\documentclass[12pt,reqno]{article}

\usepackage[usenames]{color}
\usepackage{amssymb}
\usepackage{amsmath}
\usepackage{amsthm}
\usepackage{amsfonts}
\usepackage{amscd}
\usepackage{graphicx}

\usepackage{ifthen}

\usepackage[colorlinks=true,
linkcolor=webgreen,
filecolor=webbrown,
citecolor=webgreen]{hyperref}

\definecolor{webgreen}{rgb}{0,.5,0}
\definecolor{webbrown}{rgb}{.6,0,0}

\usepackage{color}
\usepackage{fullpage}
\usepackage{float}

\usepackage{graphics}
\usepackage{latexsym}
\usepackage{epsf}
\usepackage{breakurl}

\setlength{\textwidth}{6.5in}
\setlength{\oddsidemargin}{.1in}
\setlength{\evensidemargin}{.1in}
\setlength{\topmargin}{-.1in}
\setlength{\textheight}{8.4in}

\newcommand{\seqnum}[1]{\href{https://oeis.org/#1}{\rm \underline{#1}}}

\usepackage[utf8]{inputenc}
\usepackage[T1]{fontenc}
\usepackage{mathrsfs}
\usepackage{epsfig}
\usepackage{mathtools}
\usepackage{multicol}
\usepackage{enumitem}

\allowdisplaybreaks

\newcommand{\nnend}{\nonumber\\}
\DeclarePairedDelimiter{\ceil}{\lceil}{\rceil}
\DeclarePairedDelimiter{\na}{(}{)}

\newcommand{\C}{\ensuremath{\mathbb{C}}}
\newcommand{\Z}{\ensuremath{\mathbb{Z}}}
\newcommand{\N}{\ensuremath{\mathbb{N}}}

\begin{document}

\theoremstyle{plain}
\newtheorem{theorem}{Theorem}[section]
\newtheorem{conjecture}[theorem]{Conjecture}
\newtheorem{corollary}[theorem]{Corollary}
\newtheorem{lemma}[theorem]{Lemma}
\newtheorem{proposition}[theorem]{Proposition}

\theoremstyle{definition}
\newtheorem{example}[theorem]{Example}
\newtheorem{definition}[theorem]{Definition}
\newtheorem{notation}[theorem]{Notation}

\theoremstyle{remark}
\newtheorem{remark}[theorem]{Remark}

\numberwithin{equation}{section}

\begin{center}
\vskip 1cm{\LARGE\bf 
Completeness of Positive Linear Recurrence\\
\vskip .1in
Sequences
}
\vskip 1cm
\end{center}

\begin{multicols}{2}
\begin{center}
El\.zbieta Bo\l dyriew\\
Department of Mathematics\\
Colgate University\\
Hamilton, NY 13346\\ 
USA\\
\href{mailto:eboldyriew@colgate.edu}{\tt eboldyriew@colgate.edu} \\
\ \\

John Haviland\\
Department of Mathematics\\ 
University of Michigan\\
Ann Arbor, MI 48109 \\
USA\\
\href{mailto:havijw@umich.edu}{\tt havijw@umich.edu} \\
\ \\

Ph\'uc L\^am\\
Department of Mathematics\\
University of Rochester\\
Rochester, NY 14627\\ 
USA\\
\href{mailto:plam6@u.rochester.edu}{\tt plam6@u.rochester.edu}\\
\end{center}
\columnbreak
\begin{center}
John Lentfer\\
Department of Mathematics\\
Harvey Mudd College\\
Claremont, CA 91711\\
USA\\
\href{mailto:jlentfer@hmc.edu}{\tt jlentfer@hmc.edu} \\
\ \\

Steven J. Miller\\
Department of Mathematics and Statistics\\
Williams College\\
Williamstown, MA 01267\\
USA\\ 
\href{mailto:sjm1@williams.edu}{\tt sjm1@williams.edu} \\
\ \\

Fernando Trejos Su\'arez\\
Department of Mathematics\\
Yale University\\
New Haven, CT 06520\\
USA\\
\href{mailto:fernando.trejos@yale.edu}{\tt fernando.trejos@yale.edu} \\
\ \\

\end{center}
\end{multicols}

\begin{abstract}
	A sequence of positive integers is complete if every positive integer is a sum of distinct terms. A positive linear recurrence sequence (PLRS) is a sequence defined by a homogeneous linear recurrence relation with nonnegative coefficients of the form $H_{n+1} = c_1 H_n + \cdots + c_L H_{n-L+1}$ and a particular set of initial conditions.
	
	We seek to classify various PLRS's by completeness. With results on how completeness is affected by modifying the recurrence coefficients of a PLRS, we completely characterize completeness of several families of PLRS's as well as conjecturing criteria for more general families. Our primary method is applying Brown's criterion, which says that an increasing sequence is complete if and only if the first term is $1$ and each subsequent term is bounded above by the sum of all previous terms plus $1$. A survey of these results can be found in the authors' previous paper \cite{BHLLMT}.
	
	Finally, we adopt previous analytic work on PLRS's to find a more efficient way to check completeness. Specifically, the characteristic polynomial of any PLRS has exactly one positive root; by bounding the size of this root, the majority of sequences may be classified as complete or incomplete. Additionally, we show there exists an indeterminate region where the principal root does not reveal any information on completeness. We have conjectured precise bounds for this region.
\end{abstract}

\tableofcontents

\section{Introduction}

The Fibonacci numbers are one of the most studied integer sequences. One of their many interesting properties is that they can be used to construct a unique decomposition for any positive integer. Zeckendorf proved that every positive integer can be written uniquely as a sum of non-consecutive elements of the Fibonacci sequence, when indexed with the initial conditions $f_1=1,\ f_2=2$ and the recurrence $f_{n+1}=f_n+f_{n-1}$. Note that this is a just a shift of the indexing by one from the common initial conditions $F_0=0,\ F_1=1$ \seqnum{A000045}. For an arbitrary positive integer, this unique decomposition into Fibonacci numbers is called its \emph{Zeckendorf decomposition} \cite{Ze}. The result on the uniqueness and existence of such decompositions has been generalized to a much larger class of linear recurrence relations; the following definitions are from Miller and Wang \cite{MW}.

\begin{definition}\label{defn:goodrecurrencereldef} We say a sequence $\left(H_n\right)_{n=1}^\infty$ of positive integers is a \emph{Positive Linear Recurrence Sequence (PLRS)} if the following properties hold.
\begin{enumerate}
\item \emph{Recurrence relation:} There are non-negative integers $L, c_1, \dots, c_L$\label{c_i} such that \begin{equation} H_{n+1} \ = \ c_1 H_n + \cdots + c_L H_{n+1-L},\end{equation}  with $L, c_1$ and $c_L$ positive.
\item \emph{Initial conditions:} $H_1 = 1$, and for $1 \leq n < L$ we have
\begin{equation} H_{n+1} \ =\
c_1 H_n + c_2 H_{n-1} + \cdots + c_n H_{1}+1.\end{equation}
\end{enumerate}
\end{definition}

\begin{definition}[Legal decompositions]
We call a decomposition $\sum_{i=1}^{m} {a_i H_{m+1-i}}$\label{a_i} of a positive integer $N$ (and the sequence $\left(a_i\right)_{i=1}^{m}$) \emph{legal} \label{legal} if $a_1>0$, the other $a_i \geq 0$, and one of the following two conditions holds.
\begin{enumerate}

\item We have $m<L$ and $a_i=c_i$ for $1\leq i\leq m$.

\item There exists $s\in\{1,\dots, L\}$ such that
\begin{equation}
a_1\ = \ c_1,\ a_2\ = \ c_2,\ \cdots,\ a_{s-1}\ = \ c_{s-1}\ {\rm{and}}\ a_s<c_s,
\end{equation}
$a_{s+1}, \dots, a_{s+\ell} \ = \  0$ for some $\ell \geq 0$,
and $\left(b_i\right)_{i=1}^{m-s-\ell}$ (with $b_i = a_{s+\ell+i}$) is legal or empty.

\end{enumerate}
\end{definition}

The following theorem is due to Grabner and Tichy \cite{GT}, and stated in this form in Miller and Wang \cite{MW}.

\begin{theorem}[Generalized Zeckendorf's Theorem for PLRS]\label{thm:genZeckendorf} Let $\left(H_n\right)_{n=1}^\infty$ be a \emph{positive linear recurrence sequence}. Then there is a unique legal decomposition for each positive integer $N\geq 0$.
\end{theorem}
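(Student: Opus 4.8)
I would prove existence and uniqueness simultaneously by strong induction on $N$, after first extracting one structural lemma: if $\sum_{i=1}^{m} a_i H_{m+1-i}$ is a legal decomposition of $N$, then $H_m \le N \le H_{m+1}-1$. The lower bound is immediate from $a_1 \ge 1$. For the upper bound I would induct on $m$, unwinding the recursive definition of ``legal.'' In Case~1 (here $m<L$ and $a_i=c_i$ for all $i$) the initial conditions give $N = c_1 H_m + \cdots + c_m H_1 = H_{m+1}-1$ on the nose. In Case~2 we have $a_1=c_1,\dots,a_{s-1}=c_{s-1}$, $a_s<c_s$, then a (possibly empty) block of zeros, then a shorter legal or empty decomposition whose value $V$ satisfies $V \le H_{m+1-s}-1$ by the inductive hypothesis together with monotonicity of $(H_n)$; hence
\[ N \ \le\ \sum_{i=1}^{s-1} c_i H_{m+1-i} + (c_s-1)H_{m+1-s} + (H_{m+1-s}-1) \ \le\ \sum_{i=1}^{\min(m,L)} c_i H_{m+1-i} - 1 \ \le\ H_{m+1}-1, \]
where the last step uses the recurrence $H_{m+1} = c_1 H_m + \cdots + c_L H_{m+1-L}$ when $m \ge L$ and the initial conditions when $m<L$. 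An immediate consequence is that the index of the largest term in \emph{any} legal decomposition of $N$ equals the unique $m$ with $H_m \le N < H_{m+1}$, so that index is forced by $N$.

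\textbf{Forcing the shape.} Set $\Sigma_{<s} := \sum_{i=1}^{s-1} c_i H_{m+1-i}$ and $\Sigma_{\le s} := \Sigma_{<s} + c_s H_{m+1-s}$ for $1 \le s \le \min(m,L)$; the half-open intervals $[\Sigma_{<s},\Sigma_{\le s})$ are consecutive and exhaust $[0,H_{m+1})$ (for $m \ge L$) or $[0,H_{m+1}-1)$ (for $m<L$). I claim that in any legal decomposition of $N$ with largest index $m$, the parameter $s$ of Case~2 is precisely the one with $N \in [\Sigma_{<s},\Sigma_{\le s})$, while the decomposition falls into Case~1 exactly when $m<L$ and $N = H_{m+1}-1$. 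Indeed, applying the size bound to the trailing legal block shows its value is $< H_{m+1-s}$, so once $a_1=c_1,\dots,a_{s-1}=c_{s-1}$ are accounted for we must have $a_s = \lfloor (N-\Sigma_{<s})/H_{m+1-s}\rfloor$ and the trailing block represents $R' := N - \Sigma_{<s} - a_s H_{m+1-s}$, which satisfies $R' < H_{m+1-s}$ and (since $a_1 \ge 1$ when $s=1$ and $\Sigma_{<s} \ge H_m$ when $s \ge 2$) also $R' < N$. Thus $a_1,\dots,a_s$ and the value $R'$ of the remaining block are all determined by $N$.

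\textbf{Running the induction.} For \emph{uniqueness}: the trailing block is a legal or empty decomposition of $R' < N$, which by the inductive hypothesis is unique, so the whole decomposition of $N$ is determined. For \emph{existence}: given $N \ge 1$, pick $m$ with $H_m \le N < H_{m+1}$, take Case~1 if $m<L$ and $N = H_{m+1}-1$, and otherwise let $s$ be the index with $N \in [\Sigma_{<s},\Sigma_{\le s})$ and set $a_1=c_1,\dots,a_{s-1}=c_{s-1}$, $a_s = \lfloor (N-\Sigma_{<s})/H_{m+1-s}\rfloor$; one checks $a_1>0$ (using $N \ge H_m$ when $s=1$) and $0 \le a_s < c_s$. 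By the inductive hypothesis $R'$ has a legal (or empty) decomposition, and since $R' < H_{m+1-s}$ its largest index is $\le m-s$ by the size bound, so it slots in after the appropriate number of interior zeros, yielding a legal decomposition of $N$; the base case $N=0$ is the empty decomposition.

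\textbf{Main obstacle.} The delicate point is the size bound, precisely because ``legal'' is self-referential (Case~2 invokes a shorter legal decomposition): one must set up the induction on the largest index carefully, track the interior block of zeros, and invoke in the right places the two \emph{different} pieces of data encoding the recurrence --- the relation $H_{m+1} = c_1 H_m + \cdots + c_L H_{m+1-L}$ for large indices and the ``$+1$'' initial conditions for small indices. Once the size bound and the resulting partition into intervals are in hand, forcing the shape parameter $s$ and running the induction on $N$ are routine.
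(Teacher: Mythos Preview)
The paper does not prove this theorem; it is quoted from Grabner--Tichy \cite{GT} in the form stated by Miller--Wang \cite{MW}, so there is no in-paper proof to compare against. Your argument is correct and is essentially the standard greedy/size-bound proof found in that literature: the lemma $H_m \le N < H_{m+1}$ for any legal decomposition with leading index $m$ pins down $m$, the partition of $[0,H_{m+1})$ (resp.\ $[0,H_{m+1}-1)$ when $m<L$) into the consecutive intervals $[\Sigma_{<s},\Sigma_{\le s})$ then pins down the Case~2 parameter $s$ and the digit $a_s$, and one recurses on the strictly smaller residual $R'$. Two small remarks you may want to make explicit for a reader: (i) when $c_s=0$ the interval $[\Sigma_{<s},\Sigma_{\le s})$ is empty and $s$ cannot arise as the Case~2 parameter anyway, since $a_s<c_s$ would force $a_s<0$; (ii) strict monotonicity of $(H_n)$, which you invoke, follows immediately from $c_1\ge 1$ together with the ``$+1$'' in the initial conditions.
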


Next, we introduce \emph{completeness}, as defined by Hoggatt and King \cite{HK}.

\begin{definition}
An arbitrary sequence of positive integers $(a_i)_{i=1}^\infty$ is \emph{complete} if and only if every positive integer $n$ can be represented in the form $n=\sum_{i=1}^\infty \varepsilon_i a_i$, where $\varepsilon \in \{0,1\}$. A sequence that fails to be complete is \emph{incomplete}.
\end{definition}

In other words, a sequence of positive integers is complete if and only if each positive integer can be written as a sum of unique terms of the sequence. 

\begin{example}
    The Fibonacci sequence is complete. This follows directly from Zeckendorf's theorem, which is stronger statement, as it states that every positive integer may be written as the sum of \emph{non-consecutive} Fibonacci numbers. Completeness does not require that the decompositions use non-consecutive terms. 
    
    Note that unlike Zeckendorf decompositions, complete decompositions are not necessarily unique. In the case of the Fibonacci sequence, while the Zeckendorf decomposition of $10$ as $10=f_2+f_5=8+2$ is unique, we may find multiple complete decompositions, as with $10=2+8=2+3+5$.
\end{example}

After seeing this example, it is natural to ask if Theorem~\ref{thm:genZeckendorf} implies that all PLRS's are complete. Previous work in numeration systems by Gewurz and Merola \cite{GM} has shown that specific classes of recurrences as defined by Fraenkel \cite{Fr} are complete under their greedy expression. However, we cannot generalize this result to all PLRS's. For legal decompositions, the decomposition rule can permit some sequence terms to be used multiple. This is not allowed for completeness decompositions, where each unique term from the sequence can be used at most once.

\begin{example}
    The PLRS $H_{n+1} = H_n + 3H_{n-1}$ has terms $\left(1, 2, 5, 11, \ldots\right)$ \seqnum{A006138}. The unique \emph{legal} decomposition for $9$ is $1\cdot 5 + 2\cdot 2$, where the term $2$ is used twice. However, no \emph{complete} decomposition for $9$ exists. Adding all terms from the sequence less than $9$ is $1 + 2 + 5 = 8$, and to include $11$ or any subsequent term surpasses $9$.
\end{example}

We also make use of the following criterion for completeness of a sequence, due to Brown \cite{Br}.

\begin{theorem}[Brown's Criterion]
	If $a_n$ is a nondecreasing sequence, then $a_n$ is complete if and only if $a_1 = 1$ and for all $n > 1$,
	\begin{equation}\label{eqn:BrownsCrit}
		a_{n + 1} \leq 1 + \sum_{i = 1}^{n} a_i.
	\end{equation}
\end{theorem}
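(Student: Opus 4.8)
The plan is to prove the two implications separately: completeness $\Rightarrow$ the stated bounds is a short ``gap'' argument, while the bounds $\Rightarrow$ completeness is an induction on the partial sums.

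For the forward direction, suppose $a_n$ is complete. Since every term is a positive integer and the sequence is nondecreasing, the only way to write $1$ as a sum of distinct terms is the single term $a_1$, so $a_1 = 1$. For the inequality I would argue by contradiction: if $a_{n+1} > 1 + \sum_{i=1}^{n} a_i$ for some $n$, set $N := 1 + \sum_{i=1}^{n} a_i$. Then $N < a_{n+1} \le a_{n+2} \le \cdots$, so any representation of $N$ as a sum of distinct terms can only use $a_1, \dots, a_n$; but the largest such sum is $\sum_{i=1}^{n} a_i = N - 1 < N$, so $N$ is not representable, contradicting completeness.

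For the reverse direction, write $S_n := \sum_{i=1}^{n} a_i$ and prove by induction on $n$ the stronger statement that every integer in $[1, S_n]$ is a sum of distinct terms from $\{a_1, \dots, a_n\}$. The base case $n = 1$ is immediate from $a_1 = 1$. For the inductive step take $N \in [1, S_{n+1}] = [1, S_n + a_{n+1}]$. If $N \le S_n$, the induction hypothesis applies directly. Otherwise $S_n < N \le S_n + a_{n+1}$, and here I invoke the hypothesis $a_{n+1} \le 1 + S_n$, i.e.\ $a_{n+1} - 1 \le S_n < N$, to get $N - a_{n+1} \ge 0$: if $N - a_{n+1} = 0$ then $N = a_{n+1}$ is a one-term representation, and if $1 \le N - a_{n+1} \le S_n$ then the induction hypothesis represents $N - a_{n+1}$ using only $a_1, \dots, a_n$, to which we adjoin the new distinct term $a_{n+1}$. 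Completeness follows because $S_n \ge n \to \infty$ (each $a_i \ge a_1 = 1$), so every positive integer eventually lies in some interval $[1, S_n]$.

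The crux is the inductive step of the reverse direction: the inequality $a_{n+1} \le 1 + S_n$ is precisely the condition that prevents a gap from opening between $S_n$ and $a_{n+1}$ when the new term is introduced, keeping the set of reachable integers an initial segment. I would also handle the smallest case of the induction carefully, namely the passage from $n = 1$ to $n = 2$, which uses $a_2 \le 1 + a_1 = 2$; combined with $a_2 \ge a_1 = 1$ this forces $a_2 \in \{1, 2\}$, after which the general argument applies verbatim.
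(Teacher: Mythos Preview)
Your proof is correct and follows essentially the same approach as the paper's proof (given in Appendix~\ref{apx:brownAndCor}): both directions match closely, with the sufficiency proved by induction on the partial sums $S_n$ and the necessity by exhibiting an unrepresentable integer in the gap. Your write-up is in fact slightly more complete than the paper's, since you explicitly argue that $a_1 = 1$ is forced and you note why $S_n \to \infty$ at the end.
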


An immediate corollary is the following sufficient, though not necessary, condition for completeness, which we call the doubling criterion. The proof is left to the appendix, as Corollary~\ref{cor:doublingCritApx}.

\begin{corollary}[Doubling Criterion]\label{cor:doublingCrit}
	If $a_n$ is a nondecreasing sequence such that $a_n \leq 2 a_{n - 1}$ for all $n \geq 2$, then $a_n$ is complete.
\end{corollary}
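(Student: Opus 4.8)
The plan is to derive the statement from Brown's Criterion, so the goal is simply to verify that its two hypotheses hold for the sequence $a_n$: namely that $a_1 = 1$, and that $a_{n+1} \leq 1 + \sum_{i=1}^{n} a_i$ for every $n > 1$. The condition $a_1 = 1$ is part of the intended hypotheses (without it no sequence of positive integers can be complete, since $1$ itself could not be represented), and it will also serve as the base case of the induction below. So the real work is establishing the displayed inequality.

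One subtlety is worth flagging up front: the doubling hypothesis by itself gives only the crude bound $a_{n+1} \leq 2a_n \leq 2\sum_{i=1}^{n} a_i$, which overshoots the $1 + \sum_{i=1}^{n} a_i$ demanded by \eqref{eqn:BrownsCrit} by roughly a factor of two, so a one-step estimate is not enough. The remedy is to prove, by induction on $n$, the slightly repackaged claim
\[
a_n \ \leq\ 1 + \sum_{i=1}^{n-1} a_i \qquad (n \geq 1),
\]
with the empty sum read as $0$. For each $n \geq 2$ this is exactly inequality \eqref{eqn:BrownsCrit} with the index shifted, and for $n = 1$ it reduces to $a_1 \leq 1$, so proving the claim for all $n \geq 1$ delivers both hypotheses of Brown's Criterion at once.

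The base case $n = 1$ is $a_1 \leq 1$, which holds since $a_1 = 1$. For the inductive step, assume $a_{n-1} \leq 1 + \sum_{i=1}^{n-2} a_i$; then, applying the doubling hypothesis and writing $2a_{n-1} = a_{n-1} + a_{n-1}$,
\[
a_n \ \leq\ 2a_{n-1} \ =\ a_{n-1} + a_{n-1} \ \leq\ a_{n-1} + \Bigl( 1 + \sum_{i=1}^{n-2} a_i \Bigr) \ =\ 1 + \sum_{i=1}^{n-1} a_i ,
\]
which completes the induction; Brown's Criterion then gives completeness. There is no deep obstacle here — the only point requiring care is recognizing that one must feed the inductive hypothesis back into the estimate rather than bounding $a_n$ directly, together with the minor bookkeeping of the empty-sum base case and the $a_1 = 1$ normalization.
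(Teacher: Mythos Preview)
Your proof is correct and follows essentially the same route as the paper's: both verify Brown's Criterion by induction, splitting $2a_{n-1} = a_{n-1} + a_{n-1}$ and feeding the inductive hypothesis into one copy. You are also right to flag that the hypothesis $a_1 = 1$ is needed even though the statement as printed omits it; the paper's appendix version of the corollary includes it explicitly.
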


\begin{remark}\label{two}
By considering the special case when $a_n = 2a_{n-1}$, this immediately implies that the doubling sequence itself $\left(1,2,4,8,\ldots\right)$ \seqnum{A000079} is complete.
\end{remark}

In this paper, we characterize many types of PLRS by whether they are complete or not complete.

\begin{notation}
We use the notation $[c_1, \ldots, c_L]$ to represent the PLRS defined by the recurrence $H_{n+1} = c_1 H_n + \cdots + c_L H_{n+1-L}$ and initial conditions as given in Definition~\ref{defn:goodrecurrencereldef}. When the context is clear, we also use $[c_1, \ldots, c_L]$ to refer to the coefficients themselves.
\end{notation}

A simple case to consider is when all coefficients $c_i$ for the sequence $[c_1,\ldots,c_L]$ are positive. The following result, proved in Section~\ref{sec:modifying}, completely characterizes these sequences are either complete or incomplete. 

\begin{theorem}\label{basic}
    If $(H_n)$ is a PLRS generated by positive coefficients $[c_1, \ldots, c_n]$, then $(H_n)$ is complete if and only if the coefficients are $[\underbrace{1,\ldots,1}_L]$ or $[\underbrace{1,\ldots,1}_{L-1},2]$ for $L \geq 1$.
\end{theorem}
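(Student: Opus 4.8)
The plan is to reduce everything to Brown's Criterion, using one preliminary observation: if a PLRS $(H_n)$ has $c_1 = c_2 = \cdots = c_{k-1} = 1$ for some $k \le L$, then its first $k$ terms are forced to be $H_j = 2^{j-1}$ for $1 \le j \le k$. This is an immediate induction from Definition~\ref{defn:goodrecurrencereldef}: $H_1 = 1$, and for each $n$ with $1 \le n \le k-1$ the initial-condition formula gives $H_{n+1} = \sum_{i=1}^{n} c_i H_{n+1-i} + 1 = \sum_{j=1}^{n} H_j + 1 = 2^n$. I would record this as a short lemma, since both directions use it.

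For the forward implication (that these two families are complete): when the coefficients are $[\underbrace{1,\ldots,1}_{L}]$, the initial-condition formula says $H_{n+1} = 1 + \sum_{i=1}^{n} H_i$ for $n < L$, so Brown's Criterion holds with equality there; and for $n \ge L$ the recurrence gives $H_{n+1} = H_n + \cdots + H_{n+1-L} \le \sum_{i=1}^{n} H_i < 1 + \sum_{i=1}^{n} H_i$, so Brown's Criterion holds (the sequence is nondecreasing and $H_1 = 1$), hence $(H_n)$ is complete. When the coefficients are $[\underbrace{1,\ldots,1}_{L-1},2]$, I claim $(H_n)$ is just the doubling sequence $(2^{n-1})$: the lemma handles $H_1,\ldots,H_L$ (the coefficient $c_L = 2$ appears in no initial condition, only $c_1,\ldots,c_{L-1}$ do), and for $n \ge L$ an easy induction using $H_{n+1} = H_n + \cdots + H_{n+2-L} + 2H_{n+1-L} = (2^{n} - 2^{n+1-L}) + 2^{n+1-L} = 2^{n}$ finishes it; completeness is then Corollary~\ref{cor:doublingCrit} (or Remark~\ref{two}).

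For the reverse implication, suppose all $c_i$ are positive but the coefficient tuple is neither $[\underbrace{1,\ldots,1}_{L}]$ nor $[\underbrace{1,\ldots,1}_{L-1},2]$; I show Brown's Criterion fails. If $L = 1$ then $c_1 \ge 3$, and $H_2 = c_1 H_1 = c_1 \ge 3 > 2 = 1 + H_1$. If $L \ge 2$ and some $c_i$ with $i \le L-1$ is at least $2$, let $k \le L-1$ be minimal with $c_k \ge 2$; then by the lemma $H_j = 2^{j-1}$ for $j \le k$, and the initial-condition formula at $n = k$ (legitimate since $k < L$) gives $H_{k+1} = \sum_{i=1}^{k-1} H_{k+1-i} + c_k H_1 + 1 = (2^{k} - 2) + c_k + 1 \ge 2^{k} + 1 > 2^k = 1 + \sum_{j=1}^{k} H_j$. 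Finally, if $L \ge 2$ and $c_1 = \cdots = c_{L-1} = 1$, then necessarily $c_L \ge 3$; the lemma gives $H_j = 2^{j-1}$ for $j \le L$, and the recurrence at index $L+1$ gives $H_{L+1} = \sum_{i=1}^{L-1} H_{L+1-i} + c_L H_1 = (2^{L} - 2) + c_L \ge 2^{L} + 1 > 2^L = 1 + \sum_{j=1}^{L} H_j$. In every case Brown's Criterion is violated, so $(H_n)$ is incomplete.

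I expect the only delicate point to be bookkeeping with the PLRS initial conditions in Definition~\ref{defn:goodrecurrencereldef}: one must use the formula $H_{n+1} = c_1 H_n + \cdots + c_n H_1 + 1$ only for $n < L$ and the homogeneous recurrence only for $n \ge L$, and in particular notice that for $[\underbrace{1,\ldots,1}_{L-1},2]$ the coefficient $c_L$ never enters the initial segment, which is exactly what makes the sequence collapse to the powers of $2$. Identifying the precise index where Brown's inequality first breaks — the first coefficient exceeding $1$, or the last coefficient if all earlier ones equal $1$ — is the crux; once that is pinned down, everything reduces to summing a finite geometric series.
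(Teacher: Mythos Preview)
Your proof is correct and takes essentially the same approach as the paper: both directions reduce to Brown's Criterion via direct inspection of the first $L+1$ terms. The only cosmetic difference is that the paper deduces $c_i=1$ for $i<L$ and $c_L\le 2$ \emph{directly} from Brown's inequality at indices $L$ and $L+1$ (using $\sum_{i}(c_i-1)H_{L-i}\le 0$), whereas you argue the contrapositive by case analysis on the first coefficient exceeding~$1$; the underlying computation is identical.
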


The situation becomes much more complicated when we consider all PLRS's, in particular those that have at least one $0$ as a coefficient. In order to be able to make progress on determining completeness of these PLRS's, we develop several tools. The following three theorems are results that allow certain modifications of the coefficients $[c_1, \ldots, c_L]$ that generate a PLRS that is known to be complete or incomplete, and preserve completeness or incompleteness. They are proved in Section~\ref{sec:modifying}.

\begin{theorem}\label{lem:incompAddCoeff}
    Consider sequences $\left( G_{n} \right) = [c_1,\dots, c_{L}]$ and $\left( H_{n} \right)= [c_1,,\dots, c_{L},c_{L+1}]$, where $c_{L+1}$ is any positive integer. If $\left( G_{n} \right)$ is incomplete, then $\left( H_{n} \right)$ is incomplete as well.
\end{theorem}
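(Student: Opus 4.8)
The plan is to apply Brown's criterion to both sequences. Since $G$ is incomplete, let $k$ be the least index with $G_{k+1} > 1 + \sum_{i=1}^{k} G_i$, so that $G_{i+1} \le 1 + \sum_{j=1}^{i} G_j$ for every $i < k$. The first step is to pin down how $H$ relates to $G$. Comparing the defining formulas, $H_n = G_n$ for $1 \le n \le L$; the initial-condition formula for $H_{L+1}$ carries a $+1$ that the recurrence for $G_{L+1}$ lacks, so $H_{L+1} = c_1 G_L + \cdots + c_L G_1 + 1 = G_{L+1} + 1$; and for $n \ge L+1$ the extra nonnegative summand $c_{L+1}H_{n-L}$ in $H$'s recurrence forces $H_n > G_n$. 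Setting $e_n := H_n - G_n$, we then have $e_n = 0$ for $n \le L$, $e_{L+1} = 1$, and $e_{n+1} = c_1 e_n + \cdots + c_L e_{n+1-L} + c_{L+1}H_{n-L}$ for $n \ge L+1$.

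The easy cases come from the Brown deficit $S_G(n) := 1 + \sum_{i=1}^{n} G_i - G_{n+1}$ and the analogous $S_H(n)$: a direct computation gives $S_H(n) = S_G(n)$ for $n \le L-1$, $S_H(L) = S_G(L) - 1$, and $S_H(L+1) = S_G(L+1) - (c_1 + c_{L+1} - 1)$ (using $H_{L+2} = G_{L+2} + c_1 + c_{L+1}$ and $\sum_{i=1}^{L+1} H_i = \sum_{i=1}^{L+1} G_i + 1$). Since a sequence is incomplete exactly when one of its deficits is negative and since $c_1, c_{L+1} \ge 1$, this already shows $H$ is incomplete whenever $S_G(i) < 0$ for some $i \le L-1$, or $S_G(L) \le 0$, or $S_G(L+1) \le c_1 + c_{L+1} - 2$; in particular it disposes of the case where $G$'s first failure $k$ satisfies $k \le L+1$ (for instance, if $k \le L-1$ then $H_{k+1} = G_{k+1} > 1 + \sum_{i=1}^{k} H_i$, while if $k = L$ then $H_{L+1} = G_{L+1}+1 > G_{L+1} > 1 + \sum_{i=1}^{L} H_i$). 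The only remaining configuration is that $G$ is incomplete while $S_G(i) \ge 0$ for all $i \le L+1$ with $S_G(L), S_G(L+1) \ge 1$; note this forces $c_1 = 1$, since $S_G(1) = 1 - c_1$.

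In that remaining configuration the goal is to derive a contradiction by showing that $G$ is in fact complete: this is a stability statement for the deficit sequence — once the $S_G(n)$ have stayed nonnegative through the transient governed by the order $L$ of the recurrence (with room to spare at indices $L$ and $L+1$), they never go negative — and establishing it is the crux. An equivalent route one might attempt is to prove directly that $S_H(n) \le S_G(n)$ for all $n$ up to the first failure $k$ of $G$, so that $S_H(k) \le S_G(k) < 0$; this unwinds to a Brown-type inequality $e_{n+1} \ge \sum_{i=1}^{n} e_i$ for the error sequence in that range. Here the naïve induction ``$e_{n+1} \ge 2e_n$, hence $e_{n+1} \ge e_n + \sum_{i<n} e_i \ge \sum_{i\le n} e_i$'' fails, because $e_{n+1} \ge 2e_n$ can genuinely break down (e.g. for $[1,1,1]$, whose principal characteristic root lies below $2$). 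The argument must therefore exploit the standing assumption that $G$ has not yet violated Brown's criterion, coupling the recurrence for $(e_n)$ with the still-valid Brown inequalities for $G$ (which, among other things, prevent the partial sums $\sum_{i\le n} G_i$ — the scale against which the accumulated error is measured — from more than doubling step to step). Carrying out this coupling carefully is the main obstacle.
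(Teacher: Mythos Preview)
Your proposal is not a proof: after handling the easy case where $G$'s first Brown failure occurs at index $k\le L+1$, you stop. For the remaining case you describe two possible strategies---either show that $G$ is forced to be complete (a ``stability'' statement for the deficit sequence), or show $S_H(n)\le S_G(n)$ up to the failure index via an inequality $e_{n+1}\ge\sum_{i\le n}e_i$ on the error sequence---but you carry out neither, ending with ``Carrying out this coupling carefully is the main obstacle.'' That leaves the theorem unproved. The first strategy, incidentally, is essentially the paper's own Conjecture~\ref{2Lcrit}, which is not available as a tool.

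What is striking is that the paper's argument is precisely the route you label ``na\"{\i}ve'' and reject. The paper claims $B_{H,m}\le B_{G,m}$ for every $m$, and derives this from the doubling inequality $H_{L+k}-G_{L+k}\ge 2(H_{L+k-1}-G_{L+k-1})$ for all $k\ge 2$ (its Lemma~\ref{SequencesDifferencesAppendix}), proved by strong induction: expand both sides via the recurrence and apply the hypothesis termwise. Your skepticism is in fact warranted. For $G=[1,0,0,1]$ and $H=[1,0,0,1,1]$ one computes $e_5,\dots,e_9=1,2,4,7,12$, so $e_8<2e_7$; and at the level of Brown's gaps, $B_{G,9}=28$ while $B_{H,9}=30$, so even the weaker monotonicity $B_{H,m}\le B_{G,m}$ fails. (The slip in the paper's inductive step is that $H_{L+m}$ must be expanded with the length-$(L{+}1)$ recurrence, which contributes an extra $c_{L+1}H_{m-1}$; the resulting residual $c_{L+1}(H_{m-1}-2H_{m-2})$ need not be nonnegative.) Your heuristic---that once the principal root of $H$ sits below $2$, the ratio $e_{n+1}/e_n$ tends to that root and so eventually drops below $2$---is exactly the right diagnosis.

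So you have correctly located a genuine difficulty that the paper's proof does not avoid, but you have not supplied an argument that overcomes it.
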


\begin{theorem}\label{decreaseLastCoe}
    Consider sequences $\left(G_n\right)=[c_1,\ldots, c_{L-1}, c_L]$ and $\left(H_n\right)=[c_1,\ldots, c_{L-1}, k_L]$, where $1 \leq k_L \leq c_L$. If $\left(G_n\right)$ is complete, then $\left(H_n\right)$ is also complete.
\end{theorem}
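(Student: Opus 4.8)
The plan is to verify Brown's Criterion for $(H_n)$ directly: as $(H_n)$ is a PLRS we automatically have $H_1=1$, so it remains to show $H_{n+1}\le 1+\sum_{i=1}^n H_i$ for every $n\ge 1$. The case $L=1$ is immediate, and for $L\ge 2$ it suffices to treat $k_L=c_L-1$ (so $c_L\ge 2$), since a general $k_L\le c_L$ is reached by lowering the last coefficient one unit at a time. The first step is to record the comparison of the two sequences: since $(G_n)$ and $(H_n)$ share the coefficients $c_1,\dots,c_{L-1}$, Definition~\ref{defn:goodrecurrencereldef}(2) forces $G_i=H_i$ for $1\le i\le L$, and writing $d_n:=G_n-H_n$ and subtracting the recurrences gives, for $n\ge L$,
\[
d_{n+1}\ =\ c_1 d_n+\cdots+c_L d_{n+1-L}+(c_L-k_L)\,H_{n+1-L}.
\]
Thus $(d_n)$ satisfies the $(G_n)$-recurrence driven by the nonnegative forcing term $(c_L-k_L)H_{n+1-L}$ with $d_1=\cdots=d_L=0$, so $d_n\ge 0$, i.e.\ $H_n\le G_n$, for all $n$.

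Next I would clear the small indices. For $1\le n\le L$ every term in Brown's inequality for $(H_n)$ has index at most $L$, where $H_i=G_i$; together with $H_{L+1}=c_1G_L+\cdots+c_{L-1}G_2+k_LG_1\le G_{L+1}$, this shows Brown's inequality for $(H_n)$ at $n\le L$ follows from the one for $(G_n)$, which holds since $(G_n)$ is complete. For the larger indices I would work with the Brown slacks $S^H_n:=1+\sum_{i=1}^n H_i-H_{n+1}$ and $S^G_n:=1+\sum_{i=1}^n G_i-G_{n+1}$; completeness of $(G_n)$ says $S^G_n\ge 0$, and we want $S^H_n\ge 0$. The comparison above gives the identity
\[
S^H_n\ =\ S^G_n+d_{n+1}-\sum_{i=1}^n d_i ,
\]
so the theorem reduces to $\sum_{i=1}^n d_i-d_{n+1}\le S^G_n$; this is trivial whenever $d_{n+1}\ge\sum_{i=1}^n d_i$.

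The main difficulty is the range of $n$ with $d_{n+1}<\sum_{i=1}^n d_i$, where one must show the accumulated $G$-slack really does absorb the deficit. I would run a strong induction on the one-step recursion $S^H_n=S^H_{n-1}+(2H_n-H_{n+1})$: a step with $H_{n+1}\le 2H_n$ only increases $S^H$, while at a step with $H_{n+1}>2H_n$ one estimates $H_{n+1}-2H_n$ from the $(H_n)$-recurrence — here the relation $c_1=1$, forced by $(G_n)$'s completeness through Brown's inequality at $n=1$, is essential — and cancels it against slack generated at earlier indices, which one bounds below by transporting Brown's inequalities for $(G_n)$ across $H_i\le G_i$. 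I expect this balancing to be the crux: the naive estimate $\sum_{i=1}^n d_i-d_{n+1}\le 0$ is genuinely false (the deficit can grow like the dominant root of $(G_n)$), so termwise domination alone cannot suffice, and the argument must use the arithmetic constraints that completeness of $(G_n)$ places on $c_1,\dots,c_{L-1}$.
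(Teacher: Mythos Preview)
Your proof is incomplete. You correctly reduce to showing $\sum_{i\le n} d_i - d_{n+1} \le S^G_n$, and you note that this is trivial whenever $d_{n+1}\ge\sum_{i\le n} d_i$; but in the remaining range you only sketch intentions (``one estimates $H_{n+1}-2H_n$ from the recurrence'', ``cancels it against slack generated at earlier indices'') without ever specifying which earlier slack absorbs which deficit, or proving that it suffices. You yourself flag this as the crux and leave it open, so as written there is a genuine gap.

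The paper's argument takes exactly the route you dismissed. It supposes Brown's criterion fails for $(H_n)$ at some minimal $m$, deduces $H_m-\sum_{i=L+1}^{m-1}H_i > G_m-\sum_{i=L+1}^{m-1}G_i$, and then proves by induction on $m$ that the reverse inequality always holds---which in your notation is precisely $d_m\ge\sum_{i=1}^{m-1}d_i$. The inductive step is driven by the appendix Lemma~\ref{Lemma3.37Appendix}, which asserts $H_{n+1}-2H_n\le G_{n+1}-2G_n$ (equivalently $d_{n+1}\ge 2d_n$) for all $n\ge L+1$; once you have doubling of $d_n$, the inequality $d_m\ge\sum_{i<m}d_i$ follows immediately. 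So the key idea you are missing is this comparison of second differences: rather than a delicate accounting of accumulated $G$-slack, the paper claims a clean termwise doubling estimate on $d_n$. Your heuristic that ``the deficit can grow like the dominant root of $(G_n)$''---which would make $d_{n+1}\ge 2d_n$ fail asymptotically whenever that root is below $2$---is in direct tension with this lemma, so if you want to close your argument along the paper's lines you should scrutinize Lemma~\ref{Lemma3.37Appendix} and its proof carefully (for instance, test it on $(G_n)=[1,0,2]$ versus $(H_n)=[1,0,1]$).
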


\begin{theorem}\label{Adding M Theorem}    
    Consider sequences $\left(G_n\right)=[c_1,\ldots, c_{L-1}, c_L]$ and $\left(H_n\right)=[c_1,\ldots, c_{L-1} + c_L]$. If $\left(G_n\right)$ is incomplete, then $\left(H_n\right)$ is also incomplete.
\end{theorem}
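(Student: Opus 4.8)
The plan is to reason throughout via Brown's criterion. For a nondecreasing sequence $(a_n)$ with $a_1=1$, put $\delta_n(a):=1+\sum_{i=1}^n a_i-a_{n+1}$, so that $(a_n)$ is incomplete exactly when $\delta_N(a)\le -1$ for some $N$. Writing $E_n:=H_n-G_n$ and $\Sigma_n:=\sum_{i=1}^n E_i$, one has the identity $\delta_n(H)=\delta_n(G)+\bigl(\Sigma_n-E_{n+1}\bigr)$. So, assuming $(G_n)$ is incomplete, I would fix the least index $N$ with $\delta_N(G)\le -1$ and look for some $M$ with $\delta_M(H)\le -1$.

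The first task is to pin down the relationship between the two sequences. Since the initial conditions of a PLRS of length $L$ (resp.\ $L-1$) involve only $c_1,\dots,c_{L-2}$, an easy induction gives $G_n=H_n$ for $1\le n\le L-1$; a short computation with the two definitions then yields $H_L=G_L+(c_L-1)$, so $E_n=0$ for $n\le L-1$ and $E_L=c_L-1\ge 0$. More generally, subtracting the recurrence of $(G_n)$ from that of $(H_n)=[c_1,\dots,c_{L-2},c_{L-1}+c_L]$ shows that for all $n\ge L$
\[
E_{n+1}=c_1E_n+\dots+c_{L-2}E_{n-L+3}+(c_{L-1}+c_L)E_{n-L+2}+c_L\bigl(G_{n-L+2}-G_{n-L+1}\bigr),
\]
i.e.\ $E$ satisfies the defining recurrence of $(H_n)$ together with the nonnegative forcing term $c_L(G_{n-L+2}-G_{n-L+1})$ (using that PLRS's are nondecreasing). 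Hence $E_n\ge 0$ for every $n$, so $H_n\ge G_n$ termwise.

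There are now two cases. If $N\le L-1$, then $\sum_{i=1}^N H_i=\sum_{i=1}^N G_i$ while $H_{N+1}\ge G_{N+1}$, so $\delta_N(H)\le\delta_N(G)\le -1$ and $(H_n)$ is incomplete. The main case is $N\ge L$, where by the identity above it suffices to prove the key inequality
\[
E_{N+1}\ \ge\ \sum_{i=1}^N E_i,
\]
for then $\delta_N(H)\le\delta_N(G)\le -1$. I would attack this by summing the forcing recurrence for $E$ over $n=L,\dots,N$ (the forcing terms telescope to $c_L(G_{N-L+2}-1)$) and bounding the outcome using the defining property of $N$, namely that $G_{n+1}\le 1+\sum_{i=1}^n G_i$ for every $n<N$, along with the base values $E_L=c_L-1$ and $E_{L+1}=c_1(2c_L-1)$.

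The step I expect to be the main obstacle is exactly this key inequality when $N\ge L$. It genuinely needs $N$ to be the \emph{first} Brown failure of $(G_n)$: termwise domination $H_n\ge G_n$ does not suffice on its own, because the partial sums $\sum_{i\le n}H_i$ can, for a stretch of indices past that first failure, outrun $H_{n+1}$ relative to the corresponding quantities for $(G_n)$, so that $E_{n+1}<\sum_{i\le n}E_i$ can occur there (this already happens for sequences of the form $[1,0,\dots,0,c_L]$ with $c_L$ large). The substance of the proof is therefore that at the first index $N$ at which Brown's inequality fails for $(G_n)$, the accumulated surplus $\sum_{i\le N}(H_i-G_i)$ has not yet overtaken the next-term surplus $H_{N+1}-G_{N+1}$; turning this into a clean inequality, via the $E$-recurrence and the minimality of $N$, is the heart of the matter.
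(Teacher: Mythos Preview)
Your setup is clean and the $E$-recurrence is correct, but the proof stops precisely where the content is: you never establish the key inequality $E_{N+1}\ge\Sigma_N$. ``Summing the forcing recurrence'' and ``using minimality of $N$'' are gestures, not an argument, and you yourself flag this as the heart of the matter. As written, this is a plan, not a proof.

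More importantly, your diagnosis of the difficulty is off. You assert that $E_{n+1}<\Sigma_n$ can occur past the first Brown failure of $(G_n)$, and therefore that minimality of $N$ is essential. In fact the stronger statement $E_{n+1}\ge\Sigma_n$ holds for \emph{every} $n$ (equivalently $B_{H,m}\le B_{G,m}$ for all $m$), so minimality of $N$ plays no role. The paper obtains this by factoring the passage from $[c_1,\dots,c_{L-1},c_L]$ to $[c_1,\dots,c_{L-1}+c_L]$ into $c_L$ elementary moves---first $c_L-1$ moves of the form $[\,\dots,c_{L-1},c_L\,]\mapsto[\,\dots,c_{L-1}+1,c_L-1\,]$, then one move $[\,\dots,c_{L-1},1\,]\mapsto[\,\dots,c_{L-1}+1\,]$---and for each elementary move proves the ``difference doubles'' inequality $H_{n+1}-G_{n+1}\ge 2\bigl(H_n-G_n\bigr)$. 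That doubling, together with $E_n=0$ for small $n$, immediately yields $E_{n+1}\ge\Sigma_n$ for every $n$ by a one-line induction, and hence $B_{H,m}\le B_{G,m}$ for all $m$. Your direct comparison collapses these steps into one, which is fine in principle, but then the right target is the same global inequality (or the doubling statement directly for the composite move), not a delicate argument pinned to the first failure index.
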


The next theorem is a result that classifies a family of PLRS's as complete or incomplete. It is proved in Section~\ref{sec:famiilies}.

\begin{theorem}\label{thm:1onekzero} The sequence generated by $[1,\underbrace{0,\ldots,0}_k,N]$ is complete if and only if $1 \leq N \leq \left\lceil(k+2)(k+3)/{4}\right\rceil$, where $\lceil \cdot \rceil$ is the ceiling function.
\end{theorem}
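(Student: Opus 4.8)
The plan is to compute the sequence explicitly, reduce Brown's criterion to a single clean inequality, and read off the threshold.

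\textbf{Setup and reduction.} Here $L=k+2$, the recurrence is $H_{n+1}=H_n+NH_{n-k-1}$ for $n\ge k+2$, and the PLRS initial conditions force $H_n=n$ for $1\le n\le k+2$. Iterating $H_{k+2+j}=H_{k+1+j}+NH_j$ gives the explicit values
\[
H_{k+2+j}\ =\ (k+2)+N\binom{j+1}{2}\qquad(0\le j\le k+2),
\]
and telescoping $H_{n+k+2}-H_{k+2}=\sum_{p=k+2}^{n+k+1}(H_{p+1}-H_p)$ gives the key closed form for partial sums,
\[
N\sum_{i=1}^{n}H_i\ =\ H_{n+k+2}-(k+2)\qquad(n\ge 1).
\]
Since $(H_n)$ is strictly increasing with $H_1=1$, Brown's criterion says $(H_n)$ is complete iff $H_{n+1}\le 1+\sum_{i=1}^nH_i$ for every $n\ge 1$; substituting the sum formula and clearing the positive denominator, this becomes the single requirement
\[
H_{n+k+2}\ \ge\ N\,H_{n+1}+(k+2)-N\qquad\text{for all }n\ge 1,
\]
which I will call $(\star)$.

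\textbf{The critical index.} Using the explicit values (which reach every index up to $2k+4$, and with one more use of the recurrence $H_{2k+5}$), I would test $(\star)$ for small $n$. For $1\le n\le k+1$ it reduces to $\binom{n+1}{2}\ge n$, which is automatic; for $n=k+2$ it reduces to $N\le(k^2+3k+4)/2$; and for $n=k+3$ it simplifies, after cancellation, to $N\le \tfrac{(k+2)(k+3)}{4}+\tfrac12$, which for integer $N$ is exactly $N\le\lceil(k+2)(k+3)/4\rceil$ because $(k+2)(k+3)$ is even. The $n=k+2$ bound is no more restrictive, so among $n\le k+3$ the operative constraint is the one at $n=k+3$. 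In particular, if $N>\lceil(k+2)(k+3)/4\rceil$ then $(\star)$ fails at $n=k+3$, so $(H_n)$ is incomplete by Brown's criterion: this is the ``only if'' direction.

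\textbf{The ``if'' direction.} Assume $N\le\lceil(k+2)(k+3)/4\rceil$; it remains to verify $(\star)$ for all $n\ge k+4$. Differencing $(\star)$ and applying the recurrence shows that the slack $E_n:=H_{n+k+2}-NH_{n+1}-(k+2)+N$ satisfies $E_{n+1}-E_n=N(H_{n+1}-NH_{n-k})$ for $n\ge k+1$, so with $F_m:=H_{m+1}-NH_{m-k}$ the statement ``$(\star)$ for all $n$'' is equivalent to ``$F_m\ge k+2-N$ for all $m\ge k+2$.'' From the explicit values, $F_{k+1}=F_{k+2}=k+2-N$ exactly, and $F_m=(k+2)+N\tfrac{(m-k)(m-k-3)}{2}\ge k+2>0$ for $k+3\le m\le 2k+2$; the hypothesis on $N$ is precisely what makes $F_{2k+4}\ge k+2-N$ (this is $(\star)$ at $n=k+3$ once more) and it also forces $F_{2k+3}=(k+2)+N\tfrac{(k+2)(k+1)}{2}-N^2\ge 0$. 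For $m\ge 2k+5$ the sequence $F$ obeys the same recurrence $F_m=F_{m-1}+NF_{m-k-2}$, and I would prove $F_m>0$ there by strong induction: the only index where $F$ can be negative is $m=2k+4$, and it first re-enters the recurrence at $m=3k+6$, where $F_{3k+6}\ge F_{2k+5}+N(F_{2k+3}+F_{2k+4})$ with $F_{2k+3}+F_{2k+4}=2(k+2)+N(k+2)^2-3N^2\ge 0$ (true since $3N\le(k+2)^2$ under our bound on $N$). Thus $F_m\ge k+2-N$ for all $m\ge k+2$, which is $(\star)$, so $(H_n)$ is complete.

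\textbf{Main obstacle.} The delicate part is the tail of the ``if'' direction: $E_n$ is genuinely non-monotone — it dips near $n=k+1$ and may dip once more near $n=2k+4$ — so checking finitely many $n$ does not suffice. The resolution is that the lone deficit of $F$ at $m=2k+4$, which is of size $O(N)$, is dominated by the positive values of $F$ accumulated just before it; making this quantitative (essentially the inequality $F_{2k+3}+F_{2k+4}\ge 0$ above, plus the routine growth estimates feeding the strong induction) is where the care is needed, with a few of the smallest $k$ handled by direct computation.
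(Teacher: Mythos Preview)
Your argument is correct, and the ``only if'' direction (Brown's criterion fails at $H_{k+4}$, i.e.\ your $(\star)$ at $n=k+3$) is exactly what the paper does. The ``if'' direction, however, is genuinely different. The paper proves completeness only at the maximal value $N=\lceil(k+2)(k+3)/4\rceil$ by establishing the auxiliary inequality $(N-2)H_{n-k-1}\le H_{n-1}+\cdots+H_{n-k}$ (their Lemma~\ref{lem:sharp1onekzero}), feeds this into Brown's criterion directly, and then invokes the general monotonicity result Theorem~\ref{decreaseLastCoe} to cover smaller $N$. Your route is more self-contained: the telescoping identity $N\sum_{i=1}^n H_i=H_{n+k+2}-(k+2)$ collapses Brown's criterion to the single inequality $(\star)$, and you then analyze the slack via the auxiliary sequence $F_m=H_{m+1}-NH_{m-k}$, which inherits the same linear recurrence. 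This avoids any appeal to Theorem~\ref{decreaseLastCoe} and treats all admissible $N$ at once, at the cost of a more delicate tail induction (the single possibly-negative value $F_{2k+4}$ must be absorbed when it re-enters at $m=3k+6$, and small $k$ are handled by hand). Two small points worth tightening: your claim that $3N\le(k+2)^2$ under the hypothesis on $N$ only holds for $k\ge 4$, so the ``few smallest $k$'' clause is doing real work there; and when $N\le k+2$ the target inequality is $F_m\ge(k+2)-N\ge 0$, not merely $F_m>0$, though in that regime all of $F_{k+1},\ldots,F_{2k+4}$ are already nonnegative and the induction goes through unchanged.
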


The sequence of upper bounds on $N$ in Theorem~\ref{thm:1onekzero} is $(2,3,5,8,11,14,18,\ldots)$, as $k$ increases, which is a shift of \seqnum{A054925}.

We have a partial extension of these theorems to when there are $g$ initial ones followed by $k$ zeroes in the collection of coefficients.

\begin{theorem}\label{thm:gbon}
Consider a PLRS generated by coefficients $[\underbrace{1, \dots, 1}_{g}, \underbrace{0,\ldots,0}_{k},N]$, with $g,k \geq 1$.
\begin{enumerate}[itemsep=2ex, leftmargin=2em]
    \item For $g \geq k +\lceil \log _2 k\rceil$, the sequence is complete if and only if $1 \leq N \leq 2^{k+1}-1$.
    \item For $k \leq g \leq k +\lceil \log _2 k\rceil$, the sequence is complete if and only if $1 \leq N \leq 2^{k+1} - \lceil k/{2^{g-k}} \rceil$.
    \end{enumerate}
\end{theorem}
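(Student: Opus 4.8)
The plan is to apply Brown's criterion directly to the sequence $(H_n)$ generated by $[\underbrace{1,\dots,1}_g,\underbrace{0,\dots,0}_k,N]$, which means I need to understand the partial sums $S_n = \sum_{i=1}^n H_i$ and compare $H_{n+1}$ against $S_n + 1$. First I would write down the explicit recurrence $H_{n+1} = H_n + H_{n-1} + \cdots + H_{n-g+1} + N H_{n-g-k+1}$ together with the initial conditions from Definition~\ref{defn:goodrecurrencereldef}, and observe that for the first several terms (before the $N$-term ever activates, i.e.\ while $n \le g+k$) the sequence behaves like $[\underbrace{1,\dots,1}_g]$ padded appropriately, whose completeness is already handled by Theorem~\ref{basic}. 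The key quantity to track is the ``deficit'' $d_n := S_n + 1 - H_{n+1}$: Brown's criterion holds iff $d_n \ge 0$ for all $n$, so I want to find a closed form or a clean recursion for $d_n$ and locate where it is minimized.

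The second step is to exploit the structure: because the coefficients are $g$ ones, then $k$ zeros, then $N$, the recurrence for the ``tail'' comparison telescopes. Specifically, using $H_{n+1} - H_n = \sum(\text{shifted terms})$ one can show $S_n - S_{n-1} = H_n$ trivially, but more usefully $S_n$ satisfies a recurrence of the same shape, so $d_n$ itself satisfies $d_{n+1} = d_n + (\text{correction involving } H \text{ at shifted indices and } N)$. I expect that the binding constraint occurs exactly at the index where the first $N H_{\cdot}$ contribution appears in $H_{n+1}$ while the partial sum $S_n$ has only accumulated the ``all-ones'' growth — this is why $2^{k+1}$ shows up, since over a window of $k$ zero-coefficients the all-ones PLRS roughly doubles, so $S_n$ is about $2^{k+1}$ times the relevant earlier term, giving the bound $N \le 2^{k+1}-1$ in the generic (large $g$) case. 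For part (2), when $g$ is only slightly larger than $k$, the all-ones block is too short to have fully ``caught up,'' so the partial sum is short by a correction of order $k/2^{g-k}$, producing the refined bound $N \le 2^{k+1} - \lceil k/2^{g-k}\rceil$; I would make this precise by carefully expanding $S_n$ in the regime $n$ near $g+k+1$ and bounding the lower-order Fibonacci-type terms.

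For the converse (incompleteness when $N$ exceeds the bound), I would use Theorem~\ref{decreaseLastCoe}: it suffices to show the sequence is incomplete for $N$ equal to the threshold-plus-one, since decreasing the last coefficient preserves completeness (contrapositively, if $[\dots,N]$ is incomplete for some $N$, I cannot immediately conclude it for larger $N$ — so actually I'd instead show directly via Brown that $d_{n_0} < 0$ at the critical index $n_0$ for every $N$ above the bound, which is automatic once the closed form for $d_{n_0}$ is linear and decreasing in $N$). Then for the forward direction (completeness when $N$ is within the bound) I must verify $d_n \ge 0$ for \emph{all} $n$, not just the critical one; here I would argue that once $H_{n+1}$ receives the full $N H_{n-g-k+1}$ contribution for all large $n$, the ratio $H_{n+1}/S_n$ is eventually bounded by $1$ (the principal-root analysis promised later in the paper, or a direct induction), so only finitely many indices need checking, and those reduce to the critical-window computation.

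The main obstacle will be part (2): pinning down the exact correction term $\lceil k/2^{g-k}\rceil$ requires an honest asymptotic expansion of the partial sums of the all-ones PLRS over a short window, keeping track of the Fibonacci-like lower-order terms and the ceiling function, rather than just the leading $2^{k+1}$ behavior. Getting the floor/ceiling bookkeeping exactly right — and confirming that the minimum of $d_n$ really is attained at the single index I expect, with no competing nearby index giving a smaller deficit — is where the delicate casework lives.
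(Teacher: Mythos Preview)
Your high-level plan---track the Brown's gap $d_n = B_{H,n+1}$, locate its minimum, and reduce the forward direction to a finite check---is exactly the paper's strategy. But two concrete pieces are missing or misidentified.

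First, the critical index is not where the $N$-term first activates. You expect the binding constraint near $n = L = g+k+1$, i.e.\ the first time $N H_1$ enters $H_{n+1}$. In fact the paper shows the minimum of the gap occurs at $n = 2g+k+2$: this is the index where the recurrence picks up $N H_{g+1} = N\cdot 2^g$, the last and largest of the pure-doubling initial terms. Computing $B_{2g+k+2}$ explicitly (the paper does this line by line) gives
\[
B_{2g+k+2} \;=\; 2^{g+k+1} - 2^k(k-1) - 1 - 2^g N,
\]
and setting this $\geq 0$ is what produces the threshold $N \leq 2^{k+1} - \lceil k/2^{g-k}\rceil$ in part~(2). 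If you check the wrong index you will get the wrong bound, so this is not just a bookkeeping issue.

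Second, your reduction to finitely many checks is the real gap. Principal-root heuristics do not tell you \emph{which} finitely many $n$ suffice, and a root just below $2$ gives no effective control over intermediate indices. The paper handles this with a standalone lemma (Theorem~\ref{weak2Lcrit}): if $B_{H,n} \geq 0$ for $n < L$ and $B_{H,n} > 0$ for $L \leq n \leq 2L-1$, then the sequence is complete. The proof is a genuine induction establishing the recursion
\[
B_{m+1} \;=\; B_m + \sum_{j=2}^L c_j\bigl(B_{m+1-j}-1\bigr) + L,
\]
which propagates positivity once you are past $2L-1$. Without this lemma your forward direction is not a proof; with it, the work becomes verifying $B_n > 0$ on the window $L \leq n \leq 2L-1 = 2g+2k+1$, which the paper does by explicit formulas for $H_n$ up to index $2g+1$ and then two cases ($2g+2 \leq n \leq 2g+k+1$ and $2g+k+2 \leq n \leq 2g+2k+1$) requiring separate monotonicity arguments for $B_n$.
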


Finally, in Section~\ref{roots}, we give some results and conjectures on completeness based on the principal roots of a PLRS.  We determine some criteria for completeness based on the size of the principal root and find that there is a certain indeterminate region where the principal root does not reveal any information.

\section{Modifying sequences}\label{sec:modifying}

A basic question to ask is how far we can tweak the coefficients used to generate a sequence, yet preserve its completeness. The modifying process turns out to be well-behaved and heavily dependent on the location of coefficients that are changed. Before we start looking into implementing any changes to our sequences, we first need to understand the maximal complete sequence.

\subsection{Maximal complete sequence}
We introduce the maximal complete sequence, which serves an important role. First, we look at all complete sequences with only positive coefficients, and show Theorem~\ref{basic}, which states that any such sequence can only have the coefficients $[1,\ldots, 1]$ or $[1,\ldots,1,2]$. 

\begin{proof}[Proof of Theorem~\ref{basic}.]
Assume that $\left(H_n\right)$ is complete. By the definition of a PLRS and by Brown's criterion, we have
\begin{equation}
c_1 H_{L - 1} + c_2 H_{L - 2} + \cdots + c_{L - 1} H_{1} + 1 = H_L \leq 1 + H_1 + H_2 + \cdots + H_{L - 1}.
\end{equation}
Since $c_i \geq 1$ for $1 \leq i \leq L$, this implies that $c_i = 1$ for $1 \leq i < L$.
By the definition of a PLRS,
\begin{equation}
H_{L+1} = c_1 H_L + c_2 H_{L - 1} + \cdots + c_L H_1 = H_L + H_{L - 1} + \cdots + H_2 + c_L H_1.
\end{equation}
Combining this with Brown's criterion gives
\begin{align}
H_{L+1} = H_L + H_{L - 1} + \cdots + c_L H_1 &\leq 1 + H_1 + H_2 + \cdots + H_{L - 1} \nnend
c_L H_1 &\leq 1 + H_1 = 2.
\end{align}
Hence $c_L\leq 2$, which completes the forward direction of the proof.

We know that if the coefficients are just $[2]$, then the sequence is complete by Remark~\ref{two}. So, now assume that $c_1 = \cdots = c_{L - 1} = 1$ and $1 \leq c_L \leq 2$. We argue by strong induction on $n$ that $H_n$ satisfies Brown's criterion. We can show this explicitly for $1 \leq n < L$. First, if $n = 1$, then $H_n = 1$, as desired. Next, if $1 \leq n < L$, then
\begin{equation}
H_{n + 1} = c_1 H_n + \cdots + c_n H_1 + 1 = H_n + \cdots + H_1 + 1,
\end{equation}
so these terms satisfy Brown's criterion. Now assume that for some $n \geq L$, for all $n' < n$,
\begin{equation}
H_{n' + 1} \leq H_{n'} + \cdots + H_1 + 1.
\end{equation}
It follows that
\begin{align}
H_{n + 2} &= H_{n + 1} + \cdots + H_{n + 2 - L} + c_L H_{n + 1 - L}\nnend
&\leq H_{n + 1} + \cdots + H_{n + 2 - L} + 2H_{n + 1 - L}\nnend
&\leq H_{n + 1} + \cdots + H_{n + 2 - L} + H_{n + 1 - L} + (H_{n - L} + \cdots + H_1 + 1),\label{eqn:IHnonZeroPLRS}
\end{align}
where the inductive hypothesis was applied to $H_{n + 1 - L}$ to obtain (\ref{eqn:IHnonZeroPLRS}). This completes the induction.
\end{proof}

Now that we have found some complete sequences, it turns out that the sequence generated by the coefficient $[2]$, i.e., $\left(2^{n - 1}\right)$, is the maximal complete sequence. 

\begin{lemma}\label{clm:largestCompleteGaps}
The complete sequence with largest span in summands is $\left(2^{n - 1}\right)$.
\end{lemma}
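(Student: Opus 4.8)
The plan is to read the lemma as the statement that the geometric sequence $\left(2^{n-1}\right)$ dominates every complete nondecreasing sequence term by term, so that it is the pointwise-largest member of the family of complete sequences; together with the fact that $\left(2^{n-1}\right)$ is itself complete (Remark~\ref{two}), this is what makes it ``the maximal complete sequence'' with the largest span in summands. Since Brown's criterion characterizes complete nondecreasing sequences, the natural tool is an induction that pushes each term up to its largest Brown-admissible value.

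Concretely, let $\left(a_n\right)$ be any complete nondecreasing sequence, and argue by strong induction that $a_n \le 2^{n-1}$ for all $n \ge 1$. For the base case, Brown's criterion forces $a_1 = 1 = 2^0$. For the inductive step, suppose $a_i \le 2^{i-1}$ for all $i \le n$; applying Brown's criterion to $a_{n+1}$ and then the inductive hypothesis gives
\[
a_{n+1} \ \le\ 1 + \sum_{i=1}^{n} a_i \ \le\ 1 + \sum_{i=1}^{n} 2^{i-1} \ =\ 1 + \left(2^{n} - 1\right) \ =\ 2^{n} \ =\ 2^{(n+1)-1},
\]
completing the induction. Hence no term of any complete sequence can exceed the corresponding power of two; since equality holds throughout for $\left(2^{n-1}\right)$ itself, which is attained by the coefficient $[2]$, it is exactly the maximal complete sequence.

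A small point worth addressing is that the definition of completeness does not a priori require monotonicity; one handles this by observing that completeness depends only on the multiset of terms, so we may pass to the nondecreasing rearrangement of an arbitrary complete sequence before invoking Brown's criterion. I do not expect a serious obstacle: the only real content is the geometric-series identity $\sum_{i=1}^{n} 2^{i-1} = 2^{n}-1$, which is precisely what makes the doubling bound of Corollary~\ref{cor:doublingCrit} tight, so the main care needed is in making the statement precise — clarifying what ``largest span in summands'' means — and recording that the extremal bound is actually achieved.
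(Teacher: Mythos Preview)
Your proof is correct and uses essentially the same idea as the paper: Brown's criterion bounds each term by $1+\sum_{i<n}a_i$, and the geometric identity $\sum_{i=1}^{n}2^{i-1}=2^n-1$ makes the doubling sequence the extremal case. The paper phrases it slightly differently---it assumes a maximal complete sequence exists, saturates Brown's inequality to get $H_{n+1}=1+\sum_{i=1}^n H_i$, and then observes this recurrence telescopes to $H_{n+1}=2H_n$---whereas you prove the termwise bound $a_n\le 2^{n-1}$ directly by induction; your framing is arguably cleaner since it avoids the tacit assumption that a maximal sequence exists, but the content is the same.
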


\begin{proof}
Suppose there exists a complete sequence $\left(H_n\right)$ with the largest span in summands. As a complete sequence must satisfy Brown's criterion, it suffices to take $H_{n + 1} = 1 + \sum^{n}_{i=1} {H_i}$. Hence,  
\begin{align}
H_{n + 1} = 1 + \sum_1^{n} H_{i} &= 1 + \sum_1^{n - 1} H_{i} + H_{n} = 2H_{n}.
\end{align}
By the intial conditions for a PLRS, $H_1=1$ and $H_2=2$. Thus, $H_n = 2 H_{n-1}=2^{n-1}$. 
\end{proof}

\begin{remark}
Thus $\left(H_k\right) = \left(2^{k - 1}\right)$ is an inclusive upper bound for any complete sequence. 
\end{remark}

As it turns out, this sequence can be generated by multiple collections of coefficients. 

\begin{corollary}
A PLRS with coefficients $[\underbrace{1,\ldots,1}_{L-1},2]$ generates the sequence $H_n= 2^{n-1}$.
\end{corollary}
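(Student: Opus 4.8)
The plan is to prove the identity $H_n = 2^{n-1}$ directly by strong induction on $n$, handling separately the indices $1 \le n \le L$ that are pinned down by the initial conditions and the indices $n > L$ that are governed by the recurrence.

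For the base range I would start from $H_1 = 1 = 2^0$, and then for each $n$ with $1 \le n < L$ apply the PLRS initial-condition formula $H_{n+1} = c_1 H_n + \cdots + c_n H_1 + 1 = H_n + H_{n-1} + \cdots + H_1 + 1$; substituting the inductive hypothesis $H_j = 2^{j-1}$ yields $H_{n+1} = (2^{n-1} + 2^{n-2} + \cdots + 2^0) + 1 = (2^n - 1) + 1 = 2^n$, as needed. (When $L = 1$ this range is empty and one passes directly to the next step.)

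For the recurrence range, suppose $n \ge L$ and $H_j = 2^{j-1}$ for all $j \le n$. Since the coefficients are $[\underbrace{1,\ldots,1}_{L-1},2]$, the recurrence reads $H_{n+1} = \sum_{j=n+2-L}^{n} H_j + 2 H_{n+1-L}$. Substituting the inductive hypothesis gives $H_{n+1} = \sum_{j=n+2-L}^{n} 2^{j-1} + 2\cdot 2^{n-L} = (2^n - 2^{n+1-L}) + 2^{n+1-L} = 2^n$, which closes the induction. The only thing requiring care is the index bookkeeping in the geometric sum, together with checking the boundary case $n = L$ (where $2H_{n+1-L} = 2H_1 = 2$); there is no genuine obstacle.

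As an alternative one could note that $[\underbrace{1,\ldots,1}_{L-1},2]$ is complete by Theorem~\ref{basic} and then observe that, with $c_L = 2$, the inequalities in the proof of Theorem~\ref{basic} are in fact equalities, so $H_{n+1} = 1 + \sum_{i=1}^{n} H_i$ for all $n$, which forces $H_n = 2^{n-1}$ by Lemma~\ref{clm:largestCompleteGaps}. I would nonetheless present the self-contained induction above, since it is shorter and does not rely on tracing equality cases through an earlier argument.
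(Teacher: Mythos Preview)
Your argument is correct and matches the paper's proof essentially line for line: strong induction on $n$, split into the initial-condition range $1\le n\le L$ and the recurrence range $n\ge L$, with the same geometric-sum computation in each case. (The paper even phrases it as ``induction on $L$'', but the proof it writes is exactly your induction on $n$.)
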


\begin{proof}
Consider the sequence $\left(H_n\right)$ 
generated by $[\underbrace{1,\ldots,1}_{L-1},2]$. We proceed by induction on $L$. Note $H_1=1=2^{1-1}$ by the definition of the PLRS. Now, suppose $H_k=2^{k-1}$ for $k\in\{1,\dots,n\}$. For $n<L$, note
\begin{align}
H_{n+1}&=c_1H_n+c_2H_{n-1}+\dots+c_n H_1+1\nnend
&=H_n+H_{n-1}+\dots+H_1+1\nnend
&=2^{n-1}+2^{n-2}+\dots+1+1=2^n. 
\end{align}
Hence, the claim holds for all $n<L$. Now, for $n\geq L$, note \begin{align} 
H_{n+1}&=c_1H_n+c_2H_{n-1}+\dots+c_L H_{n+1-L}\nnend
&=H_n+H_{n-1}+\dots+2H_{n+1-L}\nnend
&=2^{n-1}+2^{n-2}+\dots+2^{n-L+1}+2\cdot 2^{n-L}=2^n.
\end{align}
Thus, by induction, the claim holds for all $n,L\in\N$.
\end{proof}

\subsection{Modifications of sequences with arbitrary coefficients}
Modifying coefficients in order to preserve completeness proves to be a balancing act. Sometimes increasing a coefficient causes an incomplete sequence to become complete, while other times, increasing a coefficient causes a complete sequence to become incomplete. For example, $[1,0,0,0,0,0,15]$ is incomplete; increasing the second coefficient to $1$, i.e., $[1,1,0,0,0,0,15]$ is complete. Further increasing it to $2$, i.e., $[1,2,0,0,0,0,15]$ is again incomplete. To study how such modifications preserve completeness or incompleteness, we add a new definition to our toolbox. 

\begin{definition}
    For a sequence $\left(H_n\right)$, we define its \emph{$n$\textsuperscript{th} Brown's gap}
    \begin{equation}
    B_{H, n} \coloneqq 1 + \sum_{i=1}^{n-1}H_i - H_n.
    \end{equation}
\end{definition}

Thus, from Brown's criterion, $\left(H_n\right)$ is complete if and only if $B_{H, n} \geq 0$ for all $n \in \N$.

Our next questions is: What happens if we append one more coefficient to $[c_1,\ldots,c_L]$? It turns out that if our sequence is already incomplete, appending any new coefficients will never make it complete. This is Theorem~\ref{lem:incompAddCoeff}, which using are ready to prove using Brown's gap. 

\begin{proof}[Proof of Theorem~\ref{lem:incompAddCoeff}.]
By Brown's criterion, it is clear that $\left( G_{n} \right)$ is incomplete if and only if there exists $n$ such that $B_{G,n}<0$. We claim that for all $m$, $B_{H,m}\leq B_{G,m}$. If true, our lemma is proven: suppose $B_{G,n}<0$ for some $n$, we would see $B_{H,n}\leq B_{G,n}<0$, implying $\left( H_{n} \right)$ is incomplete as well.

We proceed by induction. Clearly, $B_{H,k}=B_{G,k}$ for $1\leq k \leq L$. Further, for $k=L$, we see
\begin{equation}
	B_{G,L+1}-B_{H,L+1}= 1+\sum_{i=1}^{L}G_{i} - G_{L+1} - \left(1+\sum_{i=1}^{L}H_{i} - H_{L+1} \right) =H_{L+1}-G_{L+1}=1>0
.\end{equation} 
Now, let $m \geq 2$ be arbitrary, and suppose 
\begin{equation}\label{Bequ1}
B_{H,\; L+m-1}\leq B_{G,\; L+m-1}.
\end{equation}
We wish to show that $B_{H,\; L+m}\leq B_{G,\; L+m}$.  Note that 
\begin{equation}\label{1first}
B_{H,\; L+m}-B_{H,\; L+m-1}= 2H_{L+m-1} - H_{L+m}.
\end{equation}
Similarly, 
\begin{equation}\label{2first}
B_{G,\; L+m}-B_{G,\; L+m-1}= 2G_{L+m-1} - G_{L+m}.
\end{equation}

We use Lemma~\ref{SequencesDifferencesAppendix}, which states that for all $k \geq 2$, $H_{L+k}-G_{L+k}\geq 2(H_{L+k-1}-G_{L+k-1})$. Applying this to \eqref{1first} and \eqref{2first}, we see that $B_{H,\; L+m}-B_{H,\; L+m-1}\leq B_{G,\; L+m}-B_{G,\; L+m-1}$. Summing this inequality to both sides of inequality \eqref{Bequ1}, we arrive at $B_{H,L+m}\leq B_{G,L+m}$, as desired.
\end{proof}

Now, we turn our attention to the behavior when we decrease the last coefficient for any complete sequence. In Theorem~\ref{decreaseLastCoe}, we find that decreasing the last coefficient for any complete sequence preserves completeness.

\begin{proof}[Proof of Theorem~\ref{decreaseLastCoe}.]
 Given that $\left(G_n\right)$ is complete, suppose for the sake of contradiction that there exists an incomplete $\left(H_n\right)$. Thus, let $m$ be the least such that
 \begin{equation} \label{eq:incomplete}
    H_m>1+\sum^{m-1}_{i=1}H_i.
\end{equation}
Simultaneously, as $\left(G_n\right)$ is complete, by Brown's criterion, \begin{equation}
    G_m\leq1+\sum^{m-1}_{i=1}G_i.
\end{equation} 
First, suppose $m\leq L$. However, for all $n\leq L$, $G_n=H_n$, hence \begin{equation}
    H_m=G_m\leq 1+\sum^{m-1}_{i=1}G_i=1+\sum^{m-1}_{i=1}H_i,
\end{equation}
which contradicts (\ref{eq:incomplete}). Now, suppose $m>L$. Therefore, 
\begin{equation}
    G_m\leq 1+\sum^{m-1}_{i=1}G_i = 1+\sum^{L}_{i=1}G_i+\sum_{i=L+1}^{m-1}G_i= 1+\sum^{L}_{i=1}H_i+\sum_{i=L+1}^{m-1}G_i.
\end{equation}
This implies
\begin{equation}
    1+\sum^{L}_{i=1}H_i \geq G_m-\sum_{i=L+1}^{m-1}G_i.
\end{equation}
Now, we know that 
\begin{equation}
    H_m>1+\sum^{m-1}_{i=1}H_i=1+\sum^{L}_{i=1}H_i+\sum_{i=L+1}^{m-1}H_i\geq G_m-\sum_{i=L+1}^{m-1}G_i+\sum_{i=L+1}^{m-1}H_i,
\end{equation}
and thus
\begin{align}\label{eq:contr}
    H_m-\sum_{i=L+1}^{m-1}H_i&> G_m-\sum_{i=L+1}^{m-1}G_i.
\end{align}
We claim that the opposite of (\ref{eq:contr}) is true, arguing by induction on $m$. For $m=L+1$, we obtain $G_{L+1}\geq H_{L+1}$ as $k_L\leq c_L$. Now, assume that
\begin{equation}
    G_m-\sum_{i=L+1}^{m-1}G_i \geq H_m-\sum_{i=L+1}^{m-1}H_i
\end{equation} is true for a positive integer $m$. Using the inductive hypothesis, it then follows that
\begin{align}
    G_{m+1}-\sum_{i=L+1}^{m}G_i=G_{m+1}-\sum_{i=L+1}^{m-1}G_i-G_m&\geq G_{m+1}-2G_m+H_m-\sum_{i=L+1}^{m-1}H_i. 
\end{align}
Finally, we use Lemma~\ref{Lemma3.37Appendix}, proved in Appendix~\ref{ProofsOfLemmas2}, which states that for all $k\in\N$, $H_{L+k+1}-2H_{L+k}\leq G_{L+k+1}-2G_{L+k}$. Note  
\begin{equation}
    G_{m+1}-2G_m+H_m-\sum_{i=L+1}^{m-1}H_i \geq  H_{m+1}-2H_m+H_m-\sum_{i=L+1}^{m-1}H_i =  H_{m+1}-\sum_{i=L+1}^{m}H_i,
\end{equation}
which does contradict (\ref{eq:contr}) for all $m>L$. 
Therefore, for all $m\in\N$, we have contradicted \eqref{eq:incomplete}. Hence, $\left(H_n\right)$ must be complete as well.
\end{proof}

The result above is crucial in our characterization of \textit{families} of complete sequences in Section~\ref{sec:famiilies}; finding one complete sequence allows us to decrease the last coefficient to find more. Next, we prove two lemmas that together prove Theorem~\ref{Adding M Theorem}.

\begin{lemma}\label{IncompExtension}
	Let $\left( G_{n} \right)$ be the sequence defined by $[c_1,\ldots, c_{L}]$, and let $\left( H_{n} \right)$ be the sequence defined by $[c_1,\ldots, c_{L-1}+1,\; c_{L}-1]$. If $\left( G_{n} \right)$ is incomplete, then $\left( H_{n} \right)$ must be incomplete as well. 
\end{lemma}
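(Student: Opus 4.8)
The plan is to mirror the structure of the proof of Theorem~\ref{lem:incompAddCoeff}: work with Brown's gaps $B_{G,n}$ and $B_{H,n}$, and show that $B_{H,n} \le B_{G,n}$ for all $n$, so that any index witnessing incompleteness of $(G_n)$ (i.e., $B_{G,n}<0$) also witnesses incompleteness of $(H_n)$. Since $(G_n)$ and $(H_n)$ share the coefficients $c_1,\ldots,c_{L-2}$ but differ in positions $L-1$ and $L$, they agree on initial conditions $H_j = G_j$ for $j \le L-1$, and then diverge. The first step is to pin down the base of the induction: compute $B_{G,L}-B_{H,L}$ and $B_{G,L+1}-B_{H,L+1}$ directly from the recurrences, checking that the inequality $B_{H,n}\le B_{G,n}$ holds at the start (here I expect $H_L = G_L$ still, since $H_L = c_1H_{L-1}+\cdots+(c_{L-1}+1)H_1+1$ versus... actually one must be careful: $H_L$ uses the shifted initial-condition formula, so $H_L = c_1 H_{L-1} + \cdots + c_{L-2}H_2 + (c_{L-1}+1)H_1 + 1$ while $G_L = c_1G_{L-1}+\cdots+c_{L-1}H_1+1$, giving $H_L - G_L = H_1 = 1$; and then at the next step the $c_L-1$ versus $c_L$ in the last coefficient kicks in — these small computations need to be done cleanly).

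The heart of the argument is a comparison lemma for the differences $H_{L+k} - G_{L+k}$ analogous to Lemma~\ref{SequencesDifferencesAppendix}: I would establish that for $k$ large enough, $H_{L+k} - G_{L+k} \ge 2\,(H_{L+k-1} - G_{L+k-1})$, or whatever the correct recursive inequality turns out to be once the two recurrences are subtracted term by term. Subtracting the recurrence for $(G_n)$ from that for $(H_n)$, the shared coefficients cancel and what remains is $H_{n+1} - G_{n+1} = c_1(H_n-G_n) + \cdots + c_{L-2}(H_{n+2-L}-G_{n+2-L}) + (c_{L-1}+1)(H_{n+2-L}... )$ — more precisely a linear combination of earlier differences with an extra $+H_{n+2-L} - G_{n+1-L}$ type correction coming from the coefficient shift. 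I would show this correction is nonnegative (using that all $H_i \ge 1$ and monotonicity of the sequences) and that the resulting difference sequence grows at least geometrically with ratio $\ge 2$. Then, exactly as in the proof of Theorem~\ref{lem:incompAddCoeff}, using $B_{H,n+1}-B_{H,n} = 2H_n - H_{n+1}$ and the analogous identity for $G$, the geometric-growth bound on $H_{L+k}-G_{L+k}$ forces $B_{H,n+1}-B_{H,n} \le B_{G,n+1}-B_{G,n}$, and summing telescopically onto the base case gives $B_{H,n} \le B_{G,n}$ for all $n$.

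The main obstacle I anticipate is the bookkeeping at positions $L-1$, $L$, and $L+1$, where the coefficient shift interacts with the PLRS initial conditions (the "$+1$" in the initial-condition formula and the fact that the recurrence proper only applies once $n \ge L$). One has to verify that $H_n = G_n$ for $n < L$ but that the perturbation at coefficient $L-1$ does not actually change any $H_n$ with $n \le L-1$ (since those terms only involve $c_1,\ldots,c_{n-1}$, and $n-1 \le L-2$), so the sequences genuinely first differ at $n = L$; then carefully tracking the sign of $H_L - G_L$ and $H_{L+1} - G_{L+1}$. A secondary subtlety is confirming that the extra correction term from the coefficient shift always helps rather than hurts — i.e., that moving weight from position $L$ to position $L-1$ makes the $H$-sequence grow at least as fast as $G$ eventually, which is intuitively clear (earlier coefficients are "worth more") but needs a clean monotonicity argument. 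Once the right recursive inequality on $H_{L+k}-G_{L+k}$ is isolated and proved (possibly as a separate appendix lemma in the style of Lemma~\ref{SequencesDifferencesAppendix}), the rest is a routine telescoping argument identical in form to the one already carried out for Theorem~\ref{lem:incompAddCoeff}.
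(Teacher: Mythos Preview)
Your proposal is correct and matches the paper's approach essentially line for line: the paper proves $B_{H,m}\le B_{G,m}$ for all $m$ by induction, with base case $B_{G,L}-B_{H,L}=H_L-G_L=1>0$, and for the inductive step uses the telescoping identity $B_{\bullet,L+m+1}-B_{\bullet,L+m}=2(\bullet)_{L+m}-(\bullet)_{L+m+1}$ together with a separate appendix lemma (Lemma~\ref{Add 1 Lemma Appendix}) asserting exactly the geometric-growth inequality $H_{L+k+1}-G_{L+k+1}\ge 2(H_{L+k}-G_{L+k})$ that you anticipated. Your bookkeeping concerns about where the sequences first differ are resolved precisely as you predicted: $H_n=G_n$ for $n\le L-1$ and $H_L=G_L+1$.
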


\begin{proof}
We claim that for all $m$, $B_{H,m}\leq B_{G,m}$. This lemma is proven using similar reasoning as for Lemma~\ref{lem:incompAddCoeff}. We proceed by induction. Clearly, $B_{H,k}=B_{G,k}$ for $1\leq k \leq L-1$. Further, for $k=L$, we see
\begin{equation}
	B_{G,L}-B_{H,L}=1+\sum_{i=1}^{L-1}G_{i} - G_{L} - \left(1+\sum_{i=1}^{L-1}H_{i} - H_{L}  \right) = H_{L}-G_{L}=1>0.
\end{equation} 
Now, let $m \geq 0$ be arbitrary, and suppose 
\begin{equation}\label{Bequ2}
B_{H,\; L+m}\leq B_{G,\; L+m}.
\end{equation}
We wish to show that $B_{H,\; L+m+1}\leq B_{G,\; L+m+1}$. Note that 
\begin{equation}\label{1second}
B_{H,\; L+m+1}-B_{H,\; L+m}=2H_{L+m}-H_{L+m+1},
\end{equation}
and similarly, 
\begin{equation}\label{2second}
B_{G,\; L+m+1}-B_{G,\; L+m}=2G_{L+m}-G_{L+m+1}.
\end{equation}
We use Lemma~\ref{Add 1 Lemma Appendix}, which says that for all $k \geq 0$, $H_{L+k+1}-G_{L+k+1}\geq 2\left( H_{L+k}-G_{L+k} \right)$. Applying it to \eqref{1second} and \eqref{2second}, we see $B_{H,\; L+m+1}-B_{H,\; L+m}\leq B_{G,\; L+m+1}-B_{G,\; L+m}$. Summing this inequality to both sides of inequality \eqref{Bequ2}, we conclude that $B_{H,L+m+1}\leq B_{G,L+m+1}$, as desired.
\end{proof}

How many times can Lemma~\ref{IncompExtension} be applied? Enough times to get all the way up to $[c_1,\ldots,c_{L-1}+c_L-1,1]$, but no further, as the last coefficient must remain positive to stay a PLRS.

\begin{lemma}\label{Last Case Adding M Theorem}
	Let $\left( G_{n} \right)$ be the sequence defined by $[c_1,\ldots , c_{L-1},1]$, and let $\left( H_{n} \right)$ be the sequence defined by $[c_1,\ldots , c_{L-1}+1]$. If $\left( G_{n} \right)$ is incomplete, then $\left( H_{n} \right)$ must be incomplete as well. 
\end{lemma}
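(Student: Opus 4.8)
The plan is to reuse the strategy from the proofs of Theorem~\ref{lem:incompAddCoeff} and Lemma~\ref{IncompExtension}: show that $B_{H,m} \le B_{G,m}$ for every $m \ge 1$. Once this is established, incompleteness transfers immediately, since by Brown's criterion $\left(G_n\right)$ incomplete gives some $n$ with $B_{G,n} < 0$, and then $B_{H,n} \le B_{G,n} < 0$ forces $\left(H_n\right)$ to be incomplete. Throughout, $L \ge 2$ (so that $H$ has at least one coefficient).

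First I would pin down where the two sequences agree. Here $G = [c_1,\dots,c_{L-1},1]$ has $L$ coefficients while $H = [c_1,\dots,c_{L-2},c_{L-1}+1]$ has $L-1$; a short induction shows $G_n = H_n$ for $1 \le n \le L$. Indeed, for $1 \le n \le L-1$ both terms come from the \emph{same} initial-condition formula, and at $n = L$ the term $H_L$ is the first one generated by $H$'s recurrence, in which the bumped coefficient contributes $(c_{L-1}+1)H_1 = c_{L-1}\cdot 1 + 1$ --- precisely matching the trailing ``$c_{L-1}G_1 + 1$'' of the $L$-th initial condition of $G$. Hence $B_{H,m} = B_{G,m}$ for $1 \le m \le L$. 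Writing the recurrences at index $L+1$ (using $G_i = H_i$ for $i \le L$) then gives $\delta_{L+1} \coloneqq H_{L+1} - G_{L+1} = H_2 - H_1 = c_1 > 0$, so $B_{H,L+1} = B_{G,L+1} - c_1 < B_{G,L+1}$.

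For the inductive step I would use the telescoping identity $B_{\bullet,m+1} - B_{\bullet,m} = 2(\bullet)_m - (\bullet)_{m+1}$, valid for any sequence and applied to each of $H$ and $G$, which yields
\[
B_{H,m+1} - B_{G,m+1} \ = \ \bigl(B_{H,m} - B_{G,m}\bigr) + 2\delta_m - \delta_{m+1}, \qquad \delta_n \coloneqq H_n - G_n .
\]
So, exactly as in Lemma~\ref{IncompExtension}, the whole argument collapses onto an auxiliary \emph{difference lemma}: $H_{L+k+1} - G_{L+k+1} \ge 2\bigl(H_{L+k} - G_{L+k}\bigr)$ for all $k \ge 0$ (the analog, for this pair of sequences, of Lemma~\ref{Add 1 Lemma Appendix}). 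Given it, the displayed identity together with the base cases above finishes the induction.

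Proving this difference lemma is the main obstacle, and the reason it cannot simply be quoted from Lemma~\ref{IncompExtension} is that here $G$ and $H$ have \emph{different lengths}, so subtracting their recurrences does not line up term-by-term: one obtains, for $m \ge L$,
\[
\delta_{m+1} \ = \ \sum_{j=1}^{L-1} c_j\, \delta_{m+1-j} \ + \ \bigl(H_{m+2-L} - G_{m+1-L}\bigr),
\]
with a stray index-shifted term $H_{m+2-L} - G_{m+1-L} = \bigl(G_{m+2-L} - G_{m+1-L}\bigr) + \delta_{m+2-L}$. Controlling this requires carrying through the induction both the monotonicity of $(G_n)$ and the nonnegativity of the $\delta_n$, and I expect the induction hypothesis to need strengthening --- tracking an explicit nonnegative form for $\delta_{m+1} - 2\delta_m$ rather than just its sign (as already happens for $L = 2$, where $\delta_{m+1} - 2\delta_m = (c_1-1)\delta_m + (G_m - G_{m-1}) \ge 0$). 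As with the comparison lemmas already invoked in Section~\ref{sec:modifying}, I would carry out this induction in the appendix; the small case $L = 2$ (where $H = [c_1+1]$ and $H_n = (c_1+1)^{n-1}$) can be checked directly.
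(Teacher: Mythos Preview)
Your approach is essentially the same as the paper's: reduce to $B_{H,m}\le B_{G,m}$ via the same telescoping identity, and isolate the auxiliary ``difference lemma'' $\delta_{L+k+1}\ge 2\delta_{L+k}$ (this is exactly the paper's Lemma~\ref{Last Case Adding M Appendix}). The paper's appendix handles the stray shifted term you flagged by rewriting it as $H_{m+2}-2H_{m+1}\ge G_{m+1}-2G_m$ and observing that these indices lag the main index by roughly $L$, so for small $m$ they fall in the regime $G_n=H_n$ and for larger $m$ one can bootstrap from the already-established cases; no strengthened hypothesis is needed.
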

	
\begin{remark}
	 Despite the similarities, Lemma~\ref{Last Case Adding M Theorem} is not directly implied by Lemma~\ref{IncompExtension}; both are necessary for the proof Theorem~\ref{Adding M Theorem}. Applying Lemma~\ref{IncompExtension} $(c_L-1)$ times proves that if $[c_1,\ldots , c_{L-1},c_L]$ is incomplete, then $[c_1,\ldots , c_{L-1}+c_L-1,1]$ is incomplete; at this point, we cannot apply the lemma further while maintaining a positive final coefficient to meet the definition of a PLRS. Hence the case of Lemma~\ref{Last Case Adding M Theorem} must be dealt with separately, in order to arrive at the full result of Theorem~\ref{Adding M Theorem}.
\end{remark}

\begin{proof}
The proof is similar to that of Lemma~\ref{IncompExtension}. We aim to show that $B_{H,m}\leq B_{G,m}$ for all $m$. Clearly $B_{H,k}=B_{G,k}$ for $1\leq k \leq L$. Further, for $k=L+1$, we see
\begin{equation}
B_{G,L+1}-B_{H,L+1}=\sum_{i=1}^{L}G_{i}-G_{L+1}-\left( 1+\sum_{i=1}^{L-1}H_{L}-H_{L+1} \right) =H_{L+1}-G_{L+1}=c_1>0.
\end{equation}
Now, let $m\geq 0$ be arbitrary, and suppose 
\begin{equation}\label{last brown gap inequality}
B_{H,L+m}\leq B_{G,L+m}.
\end{equation}
We wish to show that $B_{H,L+m+1}\leq B_{G,L+m+1}$. Note that 
\begin{equation}\label{ultimobrowngap}
B_{H,L+m+1}-B_{H,L+m}=2H_{L+m}-H_{L+m+1},
\end{equation}
and similarly
\begin{equation}\label{ultimobrowngap 2}
B_{G,L+m+1}-B_{G,L+m}=2G_{L+m}-G_{L+m+1}.
\end{equation}
We use Lemma~\ref{Last Case Adding M Appendix}, which states that for all $k\geq 0$, $H_{L+k+1}-G_{L+k+1}\geq 2\left( H_{L+k}-G_{L+k} \right) $. Applying it to equations \eqref{ultimobrowngap} and \eqref{ultimobrowngap 2}, we see $B_{H,L+m+1}-B_{H,L+m}\leq B_{G,L+m+1}-B_{G,L+m}$. Summing this inequality to both sides of Inequality \eqref{last brown gap inequality}, we conclude that $B_{H,L+m+1}\leq B_{G,L+m+1}$, as desired.
\end{proof}
	
Using these lemmas, we can now prove Theorem~\ref{Adding M Theorem}.

\begin{proof}[Proof of Theorem~\ref{Adding M Theorem}.]
	We apply Lemma~\ref{IncompExtension} $c_L-1$ times to conclude that if $[c_1,\ldots , c_{L-1},c_L]$ is incomplete, then $[c_1,\ldots , c_{L-1}+c_L-1,1]$ is incomplete. Finally, applying Lemma~\ref{Last Case Adding M Theorem}, we achieve the desired result.
\end{proof}

\section{Families of sequences}\label{sec:famiilies}
Recall that Theorem~\ref{decreaseLastCoe} says that given a complete PLRS, decreasing the last coefficient preserves its completeness. This raises a natural question: given the first $L-1$ coefficients $c_1, c_2, \dots, c_{L-1}$, what is the maximal $N$ such that $[c_1, c_2, \dots, c_{L-1}, N]$ is complete? In this section we explore this question.

\subsection{Using 1's and 0's as initial coefficients}

We first prove Theorem~\ref{thm:1onekzero}, which is about sequences with $1$ and $0$'s as the first coefficients. This proof is followed by a conjecture on classifying sequences of a similar form,  and then another conjecture on how complete sequences of these forms can be modified to obtain additional complete sequences and some progress toward proving it.

\begin{proof}[Proof of Theorem~\ref{thm:1onekzero}.]
First assume that $\left(H_n\right)$ is complete. By the definition of a PLRS, we can easily generate the first $k+2$ terms of the sequence: $H_i = i$ for all $1 \leq i \leq k+2$. We then have for all $n > k+1$,
\begin{equation}\label{eqn:termsThroughK+4}
    H_{n+1} = H_n + NH_{n-k-1},
\end{equation}
which implies that
\begin{equation}\label{eq:recrelation}
    H_{k+4} = H_{k+3} + NH_2 = H_{k+3} + 2N.
\end{equation}
By Brown's criterion,
\begin{equation}
    H_{k+4} \leq H_{k+3} + H_{k+2} + \cdots + H_1 + 1.
\end{equation}
By \eqref{eq:recrelation},
\begin{equation}
    H_{k+3} + 2N \leq H_{k+3} + H_{k+2} + \cdots + H_1 + 1,
\end{equation}
and we obtain
\begin{align}
    2N &\leq H_{k+2} + H_{k+1} + \cdots + H_1 + 1 \nnend
    &= (k+2) + (k+1) + \cdots + 1 + 1 \nnend
    &= \frac{(k+2)(k+3)}{2} + 1,
\end{align}
and thus we find
\begin{equation}
    N \leq \frac{(k+2)(k+3)}{4} + \frac{1}{2}. \label{takefloor}
\end{equation}
Since $N$ is an integer and $\left\lfloor (k + 2)(k + 3)/{4} + 1/2 \right\rfloor = \left\lceil (k + 2)(k + 3)/{4} \right\rceil$, we may take the floor of the right hand side of equation \eqref{takefloor}, and then $N \leq \left\lceil (k + 2)(k + 3)/{4} \right\rceil$.

We now prove that if $N \leq \left\lceil (k + 2)(k + 3)/{4} \right\rceil$, then $\left(H_n\right)$ is complete. We first show that if $N = \left\lceil (k + 2)(k + 3)/{4} \right\rceil$, then $\left(H_n\right)$ is complete. Taking the recurrence relation $H_{n+1} = H_n + NH_{n-k-1}$, and applying Brown's criterion gives
\begin{equation}
    H_{n+1} =H_n + NH_{n-k-1}
    \leq H_n +(N-2)H_{n-k-1} + H_{n-k-1} + H_{n-k-2} + \dots + H_1 +1.
\end{equation}
By Lemma~\ref{lem:sharp1onekzero}, we can expand $(N-2)H_{n-k-1}$ and find that
\begin{equation}
    H_{n+1} \leq H_n + H_{n-1} +\dots + H_{n-k} + H_{n-k-1} + H_{n-k-2} + \dots + H_1 +1.
\end{equation}
Hence, by Brown's criterion, the sequence $\left(H_n\right)$ is complete. Lastly, by Theorem~\ref{decreaseLastCoe}, for all positive $N < \left\lceil (k + 2)(k + 3)/{4} \right\rceil$, the sequence is also complete.
\end{proof}

We conjecture a bound on the last coefficient of a similar family sequence of PLRS as follows. The necessary condition for $N$ can be easily proven, similar to Theorem \ref{thm:1onekzero}. 

\begin{conjecture}\label{2onekzero}
The sequence generated by $[1, 1, \underbrace{0, \dots, 0}_{k}, N]$ is complete if and only if $1 \leq N \leq \lfloor (f_{k + 6} - k - 5)/4 \rfloor$, where $f_n$ are the Fibonacci numbers with $f_1=1, f_2=2$ and $\lfloor \cdot \rfloor$ is the floor function.
\end{conjecture}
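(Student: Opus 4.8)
The strategy is to mirror the proof of Theorem~\ref{thm:1onekzero}, with the arithmetic progression $H_i=i$ appearing there replaced by shifted Fibonacci numbers. Writing the sequence as $[1,1,\underbrace{0,\dots,0}_k,N]$, we have $L=k+3$, the initial conditions give $H_{n+1}=H_n+H_{n-1}+1$ for $2\le n\le k+2$, and the recurrence $H_{n+1}=H_n+H_{n-1}+NH_{n-k-2}$ holds for $n\ge k+3$; a short induction then yields $H_i=f_{i+1}-1$ for $1\le i\le k+3$. For the necessity direction (which, as the paper notes, parallels Theorem~\ref{thm:1onekzero}) one computes $H_{k+4}=f_{k+5}+N-2$, $H_{k+5}=f_{k+6}+3N-3$, and $H_{k+6}=f_{k+7}+8N-5$; using $\sum_{i=1}^{k+3}(f_{i+1}-1)=f_{k+6}-k-6$, the Brown gap at index $k+6$ works out to $B_{H,k+6}=f_{k+6}-k-5-4N$, so completeness forces $4N\le f_{k+6}-k-5$, i.e. $N\le\lfloor(f_{k+6}-k-5)/4\rfloor=:N_{\max}$, and $k+6$ is the first index at which Brown's criterion can fail.

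For sufficiency, Theorem~\ref{decreaseLastCoe} reduces us to showing that $[1,1,\underbrace{0,\dots,0}_k,N_{\max}]$ is complete, which we do by verifying Brown's criterion $H_{n+1}\le 1+\sum_{i=1}^{n}H_i$ by strong induction on $n$. For $n\le k+2$ this is immediate from $H_{n+1}=H_n+H_{n-1}+1$. For $n\ge k+3$ put $m=n-k-2\ge 1$; Brown's criterion at index $m$ (the inductive hypothesis, trivial when $m=1$) gives $2H_m\le H_m+1+\sum_{i=1}^{m-1}H_i$, hence
\begin{equation}
NH_m=(N-2)H_m+2H_m\ \le\ (N-2)H_m+H_m+1+\sum_{i=1}^{m-1}H_i .
\end{equation}
If in addition $(N-2)H_m\le\sum_{j=1}^{k}H_{m+j}$, then $NH_{n-k-2}\le 1+\sum_{i=1}^{n-2}H_i$, and adding $H_n+H_{n-1}$ to both sides yields $H_{n+1}\le 1+\sum_{i=1}^{n}H_i$, closing the induction.

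Everything therefore reduces to the key lemma: for $N=N_{\max}$ one has $(N-2)H_m\le\sum_{j=1}^{k}H_{m+j}$ for all $m\ge 1$. Set $D_m:=\sum_{j=1}^{k}H_{m+j}-(N-2)H_m$. Since $(H_n)$ satisfies $H_{n+1}=H_n+H_{n-1}+NH_{n-k-2}$ on the range $n\ge k+3$, linearity of that recurrence gives $D_{m+1}=D_m+D_{m-1}+ND_{m-k-2}$ for $m\ge k+3$, with every index on the right at least $1$; consequently, once $D_1,\dots,D_{k+3}\ge 0$ are established, $D_m\ge 0$ for all $m$ follows by strong induction, the term $ND_{m-k-2}$ being nonnegative. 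The base cases are the substance: $D_3=f_{k+6}-k-5-4N$ is exactly the necessity quantity $B_{H,k+6}$, hence $\ge 0$ and as small as $0$ at $N=N_{\max}$; $D_1=f_{k+4}-k-3-N$ and $D_2=f_{k+5}-k-4-2N$ are nonnegative by Fibonacci estimates implied by $N\le N_{\max}$; and for $4\le m\le k+3$ one shows $D_m\ge D_{m-1}$ (hence $\ge D_3\ge 0$) by writing $H_{k+3+i}=f_{k+4+i}-1+A_i$ for $1\le i\le k$, where $A_i=A_{i-1}+A_{i-2}+NH_i-1$ with $A_{-1}=A_0=0$ records the excess over the Fibonacci pattern, so that
\begin{equation}
D_m-D_{m-1}=f_{k+m+1}+A_{m-3}-f_{m+1}-(N-2)f_{m-1},
\end{equation}
which one checks is $\ge 0$ using $A_{m-3}\ge 0$ and $N\le N_{\max}$.

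The main obstacle is precisely this base-case analysis. Because $D$ need not be monotone — it typically dips around $m=3$ (and $D_2$ may lie below $D_1$) — one cannot conclude from a single base case plus monotonicity; one must instead show that the binding constraint among $m\in\{1,\dots,k+3\}$ is exactly $D_3\ge 0$, i.e. that all the remaining base-case inequalities follow from $N\le N_{\max}$. These are finitely many elementary Fibonacci inequalities, but they are parametrized by both $k$ and $m$ and involve the auxiliary quantities $A_i$ (for which the closed form $A_i=NP_i-H_i$, with $P_i=P_{i-1}+P_{i-2}+H_i$ and $P_{-1}=P_0=0$, is convenient), so packaging them into a clean statement — the role played by the auxiliary lemma behind the proof of Theorem~\ref{thm:1onekzero} — is where the real work lies; with that lemma in hand the argument above goes through.
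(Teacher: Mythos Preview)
This statement is presented in the paper as a \emph{conjecture}, not a theorem; the paper gives no proof, remarking only that the necessity direction ``can be easily proven, similar to Theorem~\ref{thm:1onekzero}.'' The \LaTeX{} source does contain a commented-out proof attempt (together with an auxiliary lemma analogous to Lemma~\ref{lem:sharp1onekzero}), but that material was evidently withdrawn---the lemma's induction relied on extending the sequence to nonpositive indices, and the authors flagged the earlier version as having incorrect indexing.

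Your necessity argument reproduces exactly the paper's abandoned computation through $H_{k+6}$, and your reduction of sufficiency to the inequality $(N-2)H_m\le\sum_{j=1}^k H_{m+j}$ is the same key lemma the paper tried to prove. Your proposed attack on that lemma is different and cleaner: you observe that $D_m:=\sum_{j=1}^k H_{m+j}-(N-2)H_m$ inherits the linear recurrence $D_{m+1}=D_m+D_{m-1}+ND_{m-k-2}$ for $m\ge k+3$, so nonnegativity propagates once the base cases $D_1,\dots,D_{k+3}$ are checked. This sidesteps the backward-extension issue in the paper's attempt.

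However, your proposal is not a proof: you explicitly defer the base-case verification, writing that ``packaging them into a clean statement\dots\ is where the real work lies; with that lemma in hand the argument above goes through.'' That lemma is precisely what the paper could not establish, and you have not established it either. In particular, your claim that $D_m\ge D_{m-1}$ for $4\le m\le k+3$ via the identity $D_m-D_{m-1}=f_{k+m+1}+A_{m-3}-f_{m+1}-(N-2)f_{m-1}$ is stated but not proved; showing this is nonnegative for all $4\le m\le k+3$ using only $N\le N_{\max}$ amounts to a family of Fibonacci inequalities parametrized by both $k$ and $m$, with the correction terms $A_i$ themselves depending on $N$. Until those are verified uniformly, the sufficiency direction---and hence the conjecture---remains open, just as the paper leaves it.
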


We want to find a more general result for $[\underbrace{1, \dots, 1}_{g}, \underbrace{0, \dots, 0}_{k}, N]$, as seen in Figure~\ref{fig:FamiliesOfOne}.

\begin{figure}
    \centering
    \includegraphics[width=0.9\textwidth]{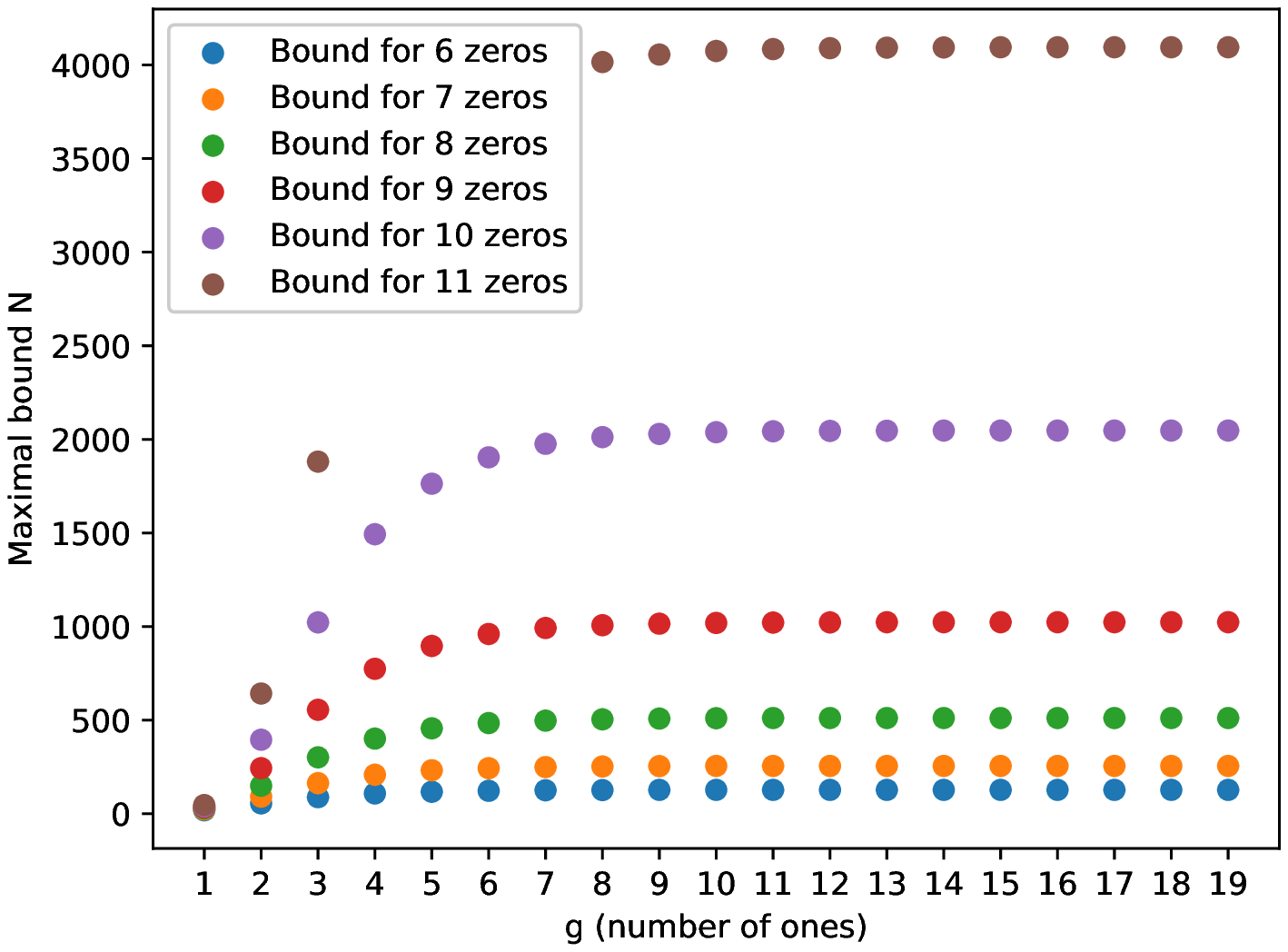}
    \caption{The maximal $N$ such that $[\protect\underbrace{1, \dots, 1}_{g}, \protect\underbrace{0, \dots, 0}_{k}, N]$ is complete, with $k$ and $g$ varying. Each color represents a fixed $k$.}
    \label{fig:FamiliesOfOne}
\end{figure}

Interestingly, we see that as we keep $k$ fixed and increase $g$, the bound increases, and then stays constant from some value of $g$ onward. This motivates the following conjecture.

\begin{conjecture}\label{addFrontOnes}
    If $[\underbrace{1, \dots, 1}_{g}, \underbrace{0, \dots, 0}_{k}, N]$ is complete, then so is $[\underbrace{1, \dots, 1}_{g+1}, \underbrace{0, \dots, 0}_{k}, N]$.
\end{conjecture}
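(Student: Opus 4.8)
\textbf{Proof proposal for Conjecture~\ref{addFrontOnes}.}

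The plan is to compare the two sequences term-by-term using Brown's gap $B_{H,n}$, mirroring the strategy already used in the proofs of Theorems~\ref{lem:incompAddCoeff} and \ref{Adding M Theorem}. Write $\left(G_n\right) = [\underbrace{1,\dots,1}_{g},\underbrace{0,\dots,0}_{k},N]$ and $\left(H_n\right) = [\underbrace{1,\dots,1}_{g+1},\underbrace{0,\dots,0}_{k},N]$, so $L = g+k+1$ is the recurrence length of $G$ and $L+1 = g+k+2$ is that of $H$; note that $H$ is obtained from $G$ by inserting a $1$ in position $g+1$ (which pushes the block of zeros and the final $N$ one slot to the right). Since $G$ is complete, $B_{G,n}\ge 0$ for all $n$, and it suffices to show $B_{H,n}\le$ something nonnegative; the cleanest route is to prove the pointwise comparison $B_{H,n}\ge B_{G,n}$ for all $n$, which by $B_{G,n}\ge 0$ yields completeness of $H$. (Note the inequality here runs opposite to the incompleteness lemmas, because inserting a positive coefficient \emph{increases} the early terms relatively little while it is exactly the kind of modification that tends to help completeness; one should double-check the direction on small cases such as $k=1$, $g=1$ before committing.)

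First I would pin down the initial segment. For $1\le n \le g+1$ both PLRS definitions force $H_n = G_n$ (they agree up to $n = g$, where the recurrences haven't yet ``seen'' the differing coefficient, and at $n=g+1$ one checks directly from the initial-condition formula in Definition~\ref{defn:goodrecurrencereldef} that the extra $1$ contributes a term equal to $H_1=1$, so actually $H_{g+1} = G_{g+1}+1$ — this needs to be computed carefully). Then for the range $g+2 \le n \le g+k+2$ one computes both sequences explicitly: in this window $G$ still satisfies its initial-condition recursion (partially) while $H$ has entered a regime where its recurrence $H_{n+1} = H_n + H_{n-1} + \cdots$ (with $g+1$ leading ones, the zeros, and $N$) is or is not yet active. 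The key structural fact to extract is a clean formula or inequality of the form $H_n - G_n \ge \delta_n$ for an explicit nondecreasing $\delta_n \ge 0$ on this window, to seed the induction.

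The inductive step is the heart of the argument. Once both sequences are in their ``steady-state'' recurrences (for $n$ large), I would establish an auxiliary lemma of the same flavor as Lemmas~\ref{SequencesDifferencesAppendix}, \ref{Add 1 Lemma Appendix}: a statement asserting that the difference $D_n := H_n - G_n$ satisfies a one-sided recurrence inequality, e.g.\ $D_{n+1} \ge D_n + D_{n-1} + \cdots$ dominating $2D_n$ or similar, so that $D_n$ grows at least as fast as the gap needs. Then, using the telescoping identities $B_{H,n+1} - B_{H,n} = 2H_n - H_{n+1}$ and $B_{G,n+1}-B_{G,n} = 2G_n - G_{n+1}$ (valid once both are governed by their recurrences — but here one must be careful, since $H$ and $G$ have \emph{different} recurrence lengths, so the identity $B_{\cdot,n+1}-B_{\cdot,n} = 2(\cdot)_n - (\cdot)_{n+1}$ is the general one and holds unconditionally), I would reduce $B_{H,n+1}-B_{H,n} \ge B_{G,n+1}-B_{G,n}$ to $2D_n - D_{n+1} \le \text{(something controlled)}$, i.e.\ to an upper bound $D_{n+1} \le 2D_n$ — and combine with the inductive hypothesis $B_{H,n}\ge B_{G,n}$.

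The main obstacle I anticipate is exactly this tension: the argument seems to need \emph{both} a lower bound on how fast $D_n = H_n - G_n$ grows (to handle the base window and keep $B_{H,n}$ from ever dropping below $B_{G,n}$) \emph{and} an upper bound $D_{n+1}\le 2D_n$ (to push the induction through), and it is not obvious these coexist — indeed the empirical fact that the maximal $N$ \emph{increases} with $g$ before stabilizing suggests the comparison $B_{H,n}\ge B_{G,n}$ may genuinely fail for some intermediate $n$, meaning the clean pointwise approach is doomed and one instead needs a global/amortized argument (bounding $\min_n B_{H,n}$ directly in terms of the known $\min_n B_{G,n}\ge 0$, perhaps by identifying \emph{where} the minimum of $B_{H,\cdot}$ occurs — likely near $n = g+k+3$, the first index past the active recurrence — and showing that at that critical index $B_{H,n}\ge B_{G,n'}$ for a corresponding $n'$). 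Resolving which of these two regimes actually holds, by grinding the $k=1$ and $k=2$ families by hand, is the step I'd expect to consume the most effort, and is presumably why this remains a conjecture rather than a theorem.
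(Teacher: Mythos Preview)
This statement is listed in the paper as an open conjecture; there is no proof in the paper to compare against. The authors' only progress is indirect: rather than comparing the two sequences, Theorem~\ref{thm:gbon} computes the \emph{exact} maximal $N$ for each pair $(g,k)$ with $g\ge k$ (namely $2^{k+1}-1$ once $g\ge k+\lceil\log_2 k\rceil$, and $2^{k+1}-\lceil k/2^{g-k}\rceil$ for $k\le g\le k+\lceil\log_2 k\rceil$), from which monotonicity in $g$ is read off as a corollary in that range. No comparison of Brown's gaps between the $g$- and $(g{+}1)$-sequences is attempted.

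Your instinct that the pointwise route $B_{H,n}\ge B_{G,n}$ is doomed is correct, and it fails already in the smallest nontrivial case. Take $g=k=1$, $N=2$: then $G=[1,0,2]$ has $G_1,G_2,G_3 = 1,2,3$ so $B_{G,3}=1$, while $H=[1,1,0,2]$ has $H_1,H_2,H_3=1,2,4$ so $B_{H,3}=0<B_{G,3}$. So the clean induction you outline in the first half of the proposal cannot get off the ground, and the ``amortized'' alternative you sketch at the end --- locating the index where $B_{H,\cdot}$ is minimized and bounding it there --- is indeed the only viable shape of argument. That is essentially what the paper does in the special range $g\ge k$ (via Theorem~\ref{weak2Lcrit}, which reduces completeness to checking $B_{H,n}$ for $n\le 2L-1$, followed by explicit case analysis of those finitely many indices), but carrying this out uniformly in $g$ and $k$, or for $g<k$, remains open.
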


We have made some progress towards this conjecture; in fact, we show the precise bound for $N$ for the case where $g \geq k$ in Theorem~\ref{thm:gbon}. 

\begin{theorem}\label{weak2Lcrit}
    The PLRS $(H_n)$ generated by $[c_1, c_2, \dots, c_L]$ is complete if
    \begin{equation}
    \begin{cases}
        B_{H, n} \geq 0, & \text{if $n < L$,}\\
        B_{H, n} > 0, & \text{if $L \leq n \leq 2L-1$.}
    \end{cases}
    \end{equation}
\end{theorem}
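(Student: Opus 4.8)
The plan is to show that once the Brown gaps $B_{H,n}$ are nonnegative for $n<L$ and \emph{strictly} positive for $L\le n\le 2L-1$, they remain positive for all larger $n$, so that Brown's criterion applies. The key observation is that for $n\ge L$ the sequence satisfies a fixed recurrence $H_{n+1}=c_1H_n+\cdots+c_LH_{n+1-L}$, which lets us express the one-step change in the Brown gap in a uniform way: directly from the definition,
\begin{equation}
B_{H,n+1}-B_{H,n}\ =\ 2H_n-H_{n+1}\ =\ (2-c_1)H_n-c_2H_{n-1}-\cdots-c_LH_{n+1-L}.
\end{equation}
So the increments of $B_{H,n}$ are governed by a linear combination of the $L$ most recent terms $H_n,\dots,H_{n+1-L}$, and the whole difficulty is that this quantity can be negative (indeed it typically is, since $(H_n)$ grows). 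The point of requiring positivity on a \emph{window of length $L$} is that it provides enough ``buffer'' to absorb one bad step while controlling the next.

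First I would set up the induction carefully. The natural statement to prove by strong induction on $n\ge 2L$ is not merely $B_{H,n}>0$ but something that stays ahead of the drop, e.g.\ a simultaneous bound on a block $B_{H,n-L+1},\dots,B_{H,n}$ together with a comparison like $B_{H,n}\ge$ (some expression in $H_{n},\dots$) that certifies the next $L$ gaps stay positive. Concretely, I expect the right inductive invariant to be: for all $n\ge L$, $B_{H,n+1}\ge B_{H,n}+2H_n-H_{n+1}$ combined with the fact that the partial sums telescope, so that $B_{H,n+m}=B_{H,n}+\sum_{j=0}^{m-1}(2H_{n+j}-H_{n+j+1})$; one then needs to show this never dips to $0$ given the base-case strict positivity across $2L-1\ge n\ge L$. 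The mechanism is that $B_{H,n}$ being large relative to $H_{n-1},\dots,H_{n-L+1}$ forces $H_n\le 1+\sum_{i<n}H_i$ with slack, and that slack, together with the recurrence writing $H_{n+1}$ in terms of those same earlier terms, keeps the subsequent gaps from closing.

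The main obstacle I anticipate is making the ``buffer propagates'' argument quantitatively airtight: a single Brown gap can decrease from one index to the next (when $2H_n<H_{n+1}$), and in principle it could decrease repeatedly, so I must show the decreases are controlled by the $L$ preceding gaps being strictly positive — essentially that the total decrease over any run of $L$ consecutive steps is bounded by the accumulated positivity. I would handle this by rewriting $B_{H,n+1}$ using the recurrence to get a \emph{self-referential} inequality of the form $B_{H,n+1}\ge \sum_{i=1}^{L}(\text{something}\ge 0)\cdot B_{H,n+1-i} - (\text{correction})$, or more cleanly by observing $B_{H,n+1}=B_{H,n}-H_{n+1}+2H_n$ and then using the recurrence to replace $H_{n+1}$ and regroup terms as $\sum c_i(\text{partial sums})$, which should reveal that $B_{H,n+1}$ is a nonnegative combination of $B_{H,m}$ for $m$ in the length-$L$ window plus a genuinely nonnegative remainder. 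Once that identity is in hand, strict positivity on the initial window $L\le n\le 2L-1$ propagates immediately by induction, and nonnegativity of $B_{H,n}$ for $n<L$ handles the initial conditions; Brown's criterion then gives completeness. I would also double-check the edge cases $L=1$ and $L=2$ separately, since the window $[L,2L-1]$ degenerates there.
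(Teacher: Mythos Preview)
Your plan is exactly the paper's: show by induction that $B_{H,n}>0$ for all $n\ge L$ by expressing $B_{H,m+1}$ as a nonnegative combination of earlier Brown gaps plus a positive remainder. What you have not done is actually produce that identity, and this is where the content lies. Two points you are missing:

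First, the hypothesis forces $c_1=1$ (if $c_1\ge 2$ then already $B_{H,2}=1-c_1<0$), and the paper uses this at the outset to write $H_i=H_{i-1}+\sum_{j=2}^{L}c_jH_{i-j}$. Without isolating $c_1=1$ your expression $(2-c_1)H_n-\sum_{j\ge 2}c_jH_{n+1-j}$ does not regroup cleanly.

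Second, the precise identity the paper obtains, after a page of telescoping using $c_1=1$ and the initial-condition relations $H_i-H_{i-1}=1+\sum_{j=2}^{i-1}c_jH_{i-j}$ for $i\le L$, is
\[
B_{H,m+1}\;=\;B_{H,m}\;+\;\sum_{j=2}^{L}c_j\bigl(B_{H,\,m+1-j}-1\bigr)\;+\;L.
\]
Since the $B_{H,n}$ are integers, strict positivity on the window $[L,2L-1]$ gives $B_{H,m+1-j}\ge 1$ for each $j$ in the inductive step, so every summand is nonnegative and the trailing $+L$ makes the whole thing positive. This is the ``nonnegative combination plus genuinely nonnegative remainder'' you were hoping for, but getting the constant $L$ and the $-1$ shifts right requires the full computation; your sketch does not yet show why the correction terms collapse so nicely. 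I would carry the algebra through rather than assert that ``once that identity is in hand'' the proof finishes.
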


\begin{proof}
    Consider $L \geq 2$; we see that if $c_1 \geq 2$, then the sequence is automatically incomplete, so we need only consider $c_1 = 1$. For $B_n \coloneqq B_{H, n}$, and we show by induction on $n$ that $B_n > 0$ when $n \geq L$. 
    Suppose $B_n > 0$ for $L \leq n \leq m$ (with $m \geq 2L-1$). Then 
    \begin{align}
    B_{m+1} &= 1 + \sum_{i=1}^mH_i - H_{m+1}\nnend
    &= 1 + \sum_{i=1}^L H_i + \sum_{i=L+1}^m H_i - \left(H_m + \sum_{j=2}^L c_jH_{m+1-j}\right)\nnend
    &= 1 + \sum_{i=1}^L H_i + \sum_{i=L+1}^m \left(H_{i-1} + \sum_{j=2}^L c_jH_{i-j} \right) - \left(H_m + \sum_{j=2}^L c_jH_{m+1-j}\right)\nnend
    &= \left(1 + \sum_{i=1}^{m-1}H_i - H_m + H_L\right) + \sum_{j=2}^L c_j\left(\sum_{i=L+1}^m H_{i-j} - H_{m +1-j} \right)\nnend
    &= (B_m + H_L) + \sum_{j=2}^{L} c_j\left(1 + \sum_{i=j+1}^m H_{i-j} - H_{m +1-j} - 1 - \sum_{i = j+1}^L H_{i-j} \right) \nnend
    &= (B_m + H_L) + \sum_{j=2}^L c_j\left(B_{m+1-j} - 1 - \sum_{i = j+1}^L H_{i-j} \right) \nnend 
    &= B_m + \sum_{j=2}^L c_j(B_{m+1-j} - 1) + H_L - \sum_{i = 3}^L\sum_{j=2}^{i-1} c_jH_{i-j}\nnend 
    &= B_m + \sum_{j=2}^L c_j(B_{m+1-j} - 1) + H_L - \sum_{i=3}^L(H_i - H_{i-1} - 1)\nnend
    &= B_m + \sum_{j=2}^L c_j(B_{m+1-j} - 1) + (L-2) + H_L - \sum_{i=3}^L(H_i - H_{i-1})\nnend
    &= B_m + \sum_{j=2}^L c_j(B_{m+1-j} - 1) + L.
    \end{align}
    The last line is positive since $B_{m+1-j} - 1 \geq 0$ and $B_m, L > 0$. Our proof by induction is complete.
\end{proof}

\begin{lemma} \label{thm:sticky}
    The PLRS $(H_i)$ generated by $[\underbrace{1, \dots, 1}_{g}, \underbrace{0, \dots, 0}_{k}, 2^{k+1}]$ is incomplete if $g \geq k \geq 1$.
\end{lemma}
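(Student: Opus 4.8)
The plan is to compute the terms of $(H_i)$ explicitly for the coefficient block $[\underbrace{1,\dots,1}_g,\underbrace{0,\dots,0}_k,2^{k+1}]$ and exhibit a specific index at which Brown's gap $B_{H,n}$ goes negative. First I would record the easy facts: since the first $g$ coefficients are $1$ and the rest contribute nothing until late, the initial conditions force $H_i = 2^{i-1}$ for $1 \le i \le g$, and then for $g < i \le g+k$ (where the recurrence reads $H_{i} = H_{i-1} + H_{i-2} + \cdots + H_{i-g}$, the $N$-term not yet in range) one checks that the sequence continues to \emph{almost} double — more precisely I would track the partial sums $S_n := \sum_{i=1}^n H_i$ and the gap $B_{H,n} = 1 + S_{n-1} - H_n$, showing $B_{H,n}$ stays small (bounded, in fact, and equal to $1$ while $n \le g$ since there $H_{n+1} = S_n + 1 = 2H_n$, i.e.\ $B_{H,n+1} = 0$ — wait, I need to be careful about the initial-condition regime versus the recurrence regime, so I would split cleanly at $n = g$ and $n = g+1$).

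The crux is the first index where the coefficient $N = 2^{k+1}$ actually enters, namely $n = g+k+1$, where $H_{g+k+1} = H_{g+k} + 2^{k+1} H_g = H_{g+k} + 2^{k+1}\cdot 2^{g-1} = H_{g+k} + 2^{g+k}$. I would then compute $B_{H,g+k+1} = 1 + S_{g+k} - H_{g+k+1} = 1 + S_{g+k} - H_{g+k} - 2^{g+k} = B_{H,g+k} + H_{g+k} - 2^{g+k}$ (using $S_{g+k} - H_{g+k+1} = S_{g+k} - H_{g+k} - 2^{g+k}$ and $B_{H,g+k} = 1 + S_{g+k-1} - H_{g+k}$, so actually $1 + S_{g+k} - H_{g+k} = B_{H,g+k} + H_{g+k}$). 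Since the sequence grows at most by doubling in the regime $g < n \le g+k$ (Doubling Criterion territory, but more relevant here as an \emph{upper} bound on $H_{g+k}$), one has $H_{g+k} < 2^{g+k}$ strictly once $k \ge 1$ — indeed $H_{g+k}$ is strictly less than $2^{g+k-1}\cdot 2$, and the deficit accumulates — while $B_{H,g+k}$ is only a small bounded quantity (I expect $B_{H,g+k}$ to be roughly on the order of a polynomial in $g$, or even just bounded, certainly much less than $2^{g+k} - H_{g+k}$ which is exponentially large in $k$ once $g \ge k$). Hence $B_{H,g+k+1} = B_{H,g+k} + H_{g+k} - 2^{g+k} < 0$, so Brown's criterion fails and the sequence is incomplete.

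The main obstacle is getting the exact (or exact enough) handle on two competing quantities at index $g+k$: the partial sum $S_{g+k}$ (equivalently $B_{H,g+k}$) and the gap $2^{g+k} - H_{g+k}$. Both require understanding the sequence in the "all ones then zeros, before $N$ kicks in" regime, which is governed by the recurrence $H_n = H_{n-1} + \cdots + H_{n-g}$ with the zero-coefficients silent. I would handle this by proving a clean lemma: for $g \le n \le g+k$, $H_n = 2^{n-1} - d_n$ for some nonnegative "deficit" $d_n$ with $d_g = d_{g+1} = 0$ (or small) and $d_n$ growing, together with a matching estimate $S_{n} = 2^n - 1 - e_n$; then $B_{H,g+k+1} = B_{H,g+k} + H_{g+k} - 2^{g+k}$ becomes an inequality in $d_{g+k}$ and $e_{g+k}$ that I need to show is negative. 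The hypothesis $g \ge k$ should be exactly what makes $2^{g-1}\cdot 2^{k+1} = 2^{g+k}$ dominate everything; if instead $g$ were much smaller than $k$ the $N$-term would enter while the sequence is still essentially doubling and the estimate could go the other way, which is consistent with Theorem~\ref{thm:gbon} allowing larger $N$ there. A cleaner alternative I would try first: show directly that $\sum_{i=1}^{g+k} H_i < 2^{g+k} + H_{g+k} - 1$, i.e.\ $S_{g+k} < 2^{g+k} + H_{g+k} - 1$, which is equivalent to $B_{H,g+k+1} < 0$ after substituting the recurrence; this collapses the whole argument to a single sum estimate, and that estimate follows because each $H_i \le 2^{i-1}$ with strict inequality for $g < i \le g+k$ (using $k \ge 1$), giving $S_{g+k} \le 2^{g+k} - 1 - (\text{positive deficit})$, which is comfortably below $2^{g+k} + H_{g+k} - 1$ — so in fact the strict-inequality bookkeeping is more than enough and no sharp constant is needed.
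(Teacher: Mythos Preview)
Your approach has a fundamental indexing error. The coefficient $N = 2^{k+1}$ is $c_L$ with $L = g+k+1$, so in the recurrence $H_{n+1} = c_1 H_n + \cdots + c_L H_{n+1-L}$ it first appears when computing $H_{L+1} = H_{g+k+2}$, and there it multiplies $H_1 = 1$, not $H_g$. Your formula $H_{g+k+1} = H_{g+k} + 2^{k+1}H_g$ is therefore wrong on two counts: $H_{g+k+1} = H_L$ is still in the initial-conditions regime, $H_L = H_{L-1} + \cdots + H_{L-g} + 1$, with no $N$-term at all; and even at $H_{L+1}$ the $N$-contribution is only $2^{k+1}$, not $2^{g+k}$. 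Consequently Brown's criterion does \emph{not} fail near $n = g+k+1$; in fact $B_{H,L+1} = 0$ exactly, since subtracting the recurrence from $1 + \sum_{i=1}^{L}H_i$ leaves $1 + \sum_{i=1}^{k+1}H_i - 2^{k+1} = 2^{k+1} - 2^{k+1} = 0$. A small check: for $g=2$, $k=1$ (coefficients $[1,1,0,4]$) the terms are $1,2,4,7,15,30,61,\ldots$, and Brown holds through index $6$ but fails first at index $7 = 2g+3$, not at $g+k+1 = 4$.

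The paper's proof instead targets index $2g+3$: it shows $B_{H,2g+2} = 0$ (so $1 + \sum_{i=1}^{2g+2}H_i = 2H_{2g+2}$) and then verifies $H_{2g+3} > 2H_{2g+2}$. The key is that at $n = 2g+3$ the $N$-term is $2^{k+1}H_{g+2-k}$, and the hypothesis $g \geq k \geq 1$ guarantees $g+2-k \leq g+1$, so $H_{g+2-k} = 2^{g+1-k}$ exactly and the product is $2^{g+2}$. That is where $g \geq k$ is genuinely used; your argument never reaches an index where this hypothesis plays any role.
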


\begin{proof}
Suppose this sequence is complete. Note that 
\begin{equation}
    H_{2g+2}=H_{2g+1}+\dots+H_{g+2}+ 2^{k+1}H_{g+1-k}. 
\end{equation}
By applying Brown's criterion to $H_{2g+2}$, we see that
\begin{equation}
    2^{k+1}H_{g+1-k}\leq \sum_{i=1}^{g+1}H_i+1.
\end{equation}
Now, note $k$ is positive, so that $g+1-k\leq g+1$. Also, by the structure of the sequence, $H_i=2^{i-1}$ for $i\leq g+1$. Hence 
\begin{equation}
    2^{g+1} = 2^{k+1}H_{g+1-k} = 2^{k+1}2^{g-k} \leq \sum_{i=1}^{g+1}2^{i-1}+1 = 2^{g+1}.
\end{equation}
Therefore one may substitute previous inequalities with equalities and obtain 
\begin{equation} \label{2g+2 eq}
    H_{2g+2}=\sum^{2g+1}_{i=1}H_i+1.
\end{equation}
It follows immediately from (\ref{2g+2 eq}) that 
\begin{equation}
    \sum^{2g+2}_{i=1}H_i+1=2H_{2g+2}.
\end{equation}
Now, consider 
\begin{equation}
    H_{2g+3}=H_{2g+2}+H_{2g+1}+\dots+H_{g+3}+2^{k+1}H_{g+2-k}.
\end{equation} 
Since $g+2-k\leq g+1$ as $k\geq 1$, one gets \begin{equation}
    H_{g+2-k}M=2^{g+1-k}2^{k+1}=2^{g+2}=2(2^{g+1})=2H_{g+2}.
\end{equation}
Hence 
\begin{align}
    H_{2g+3}&=H_{2g+2}+H_{2g+1}+\dots+H_{g+3}+2H_{g+2} \nnend
    &=H_{2g+2}+\left(H_{2g+1}+\dots+H_{g+3}+H_{g+2}+H_{g+2}\right) \nnend
    &>H_{2g+2}+\left(H_{2g+1}+\dots+H_{g+3}+H_{g+2}+H_{g+1-k}\right) \nnend
    & =2H_{2g+2}=\sum^{2g+2}_{i=1}H_i+1.
\end{align}
So $H_{2g+3}$ causes Brown's criterion to fail, rendering whole sequence incomplete.
\end{proof}

We now show the stabilizing behavior of the bound mentioned above.

\begin{lemma}\label{thm:SharpgAndlog2k}
    If $g \geq k + \lceil \log_2k \rceil$, then $[\underbrace{1, \dots, 1}_{g}, \underbrace{0, \dots, 0}_{k}, 2^{k+1} - 1]$ is complete.
\end{lemma}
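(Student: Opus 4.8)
The plan is to verify Brown's criterion directly for the sequence $(H_n)$ generated by $[\underbrace{1,\dots,1}_g,\underbrace{0,\dots,0}_k,2^{k+1}-1]$ when $g \ge k + \lceil \log_2 k\rceil$, using Theorem~\ref{weak2Lcrit} to reduce to a finite check. Since $c_1 = 1$ and each initial term satisfies $H_{n+1} = H_n + \cdots + H_1 + 1$ for $n < L = g+k+1$, the first $L$ Brown's gaps are $B_{H,n} \ge 0$ automatically (in fact $B_{H,n} = 0$ for $2 \le n \le g+1$ since $H_i = 2^{i-1}$ there, and then $B_{H,n}$ starts growing once the zeros ``kick in''). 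By Theorem~\ref{weak2Lcrit} it then suffices to show $B_{H,n} > 0$ for $L \le n \le 2L-1$, i.e.\ strict positivity of a bounded number of Brown's gaps.

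First I would pin down the terms explicitly in the relevant range. For $1 \le i \le g+1$ we have $H_i = 2^{i-1}$, and for $g+1 < n < g+k+1$ the recurrence reads $H_{n+1} = H_n + \cdots + H_{n-g+1}$ (the later coefficients are still multiplied by zero terms with negative index, so effectively a $g$-term recurrence), which again doubles: one checks $H_i = 2^{i-1}$ continues to hold up through roughly $i = g+k$ or so, because the ``$2^{k+1}-1$'' coefficient multiplies $H_{n-g-k}$, which only becomes a legitimate positive-index term when $n+1 \ge g+k+2$, i.e.\ $n \ge g+k+1 = L$. So the first ``interesting'' term is $H_{L+1} = H_L + \cdots + H_{g+2} + (2^{k+1}-1)H_{g+1-k}$. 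Using $H_i = 2^{i-1}$ throughout this range, $H_{g+1-k} = 2^{g-k}$, so $H_{L+1} = (2^L - 2^{g+1}) + (2^{k+1}-1)2^{g-k} = 2^L - 2^{g+1} + 2^{g+1} - 2^{g-k} = 2^L - 2^{g-k}$. Meanwhile $1 + \sum_{i=1}^{L} H_i = 1 + (2^L - 1) = 2^L$ (using that $H_i = 2^{i-1}$ for all $i \le L$, which holds since $L = g+k+1$ and we argued doubling persists up to index $g+k$... here one must be slightly careful: $H_L = H_{g+k+1}$; its recurrence is $H_{g+k+1} = H_{g+k} + \cdots + H_{g+2-k} + (2^{k+1}-1)H_{1}$ if $g+k+1 - (g+k+1) = 0$—need to check whether the $(2^{k+1}-1)$ coefficient already acts on $H_0$, which is out of range, so in fact $H_L = 2^{L-1}$ still). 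Hence $B_{H,L+1} = 2^L - H_{L+1} = 2^{g-k} > 0$, giving the first strict gap with room to spare.

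Then I would propagate: for $L+1 \le n \le 2L-1$, write $B_{H,n+1} - B_{H,n} = 2H_n - H_{n+1}$, and for these indices $H_{n+1} = H_n + H_{n-1} + \cdots + H_{n-g+1} + (2^{k+1}-1)H_{n-g-k}$, so $2H_n - H_{n+1} = H_n - H_{n-1} - \cdots - H_{n-g+1} - (2^{k+1}-1)H_{n-g-k}$. The point of the hypothesis $g \ge k + \lceil\log_2 k\rceil$ is precisely to control the decrement $(2^{k+1}-1)H_{n-g-k}$ against the accumulated surplus: one shows the running minimum of $B_{H,n}$ over $L \le n \le 2L-1$ stays positive. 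Concretely I expect to track $B_{H,n}$ near its smallest value (which should occur around $n \approx L + k$ or so, when the big coefficient is hitting terms that are as large as possible relative to the surplus accumulated since index $g+1$), and the inequality $g - k \ge \lceil \log_2 k\rceil$, i.e.\ $2^{g-k} \ge k$, is exactly what is needed to absorb $k$-many unit decrements of size controlled by the gap structure. After verifying positivity on $[L, 2L-1]$, Theorem~\ref{weak2Lcrit} closes the argument, as all later gaps are then automatically positive.

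The main obstacle will be the bookkeeping in the middle paragraph's final step: carefully identifying over which index ranges $H_i = 2^{i-1}$ exactly versus where the $(2^{k+1}-1)$ coefficient has begun to pull terms below the pure power of two, and then showing the minimum of $B_{H,n}$ on $[L, 2L-1]$ is positive with the quantitative slack $2^{g-k} \ge k$. I would handle this by computing $B_{H,n}$ via a closed or semi-closed form: since $B_{H,n+1} = B_{H,n} + (2H_n - H_{n+1})$ and the terms in range are explicit combinations of powers of $2$ (with small corrections once the big coefficient activates), $B_{H,n}$ becomes a sum of a few geometric-type pieces, and positivity reduces to an inequality between powers of $2$ and $k$ — exactly where $g \ge k + \lceil\log_2 k\rceil$ enters. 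If a fully closed form is unwieldy, a cleaner route is an auxiliary induction showing $B_{H,n} \ge 2^{g-k} - (\text{small explicit term} \le 2^{g-k} - 1)$ on the critical range, which would suffice and sidesteps exact evaluation.
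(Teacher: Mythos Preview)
Your overall strategy is the same as the paper's: invoke Theorem~\ref{weak2Lcrit} to reduce to checking $B_{H,n}>0$ for $L\le n\le 2L-1$, compute the early terms explicitly, and track Brown's gaps through that window. That part is fine.

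The genuine gap is in your computation of the terms. You assert that $H_i=2^{i-1}$ persists for all $i\le L=g+k+1$ because ``the recurrence reads $H_{n+1}=H_n+\cdots+H_{n-g+1}$ \ldots\ which again doubles.'' This is false on two counts. First, for $g+1\le n<L$ the initial-condition rule still contributes a ``$+1$'', so the recurrence is $H_{n+1}=H_n+\cdots+H_{n-g+1}+1$. Second, even with the $+1$, doubling fails once the window slides past $H_1$: for instance
\[
H_{g+2}=H_{g+1}+H_g+\cdots+H_2+1=2^g+(2^g-2)+1=2^{g+1}-1,
\]
not $2^{g+1}$. More generally the paper's formulas give $H_{g+n}=2^{g+n-1}-2^{n-2}(n-1)$ for $1\le n\le k+1$, so the shortfall from $2^{i-1}$ grows like $n\cdot 2^{n-2}$. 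Consequently your values $1+\sum_{i=1}^L H_i=2^L$ and $B_{H,L+1}=2^{g-k}$ are both wrong, and the ``running minimum'' heuristic you sketch afterwards is built on incorrect inputs.

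The paper repairs exactly this by introducing the $g$-bonacci sequence $(f_n)=[\underbrace{1,\ldots,1}_g]$ and writing $H_n=f_n+b(n)$, with closed forms for $f_{g+n}$ and $H_{g+k+1+n}$ in the relevant ranges (Lemmas~\ref{Line3Terms} and~\ref{Line4Terms}). It then splits $[2g+2,\,2g+2k+1]$ into two sub-ranges: on $2g+2\le n\le 2g+k+1$ it bounds $B_n$ below via $(1+\sum f_i-f_n)-(2^{k+1}-1)H_{n-(g+k+1)}$ and reduces to showing $2^{g-k}\ge n-(2g+2)$, which is where $g\ge k+\lceil\log_2 k\rceil$ enters; on $2g+k+2\le n\le 2g+2k+1$ it shows $B_{n+1}\ge B_n$ via the explicit formulas (and Lemma~\ref{lem:Gap2} when $g<2k$). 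Your plan would need this level of care with the actual term values---the vague ``auxiliary induction showing $B_{H,n}\ge 2^{g-k}-(\text{small term})$'' does not survive once the correct $H_i$ are used, because the shortfall from $2^{i-1}$ is not small.
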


\begin{proof}
Define $\left( f_n \right)=[\underbrace{1,\ldots , 1}_{g}]$, and $\left( H_n \right)=[\underbrace{1, \dots, 1}_{g}, \underbrace{0, \dots, 0}_{k}, 2^{k+1} - 1]$. We can calculate the terms of $\left( f_n \right)$ and $\left( H_n \right)$ up to $2g+1$. Namely,
\begin{alignat}{2}
H_{n} &= f_{n}=2^{n-1}, &&\qquad \text{if $1\leq n\leq g$;}  \nnend
H_{g+n} &= f_{g+n}+2^{n-1}, &&\qquad \text{if $1\leq n\leq k+1$;}\nnend
H_{g+k+1+n} &= f_{g+k+1+n}+\left( 2^{k+1}-1 \right) \left( 2^{n}+2^{n-2}\left( n-1 \right)  \right), &&\qquad \text{if $1\leq n\leq g-k$;}\nnend
f_{g+n} &= 2^{g+n-1}-2^{n-2}\left( n+1 \right), &&\qquad \text{if $1 \leq n \leq g$.}
\end{alignat}
The third and fourth lines are verified in Lemmas~\ref{Line3Terms} and \ref{Line4Terms}, respectively. We show that the conditions in Theorem~\ref{weak2Lcrit} hold for $\left(H_n\right)$. We can verify directly that Brown's criterion holds for the first $(2g+1)$ terms of $\left( H_n \right)$; in fact, for $B_n \coloneqq B_{H, n}$, we get 
\begin{equation}
\begin{cases}
    B_n \geq 0, & \text{if $1\leq n\leq g + k$;}\\
    B_n > 0, & \text{if $g+k+1 \leq n \leq 2g+1$.}
\end{cases}
\end{equation}
Thus, it remains to show that $B_n > 0$ for $2g+2\leq n \leq 2\left( g+k \right) -1$. 
\begin{enumerate}[label={Case \arabic*:}, leftmargin=*]
\item $2g+2 \leq n \leq 2g+k+1$.

Define $b(n) \coloneqq H_{n}-f_{n}$. Note that $b(n)\geq 0$, and by induction, $b(n) > 0$ for all $n \geq g+1$. For $n\geq g+k+2$,
\begin{align}
f_{n}+b(n) &=H_{n}\nnend
&= H_{n-1}+H_{n-2}+\cdots +H_{n-g}+\left( 2^{k+1}-1 \right) H_{n-(g+k+1)} \nnend
&= \sum_{i=1}^{g}f_{n-i}+\sum_{i=1}^{g}b\left( n-i \right) +\left( 2^{k+1}-1 \right) H_{n-(g+k+1)}.
\end{align}
Since $f_{n}=\sum_{i=1}^{g}f_{n-i}$, 
\begin{equation}
b(n)=\sum_{i=1}^{g}b\left( n-i \right) +\left( 2^{k+1}-1 \right) H_{n-(g+k+1)}.
\end{equation}
Thus, for any $n\geq 2g+2$, 
\begin{align}\label{LastTerm}
B_n &= 1+\sum_{i=1}^{n-1}H_{i}-H_{n} \nnend
&= 1+\sum_{i=1}^{n-1}\left( f_{i}+b(i) \right) -\left( f_{n}+b(n) \right) \nnend
&= \left( 1+\sum_{i=1}^{n-1}f_{i}-f_{n} \right) -\left( 2^{k+1}-1 \right) H_{n-(g+k+1)}+\sum_{i=g+1}^{n-\left( g+1 \right) }b(i)\nnend
& > \left( 1+\sum_{i=1}^{n-1}f_{i}-f_{n} \right) -\left( 2^{k+1}-1 \right) H_{n-(g+k+1)}.
\end{align}
We are to show that the last term is nonnegative. As $n-\left( g+k+1 \right) \leq g$, 
\begin{align}
& 1+\sum_{i=1}^{n-1}f_{i}-f_{n}-\left( 2^{k+1}-1 \right) H_{n-\left( g+k+1 \right) }\nnend
&= 1+\sum_{i=1}^{n-\left( g+1 \right) }f_{i}-\left( 2^{k+1}-1 \right) H_{n-(g+k+1)}\nnend
&= 1+\sum_{i=1}^{g}f_{i}+\sum_{i=1}^{n-\left( 2g+1 \right) }f_{g+i}-\left( 2^{k+1}-1 \right) \cdot 2^{n-\left( g+k+1 \right) -1}\nnend
&= 1+\sum_{i=1}^{g}2^{i-1}+\sum_{i=1}^{n-\left( 2g+1 \right) }\left( 2^{g+i-1}-2^{i-2}\left( i+1 \right)  \right) -2^{n-g-1}+2^{n-\left( g+k+1 \right) -1}\nnend
&= 2^{n-\left( g+k+1 \right) -1}-\sum_{i=1}^{n-\left( 2g+1 \right) }2^{i-2}\left( i-1 \right) -\sum_{i=1}^{n-\left( 2g+1 \right) }2^{i-1}\nnend
&= 2^{n-\left( g+k+1 \right) -1}-\left( 2^{n-\left( 2g+2 \right) }\left( n-\left( 2g+3 \right) \right) +1 \right) -\left( 2^{n-\left( 2g+2 \right) }-1 \right)\nnend
&= 2^{n-\left( g+k+1 \right) -1}-2^{n-\left( 2g+2 \right) }\left( n-\left( 2g+2 \right)  \right)\nnend
&= 2^{n-\left( 2g+2 \right) }\left( 2^{g-k}-\left( n-\left( 2g+2 \right)  \right)  \right)\nnend
& \geq 2^{n-\left( 2g+2 \right) }\left( 2^{g-k}-\left( k-1 \right)  \right)\nnend
& > 0.
\end{align}
Note that the last line comes from $g \geq k+\log_2k$, which implies $2^{g-k }\geq k > k-1$.

\item $2g+k+2 \leq n \leq 2g+2k+1$.

We show that $B_{n+1} \geq B_n$ for $2g+k+2 \leq n < 2g+2k+1$, and that $B_{2g+k+2} > 0$. 
\begin{align}\label{differenceBGap}
    B_{n+1} - B_n &= 2H_n - H_{n+1} \nnend
    &= 2H_n - \left(\sum_{i=n-g+1}^n H_i +(2^{k+1} -1)H_{n - (g+k)} \right) \nnend
    &= \left(H_n - \sum_{i=n-g+1}^n H_i \right) +(2^{k+1} -1)H_{n - (g+k)} \nnend
    &= H_{n-g} - (2^{k+1} -1)(H_{n - (g+k)} - H_{n-(g+k+1)}). 
    \intertext{Replace $n$ by $2g+k+1+m$ with $1 \leq m \leq k$ to obtain}
    &= H_{(g+k+1)+m} - (2^{k+1}-1)(H_{g+m+1} - H_{g+m}) \nnend
    &= H_{(g+k+1)+m} - (2^{k+1}-1)(2^{g+m-1} - 2^{m-2}(m+1)).
\end{align}
For $1 \leq m \leq g-k$, we have an explicit formula for $H_{(g+k+1)+m}$, so we can substitute directly to show that \eqref{differenceBGap} is nonnegative. Thus, if $g-k \geq k$ (i.e., $g \geq 2k$), then this holds for all $1 \leq m \leq k$. If $g - k < k$ (i.e., $g < 2k$), then from Lemma~\ref{lem:Gap2}, \eqref{differenceBGap} is nonnegative. Thus, $B_{n+1} \geq B_n$ for all $2g+k+2 \leq n \leq 2g+k+1$. It remains to show that $B_{2g+k+2} > 0$, which we can do by directly substituting the explicit formulas.\qedhere
\end{enumerate}
\end{proof}

Combining these lemmas, we can prove the first part of Theorem~\ref{thm:gbon}.

\begin{proof}[Proof of Theorem~\ref{thm:gbon}.1.]
From Lemmas~\ref{thm:sticky} and~\ref{thm:SharpgAndlog2k}, the bound for $N$ is precisely $2^{k+1} - 1$ when $g \geq k + \lceil \log_2k \rceil$.
\end{proof}

Next, we consider when $k\leq g \leq k+\ceil{\log_2k}$, and prove the second part of Theorem~\ref{thm:gbon} using similar methods.

\begin{proof}[Proof of Theorem~\ref{thm:gbon}.2.]
First, we show that 
for $N>2^{k+1}-\ceil{k/{2^{g-k}}}$, $\left(H_i\right)$ is incomplete, and suppose $k \geq 2$. Let us calculate the initial $L = g+k+1$ terms of the sequence. Note
\begin{alignat}{2}
    H_n&=2^{n-1} &&\qquad\text{for all } 1\leq n \leq g+1 \nnend
    H_{g+n}&=2^{g+n-1}-2^{n-2}(n-1) &&\qquad\text{for all } 1\leq n \leq k+1.
\end{alignat}
Let $B_i \coloneqq B_{H,i}$. Then, we consider Brown's gap $B_{2g+k+2}$,
\begin{align}
B_{2g+k+2} &= \na*{1+\sum_{i = 1}^{2g+k+1}{H_i}} - H_{2g+k+2} \nnend
&= \na*{1+\sum_{i = 1}^{2g+k+1}{H_i}} - \na*{\sum_{i = g+k+2}^{2g+k+1}{H_i}+NH_{g+1}} \nnend
&=\na*{1+\sum_{i = 1}^{g+k+1}H_i} - NH_{g+1} \nnend
&=1+\sum_{i = 1}^{g}H_i+\sum_{i = g+1}^{g+k+1}H_i - NH_{g+1}\nnend
&=1+\sum_{i = 1}^{g}2^{i-1}+\sum_{i = 1}^{k+1}\na*{2^{g+i-1}-2^{i-2}\na*{i-1}} - 2^g N\nnend
&=2^{g+k+1}-\sum_{i = 1}^{k}2^{i-1}i - 2^g N\nnend
 &=2^{g+k+1}-2^k(k-1)-1 - 2^gN .
\intertext{Now, $N>2^{k+1}-\ceil*{k/{2^{g-k}}}$ by assumption so it follows that $N\geq 2^{k+1}-k/{2^{g-k}}+1$, hence}
&\leq 2^{g+k+1}-2^k(k-1)-1 - 2^{g}\na*{2^{k+1}-\frac{k}{2^{g-k}}+1}\nnend 
&=2^{k}-2^g-1,
\end{align}
which must be negative as $g\geq k$. So $\left(H_n\right)$ fails Brown's criterion at the $(2g+k+1)$st term, rendering the sequence incomplete.

Now we can show that for $N=2^{k+1}-\ceil{k/{2^{g-k}}}$, $\left(H_i\right)$ is complete by Theorem~\ref{weak2Lcrit}. We can easily verify that $B_n\geq 0$ for all $1\leq n \leq g+k+1$ and $B_{g+k+1} > 0$; it remains to show that $B_n > 0$ for $g+k+2 \leq n \leq 2g+2k+1$. We consider two cases.  
\begin{enumerate}[label={Case \arabic*:}, leftmargin=*]
    \item $2 \leq n-(g+k)\leq g+1$.
        
        We want to show that $B_{n+1}\geq B_n$ for all $2\leq n - (g+k) \leq g+1$ and that $B_{g+k+2} > 0$. Now,
    \begin{align}
        B_n&=1+\sum_{i=1}^{n-1}{H_i}-H_n \nnend
        &=1+\sum_{i=1}^{n-1}{H_i}-\na*{\sum_{i=n-g}^{n-1}{H_i}+NH_{n-(g+k+1)}} \nnend
        &=1+\sum_{i=1}^{n-g-1}{H_i}-NH_{n-(g+k+1)}. 
    \end{align} 
Then, note that
\begin{align}
    B_{n+1}-B_n&=H_{n-g}-N\na*{H_{n-(g+k)}-H_{n-(g+k+1)}} \nnend
    &=H_{n-g}-N\na*{2^{n-(g+k+1)}-2^{n-(g+k+2)}},
    \intertext{and by assumption,}
    &=H_{n-g}-\na*{2^{k+1}-\ceil[\Big]{{\frac{k}{2^{g-k}}}}}{2^{n-(g+k+2)}} \nnend
    &={2^{n-(g+k+2)}}\ceil[\Big]{{\frac{k}{2^{g-k}}}}-\na*{2^{n-g-1}-H_{n-g}}.
\end{align}
If $n-g\leq g+1$, then $2^{n-g-1}-H_{n-g}=0$, so $B_{n+1}-B_n>0$. If $g+2\leq n-g \leq g+k+1$, then 
\begin{equation}
    2^{n-g-1}-H_{n-g}=2^{n-2g-2}\na*{n-2g-1}\leq 2^{n-(g+k+2)}\frac{k}{2^{g-k}}\leq 2^{n-(g+k+2)}\ceil[\Big]{{\frac{k}{2^{g-k}}}},
\end{equation}
so that $B_{n+1}-B_n \geq 0$. In any case, $B_{n+1}\geq B_n$. We can verify directly that $B_{g+k+2}>0$, completing this case.

\item $g\leq n-(g+k)\leq g+k+1$.

From the previous case, $B_{2g+k+2}\geq B_{2g+k+1}>0$. Now,
\begin{align}
B_n&=1+\sum_{i=1}^{n-g-1}H_i - NH_{n-(g+k+1)} \nnend
&=1+\sum_{i=1}^{n-2g-1}H_i+\sum_{i=n-2g}^{n-g-1}H_i- NH_{n-(g+k+1)} \nnend
&=1+\sum_{i=1}^{n-2g-1}H_i+H_{n-g}- NH_{n-(2g+k+1)}- NH_{n-(g+k+1)}.
\intertext{Substituting $n=2g+k+1+m$ for $1\leq m \leq k$,}
&=1+\sum_{i=1}^{k+m}H_i+H_{g+k+1+m}-N\na*{H_m+H_{g+m}} \nnend
&\geq H_{k+m+1}+H_{g+k+1+m}-N\na*{2^{m-1}+2^{g+m-1}-2^{m-2}(m-1)}. \label{Cm}
\end{align}
Let $C_m \coloneqq H_{k+m+1} + H_{g+k+1+m}- N \na*{2^{m-1}+2^{g+m-1}-2^{m-2}(m-1)}$, from equation \eqref{Cm}. We show by strong induction that $C_m > 0$. By direct computation, $C_1 > 0$. Suppose it holds for all values from $1$ to $m-1$ for $m\geq 2$. Then by the induction hypothesis, 
\begin{align}
    H_{g+k+1+m}&=\na*{H_{g+k+m}+\cdots+H_{g+k+2}}+\na*{H_{g+k+1}+\cdots+H_{m+k+1}}+NH_m  \nnend 
    & > \sum_{i=1}^{m-1}\na*{N\na*{2^{i-1}+2^{g+1-i}-2^{i-2}(i-1)}-H_{k+i+1}}+\nnend &\hspace{25mm}+\na*{2^{g+k}+\cdots+2^{m+k}-\sum_{i=1}^{k+1}2^{i-2}(i-1)}+2^{m-1}N \nnend
    &=N\na*{2^m-1+2^{g+m+1}-2^g-2^{m-2}(m-3)-1} - \nnend
    &\hspace{25mm} -\sum_{i=k+2}^{k+m}H_i+\na*{2^{g+k+1}-2^{m+k}-2^k(k-1)-1} \nnend
    &\geq N\na*{2^{m-1}+2^{g+m-1}-2^{m-2}(m-1)}-\na*{2^g+2-2^m}N - \nnend 
    &\hspace{25mm} - \sum_{i=k+m-g}^{k+m} H_i + \na*{2^{g+k+1}-2^{m+k}-2^k(k-1)-1},
\end{align}
where $H_i = 0$ for nonpositive $i$. Hence, 
\begin{align}
   C_m&=H_{g+k+1+m}- N \na*{2^{m-1}+2^{g+m-1}-2^{m-2}(m-1)}+H_{k+m+1}\nnend
   & > \na*{H_{k+m+1}-\sum_{i=k+m-g}^{k+m}H_i}+\na*{2^{g+k+1}-2^{m+k}-2^k(k-1)-1} - \nnend
   &\hspace{80mm} -\na*{2^g+2-2^m}N \nnend 
   &=1+\na*{2^{g+k+1}-2^{m+k}-2^k(k-1)-1}-\na*{2^g+2-2^m}\left(2^{k+1}-\left\lceil\frac{k}{2^{g-k}}\right\rceil\right) \nnend
   &=2^{m+k}-2^k\na*{k+3}+\na*{2^g+2-2^m}\left\lceil\frac{k}{2^{g-k}}\right\rceil \nnend
   &\geq 2^{m+k}-2^k\left(k+3\right)+\left(2^g+2-2^m\right) \frac{k}{2^{g-k}}\nnend 
   &=2^{m+k}-3\cdot 2^k - \left(2^m-2\right)\frac{k}{2^{g-k}}\nnend
   &=\na*{2^m-3}\left(2^k-\frac{k}{2^{g-k}}\right)-\frac{k}{2^{g-k}} \nnend
   &\geq 2^k-\frac{2k}{2^{g-k}} \geq 2^k-2k \geq 0.
\end{align}
 This completes the induction, so $B_n\geq C_m > 0$.
\end{enumerate}
Since both cases are satisfied, $\left(H_i\right)$ is complete. 
\end{proof}

\begin{remark}
The case $k = 1$ is characterized in Lemma~\ref{lem:failAt2L-1}.
\end{remark}

\subsection{The ``\texorpdfstring{$2L - 1$}{2L - 1} conjecture''}

We conjecture a strengthened version of Theorem~\ref{weak2Lcrit} as follows.

\begin{conjecture}\label{2Lcrit}
    The PLRS $\left(H_n\right)$ defined by $[c_1, \dots, c_L]$ is complete if $B_{H, n} \geq 0$ for all $n \leq 2L - 1$, i.e., Brown's criterion holds for the first $2L-1$ terms.
\end{conjecture}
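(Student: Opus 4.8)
We follow a strong induction driven by the gap recurrence that already surfaces inside the proof of Theorem~\ref{weak2Lcrit}. As there, we may take $L\ge 2$ and $c_1=1$ (if $c_1\ge 2$ the PLRS is incomplete). Write $B_n\coloneqq B_{H,n}$ and $C\coloneqq\sum_{j=2}^{L}c_j$. The computation in that proof gives, for every $m\ge L$,
\[
B_{m+1}\;=\;B_m+\sum_{j=2}^{L}c_j\bigl(B_{m+1-j}-1\bigr)+L\;=\;B_m+\sum_{j=2}^{L}c_j B_{m+1-j}+(L-C).
\]
Given that $B_n\ge 0$ for all $n\le 2L-1$, we induct on $m\ge 2L-1$ to prove $B_{m+1}\ge 0$; the indices appearing on the right, namely $m$ and $m+1-j$ for $2\le j\le L$, all lie in $[L,m]$ (this is where $m\ge 2L-1$ enters), so the corresponding $B$'s are already nonnegative.

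\emph{Easy case: $C\le L$.} Then $L-C\ge 0$ and every summand above is nonnegative, so $B_{m+1}\ge 0$ and the induction closes. This already settles the conjecture whenever $C\le L$: the slack $L-C$ does the work that strict inequality did in Theorem~\ref{weak2Lcrit}, which is exactly why the hypothesis there can be relaxed to $B_{H,n}\ge 0$ in this regime.

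\emph{Hard case: $C>L$.} Now a summand $c_j(B_{m+1-j}-1)$ with $B_{m+1-j}=0$ subtracts $c_j$, so a step with several vanishing recent gaps could in principle force $B_{m+1}<0$. Two observations cut this down. First, since each $B_{m+1-j}-1\ge -1$, we get $B_{m+1}\ge B_m-(C-L)$; so $B$ can drop below $0$ only from a value smaller than $C-L$, and only gradually. Second, whenever the $c_L$-weighted gap satisfies $B_{m+1-L}\ge 1$ the last summand is nonnegative, which for coefficient patterns with $\sum_{j=2}^{L-1}c_j\le L$ already gives $B_{m+1}\ge B_m\ge 0$; so the genuine danger is confined to steps where a ``$c_L$-heavy'' gap vanishes while $B$ is simultaneously near $0$. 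The core of the argument should therefore be a structural lemma on the zero set $\{n:B_n=0\}$: the examples suggest it is confined to an initial segment whose length is dictated by the coefficient pattern (it is all of $\N$ for $[\,1,\dots,1,2\,]$, yet only $\{1,2,4,5\}$ for $[1,0,3,0,3]$ even though there $C=6>5=L$), so that past this segment $B_n\ge 1$ and the induction closes with room to spare, the one or two boundary steps being handled by the first observation since $B$ is already large there. A more robust substitute, to be proved jointly with the main induction by a secondary induction on a length-$L$ window sum $\sum_{j=0}^{L-1}w_j B_{m-j}$, would be the estimate $\sum_{j=2}^{L}c_j B_{m+1-j}\ge C-L$ for all $m\ge L$ --- equivalently, the accumulated deficit $\sum_{j=2}^{L}c_j\max\bigl(0,\,1-B_{m+1-j}\bigr)$ never exceeds $L$ --- which is precisely what closes the induction. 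One might alternatively try the spectral route of Section~\ref{roots}: $(B_n)$ obeys the same homogeneous recurrence as $(H_n)$ (characteristic polynomial $x^L-x^{L-1}-c_2x^{L-2}-\cdots-c_L$) plus the constant particular solution $1-L/C$, so $B_n=(1-L/C)+\alpha\rho^{n}+(\text{lower-order, possibly oscillating terms})$ with $\rho$ the principal root; here one argues $\alpha\ge 0$ is forced by $B_L,\dots,B_{2L-1}\ge 0$ and then controls the oscillatory remainder.

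The main obstacle lives entirely in the hard case: ruling out the simultaneous near-vanishing of many of $B_m,B_{m-1},\dots,B_{m+1-L}$ when $C$ is large. This seems to demand a case analysis on the shape of $(c_1,\dots,c_L)$ --- which coefficient slots may be large, and how runs of zero coefficients interact with zeros of $(B_n)$ --- in the spirit of the proofs of Theorems~\ref{thm:1onekzero} and~\ref{thm:gbon}, which is presumably why the statement has resisted a uniform proof. A secondary difficulty is that the window estimate, though clean to state, is not visibly monotone: a step may lower $B$ by as much as $C-L$ and be compensated only several steps later by a large $c_L$-weighted contribution, so the potential function must genuinely average over a window of length at least $L$, and making that bookkeeping airtight is the crux.
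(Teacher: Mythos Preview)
This statement is labeled \emph{Conjecture} in the paper, and the paper offers no proof of it; it explicitly says ``This conjecture, if true, would be a powerful tool'' and that the authors ``do not know yet if such a threshold exists for each $L$.'' So there is no paper proof to compare against, and your proposal is an attack on an open problem.

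Your proposal is not a proof either, and you are candid about this. The gap recurrence
\[
B_{m+1}=B_m+\sum_{j=2}^{L}c_j B_{m+1-j}+(L-C)
\]
is correct (it is exactly the computation in Theorem~\ref{weak2Lcrit}), and your ``easy case'' $C\le L$ is a genuine, clean improvement over that theorem: it removes the strict-inequality hypothesis whenever the coefficient sum is small. That part stands.

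The ``hard case'' $C>L$ is where the conjecture lives, and here you have only sketched heuristics. The two concrete mechanisms you propose --- a window-sum potential $\sum_j w_j B_{m-j}$ carrying the deficit bound $\sum_{j=2}^{L}c_j\max(0,1-B_{m+1-j})\le L$, and the spectral decomposition $B_n=(1-L/C)+\alpha\rho^n+\cdots$ --- are both reasonable lines of attack, but neither is close to complete. For the first, you have not exhibited weights $w_j$ nor shown the potential is monotone or bounded below; your own closing paragraph concedes that a step can drop $B$ by $C-L$ with compensation delayed by $L$ steps, and that ``making that bookkeeping airtight is the crux.'' For the second, deducing $\alpha\ge 0$ from $B_L,\dots,B_{2L-1}\ge 0$ and then controlling the oscillatory remainder uniformly in the coefficient pattern is precisely the hard part, and you have not done it. The example family $[1,\dots,1,0,4]$ from Lemma~\ref{lem:failAt2L-1}, which first fails at index $2L-1$, shows the conjectured threshold is sharp and that any argument must be delicate enough to separate this from $[1,\dots,1,0,3]$.

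In short: the easy case is a valid partial result worth recording; the hard case remains a research outline, not a proof, and the conjecture stays open.
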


When using Brown's criterion, it would be very helpful to know how many terms must be checked to be sure that a PLRS is complete. This conjecture, if true, would be a powerful tool to do so. We do not know yet if such a threshold exists for each $L$; however, if it does, then it is at least $2L-1$, as shown by the following example, where $k+2=L$.

\begin{lemma}\label{lem:failAt2L-1}
   The sequence $[1,\dots,1,0,4]$, with $k$ ones, where $k \geq 1$, is always incomplete. Moreover, it first fails Brown's criterion on the $(2k+3)$\textsuperscript{rd} term. 
\end{lemma}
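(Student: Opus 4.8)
The plan is to write out the coefficients $[1,\dots,1,0,4]$ with $k$ ones explicitly, so $L = k+2$, and compute the sequence $(H_n)$ through index $2k+3 = 2L-1$ directly. The recurrence is $H_{n+1} = H_n + \dots + H_{n-k+1} + 4H_{n-k-1}$ (the coefficient $0$ in position $k+1$ drops the $H_{n-k}$ term), with the PLRS initial conditions giving $H_i = i$ for $1 \le i \le k+1$ and then $H_{k+2} = H_{k+1} + \dots + H_1 + 4 = \binom{k+1}{2} + k + 1 + 4$; actually since $c_1=\dots=c_k=1$, $c_{k+1}=0$, $c_{k+2}=4$, the initial condition at $n = k+1 < L$ reads $H_{k+2} = c_1H_{k+1} + \dots + c_{k+1}H_1 + 1 = (H_{k+1}+\dots+H_2) + 0\cdot H_1 + 1$. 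First I would nail down these small-index formulas exactly, then compute $H_{k+3}, \dots, H_{2k+3}$ using the recurrence, tracking at each step the Brown gap $B_{H,n} = 1 + \sum_{i=1}^{n-1} H_i - H_n$.

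The heart of the argument is to show $B_{H,n} \ge 0$ for all $n \le 2k+2$ but $B_{H, 2k+3} < 0$. I expect the gaps to stay nonnegative (likely shrinking) through $n = 2k+2$, and then the appearance of the $4H_{n-k-1}$ term with a large enough argument — specifically when $n-k-1$ has grown past the "$H_i = i$" regime into the regime where $H$ grows faster — pushes $H_{2k+3}$ just above $1 + \sum_{i=1}^{2k+2} H_i$. Concretely, using the difference identity $B_{H,n+1} - B_{H,n} = 2H_n - H_{n+1}$, I would show this difference is eventually negative and that the cumulative decrease first drives the gap below zero at exactly $n = 2k+3$. The cleanest route is probably to get closed forms (or at least sharp two-sided estimates) for $H_{k+2+j}$ for $0 \le j \le k+1$ in terms of $k$ and $j$; since each such term is built from a "tail of 1-coefficients" plus $4$ times a term still in the linear regime $H_i = i$, these should be low-degree polynomials in $j$ plus a controlled correction, making the final inequality $B_{H,2k+3} < 0$ a finite polynomial-in-$k$ check.

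The main obstacle is bookkeeping: the convolution defining $H_{k+2+j}$ mixes terms from the linear regime ($H_i = i$) with terms already past $H_{k+2}$, and the index at which the $4H_{n-k-1}$ term leaves the linear regime is itself near $n = 2k+3$, so the formulas change character right around the critical index. I would handle this by splitting into the ranges $k+3 \le n \le 2k+2$ (where $n-k-1 \le k+1$, so $H_{n-k-1} = n-k-1$ is still linear) and then treating $n = 2k+3$ separately (where $n-k-1 = k+2$, so $H_{n-k-1} = H_{k+2}$ is the first "large" term fed in). In the first range one gets a manageable recurrence $H_{n+1} = H_n + (\text{sliding window sum}) + 4(n-k-1)$; summing telescopically should give $B_{H,n}$ as an explicit polynomial in $n$ and $k$ that one checks is $\ge 0$ throughout. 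For $n = 2k+3$, substituting $H_{k+2}$ explicitly and comparing to $1 + \sum_{i=1}^{2k+2}H_i$ yields the strict failure. Finally, incompleteness for the whole sequence follows immediately from Brown's criterion once a single negative gap is exhibited, and "first fails on the $(2k+3)$rd term" is exactly the statement that all earlier gaps were nonnegative, which the case analysis establishes.
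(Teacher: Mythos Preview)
Your proposal contains a fundamental computational error in the initial terms. You write ``$H_i = i$ for $1 \le i \le k+1$,'' but this is the formula for the sequence $[1,0,\dots,0,N]$ (with a single leading $1$), not for $[1,\dots,1,0,4]$ with $k$ leading ones. With $c_1=\dots=c_k=1$, the PLRS initial conditions give $H_{n+1}=H_n+\dots+H_1+1$ for $1\le n\le k$, so in fact $H_j=2^{j-1}$ for $1\le j\le k+1$, and then $H_{k+2}=H_{k+1}+\dots+H_2+0\cdot H_1+1=2^{k+1}-1$. The early terms are exponential in $j$, not linear, so your plan to obtain ``low-degree polynomials in $j$'' for $H_{k+2+j}$ and to finish with ``a finite polynomial-in-$k$ check'' cannot work as stated.

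Once the correct powers-of-$2$ structure is in place, the paper's argument is much shorter than the telescoping scheme you outline. For $k+2\le j\le 2k+2$ one has $1\le j-k-1\le k+1$, so $2H_{j-k-1}=2\cdot 2^{j-k-2}=2^{j-k-1}=H_{j-k}$ \emph{exactly}; combined with $H_{j-k-1}=H_{j-k-2}+\dots+H_1+1$, the recurrence $H_{j+1}=H_j+\dots+H_{j-k+1}+4H_{j-k-1}$ rewrites as $H_{j+1}=H_j+\dots+H_1+1$, i.e., Brown's criterion holds with equality through index $2k+2$. At $n=2k+3$ the argument $n-k-1=k+2$ first leaves the doubling regime, and the single strict inequality $2H_{k+1}=2^{k+1}>2^{k+1}-1=H_{k+2}$ forces $H_{2k+3}>\sum_{i=1}^{2k+2}H_i+1$. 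So the ``change of character'' you anticipated does occur precisely at $2k+3$, but the mechanism is the exact identity $2H_{j-k-1}=H_{j-k}$ for small indices, not a polynomial estimate; fixing the initial values would let you recover this cleanly.
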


\begin{proof}
We have the recurrence relation $H_{n+1} = H_n + \dots + H_{n-k+1} + 4H_{n-k-1}$. We show that the term in the $(2k+3)$\textsuperscript{rd} position in the sequence fails Brown's criterion. First, 
\begin{equation}
    H_{2k+3} = H_{2k+2} + \dots + H_{k+3} + 4H_{k+1}. 
\end{equation}
Next, we observe that for $1 \leq j \leq k+1$, we have  $H_j=2^{j-1}$. Additionally, $H_{k+2} = 2^{k+1}-1$. Thus,
\begin{equation}
    2H_{k+1} = 2^{k+1}>2^{k+1}-1 = H_{k+2}.
\end{equation}
We also note that $H_{k+1} = H_k +\dots + H_1 +1$. Putting everything together,
\begin{align}
    H_{2k+3} &= H_{2k+2} + \dots + H_{k+3} + 4H_{k+1}\nnend
    &= H_{2k+2} + \dots + H_{k+3} + 3H_{k+1} + H_k + \dots + H_1 +1 \nnend
    &> H_{2k+2} + \dots + H_{k+3} + H_{k+2} + H_{k+1} + H_k + \dots + H_1 +1.
\end{align}
Hence, we have shown that $[1,\dots,1,0,4]$, with $k\geq 1$ ones, is incomplete, as it fails Brown's criterion on the $(2k+3)$\textsuperscript{rd} term.

We now show that Brown's criterion holds for the first $(2k+2)$ terms. For $1 \leq j \leq k+1$, we have $H_j=2^{j-1}$, which satisfies the equality $H_{j+1}=H_j +\dots + H_1 + 1$. When $k+2 \leq j \leq 2k+2$, 
\begin{equation}
    H_{j+1} = H_j + \dots + H_{j-k+1} + 4H_{j-k-1}.
\end{equation}
Note that $H_{j-k-1}=H_{j-k+2}+\dots+H_1+1$ as $1 \leq j-k-1 \leq k + 1$, so
\begin{equation}
    H_{j+1} = H_j + \dots + H_{j-k+1} + 2H_{j-k-1} + H_{j-k-1} + H_{j-k-2} + \dots + H_1 +1
\end{equation}
and as $2H_{j-k-1} = 2^{j-k-1}=H_{j-k}$,  we see
\begin{equation}
    H_{j+1} = H_j + \dots + H_{j-k+1} + H_{j-k} + H_{j-k-1} + H_{j-k-1} + H_{j-k-2} + \dots + H_1 +1.
\end{equation}
Hence, this equality satisfies Brown's criterion for terms $k+2 \leq j \leq 2k+2$.
\end{proof}

Assuming this conjecture, we can explore sequences of the form $[1, 0, \dots, 0, 1, \dots, 1, N]$ further. In Theorems~\ref{sum2L-2m} and \ref{sum2L-2m 2nd}, we show that the bound on $N$ for $[1, \underbrace{0, \dots, 0}_{L-m-2}, \underbrace{1, \dots, 1}_{m}, N]$ strictly increases if we keep $L$ fixed and increase $m$ from $0$ to $L-3$, i.e., switching the coefficients from $0$ to $1$ gradually from the end so that at least one $0$ remains. We first state a following powerful lemma that is contingent on this conjecture.

\begin{lemma}[Conditional]\label{firstL+2}
	Let $\left( H_{n} \right)$ defined by $[1,0,\dots, 0,1,\dots, 1,N]$ be a sequence with $L$ coefficients, $m$ of which are ones. Then, if $(H_n)$ is incomplete, it must fail Brown's criterion at the $(L+1)$st or $(L+2)$nd term. In other words, if $H_{L+1}\leq 1+\sum_{i=1}^{L}H_{i}$ and $H_{L+2}\leq 1+\sum_{i=1}^{L+1}H_{i}$, then $\left( H_{n} \right)$ is complete.
\end{lemma}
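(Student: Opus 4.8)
Throughout, write the coefficients as $c_1=1$, $c_i=0$ for $2\le i\le L-m-1$, $c_i=1$ for $L-m\le i\le L-1$, and $c_L=N$, so that there are $L$ coefficients, $m$ of which form the block of $1$'s, and $L-m-2\ge 1$ zeros remain. The plan is to reduce completeness to finitely many Brown-gap inequalities and then verify them from a closed form. First note that $B_{H,n}\ge 0$ automatically for $1\le n\le L$: we have $B_{H,1}=0$, and for $2\le n\le L$ the initial-condition formula $H_n=c_1H_{n-1}+\cdots+c_{n-1}H_1+1$ together with $c_1,\dots,c_{n-1}\in\{0,1\}$ gives $H_n\le 1+\sum_{i=1}^{n-1}H_i$. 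Hence, by Conjecture~\ref{2Lcrit}, it suffices to show that the hypotheses $B_{H,L+1}\ge 0$ and $B_{H,L+2}\ge 0$ force $B_{H,L+j}\ge 0$ for every $3\le j\le L-1$. (We cannot instead invoke Theorem~\ref{weak2Lcrit} directly, since the hypotheses provide no strictness.)

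Next I would derive a closed form for the gaps. For $n\ge L$ the recurrence is $H_{n+1}=H_n+(H_{n-L+2}+\cdots+H_{n-L+m+1})+NH_{n-L+1}$, and combining this with $B_{H,n+1}=1+\sum_{i=1}^nH_i-H_{n+1}$ telescopes to
\[
B_{H,L+j}\;=\;\Bigl(1+\sum_{i=1}^{j}H_i-NH_j\Bigr)\;+\;\sum_{i=j+m+1}^{L+j-2}H_i,\qquad 1\le j\le L-1,
\]
where the second sum is a sliding window of fixed width $L-m-2$. Two structural facts matter: $H_1,\dots,H_L$ are independent of $N$ (the initial conditions use only $c_1,\dots,c_{L-1}$), so each $B_{H,L+j}$ with $j\le L-1$ is affine in $N$; and the window sum is nondecreasing in $j$. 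The core claim is that $B_{H,L+j+1}\ge B_{H,L+j}$ for all $2\le j\le L-2$; combined with $B_{H,L+2}\ge 0$ this yields $B_{H,L+j}\ge 0$ for all $2\le j\le L-1$, which is what is needed. Using $H_{L+j-1}=H_{L+j-2}+(H_j+\cdots+H_{j+m-1})+NH_{j-1}$ (valid since $j\ge 2$), the increment collapses to
\[
B_{H,L+j+1}-B_{H,L+j}\;=\;H_{j+1}-N\bigl(H_{j+1}-H_j\bigr)+H_{L+j-1}-H_{j+m+1},
\]
which I would show is nonnegative by splitting on whether the small-index terms $H_{j-1},H_j,H_{j+1}$ lie in the linear initial segment ($j+1\le L-m$, where $H_i=i$) or involve the block of $1$'s ($L-m<j+1\le L-1$).

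The linear case is easy: at $j=2$ the coefficient of $N$ vanishes exactly — the $-N(H_3-H_2)=-N$ is absorbed by the $NH_1=N$ contained in $H_{L+1}$ — leaving $3+H_L+\sum_{i=2}^{m+1}H_i-H_{m+3}>0$ (using $m\le L-3$); for $3\le j\le L-m-1$ the net coefficient of $N$ in the increment is $N\bigl(H_{j-1}-(H_{j+1}-H_j)\bigr)=N(j-2)\ge 0$, while the $N$-free part is at least $H_{j+1}+\bigl(\text{$N$-free part of }H_{L+j-1}\bigr)-H_{j+m+1}\ge H_{j+1}>0$ since the $N$-free part of $H_{L+j-1}$ is $\ge H_L\ge H_{j+m+1}$. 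The main obstacle is the remaining case $L-m\le j\le L-2$: there $H_{j+1}$ (and $H_j$, and possibly $H_{j-1}$) are computed from the block of $1$'s, the successive differences of these initial terms grow (roughly like $\binom{t+1}{2}+1$ in the purely linear sub-range, and faster when $m$ is close to $L-3$), so the coefficient of $N$ in the increment may turn negative, and one must show the $N$-free part — driven by the recurrence-regime term $H_{L+j-1}$, which itself carries an $N$-contribution — dominates. This forces a careful bookkeeping of explicit formulas for $H_{L-m},\dots,H_{L-1}$ against the expansion of $H_{L+j-1}$, and it is the only place the finite range $j\le L-1$ is genuinely used; it is routine but calculation-heavy. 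Once the increment is shown nonnegative on $2\le j\le L-2$, the gaps $B_{H,L+2},\dots,B_{H,2L-1}$ are all $\ge 0$, and Conjecture~\ref{2Lcrit} finishes the proof.
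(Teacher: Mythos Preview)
Your overall strategy matches the paper's: verify $B_{H,n}\ge 0$ for $n\le 2L-1$ and then invoke Conjecture~\ref{2Lcrit}. Your closed form for $B_{H,L+j}$ and the increment
\[
B_{H,L+j+1}-B_{H,L+j}=H_{j+1}-N(H_{j+1}-H_j)+H_{L+j-1}-H_{j+m+1}
\]
are both correct, and your treatment of the ``linear'' range $j\le L-m-1$ is fine.

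The genuine gap is your \emph{core claim} that $B_{H,L+j+1}\ge B_{H,L+j}$ holds \emph{unconditionally} for $2\le j\le L-2$. It does not. Take $L=6$, $m=3$, coefficients $[1,0,1,1,1,N]$. One computes $B_{10}=55-2N$ and $B_{11}=94-3N$, so $B_{11}-B_{10}=39-N$, which is negative for any $N>39$. This lies precisely in your ``hard case'' $j=L-2=4$. Thus the assertion that in this range ``the $N$-free part \ldots\ dominates'' is simply false, and no amount of routine bookkeeping with the explicit formulas for $H_{L-m},\dots,H_{L-1}$ will rescue it: the increment genuinely changes sign as $N$ grows.

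What makes the lemma true is that the hypotheses $B_{H,L+1}\ge 0$ and $B_{H,L+2}\ge 0$ cap $N$, and that cap must be fed back into the hard-case estimate. The paper does this inductively: assuming Brown's criterion up through term $L+k-1$, it expands $H_{L+k-1}$ \emph{all the way down} to $H_{L-1}$, obtaining
\[
H_{L+k-1}=H_{L-1}+\sum_{a=1}^{k}\sum_{i=a}^{a+m-1}H_i+N\sum_{i=1}^{k-1}H_i,
\]
so that the problematic $N$-coefficient becomes $N\bigl(H_{k+1}-\sum_{i=1}^{k}H_i\bigr)<0$. After one more expansion of $H_{k+m+1}$, the remaining inequality is closed using Brown's criterion on $H_{k+m}$ --- and for $k+m>L$ this is exactly where the inductive hypothesis enters. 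Your single-step expansion of $H_{L+j-1}$ does not achieve this sign flip, and your argument never uses the hypotheses, so as written it cannot succeed in the hard range.
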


The proof of this lemma is deferred to Lemma~\ref{firstL+2append} of Appendix~\ref{apx:sec3lemmas}.

\begin{theorem}\label{sum2L-2m}
	Let $\left( H_{n} \right)$ be a PLRS with $L$ coefficients defined by  $[1,0,\dots, 0, \underbrace{1,\dots, 1}_{m}, N]$, where $L \geq 2m + 2$. Then $\left( H_{n} \right)$ is complete if and only if \begin{equation}
N\leq \left\lfloor \frac{\left( L-m \right) \left( L+m+1 \right) }{4}+\frac{1}{48}m(m+1)(m+2)(m+3)+\frac{1-2m}{2} \right\rfloor.\end{equation}
\end{theorem}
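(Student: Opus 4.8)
The plan is to follow the same two-part structure used in the proof of Theorem~\ref{thm:1onekzero}: first establish the necessity of the bound on $N$ by applying Brown's criterion at a well-chosen term, then establish sufficiency by showing the extremal $N$ yields a complete sequence and invoking Theorem~\ref{decreaseLastCoe} for all smaller values. For the necessity direction, I would first compute the initial $L$ terms of $(H_n)$. Because the coefficient string is $[1,0,\dots,0,1,\dots,1,N]$ with the block of $m$ ones sitting just before $N$, the early behavior is governed by the recurrence $H_{n+1}=H_n+H_{n-(L-m-1)}+H_{n-(L-m)}+\cdots+H_{n-(L-2)}+NH_{n-(L-1)}$; for $n\le L$ the $N$-term is absent and only the leading $1$ and the block of $m$ ones contribute nontrivially, so the $H_i$ are small polynomial-in-index quantities. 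The stated bound's shape — a quadratic in $L$ plus a quartic-looking term $\tfrac1{48}m(m+1)(m+2)(m+3)$ — strongly suggests that the relevant Brown inequality is the one at term $H_{L+2}$ (consistent with Lemma~\ref{firstL+2}), where $N$ enters linearly through $NH_3$ and $NH_{2}$ contributions, and $\sum_{i=1}^{L+1}H_i$ contributes the polynomial pieces. So the concrete first step is: derive closed forms for $H_1,\dots,H_{L+2}$, plug into $H_{L+2}\le 1+\sum_{i=1}^{L+1}H_i$, isolate $N$, and check the arithmetic yields exactly the floor expression.

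For the sufficiency direction, assuming Conjecture~\ref{2Lcrit} is in force (the theorem is stated conditionally on it, via Lemma~\ref{firstL+2}), it suffices to verify Brown's criterion for the first $L+2$ terms when $N$ equals the extremal value; equivalently, by Lemma~\ref{firstL+2}, to check only $B_{H,L+1}\ge 0$ and $B_{H,L+2}\ge 0$. The term $B_{H,L+2}\ge 0$ is an equality (or near-equality) by the choice of $N$, and $B_{H,L+1}\ge 0$ should follow from the same explicit formulas since $H_{L+1}=H_L+\sum(\text{block terms})+NH_1$ and $H_1=1$, making the $N$-contribution minimal there. Once the extremal $N$ works, Theorem~\ref{decreaseLastCoe} immediately gives completeness for every $1\le N$ below it, closing the ``if'' direction. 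Throughout, I would lean on the hypothesis $L\ge 2m+2$ to guarantee that in the range $n\le L+2$ the block of ones and the $N$-coefficient interact with disjoint, already-computed earlier terms (so the formulas for $H_i$ remain the clean polynomial ones), and to ensure the two cases of $m$ small versus $m$ near $L/2$ do not require separate treatment — this is presumably exactly why the hypothesis is $L\ge 2m+2$ rather than merely $L\ge m+3$.

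The main obstacle I anticipate is purely computational bookkeeping: getting the closed forms for $H_{L+1}$ and $H_{L+2}$ exactly right. The quartic term $\tfrac1{48}m(m+1)(m+2)(m+3)$ is a double-summation of binomial-type quantities (a sum of $\binom{i+2}{3}$ or similar over the block), and it will arise from nesting the recurrence through the block of $m$ ones; an off-by-one in where the block starts or ends will corrupt this term. I would manage this by computing $H_i$ for $i=1,\dots,L+2$ symbolically in terms of $L$ and $m$, keeping careful track of which indices fall inside the ``all-ones prefix'' regime $i\le L$ versus the regime $i>L$ where $N$ activates, and cross-checking against the $m=0$ specialization, where the bound must collapse to the $[1,0,\dots,0,N]$ answer $\lceil (k+2)(k+3)/4\rceil$ from Theorem~\ref{thm:1onekzero} with $k=L-2$. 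A secondary subtlety is justifying the floor: as in \eqref{takefloor}, the raw inequality gives $N\le(\text{rational expression})$, and one must check that taking the floor is exactly the stated quantity and that the extremal integer $N$ still satisfies the (non-strict) Brown inequality at term $L+2$; the parity analysis there mirrors the identity $\lfloor x+1/2\rfloor=\lceil x\rceil$ used in Theorem~\ref{thm:1onekzero} but may be messier here.
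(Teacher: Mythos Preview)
Your proposal is correct and follows essentially the same route as the paper: compute the first $L$ terms explicitly (the paper obtains $H_n=n$ for $n\le L-m$ and $H_{L-m+k}=L-m+\tfrac{1}{6}k(k+1)(k+2)+k$ for $1\le k\le m$, using $L\ge 2m+2$ exactly as you anticipate), then invoke Lemma~\ref{firstL+2} to reduce completeness to Brown's criterion at terms $L+1$ and $L+2$, observe the $L+2$ constraint is the binding one, and simplify. Two minor economies relative to your plan: the paper never needs closed forms for $H_{L+1}$ or $H_{L+2}$ because $H_{L+1}$ cancels from both sides of $H_{L+2}\le 1+\sum_{i=1}^{L+1}H_i$, and the appeal to Theorem~\ref{decreaseLastCoe} is unnecessary since Lemma~\ref{firstL+2} already gives an if-and-only-if.
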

\begin{proof}
First, note for all $1\leq n\leq L-m$, that $H_{n}=n$.

Now, we claim that for all $1\leq k\leq m$, \begin{equation}
	H_{L-m+k}=L-m+\frac{1}{6}k(k+1)(k+2)+k
.\end{equation} 
We use induction, appealing to the identity $\sum_{a=1}^{n}a(a+1)/{2}=n(n+1)(n+2)/6$. We first see that 
\begin{equation}
	H_{L-m+1}=H_{L-m}+H_1+1=L-m+2 = L-m + \sum_{a=1}^{1}\frac{a(a+1)}{2}+1.
\end{equation}
Additionally, \begin{equation}
	H_{L-m+2}=H_{L-m+1}+H_2+H_1+1 = (L-m+2)+2+1+1 = L-m + \sum_{a=1}^{2}\frac{a(a+1)}{2}+2
.\end{equation} 
Now, suppose $H_{L-m+k}=L-m+\sum_{a=1}^{k}a(a+1)/{2}+k$ for some $k<m$. Note that \begin{equation}
H_{L-m+k+1}=H_{L-m+k}+H_{k+1}+\dots+H_{1}+1.\end{equation}
Since we supposed $L \geq 2m+2$, we see $k+1 \leq m+1 \leq L-m$, and thus for all $ 1\leq i\leq k,\; H_{i}=i$. Thus,
\begin{align}
 H_{L-m+k+1} &= \left( L-m+\sum_{a=1}^{k}\frac{a(a+1)}{2}+k \right) +\frac{(k+1)(k+2)}{2}+1 \nnend
 &= L-m +\sum_{a=1}^{k+1}\frac{a(a+1)}{2}+k+1.
\end{align}

Thus, we have an explicit formula for $H_{i}$, for $1\leq i\leq L$. 

Note that $\left( H_{n} \right)$ is complete if and only if it fulfills Brown's criterion for the $(L+1)$st and $(L+2)$nd term. We show that $\left( H_{n} \right)$ fulfills the criterion for $L+2$ if and only if the bound above holds; it is not difficult to show that the bound for $L+1$ is less strict.

Indeed, we wish to reduce the inequality
\begin{align}
    H_{L+2}=H_{L+1}+H_{m+2}+\dots+H_{3}+2N &\leq 1+\sum_{i=1}^{L+1}H_{i}\\
	\iff H_{m+2}+\dots+H_{3}+2N &\leq 1+\sum_{i=1}^{L}H_{i}\label{1}.
\end{align}

Simplifying the left hand side of inequality \eqref{1}, 
\begin{align}
		H_{m+2}+\dots+H_{3}+2N &=H_{m+2}+\dots+H_3+(H_2+H_1 - H_2 - H_1) +2N\nnend&= \frac{(m+2)(m+3)}{2}-3+2N.
\end{align}
Additionally, 
\begin{align}
	1+\sum_{n=1}^{L}H_{n}&=1+\sum_{n=1}^{L-m}H_{n}+\sum_{n =L-m+1}^{L}H_{n}\nnend
	&=1 + \frac{(L-m)\left( L-m+1 \right) }{2}+\sum_{n=1}^{m}\left( \frac{1}{6}n(n+1)(n+2) +n+L-m\right).\label{eq:351}
\end{align}
We use the fact that $\sum_{n=1}^{m}n(n+1)(n+2)=m(m+1)(m+2)(m+3)/4$ to simplify \eqref{eq:351} as follows:
\begin{multline}
1+\frac{(L-m)\left( L-m+1 \right) }{2}+\frac{m(m+1)}{2}+mL-m ^2+\frac{1}{6}\sum_{n=1}^{m}n(n+1)(n+2)\\
=1+ \frac{(L-m)\left( L-m+1 \right) }{2}+\frac{m(m+1)}{2}+mL-m ^2+\frac{1}{24}m(m+1)(m+2)(m+3)
.\end{multline}
Hence \eqref{1} is equivalent to
\begin{multline}
	\frac{\left( m+2 \right) \left( m+3 \right) }{2}-3+2N \leq 1 +  \frac{(L-m)\left( L-m+1 \right) }{2}+\frac{m(m+1)}{2}\\
	+mL-m ^2+\frac{1}{24}m(m+1)(m+2)(m+3).
\end{multline}

Simplifying, this gives us \begin{equation}
N\leq \left\lfloor \frac{\left( L-m \right) \left( L+m+1 \right) }{4}+\frac{1}{48}m(m+1)(m+2)(m+3)+\frac{1-2m}{2} \right\rfloor.\end{equation} 
\end{proof}

\begin{theorem}\label{sum2L-2m 2nd}
Let $\left(G_n\right)$ and $\left(H_n\right)$ be PLRS's, both with $L$ coefficients, which are defined by $[1, 0, \dots, 0, \underbrace{1, \dots, 1}_{m}, N]$ and $[1, 0, \dots, 0, \underbrace{1, \dots, 1}_{m+1}, N+1]$ respectively. Suppose $L - m \geq 4$ (so that at least one zero is present in $\left(H_n\right)$), $m \geq (L-1) / 2$, and $\left(G_n\right)$ is complete. Then $\left(H_n\right)$ is also complete.
\end{theorem}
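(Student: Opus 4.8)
The plan is to reduce completeness of $(H_n)$ to two Brown's-gap inequalities via the conditional Lemma~\ref{firstL+2}, and then transfer them from $(G_n)$. Since $(H_n)$ is defined by $[1,0,\dots,0,1,\dots,1,N+1]$ with $L$ coefficients, $m+1$ of which are ones, and since $L-m\geq 4$ forces at least one zero to remain, Lemma~\ref{firstL+2} tells us $(H_n)$ is complete as soon as $B_{H,L+1}\geq 0$ and $B_{H,L+2}\geq 0$. Completeness of $(G_n)$ gives, via Brown's criterion, $B_{G,L+1}\geq 0$ and $B_{G,L+2}\geq 0$. So the whole problem is to deduce the first pair from the second.

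Writing $d_i\coloneqq H_i-G_i$, note that $G$ and $H$ share their first $L-m-2$ coefficients, so $d_i=0$ for $1\leq i\leq L-m-1$ and $d_{L-m}=1$. Expanding $H_{L+1},H_{L+2},G_{L+1},G_{L+2}$ from the recurrences, a short computation gives
\begin{align*}
B_{H,L+1}&=1+\sum_{i=m+3}^{L-1}H_i-N, & B_{G,L+1}&=2+\sum_{i=m+2}^{L-1}G_i-N,\\
B_{H,L+2}&=2+\sum_{i=m+4}^{L}H_i-2N, & B_{G,L+2}&=4+\sum_{i=m+3}^{L}G_i-2N.
\end{align*}
Hence $B_{G,L+1}\geq 0$ yields $B_{H,L+1}\geq 0$ once we know $\sum_{i=m+3}^{L-1}d_i\geq 1+G_{m+2}$, and $B_{G,L+2}\geq 0$ yields $B_{H,L+2}\geq 0$ once we know $\sum_{i=m+4}^{L}d_i\geq 2+G_{m+3}$; as in the proof of Theorem~\ref{sum2L-2m}, I expect the first to be the less stringent, so the crux is the single inequality $\sum_{i=m+4}^{L}d_i\geq 2+G_{m+3}$.

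To handle that I would unwind the initial-condition recursions, which for $1\leq k\leq m$ read
\begin{equation*}
H_{L-m+k}=H_{L-m+k-1}+\sum_{j=1}^{k+1}H_j+1,\qquad G_{L-m+k}=G_{L-m+k-1}+\sum_{j=1}^{k}G_j+1,
\end{equation*}
so that $d_{L-m+k}-d_{L-m+k-1}=H_{k+1}+\sum_{j=1}^{k}d_j$. While the index arguments stay below $L-m$ this telescopes to $d_{L-m+k}=\binom{k+2}{2}$ and $G_{L-m+k}=(L-m)+k+\tfrac{1}{6}k(k+1)(k+2)$, exactly as in Theorem~\ref{sum2L-2m}. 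The distinctive feature of the present regime $m\geq (L-1)/2$ is that $k$ is now allowed to exceed $L-m$, so these sums feed the nonlinear terms back into themselves; rather than chase closed forms (which can require several layers when $L$ is much smaller than $2m$), I would prove by strong induction on $k$ a lower bound on $d_{L-m+k}$, together with a compatible upper bound on $G_{m+2}$ and $G_{m+3}$, sharp enough to give the two displayed inequalities — in the spirit of the inductive $C_m$ argument in the proof of Theorem~\ref{thm:gbon}.2. The hypothesis $m\geq(L-1)/2$ is exactly what makes the induction close.

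The main obstacle is precisely this inductive term estimate in the ``wrapped'' regime. In the case $L\geq 2m+2$ treated in Theorem~\ref{sum2L-2m} one never leaves the range where $H_j=j$, so the triangular-number formulas suffice; here, once $k>L-m$, the nested summation must be pushed through further levels (sums of triangular numbers, and then sums of those), and arranging the induction so that the resulting polynomial inequality in $L$ and $m$ collapses cleanly on $m+4\leq L\leq 2m+1$ — instead of into a thicket of subcases — is where the real work lies. Everything upstream of that, the reduction via Lemma~\ref{firstL+2} and the Brown's-gap bookkeeping, is routine and parallels the proofs of Theorems~\ref{thm:1onekzero} and~\ref{sum2L-2m}.
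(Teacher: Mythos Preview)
Your reduction via Lemma~\ref{firstL+2} and the Brown's-gap bookkeeping are correct and match the paper exactly: the paper also reduces everything to the pair of inequalities you display, and its ``hardbound'' is precisely your $\sum_{i=m+4}^{L}d_i\geq 2+G_{m+3}$, rewritten as $\sum_{i=m+3}^{L}G_i+2\leq\sum_{i=m+4}^{L}H_i$.

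Where you diverge is in how to prove that inequality. You propose to track $d_i=H_i-G_i$ and push a strong induction through the ``wrapped'' regime, and you rightly flag this as the unfinished hard part. The paper avoids this difficulty with a different comparison: instead of $H_i$ against $G_i$, it compares $H_{i-1}$ against $G_i$ (Lemma~\ref{diffGH}). The point is that the extra $1$ in $H$'s coefficient list sits one position to the left of where $G$'s block of ones begins, so the initial-condition recursion for $H_{i-1}$ lines up term-by-term with that for $G_i$. This yields the clean trichotomy $H_{i-1}=G_i-1$ for $i<2(L-m)$, $H_{i-1}=G_i$ at $i=2(L-m)$, and $H_{i-1}>G_i$ for $2(L-m)<i\leq L$, with no nested feedback to unravel. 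The paper then expands each $H_i$ once via its recursion, applies Brown's criterion to the inner partial sum, uses the shifted comparison to convert the resulting $H$-sum into a $G$-sum, and closes with a short case split on whether $m+3\gtrless 2(L-m)$ (the second case needing the elementary Lemma~\ref{lem:trivialineq}). No layered induction is required.

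Your $d_i$-route is not wrong in principle and could probably be forced through, but the shifted comparison $H_{i-1}\leftrightarrow G_i$ is the missing idea that dissolves exactly the obstacle you identified.
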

\begin{proof}
As $\left(G_n\right)$ is complete, from Brown's criterion, we obtain
\begin{equation}
G_{L+2} = G_{L+1} + \sum_{i=3}^{m+2}G_i + NG_2 \leq 1 + \sum_{i=1}^{L+1}G_i,
\end{equation}
which is equivalent to 
\begin{equation}\label{baseG}
2N \leq \sum_{i = m+3}^L G_i + 4.
\end{equation}
From Lemma~\ref{firstL+2}, it suffices to show that
\begin{equation}
H_{L+1} \leq 1 + \sum_{i=1}^L H_i \qquad\text{and}\qquad H_{L+2} \leq 1 + \sum_{i=1}^{L+1} H_i,
\end{equation}
or equivalently,
\begin{equation}
    N \leq \sum_{i=m+3}^{L-1}H_i\label{easybound}
\end{equation}
and
\begin{equation}
    2N \leq \sum_{i=m+4}^{L}H_i + 2\label{hardbound}.
\end{equation}
We first show \eqref{easybound}. Combining with \eqref{baseG}, it suffices to show that 
\begin{equation}
\sum_{i=m+3}^L G_i + 4 \leq 2\sum_{i=m+4}^L H_i.
\end{equation}
From Lemma~\ref{diffGH}, 
\begin{equation}
\begin{cases}
    G_i \leq H_i, & \text{if $m + 3 \leq i \leq L$;}\\
    G_i \leq H_{i-1} - 1, & \text{if $2(L-m) < i \leq L$}.
\end{cases}
\end{equation}
Thus, 
\begin{align}
    \sum_{i=m+3}^L G_i + 4 &= \sum_{i=m+3}^{2(L-m)}G_i + \sum_{i=2(L-m)+1}^{L}G_i + 4 \nnend
    & \leq \sum_{i=m+3}^{2(L-m)}H_i + \sum_{i=2(L-m)}^{L-1}H_i + (2m - L + 4) \nnend
    & \leq 2\sum_{i=m+4}^L H_i,
\end{align}
where the last inequality can be taken crudely.
We then show \eqref{hardbound}. Similarly, combining with \eqref{baseG}, it suffices to show that
\begin{equation}
\sum_{i=m+3}^L G_i + 2 \leq \sum_{i=m+4}^L H_i.
\end{equation}
If $m + 3 \geq 2(L-m)$, then 
\begin{align}
    \sum_{i=m+4}^LH_i &= \sum_{i=m+4}^L\left(H_{i-1} + \sum_{j=1}^{i-L+m+1}H_j + 1\right) \nnend
    &\geq \sum_{i=m+4}^L(H_{i-1} + H_{i-L + m + 2}) \text{ (Brown's criterion for the first terms)} \nnend
    &= \sum_{i=m+3}^{L-1} H_i + \sum_{i=2m+6-L}^{m+2}H_i \geq \sum_{i=m+3}^{L-1}(G_{i+1} + 1) + H_{m+2} \nnend
    &\geq \sum_{i=m+3}^L G_i + 2. 
\end{align}
If $m + 3 < 2(L-m)$, then 
\begin{align}
    \sum_{i=m+3}^L G_i &= \sum_{i=m+3}^{2(L-m)-1}G_i + G_{2(L-m)} + \sum_{i=2(L-m) +1}^L G_i \nnend
    &= \sum_{i=m+3}^{2(L-m)-1}(H_{i-1} + 1) + H_{2(L-m)-1} + \sum_{i=2(L-m) +1}^L G_i \nnend
    &= \sum_{i=m+2}^{2(L-m)-1}H_i + (2L - 3(m+1)) + \sum_{i=2(L-m) +1}^L G_i.
\end{align}
Thus, our original inequality \eqref{hardbound} holds if we can show that
\begin{equation}
H_{m+2} + H_{m+3} + (2L - 3(m+1)) + \sum_{i=2(L-m) +1}^L G_i \leq \sum_{i=2(L-m)}^L H_i.
\end{equation}
Similarly to the previous case,
\begin{align}
    \sum_{i=2(L-m)}^L H_i &\geq \sum_{i=2(L-m)}^L(H_{i-1} + H_{i-L+m+2}) \nnend
    &= \sum_{i=2(L-m)-1}^{L-1}H_i + \sum_{i=L-m+2}^{m+2}H_i \nnend 
    &= \sum_{i=2(L-m)}^{L-1}H_i + H_{2(L-m)-1} + H_{m+2} + \sum_{i=L-m+2}^{m+1}H_i.
\end{align}
As $2(L-m) - 1 \geq m+3$ and $H_i \geq i$,
\begin{align}
    \sum_{i=2(L-m)}^L H_i &\geq \sum_{i=2(L-m)}^{L-1}(G_{i+1} + 1) + H_{m+3} + H_{m+2} + \sum_{L-m+2}^{m+1}i \nnend
    &= \sum_{i=2(L-m)+1}^L G_i + H_{m+3} + H_{m+2} + \left( 2m - L + \sum_{i=L-m+2}^{m+1}i \right).
\end{align}
From Lemma~\ref{lem:trivialineq},
\begin{equation}
    \sum_{i=2(L-m)}^L H_i \geq \sum_{i=2(L-m)+1}^L G_i + H_{m+3} + H_{m+2} + (2L - 3(m+1)).
\end{equation}
\end{proof}

\section{An analytical approach}\label{roots}

\subsection{An introduction to principal roots}
We begin by restating some results from Martinez, Miller, Mizgerd, Murphy, and Sun \cite{MMMMS}.

\begin{lemma}\label{lma:principalRoot}
Let $P(x)$ be the characteristic polynomial of a recurrence relation with nonnegative coefficients and at least one positive coefficient. Let $S = \{m \ |\ c_m \neq 0\}$. Then
\begin{enumerate}
    \item there exists exactly one positive root $r$, and this root has multiplicity $1$,
    \item every root $z \in \C$ satisfies $|z| \leq r$, and
    \item if $\gcd(S) = 1$, then $r$ is the unique root of greatest magnitude.
\end{enumerate}
\end{lemma}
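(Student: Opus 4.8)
The plan is to work with the polynomial $P(x) = x^L - c_1 x^{L-1} - \cdots - c_L$ and, rather than analyzing $P$ directly, to study the auxiliary rational function $g(x) = c_1/x + c_2/x^2 + \cdots + c_L/x^L$ obtained by dividing $P(x) = 0$ by $x^L$. A positive real number $r$ is a root of $P$ if and only if $g(r) = 1$. For part (1), I would observe that on $(0,\infty)$ the function $g$ is continuous, strictly decreasing (each term $c_m/x^m$ is nonincreasing and at least one, namely $c_1/x$ since $c_1 > 0$, is strictly decreasing), tends to $+\infty$ as $x \to 0^+$, and tends to $0$ as $x \to \infty$. By the intermediate value theorem there is exactly one $r > 0$ with $g(r) = 1$; strict monotonicity gives uniqueness. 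Multiplicity $1$ follows because $P'(r) \neq 0$: differentiating $x^L g(x) = x^L - P(x)$, or more directly, noting that $g'(r) < 0$ forces $r$ to be a simple zero of $x^L(g(x)-1) = -P(x)$.

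For part (2), let $z \in \C$ be any root, so $1 = \sum_{m \in S} c_m z^{-m}$. Taking absolute values and using the triangle inequality,
\begin{equation}
1 \ = \ \left| \sum_{m \in S} \frac{c_m}{z^m} \right| \ \leq \ \sum_{m \in S} \frac{c_m}{|z|^m} \ = \ g(|z|).
\end{equation}
Since $g$ is strictly decreasing on $(0,\infty)$ with $g(r) = 1$, the inequality $g(|z|) \geq 1 = g(r)$ forces $|z| \leq r$. (Here I am implicitly using $z \neq 0$, which holds because $c_L \neq 0$ means $0$ is not a root.)

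For part (3), suppose $\gcd(S) = 1$ and let $z$ be a root with $|z| = r$; I want to show $z = r$. Equality $|z| = r$ combined with the computation above forces equality in the triangle inequality, so all the complex numbers $c_m/z^m$ for $m \in S$ must be nonnegative real multiples of one another; since they have positive modulus and $g(r) = \sum c_m/r^m$, each $c_m/z^m$ must in fact be a positive real, equal to $c_m/r^m$. Writing $z = re^{i\theta}$, this says $e^{-im\theta} = 1$, i.e.\ $m\theta \in 2\pi\Z$, for every $m \in S$. Taking an integer combination of the $m$'s realizing $\gcd(S) = 1$ gives $\theta \in 2\pi\Z$, hence $z = r$. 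The main obstacle is making the equality-in-the-triangle-inequality step fully rigorous — one must argue carefully that equality in $|\sum w_m| \leq \sum |w_m|$ for finitely many nonzero $w_m$ forces them to be positive real multiples of a common unit vector, and then pin down that common direction using the known value $g(r) = 1$; everything else is routine real analysis and an elementary number-theoretic argument with $\gcd$.
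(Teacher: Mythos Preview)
Your argument is correct and is the standard proof of this Perron--Frobenius type fact; the paper itself does not give a proof at all but simply cites Lemma~2.1 of Martinez, Miller, Mizgerd, Murphy, and Sun~\cite{MMMMS}. So there is nothing to compare against beyond noting that you have supplied what the paper outsources.

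Two small points of hygiene. First, you justify strict monotonicity of $g$ by asserting $c_1 > 0$, but the lemma as stated only assumes some $c_m > 0$; the fix is immediate (any single positive $c_m$ makes $c_m/x^m$ strictly decreasing, hence $g$ strictly decreasing). Second, you invoke $c_L \neq 0$ to rule out $z = 0$ in part~(2); again the lemma does not assume this, but the case $z = 0$ is trivial since $|0| = 0 < r$. Neither affects the substance of your proof.
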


\begin{proof}
This is Lemma 2.1 from Martinez, Miller, Mizgerd, Murphy, and Sun \cite{MMMMS}.
\end{proof}

\begin{remark}
We refer to the unique positive root from Lemma~\ref{lma:principalRoot} as the \emph{principal root} of the recurrence sequence and corresponding characteristic polynomial.
\end{remark}

\begin{lemma}
Let $P(x)$ be the characteristic PLRS $\left(H_n\right)$ and let $r_1$ be its principal root. Then
\begin{equation}
\lim_{n \to \infty} \frac{H_n}{r_1^n} = C
\end{equation}
for some constant $C > 0$.
\end{lemma}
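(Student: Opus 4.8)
The plan is to pass to the ordinary generating function $F(x)=\sum_{n\ge 1}H_nx^n$ and run a standard singularity analysis. First I would use Definition~\ref{defn:goodrecurrencereldef}: the homogeneous recurrence $H_m=c_1H_{m-1}+\cdots+c_LH_{m-L}$ for $m\ge L+1$, together with the ``$+1$'' initial relations $H_m=c_1H_{m-1}+\cdots+c_{m-1}H_1+1$ for $2\le m\le L$ and $H_1=1$. A routine telescoping of $(1-c_1x-\cdots-c_Lx^L)F(x)$ then kills every coefficient past $x^L$ and leaves
\begin{equation}
F(x)\ =\ \frac{A(x)}{B(x)},\qquad A(x)=x+x^2+\cdots+x^L,\qquad B(x)=1-c_1x-c_2x^2-\cdots-c_Lx^L.
\end{equation}
The structural point I would emphasize is that $B(x)=x^LP(1/x)$, so the poles of $F$ are exactly the reciprocals of the roots of the characteristic polynomial $P$, with matching multiplicities.

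Next I would locate the dominant singularity. Since a PLRS has $c_1>0$, the index $1$ lies in $S=\{m:c_m\ne 0\}$, so $\gcd(S)=1$, and Lemma~\ref{lma:principalRoot} applies: $r_1$ is the unique root of $P$ of largest modulus and it is simple. Hence $x_0=1/r_1$ is the unique pole of $F$ of smallest modulus, it is simple, and it is genuine (non-removable) because $A(1/r_1)=\sum_{k=1}^{L}r_1^{-k}\ne 0$. Writing the partial-fraction form $F(x)=\tfrac{C}{1-r_1x}+G(x)$, where $G$ is analytic on a disk of radius $\rho^{-1}>r_1^{-1}$ (with $\rho<r_1$ the second-largest modulus among roots of $P$), and extracting coefficients gives $H_n=Cr_1^n+[x^n]G(x)$ with $[x^n]G(x)=o(r_1^n)$. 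Therefore $\lim_{n\to\infty}H_n/r_1^n=C$, and moreover $C=\lim_{x\to 1/r_1}(1-r_1x)F(x)$.

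It then remains to show $C>0$. Applying L'H\^opital's rule to $(1-r_1x)A(x)/B(x)$ at $x=1/r_1$ (numerator and denominator each vanish to order exactly one there) gives
\begin{equation}
C\ =\ \frac{-\,r_1\,A(1/r_1)}{B'(1/r_1)}.
\end{equation}
Now $A(1/r_1)=r_1^{-1}+r_1^{-2}+\cdots+r_1^{-L}>0$ since $r_1>0$, so the numerator is negative; and $B'(x)=-\bigl(c_1+2c_2x+\cdots+Lc_Lx^{L-1}\bigr)\le -c_1<0$ for every $x>0$ (again using $c_1>0$), so $B'(1/r_1)<0$. Thus $C$ is a quotient of two negative numbers, hence $C>0$.

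The step requiring the most care is justifying $H_n=Cr_1^n+o(r_1^n)$ from ``$1/r_1$ is a simple pole of $F$'': this is precisely where strict dominance of $r_1$ is needed, since any further root of $P$ on the circle $|z|=r_1$ would make $H_n/r_1^n$ oscillate rather than converge — but for a PLRS strict dominance is automatic via $c_1>0$ and Lemma~\ref{lma:principalRoot}(3). As an alternative route avoiding generating functions, one can argue by Perron--Frobenius: the $L\times L$ companion matrix $M$ of the recurrence is nonnegative, its digraph is strongly connected because $c_L>0$ supplies a cycle through all $L$ states, and it is aperiodic because $c_1>0$ supplies a self-loop at the first state; hence $M$ is primitive, $M^n/r_1^n$ converges to a strictly positive rank-one matrix, and since the state vector $(H_n,\dots,H_{n-L+1})^{T}$ has strictly positive entries the limit $C$ is a strictly positive number. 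I would present the generating-function argument as the main proof and record the matrix argument as a remark.
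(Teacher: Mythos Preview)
Your proof is correct and complete. The generating-function computation $F(x)=(x+\cdots+x^L)/(1-c_1x-\cdots-c_Lx^L)$ is right (the ``$+1$'' in the initial conditions is exactly what makes the numerator come out so cleanly), the identification of $1/r_1$ as the unique dominant simple pole via Lemma~\ref{lma:principalRoot}(3) is the key structural input, and your sign analysis of $C=-r_1A(1/r_1)/B'(1/r_1)$ is clean. The Perron--Frobenius alternative is also valid; primitivity of the companion matrix follows precisely from $c_1>0$ (self-loop) and $c_L>0$ (long cycle), as you say.

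The paper, by contrast, does not prove this lemma at all: it simply cites Corollary~2.3 of \cite{MMMMS}, which establishes a stronger asymptotic. So your write-up is strictly more self-contained than what appears here. If you want to match the paper's level of detail you could replace your argument by a one-line citation; conversely, if the goal is an independent exposition, either of your two arguments would serve, with the Perron--Frobenius route perhaps being the more economical to state since it avoids the residue computation entirely and gives $C>0$ for free from positivity of the Perron eigenvectors and of the initial state vector $(H_L,\dots,H_1)$.
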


\begin{proof}
Corollary 2.3 from \cite{MMMMS} proves a stronger result than this, which immediately implies this lemma.
\end{proof}

\begin{lemma}\label{lma:principalRootContributes}
Let $P(x)$ be the characteristic polynomial of a PLRS $\left(H_n\right)$ with roots $r_i$, each of multiplicity $m_i$, where $r_1$ is the principal root. If
\begin{equation}\label{eqn:explicitHn}
H_n = a_1 r_1^n + \sum_{i=2}^k q_i(n)r_i^n, 
\end{equation}
where $q_i(x)$ is a polynomial of degree at most $m_i - 1$, then $a_1 > 0$.
\end{lemma}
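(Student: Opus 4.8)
The plan is to deduce this directly from the two preceding lemmas on principal roots, together with the fact that a PLRS always has $c_1 > 0$. The point is that for a PLRS the set $S = \{m \mid c_m \neq 0\}$ contains $1$, so $\gcd(S) = 1$, and hence part (3) of Lemma~\ref{lma:principalRoot} applies: $r_1$ is not merely of maximal modulus but is the \emph{unique} root of greatest magnitude. Thus $|r_i| < r_1$ strictly for every $i \geq 2$.

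From here I would divide the explicit expansion \eqref{eqn:explicitHn} by $r_1^n$ to get
\begin{equation}
\frac{H_n}{r_1^n} \ = \ a_1 + \sum_{i=2}^k q_i(n)\left(\frac{r_i}{r_1}\right)^{\! n}.
\end{equation}
For each $i \geq 2$, write $\rho_i = |r_i|/r_1 < 1$; since $q_i$ is a fixed polynomial (of degree at most $m_i - 1$), we have $|q_i(n)(r_i/r_1)^n| \leq |q_i(n)|\rho_i^n \to 0$ as $n \to \infty$, because exponential decay dominates polynomial growth. Therefore the entire sum tends to $0$, and $\lim_{n\to\infty} H_n/r_1^n = a_1$. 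On the other hand, the second lemma in this subsection (the one giving $\lim_{n\to\infty} H_n/r_1^n = C$ for some $C > 0$) identifies this limit as a strictly positive constant. Combining, $a_1 = C > 0$, which is exactly the claim.

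There is essentially no hard step here; the only thing to be careful about is justifying that part (3) of Lemma~\ref{lma:principalRoot} is available, i.e.\ explicitly invoking $c_1 > 0$ from Definition~\ref{defn:goodrecurrencereldef} so that $\gcd(S)=1$ — without strict inequality $|r_i| < r_1$ the vanishing of the error terms could fail (e.g.\ a root $-r_1$ of equal modulus would leave an oscillating contribution), and the limit argument would break down. A secondary minor point is to note that $a_1$ is genuinely a constant rather than a polynomial coefficient, which is consistent with $r_1$ having multiplicity $1$ by part (1) of Lemma~\ref{lma:principalRoot}. Everything else is a routine estimate on polynomial-times-geometric sequences, which I would state in a line rather than belabor.
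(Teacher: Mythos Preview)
Your proof is correct and follows essentially the same route as the paper: both invoke $c_1>0$ to get $\gcd(S)=1$ and hence strict inequality $|r_i|<r_1$ for $i\geq 2$, and both appeal to the preceding limit lemma. The only cosmetic difference is that the paper splits into the cases $a_1<0$ (ruled out because $H_n$ would eventually be negative) and $a_1=0$ (ruled out because the limit would be $0$), whereas you compute $\lim H_n/r_1^n=a_1$ in one stroke and identify it with $C>0$; your version is slightly more streamlined but not a different argument.
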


\begin{proof}
First, note that the set $S$ of Lemma~\ref{lma:principalRoot} contains $1$ because $c_1 > 0$ in a PLRS. Therefore $\gcd(S) = 1$, and $r_1$ is the unique root of greatest magnitude. If $a_1 < 0$, then this implies that $H_n < 0$ for some $n$ because the behavior of $a_1 r_1^n$ eventually dominates the expression for $H_n$ in (\ref{eqn:explicitHn}). If $a_1 = 0$, then
\begin{equation}
\lim_{n \to \infty} \frac{H_n}{r_1^n} = 0
\end{equation}
because $r_1$ is the unique root of greatest magnitude, so if $a_1 = 0$ then the behavior of $H_n$ is bounded by geometric growth of the root of next greatest magnitude, which is necessarily smaller than $r_1^n$. Thus, $a_1 > 0$.
\end{proof}

\subsection{Applications to completeness}
Given these results, we see that the principal root of a PLRS serves as a measure for the rate of that sequence's growth. Guided by the simple heuristic that, generally, a sequence which grows slowly is more likely to be complete than a sequence which grows rapidly, we find bounds for the potential roots of a complete or incomplete PLRS. We aim to answer these questions: for any given $L$, what is the fastest-growing complete PLRS with $L$ coefficients? What is the slowest-growing incomplete PLRS with $L$ coefficients? While the principal root of a PLRS has not been related to completeness before, there is previous work on bounding the principal root of other linear recurrence sequences by Gewurz and Merola \cite{GM}.

\begin{lemma}\label{thm:CompleteCriterionRoots}
If $\left(H_n\right)$ is a complete PLRS and $r_1$ is its principal root, then $|r_1| \leq 2$.
\end{lemma}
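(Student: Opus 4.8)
The plan is to use Brown's criterion together with the asymptotic growth estimate $H_n \sim C r_1^n$ to show that a principal root exceeding $2$ forces Brown's criterion to fail eventually. First I would recall from Lemma~\ref{lma:principalRoot} that $r_1$ is real and positive, so the claim $|r_1| \le 2$ is just $r_1 \le 2$. Suppose for contradiction that $r_1 > 2$. By the asymptotic lemma, there is a constant $C > 0$ with $H_n / r_1^n \to C$; equivalently, for any $\varepsilon > 0$ we have $(C - \varepsilon) r_1^n \le H_n \le (C + \varepsilon) r_1^n$ for all sufficiently large $n$.

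Next I would bound the partial sums. Since $\sum_{i=1}^{n-1} H_i$ is (up to lower-order terms from the other roots, which are dominated because $r_1$ is the unique root of largest magnitude when $\gcd(S)=1$, as holds for a PLRS since $c_1>0$) comparable to a geometric series with ratio $r_1$, we get
\begin{equation}
\sum_{i=1}^{n-1} H_i \ \le\ (C+\varepsilon)\sum_{i=1}^{n-1} r_1^i \ \le\ (C+\varepsilon)\,\frac{r_1^{n}}{r_1 - 1}.
\end{equation}
On the other hand $H_n \ge (C-\varepsilon) r_1^n$. Brown's criterion requires $H_n \le 1 + \sum_{i=1}^{n-1} H_i$ for all $n$, so completeness would force
\begin{equation}
(C-\varepsilon)\, r_1^n \ \le\ 1 + (C+\varepsilon)\,\frac{r_1^{n}}{r_1-1}
\end{equation}
for all large $n$. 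Dividing by $r_1^n$ and letting $n \to \infty$ (then $\varepsilon \to 0$) yields $C \le C/(r_1 - 1)$, i.e. $r_1 - 1 \le 1$, i.e. $r_1 \le 2$, contradicting $r_1 > 2$. Hence $r_1 \le 2$.

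The main obstacle is making the partial-sum estimate rigorous: one must control the contribution of the non-principal roots $r_i$ to $\sum H_i$ and to $H_n$ itself, i.e. justify that the $r_1$-term genuinely dominates both. This is exactly what Lemma~\ref{lma:principalRootContributes} and the remark that $\gcd(S)=1$ for a PLRS provide — $a_1 > 0$ and $|r_i| < r_1$ for $i \ge 2$ — so the error terms are $O((r_1 - \delta)^n)$ for some $\delta > 0$ and wash out after dividing by $r_1^n$. A minor secondary point is the case $r_1 = 2$, which is allowed by the statement (the inequality is non-strict) and causes no trouble since we only derive a contradiction from $r_1 > 2$; and one should note the degenerate case where there are no non-principal roots (e.g. $L=1$), where the estimate is exact and the argument is even simpler.
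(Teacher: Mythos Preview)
Your argument is correct. The paper's proof follows the same overall strategy---assume $r_1>2$, use that $H_n\sim a_1 r_1^n$ with $a_1>0$, and derive a contradiction with Brown's criterion---but packages the final step differently: rather than bounding $\sum_{i<n}H_i$ by a geometric series and passing to the limit, it simply observes that $r_1>2$ forces $H_n>2^{n-1}$ for some large $n$, and then invokes Lemma~\ref{clm:largestCompleteGaps} (any complete nondecreasing sequence satisfies $H_n\le 2^{n-1}$). This sidesteps your partial-sum estimate entirely, so there is no need to worry about the small-$i$ terms where the asymptotic bound $H_i\le (C+\varepsilon)r_1^{i}$ has not yet kicked in. Your route is perfectly fine once you add the constant correction for those finitely many early terms (which vanishes after dividing by $r_1^n$), but the paper's version is a line shorter because the geometric-series work is already baked into the $2^{n-1}$ maximality lemma.
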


\begin{proof}
Suppose that $|r_1| > 2$. Set
\begin{equation}
	H_n = a_1r_1^{n}+q_2 (n)r_2^n + \cdots + q_r (n)r_k^n.
\end{equation}
Since $r_1$ is the unique root of largest magnitude by Lemma~\ref{lma:principalRoot}, the behavior of $a_1 r_1^n$ dominates in the limit. By Lemma~\ref{lma:principalRootContributes}, $a_1 > 0$, so if $|r_1| > 2$, then eventually $|a_1 r_1^n| > 2^{n - 1}$, and so there exists a large $n$ for which $H_{n}>2^{n-1}$. As the sequence $\left( 2^{n-1} \right)$ is the complete PLRS with maximal terms by Theorem~\ref{clm:largestCompleteGaps}, we see $\left( H_{n} \right)$ must be incomplete.
\end{proof}

\begin{remark}
The converse to this lemma does not hold. A counterexample is $[1, 1, 1, 0, 4]$, which has principal root 2 but is not complete.
\end{remark}

While the proof is simple, this lemma gives us an effective upper bound for the roots of a complete PLRS, regardless of length. Recall from Theorem~\ref{basic} that for any $L$, the PLRS $\left( H_{n} \right)$ generated by the coefficients $[\underbrace{1,\ldots , 1}_{L-1},2]$ satisfies $H_{n}=2^{n-1}$. This sequence naturally has a principal root of 2, and is complete. Similarly, for any $L \geq 1$, the sequence $[\underbrace{1,\ldots , 1}_{L}]$ is complete, and its principal root asymptotically approaches 2 as $L$ grows.

We now focus on finding a lower bound for the roots of an incomplete sequence, which proves to be a more difficult problem.

\begin{lemma}\label{boundexists}
For any $ L\in \Z_{>0}$, there exists a constant $ B_{L}$, with $  1< B_{L}<2$ such that if $\left( H_{n} \right)$ is a PLRS with principal root $r_1$ and $r_1<B_{L}$, then $\left( H_{n} \right)$ is complete.
\end{lemma}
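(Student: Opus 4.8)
The plan is to show that for each fixed $L$, there are only finitely many ``relevant'' coefficient vectors $[c_1,\dots,c_L]$ to worry about when the principal root is bounded away from $2$, and that each of these, if incomplete, forces the principal root to be at least some explicit constant exceeding $1$; taking $B_L$ to be the minimum of these constants (intersected with the interval $(1,2)$) gives the result. First I would record the basic monotonicity fact: if $[c_1,\dots,c_L]$ is coordinatewise $\leq [c_1',\dots,c_L']$, then the principal root of the former is $\leq$ that of the latter (this follows from Lemma~\ref{lma:principalRoot}, since $P(x) = x^L - c_1x^{L-1} - \cdots - c_L$ is strictly decreasing in each $c_i$ at the positive root, or directly from the characterization of $r$ as the unique positive solution of $\sum_{i} c_i x^{-i} = 1$). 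Consequently, large coefficients produce large roots: if any single $c_i \geq M$ then $r_1 \geq M^{1/L}$ at least, and more carefully one can get $r_1$ arbitrarily close to a value $>1$ by taking the coefficient large. So fix a threshold $M = M(L)$; I claim any PLRS with some coefficient exceeding $M$ already has $r_1$ bounded below by a constant $\beta(M,L) \in (1,2)$ with $\beta(M,L) \to $ something $>1$; concretely $r_1 > (1 + 1/M)^{?}$ — the exact form is a routine estimate on $P$.

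Next, among the finitely many coefficient vectors $[c_1,\dots,c_L]$ with all $c_i \leq M$, consider those that generate an \emph{incomplete} PLRS. There are finitely many of them; let $r^{(1)}, \dots, r^{(t)}$ be their principal roots. Each such root is a real algebraic number in $(1,2]$ — it is $>1$ because $c_1 \geq 1$ (so $P(1) = 1 - \sum c_i \le 0$ forces $r_1 \ge 1$, and $r_1 = 1$ only if all but $c_1$ vanish and $c_1=1$, i.e.\ the complete sequence $[1,\dots]$... so in fact $r_1 > 1$ strictly for any PLRS that isn't literally a tail of ones, and for the all-ones sequence the root is $<2$ but the sequence is complete, hence excluded), and $\leq 2$ is automatic only if... actually no: incomplete sequences can have root $>2$, which is fine for a lower bound. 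Set $\rho = \min_j r^{(j)}$; this is a minimum of finitely many reals each $>1$, hence $\rho > 1$. Now choose $B_L = \min\{\rho,\ \beta(M,L),\ 2\} - \varepsilon$ for a tiny $\varepsilon$, or more cleanly $B_L = \min\{\rho, \beta(M,L)\}$ after arranging $\beta(M,L) < 2$; either way $1 < B_L < 2$. The verification: if $(H_n)$ has $r_1 < B_L$, then in particular $r_1 < \beta(M,L)$, so all coefficients are $\leq M$ by the contrapositive of the previous paragraph; and $r_1 < \rho$, so $[c_1,\dots,c_L]$ is not one of the incomplete vectors with bounded coefficients — hence it is complete.

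The main obstacle is making the ``large coefficient forces large root'' step quantitative and uniform in a way that interacts correctly with the finiteness argument: I need a single threshold $M(L)$ and a single constant $\beta(M,L) < 2$ such that \emph{every} PLRS of length $L$ with a coefficient $> M$ has $r_1 \geq \beta(M,L)$, and simultaneously $\beta(M,L) > 1$. The cleanest route is: if $c_i \geq M$ for some $i \leq L$, then by monotonicity $r_1 \geq $ the principal root of $[0,\dots,0,M]$ placed appropriately — but that breaks the PLRS shape (needs $c_1 > 0$), so instead compare against $[1,0,\dots,0,M]$ or just use the defining equation $\sum_j c_j r_1^{-j} = 1$ directly to get $r_1^{-i} \leq 1/M$, i.e.\ $r_1 \geq M^{1/i} \geq M^{1/L}$; then choose $M$ large enough that $M^{1/L} > 1$ comfortably but note this only gives a lower bound $>1$, not $<2$ — the $<2$ side must come from the other min, so I should define $B_L = \min\{\rho, 2\}$ and handle the large-coefficient case by observing $M^{1/L} \geq $ some fixed value, then pick $B_L$ smaller than \emph{both} $\rho$ and $M^{1/L}$ and less than $2$, which is possible since all three quantities exceed $1$. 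So the real content is just bookkeeping: (i) monotonicity of $r_1$ in the coefficients, (ii) $r_1 > 1$ for any genuine PLRS, (iii) finiteness of bounded-coefficient vectors, (iv) large coefficient $\Rightarrow r_1$ bounded below by $M^{1/L}$. None of these is deep; the lemma is essentially a compactness/finiteness packaging of facts already available.
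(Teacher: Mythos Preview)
Your argument is correct and follows essentially the same finiteness strategy as the paper: only finitely many coefficient vectors $[c_1,\dots,c_L]$ can produce a small principal root, so the infimum of roots of incomplete sequences among these is actually a minimum strictly exceeding $1$, and any number below it works as $B_L$.

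The paper's execution is tighter, though. Rather than introducing an auxiliary threshold $M$ and the estimate $r_1 \geq M^{1/L}$, it observes directly that $r_1 < 2$ is equivalent (by the intermediate value theorem, since $r_1$ is the unique positive root) to
\[
p(2) \;=\; 2^L - \sum_{i=1}^L c_i\,2^{L-i} \;>\; 0,
\]
which immediately forces $c_i < 2^i$ for every $i$. This pins the finiteness to the natural threshold $2$ already singled out by Lemma~\ref{thm:CompleteCriterionRoots}, and eliminates the need to juggle two separate constants $\rho$ and $\beta(M,L)$: one simply takes $B_L$ to be the smallest principal root among the finitely many incomplete PLRS of length $L$ with root below $2$ (or any value in $(1,2)$ if there are none). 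Your route works, but the extra parameter $M$ is doing no real work once you notice that the bound $r_1 < 2$ itself already constrains the coefficients.
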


\begin{remark}
    This means that for any $L$, there exists a lower bound $B_{L}$ on possible values of the principal root of an incomplete PLRS generated by $[c_1,\ldots, c_{L}]$.
\end{remark}

\begin{proof}
	In order to show that such a $B_{L}$ exists, it suffices to show that for any given $L$, there exists only finitely many incomplete positive linear recurrence sequences generated by $[c_1,\dots, c_{L}]$ with principal root $r_1 <2$.

	Recall that the principal root $r_1$ of a PLRS is the single positive root of the characteristic polynomial $p(x)=x^{L}-\sum_{i=1}^{L}c_{i}x^{L-i}$. As $\lim_{x \rightarrow \infty }p(x)=+\infty $, the fact that $r_1$ is the unique positive root of $p(x)$ implies that $r_1<2 \iff p(2)>0$, by IVT. Note that
	\begin{equation}
		p(2)=2^{L}-\sum_{i=1}^{L}c_{i}2^{L-i}>0 \iff \sum_{i=1}^{L}c_{i}2^{L-i}<2^{L}.
	\end{equation} 
	As for all $i$, $c_{i}\geq 0$, so the inequality above cannot hold if there exists $i$ such that $c_{i}\geq 2^{i}$. As the set $\{ [c_1,\dots, c_{L} ]: 0\leq c_{i}\leq 2^{i} \text{ for all } i\}$ of such sequences is finite, we are done.
\end{proof}

The remainder of this section is a series of lemmas which build towards the following conjecture.
\begin{conjecture}\label{final}
	Let $N_{L}=\left\lceil L(L+1)/4 \right\rceil$, and let $\lambda_{L} $ be the principal root of the sequence generated by $[1,\underbrace{0,\ldots , 0}_{L-2}, N_{L} +1]$, i.e., the sole principal root of
    \begin{equation}
	p_{L}(x)=x^{L}-x^{L-1}-\left\lceil \frac{L(L+1)}{4} \right\rceil -1.
    \end{equation}
	Then, if $[c_1,\ldots , c_{L}]$ generates an incomplete sequence, then its principal root is at least $\lambda_{L}$; in other words, the incomplete sequence of length $L$ with the smallest possible principal root is precisely $[1,\underbrace{0,\ldots , 0}_{L-2}, N_{L} +1]$. 
\end{conjecture}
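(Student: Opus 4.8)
The plan is to translate the comparison of principal roots into a single polynomial inequality, dispose of the easy cases, and then pin down the extremal incomplete sequence by an extremality/exchange argument. Write $p(x)=x^{L}-\sum_{i=1}^{L}c_i x^{L-i}$ for the characteristic polynomial of $[c_1,\dots,c_L]$ and $r_1$ for its principal root (take $L\ge 2$, so the sequence in the statement is well defined). By Lemma~\ref{lma:principalRoot}, $p$ has a unique positive root; since $p(0)=-c_L<0$ and $p(x)\to+\infty$, we have $p<0$ on $(0,r_1)$ and $p>0$ on $(r_1,\infty)$. Hence $r_1\ge\lambda_L$ is equivalent to $p(\lambda_L)\le 0$, i.e., to
\begin{equation}\label{eq:rootproposal}
\sum_{i=1}^{L}c_i\,\lambda_L^{\,L-i}\ \ge\ \lambda_L^{\,L}\ =\ \lambda_L^{\,L-1}+N_L+1,
\end{equation}
the last equality being just $p_L(\lambda_L)=0$. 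So it suffices to establish \eqref{eq:rootproposal} for every incomplete PLRS $[c_1,\dots,c_L]$.

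The case $c_1\ge 2$ is immediate: then $p(x)\le x^{L}-2x^{L-1}-1$ for $x\ge 0$ (the other coefficients are nonnegative and $c_L\ge 1$), and the unique positive root of $x^{L}-2x^{L-1}-1$ exceeds $2$, so $r_1>2$; for $L\ge 3$ one checks directly that $p_L(2)=2^{L-1}-N_L-1\ge 0$, hence $\lambda_L\le 2<r_1$. For $L=2$, Theorem~\ref{basic} shows that incompleteness forces $c_1\ge 2$ or ($c_1=1$ and $c_2\ge 3$), and in either case a one-line computation gives $r_1\ge\lambda_2=(1+\sqrt{13})/2$. So from now on $L\ge 3$ and $c_1=1$, and \eqref{eq:rootproposal} reduces to $\sum_{i=2}^{L}c_i\,\lambda_L^{\,L-i}\ge N_L+1$.

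For this main case my proposed route is extremality. The principal root is strictly increasing in each coefficient (raising $c_i$ strictly lowers $p$ on $(0,\infty)$, hence raises its root); by Lemma~\ref{boundexists} only finitely many PLRS of length $L$ have principal root below $2$, and any incomplete PLRS with $r_1\ge 2\ge\lambda_L$ already satisfies \eqref{eq:rootproposal}, so we may assume $r_1<2$ and then the minimum of $r_1$ over incomplete length-$L$ PLRS with $c_1=1$ is attained, say at $[1,c_2,\dots,c_L]$. I would then argue that at such a minimizer $c_2=\dots=c_{L-1}=0$: if some middle coefficient $c_j$ with $2\le j\le L-1$ were positive, decreasing $c_j$ by $1$ leaves room to increase $c_L$ by up to $\lfloor r_1^{\,L-j}\rfloor$ without raising the principal root, and one would need to show this room always suffices to keep the sequence incomplete, contradicting minimality. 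Once the middle coefficients vanish, Theorem~\ref{thm:1onekzero} (with $k=L-2$) forces $c_L=N_L+1$ at the minimizer, so the minimizer is exactly $[1,\underbrace{0,\dots,0}_{L-2},N_L+1]$, whose principal root is $\lambda_L$; this is the assertion.

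The hard step is the exchange: moving weight from a middle coefficient onto $c_L$ while preserving incompleteness. Incompleteness is badly non-monotone in the coefficients — the paper's own example $[1,0,0,0,0,0,15]$ (incomplete), $[1,1,0,0,0,0,15]$ (complete), $[1,2,0,0,0,0,15]$ (incomplete) rules out any coordinatewise monotonicity — and changing $c_j$ and $c_L$ perturbs every term $H_n$ in a correlated way, so Brown's criterion must be re-examined after the exchange. One concrete attack would be to show that the first position at which Brown's criterion fails for a length-$L$ PLRS is governed by an inequality of the shape ``$2N'>(\text{sum of early terms})$'', in the spirit of the reduction used to prove Theorem~\ref{thm:1onekzero}, and that shifting weight onto $c_L$ only relaxes it; but controlling where the first failure occurs for an arbitrary coefficient pattern is essentially the content of the ``$2L-1$ conjecture'' (Conjecture~\ref{2Lcrit}), which is itself open, and this is the real obstruction — presumably the reason the statement is posed only as a conjecture.
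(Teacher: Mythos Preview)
Your reformulation via $p(\lambda_L)\le 0$, the disposal of $c_1\ge 2$ and of $L=2$, and the finiteness/extremality setup are all correct, and you are right that the exchange step --- pushing weight off a middle coefficient while keeping the sequence incomplete --- is the genuine obstruction. The paper does not prove this statement either; it is stated as a conjecture, with only partial progress.

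The paper's partial attack is organized differently from yours, and in one regime it gets strictly further. Instead of an exchange tied to incompleteness, the paper first proves (Lemma~\ref{PLSRoots}) that among \emph{all} length-$L$ PLRS with a fixed coefficient sum $S+1$, the configuration $[1,0,\dots,0,S]$ minimizes the principal root; this is essentially your exchange move, but as a pure statement about roots, with no need to track completeness. Combined with monotonicity of the root in $S$ (Lemma~\ref{DecreaseLastDecreasesRoot}), this already proves the conjecture outright for every incomplete sequence with $\sum_i c_i\ge N_L+2$ (Lemma~\ref{minprincipalroot}): its root dominates that of $[1,0,\dots,0,\sum_i c_i-1]$, which in turn dominates $\lambda_L$. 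Your extremality argument is doing unnecessary work in this regime --- you are trying to preserve incompleteness along the exchange path, when in fact the root comparison alone suffices there.

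For the residual small-sum case $\sum_i c_i\le N_L+1$, the paper does not use extremality at all but instead sets up an induction on $L$ (the partial proof of Conjecture~\ref{finalsegundamitad}): drop $c_{L+1}$, apply the large-sum lemma or the inductive hypothesis to $[c_1,\dots,c_L]$, and then use that appending a coefficient strictly increases the root (Lemma~\ref{addingcoeffdecreasesroot}) together with $\lambda_L>\lambda_{L+1}$ (Lemma~\ref{rootbounddecreases}). The sole missing subcase is when the truncation $[c_1,\dots,c_L]$ is complete but $[c_1,\dots,c_{L+1}]$ is incomplete. That gap is a cousin of yours but not the same: yours asks whether a single coefficient exchange preserves incompleteness, while the paper's asks how to bound the root of a sequence whose incompleteness is created entirely by its last coefficient. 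Neither is resolved, and neither obviously reduces to the $2L-1$ conjecture, though your instinct that control over the location of the first Brown failure is the key missing ingredient is well aligned with the paper's discussion.
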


\begin{remark}
This conjecture is equivalent to stating $B_{L}=\lambda _{L}$ for all $L\geq 2$, where $B_{L}$ is the bound proposed in Lemma~\ref{boundexists}.
\end{remark}

\begin{remark}
By using Theorem~\ref{thm:1onekzero}, it is easy to see that the sequence generated by $[1,0,\ldots , 0, N_{L} +1]$ is incomplete; in fact, the value $N_{L}+1$ is the minimal positive integer such that a sequence of this form is incomplete.
\end{remark}

As a first step towards a proof of Conjecture~\ref{final}, we prove Lemma~\ref{minprincipalroot}, which addresses the case of sequences with a large sum in coefficients.

\begin{definition}
For positive integers $S,L$, we define the set of positive linear recurrence sequences 
\begin{equation}
P_{L,S} \coloneqq \biggl\{ \left(H_n\right) \text{ generated by } [c_1,\ldots, c_{L}]\ \bigg| \  \sum_{i=1}^{L}c_{i}=S+1  \biggr\}.
\end{equation} 
\end{definition}

\begin{lemma}\label{PLSRoots}
The sequence in $P_{L,S}$ with the minimal principal root is $[1,0,\ldots, 0,S]$.
\end{lemma}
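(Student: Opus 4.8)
The plan is to compare the principal root of an arbitrary sequence $(H_n)$ in $P_{L,S}$ with that of the benchmark sequence $(G_n)$ generated by $[1,0,\ldots,0,S]$, using the characterization of the principal root via the sign of the characteristic polynomial at a point (as in the proof of Lemma~\ref{boundexists}). Write $p(x) = x^L - \sum_{i=1}^L c_i x^{L-i}$ for the characteristic polynomial of $(H_n)$ and $p_G(x) = x^L - x^{L-1} - S$ for that of $(G_n)$. Since each of these polynomials has a unique positive root, and since $p(x), p_G(x) \to +\infty$ as $x \to \infty$, it suffices to show that at the principal root $r_G$ of $(G_n)$ we have $p(r_G) \leq 0$; by the intermediate value theorem this forces the principal root $r$ of $(H_n)$ to satisfy $r \geq r_G$.

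First I would record that $r_G > 1$: indeed $p_G(1) = 1 - 1 - S = -S < 0$, so the positive root exceeds $1$. Then the core estimate is to bound $\sum_{i=1}^L c_i r_G^{L-i}$ from below by $r_G^{L-1} + S$, which would give $p(r_G) = r_G^L - \sum_{i=1}^L c_i r_G^{L-i} \leq r_G^L - r_G^{L-1} - S = p_G(r_G) = 0$. Here is where the constraint $\sum_{i=1}^L c_i = S+1$ together with $c_1 \geq 1$ enters: we have $c_1 \geq 1$ contributing at least $r_G^{L-1}$, and the remaining coefficients sum to $S$, each attached to a power $r_G^{L-i}$ with $i \geq 2$. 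Since $r_G > 1$, every such power satisfies $r_G^{L-i} \geq r_G^{L-i}$... but this is the wrong direction — the powers $r_G^{L-i}$ for $i \geq 2$ are at most $r_G^{L-2} < r_G^{L-1}$, so distributing the mass $S$ among them gives at most $S \cdot r_G^{L-2}$, not at least $S$. So I would instead use that each $r_G^{L-i} \leq r_G^{L-2}$ is still bounded below by $1$ only when $L-i \geq 0$, i.e. $i \leq L$; since $r_G > 1$, $r_G^{L-i} \geq 1$ for all $i \leq L$, hence $\sum_{i=2}^L c_i r_G^{L-i} \geq \sum_{i=2}^L c_i = S$. Combining, $\sum_{i=1}^L c_i r_G^{L-i} \geq c_1 r_G^{L-1} + S \geq r_G^{L-1} + S$, which is exactly what is needed. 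Thus $p(r_G) \leq 0$, so $r \geq r_G$, with equality only when $(H_n) = (G_n)$.

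The main obstacle — really the only subtlety — is making sure the inequality $r_G^{L-i} \geq 1$ is applied correctly for the full range $2 \leq i \leq L$ (it holds precisely because $L - i \geq 0$ and $r_G > 1$), and handling the equality case: equality $p(r_G) = 0$ at a common positive root forces $c_1 r_G^{L-1} = r_G^{L-1}$ and $\sum_{i=2}^L c_i r_G^{L-i} = \sum_{i=2}^L c_i$, the latter (since $r_G^{L-i} > 1$ for $i < L$) forcing $c_i = 0$ for $2 \leq i \leq L-1$ and $c_1 = 1$, hence $c_L = S$. A minor degenerate case $L = 1$ should be checked separately (or excluded, since the conjecture's setup has $L \geq 2$), and one should note that the benchmark $[1,0,\ldots,0,S]$ is itself a valid PLRS, which it is by Definition~\ref{defn:goodrecurrencereldef} since $c_1 = 1 > 0$ and $c_L = S \geq 1$.
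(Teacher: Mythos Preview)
Your approach is correct and somewhat more direct than the paper's. The paper proceeds iteratively: it shows that for any $1 \le m < L$, shifting one unit of mass from $c_m$ to $c_L$ strictly decreases the principal root (by checking that the new characteristic polynomial is positive at the old root, since the difference is $r_1^{L-m}-1>0$), and then applies this transformation repeatedly until the coefficients reach $[1,0,\ldots,0,S]$. You instead compare every sequence in $P_{L,S}$ directly against the benchmark by evaluating its characteristic polynomial at $r_G$ in one stroke. Both arguments rest on the same sign/IVT principle, but yours avoids the iteration and delivers the uniqueness statement via a clean equality-case analysis.

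One small slip to fix: you write $\sum_{i=2}^L c_i = S$, but in general this sum equals $S+1-c_1$, which is $S$ only when $c_1=1$. The bound you need still holds, since
\[
c_1 r_G^{L-1} + (S+1-c_1) - \bigl(r_G^{L-1} + S\bigr) = (c_1-1)\bigl(r_G^{L-1}-1\bigr) \ge 0;
\]
equivalently, split off one unit from $c_1$ and apply $r_G^{L-i}\ge 1$ to the remaining $S$ units of total coefficient mass.
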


\begin{proof}
Consider a sequence generated by $s=[c_1,\ldots, c_{L}]\in P_{L,S}$, and let $r_1,\ldots, r_{L}$ be its roots, with $r_1>0$ the principal root. Since $|c_{L}|=\bigl|  \prod_{i=1}^{L}r_{L}\bigr|$ is a positive integer, we know $r_1>1$. 
Now, for any $1\leq m\leq L$ consider a sequence generated by $s_{m} \in P_{L,S}$ of the form
\begin{equation}
	[c_1,\ldots,c_{m-1}, c_{m}-1,c_{m+1},\ldots, c_{L}+1].
\end{equation} 
We claim that the principal root $q_1$ of $s_{m}$ fulfills $q_1<r_1$.

Define the characteristic polynomials $f(x)$ and $g(x)$ for $s$ and $s_{m}$, respectively, so that
\begin{equation}
f(x)=x^{L}-\sum_{i=1}^{L}c_{i}x^{L-i},
\end{equation}
and
\begin{align}
    g(x) &= x^{L}-\sum_{i=1}^{m-1}c_{i}x^{L-i}-\left( c_{m}-1 \right)x^{m} - \sum_{i=m+1}^{L-1}c_{i}c_{i}x^{L-i}-\left( c_{L}+1 \right)\nnend
    &=x^{L}-\sum_{i=1}^{L}c_{i}x^{L-i}+x^{m}-1.
\end{align}

As $q_1$ is the sole positive root of $g(x)$, and $g(x)$ is eventually positive, we notice that $q_1<r_1$ if and only if $g(r_1)>0$, which is equivalent to $g\left( r_1 \right) >f(r_1)$.

Now, 
\begin{equation}
\begin{array}{l@{{}\iff {}}l}
	g\left( r_1 \right) >f\left( r_1 \right) 
& r_1^{L}-\sum_{i=1}^{L}c_{i}r_1^{L-i}+r_1^{m}-1>r_1^{L}-\sum_{i=1}^{L}c_{i}r_1^{L-1} \\ 
&  r_1^{m}-1 >0 \\
 & r_1>1. \\
\end{array}
\end{equation}

As $r_1 > 1$, the principal root $q_1$ of $g(x)$ is strictly less than that of $f(x)$.

As $s$ was chosen arbitrarily, we see that the principal root of any sequence $s\in P_{L,S}$ can be strictly decreased by using the transformation $s\rightarrow s_{m}$ for any $1\leq m\leq L$. Applying this transformation iteratively for all values of $m$, we inevitably end up with the minimal possible values of $c_1,\ldots, c_{L-1}$, namely $c_1=1,\; c_2=c_3=\cdots =c_{L-1}=0$, and the maximal possible value of $c_{L}$, namely $c_{L}=S$. 

Thus, as the principal root under these iterated transformations is strictly decreasing, we conclude that $[1,0,\ldots, 0,S]$ has the smallest principal root of any element of $P_{L,S}$. 
\end{proof}

\begin{lemma}\label{DecreaseLastDecreasesRoot}
For any $S > 0$, the principal root of $[1,0,\ldots,0,S]$ is strictly less than that of $[1,0,\dots,0,S+1]$. 
\end{lemma}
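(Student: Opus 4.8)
The plan is to compare the two characteristic polynomials directly and exploit the fact, already used in Lemmas~\ref{boundexists} and~\ref{PLSRoots}, that the characteristic polynomial of a PLRS has a unique positive root and is eventually positive, so that its sign at a point $t > 0$ determines whether $t$ lies below or above the principal root.

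Write $L$ for the common number of coefficients of the two tuples. The characteristic polynomial of $[1,0,\ldots,0,S]$ is $p(x) = x^L - x^{L-1} - S$, and that of $[1,0,\ldots,0,S+1]$ is $q(x) = x^L - x^{L-1} - (S+1) = p(x) - 1$. Let $r$ be the principal root of $p$, so $p(r) = 0$, and hence $q(r) = p(r) - 1 = -1 < 0$. First I would invoke Lemma~\ref{lma:principalRoot} to record that $q$ has exactly one positive root, call it $r'$, and that $q(x) \to +\infty$ as $x \to \infty$. Since $q(r) < 0$ with $r > 0$, the intermediate value theorem places a root of $q$ strictly to the right of $r$; by uniqueness of the positive root this root is $r'$, so $r' > r$, which is the claim. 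Note that only $r > 0$ is needed, not any finer information about the size of $r$.

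The argument has no real obstacle; the only points requiring care are that the two sequences are taken with the same length $L$, so that their characteristic polynomials differ by exactly the constant $1$, and that $p$ and $q$ are genuine PLRS characteristic polynomials (here $L \geq 2$, and $c_1 = 1$, $c_L = S$ or $S+1$ are positive), so that Lemma~\ref{lma:principalRoot} applies. Indeed the same one-line observation, $q(r) = p(r) - 1 < 0$ followed by the intermediate value theorem, shows more generally that increasing only the last coefficient of any PLRS strictly increases its principal root.
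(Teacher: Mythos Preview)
Your proposal is correct and is essentially the same argument as the paper's proof: both observe that the two characteristic polynomials differ by the constant $1$ and use the uniqueness of the positive root together with the intermediate value theorem to order the roots. The only cosmetic difference is that the paper evaluates the polynomial for $S$ at the root of the polynomial for $S+1$ (obtaining a positive value), whereas you evaluate the polynomial for $S+1$ at the root of the polynomial for $S$ (obtaining $-1$); these are two sides of the same observation.
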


\begin{proof}
Let $S$ be an arbitrary positive integer, and let $f(x), g(x)$ and $r_1,q_1$ denote the characteristic polynomials and principal roots of $[1,0,\ldots, 0,S+1]$ and $[1,0,\ldots, 0,S]$, respectively. 

As before,  $q_1<r_1$ if and only if $g(r_1)>0=f(r_1)$. Note that 
\begin{equation}
\begin{array}{l@{{}\iff {}}l}
	g(r_1)>f(r_1)
& r_1^{L}-r_1^{L-1}-S>r_1^{L}-r_1^{L-1}-\left( S+1 \right)  \\
& S+1>S.
\end{array}
\end{equation}
Thus, $q_1<r_1$, for any value of $S$.
\end{proof}

\begin{lemma}\label{minprincipalroot}
	Any sequence fulfilling $\sum_{i=1}^{L}c_{i}\geq N_{L}+2$ has a principal root greater than or equal to that of \begin{equation} [1,0,\ldots,0,N_{L}+1]. \end{equation}
\end{lemma}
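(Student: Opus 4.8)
The plan is to reduce the general claim $\sum_{i=1}^L c_i \geq N_L+2$ to the already-established special cases by peeling off one summand at a time. Write $S+1 = \sum_{i=1}^L c_i$, so that $S \geq N_L+1$ and the sequence in question lies in $P_{L,S}$. By Lemma~\ref{PLSRoots}, the element of $P_{L,S}$ with the smallest principal root is $[1,0,\ldots,0,S]$, so it suffices to bound the principal root of $[1,0,\ldots,0,S]$ from below by that of $[1,0,\ldots,0,N_L+1]$.

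Next I would invoke Lemma~\ref{DecreaseLastDecreasesRoot}, which says the principal root of $[1,0,\ldots,0,S]$ strictly increases with $S$. Since $S \geq N_L+1$, iterating this comparison $S-(N_L+1)$ times (or just citing monotonicity directly) gives that the principal root of $[1,0,\ldots,0,S]$ is at least that of $[1,0,\ldots,0,N_L+1]$, with equality exactly when $S = N_L+1$. Chaining the two inequalities yields the statement: any sequence with coefficient sum at least $N_L+2$ has principal root $\geq \lambda_L$, the principal root of $[1,0,\ldots,0,N_L+1]$.

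So the proof is essentially a two-line argument: \textbf{(i)} apply Lemma~\ref{PLSRoots} to pass to $[1,0,\ldots,0,S]$ within the fixed coefficient-sum class; \textbf{(ii)} apply Lemma~\ref{DecreaseLastDecreasesRoot} (monotonicity in the last coefficient) to compare $[1,0,\ldots,0,S]$ with $[1,0,\ldots,0,N_L+1]$ using $S \geq N_L+1$. The one bookkeeping point to be careful about is the translation between "$\sum c_i \geq N_L+2$" and "$S \geq N_L+1$" under the convention $S+1 = \sum c_i$ used in the definition of $P_{L,S}$; this is where an off-by-one slip could creep in, so I would state it explicitly.

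I do not anticipate a genuine obstacle here — both supporting lemmas are already proved in the excerpt, and the result is a straightforward transitivity argument. If anything, the only subtlety is making sure Lemma~\ref{PLSRoots} is being applied to the correct set $P_{L,S}$ (i.e., that the hypothesis pins down the coefficient sum exactly, not just a lower bound), which is handled by fixing $S$ first and then noting that a larger sum only helps via Lemma~\ref{DecreaseLastDecreasesRoot}.
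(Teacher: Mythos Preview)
Your proposal is correct and follows essentially the same route as the paper's proof: fix the coefficient sum, apply Lemma~\ref{PLSRoots} to reduce to $[1,0,\ldots,0,S]$, then apply Lemma~\ref{DecreaseLastDecreasesRoot} to compare with $[1,0,\ldots,0,N_L+1]$. Your explicit handling of the $S+1=\sum c_i$ convention and the observation that equality holds precisely when the sequence is $[1,0,\ldots,0,N_L+1]$ itself are in fact slightly more careful than the paper, which concludes with a strict inequality that does not hold in that boundary case.
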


\begin{proof}
Recall from Theorem~\ref{thm:1onekzero} that the sequence $[1,0,\dots,0,N] $ is complete if and only if $N\leq N_{L}$, for $N_L= \left\lceil L(L+1)/{4}\right\rceil$. Thus, an immediate corollary to Theorem~\ref{thm:1onekzero} is that the incomplete sequence of the form $[1,0,\dots,0,N]$ with the minimal possible principal root is $[1,0,\ldots,0,N_{L}+1]$.

Furthermore, if we have a sequence generated by $[c_1,\ldots, c_{L}]$ which fulfills $\sum_{i=1}^{L}c_{i}\geq N_{L}+2$, Lemmas~\ref{PLSRoots} and \ref{DecreaseLastDecreasesRoot} present a sequence of algorithms which allow us to transform this sequence into the sequence generated by $[1,0,\ldots,0,N_{L}+1]$, in such a way that each transformation strictly lowers the magnitude of the principal root. 

Thus, any sequence satisfying $\sum_{i=1}^{L}c_{i}\geq N_{L}+2$ has a principal root strictly greater than the principal root of $[1,0,\ldots,0,N_{L}+1]$.
\end{proof}

The following lemmas are working towards proving Conjecture~\ref{finalsegundamitad}, which addresses the second case of Conjecture~\ref{final}, which addresses the roots of sequences $[c_1, \ldots, c_L]$ which fulfill $\sum_{i=1}^L c_i \leq N_L +2$. 

\begin{lemma}\label{addingcoeffdecreasesroot}
	Suppose the sequence generated by $[c_1,\ldots, c_{L}]$ has principal root $r$, then for any $c_{L+1}\in \Z_{>0}$, the sequence generated by $[c_1,\ldots, c_{L},c_{L+1}]$ (in which we add an additional positive coefficient) with principal root $q$ fulfills $r<q$. 
\end{lemma}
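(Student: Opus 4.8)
The plan is to mimic the structure used in the proofs of Lemmas~\ref{PLSRoots} and~\ref{DecreaseLastDecreasesRoot}: compare the two characteristic polynomials by evaluating the longer one at the principal root of the shorter one. Write
\[
f(x) \;=\; x^{L} - c_1 x^{L-1} - \cdots - c_{L}
\]
for the characteristic polynomial of $[c_1,\ldots,c_L]$, and
\[
g(x) \;=\; x^{L+1} - c_1 x^{L} - \cdots - c_{L} x - c_{L+1}
\]
for that of $[c_1,\ldots,c_L,c_{L+1}]$. The key algebraic observation, obtained by simply grouping terms, is the factorization $g(x) = x\,f(x) - c_{L+1}$.

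First I would apply Lemma~\ref{lma:principalRoot} to the coefficient vector $[c_1,\ldots,c_L,c_{L+1}]$ (nonnegative, with first and last entries positive) to conclude that $g$ has exactly one positive root $q$. Since $g(0) = -c_{L+1} < 0$ and $g(x)\to +\infty$ as $x\to\infty$, it follows that $g$ is negative on $[0,q)$ and positive on $(q,\infty)$; in particular, for $x > 0$ we have $x > q \iff g(x) > 0$. Analogously $r > 0$ is the unique positive root of $f$.

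Now evaluate $g$ at $r$. Because $f(r) = 0$, the factorization gives $g(r) = r\,f(r) - c_{L+1} = -c_{L+1} < 0$. Since $r > 0$ and $g(r) < 0$, the dichotomy from the previous paragraph forces $r < q$, which is exactly the claim of Lemma~\ref{addingcoeffdecreasesroot}.

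There is no serious obstacle here; the proof is short once the factorization $g(x)=x f(x)-c_{L+1}$ is spotted. The only points requiring care are (i) verifying that factorization is exactly right, (ii) invoking Lemma~\ref{lma:principalRoot} to guarantee $g$ has a \emph{unique} positive root, so that the sign of $g(r)$ determines on which side of $q$ the value $r$ lies, and (iii) noting $r > 0$ (indeed $r \geq 1$ for a PLRS), so that the sign comparison takes place in the relevant range of the real line.
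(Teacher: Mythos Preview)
Your proof is correct and follows essentially the same approach as the paper: evaluate the longer characteristic polynomial $g$ at the principal root $r$ of the shorter one and use the unique-positive-root property to conclude $r<q$. Your factorization $g(x)=x\,f(x)-c_{L+1}$ is a slightly cleaner way of reaching $g(r)=-c_{L+1}<0$ than the paper's chain of equivalences $g(r)<f(r)\iff c_{L+1}>(1-r)f(r)=0$, but the underlying argument is identical.
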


\begin{proof}
Let $f(x),g(x)$ be the characteristic polynomials of the two sequences, so that
\begin{equation}
f(x)=x^{L}-\sum_{i=1}^{L}c_{i}x^{L-i} \quad \text{and} \quad g(x)=x^{L+1}-\sum_{i=1}^{L}c_{i}x^{L+1-i}-c_{L+1}.
\end{equation} 
Similar to previous arguments, by the IVT, $r<q$ if and only if $g(r)<f(r)=0$. Note that
\begin{equation}
\begin{array}{l@{{}\iff {}}l}
	g(r)<f(r)
& r^{L+1}-\sum_{i=1}^{L}c_{i}x^{L+1-i}-c_{L+1}<r^{L}-\sum_{i=1}^{L}c_{i}r^{L-i} \\
& c_{L+1} > r^{L+1}-r^{L}+\sum_{i=1}^{L}c_{i}r^{L-i}-\sum_{i=1}^{L}c_{i}r^{L+1-i} \\
& c_{L+1}>r^{L}\left( r-1 \right) +\sum_{i=1}^{L}c_{i}r^{L-i}\left( 1-r \right)  \\
& c_{L+1}>\left( 1-r \right) \left( r^{L}-\sum_{i=1}^{L}c_{i}r^{L-i} \right) =\left( 1-r \right) \cdot f(r)   \\
& c_{L+1}> \left( 1-r \right) f(r)=\left( 1-r \right) \cdot 0	=0. \\
\end{array}
\end{equation}
Since $c_{L+1}\in \Z_{>0}$, the last line holds. It follows immediately that $r<q$.
\end{proof}

\begin{lemma}\label{rootbounddecreases}
Let $\lambda _{L}$ be the principal root of \begin{equation}
x^{L} - x^{L - 1} - N_L - 1.
\end{equation} 
Then, for any $L\geq 2$, $\lambda _{L}>\lambda _{L+1}$.
\end{lemma}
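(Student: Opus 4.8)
The plan is to convert the root comparison into a sign condition on polynomial values. Write $p_L(x) = x^L - x^{L-1} - N_L - 1$, so that $\lambda_L$ is its unique positive root. On the interval $(1,\infty)$ we have $p_L(x) = x^{L-1}(x-1) - (N_L+1)$, which is strictly increasing from $-(N_L+1)$ up to $+\infty$; hence $\lambda_L > 1$, and for any $x > 1$ the sign of $p_L(x)$ equals the sign of $x - \lambda_L$ (and likewise for $p_{L+1}$ and $\lambda_{L+1}$). Since $\lambda_L > 1$, proving $\lambda_{L+1} < \lambda_L$ is equivalent to proving $p_{L+1}(\lambda_L) > 0$.

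First I would simplify $p_{L+1}(\lambda_L)$ using $p_L(\lambda_L) = 0$, i.e.\ $\lambda_L^L - \lambda_L^{L-1} = N_L + 1$:
\[
p_{L+1}(\lambda_L) \;=\; \lambda_L^{L+1} - \lambda_L^L - (N_{L+1}+1) \;=\; \lambda_L\big(\lambda_L^L - \lambda_L^{L-1}\big) - (N_{L+1}+1) \;=\; \lambda_L(N_L+1) - (N_{L+1}+1).
\]
So the lemma reduces to the single inequality $\lambda_L > t_L$, where $t_L := (N_{L+1}+1)/(N_L+1)$; note $t_L > 1$ because $N_{L+1} > N_L$. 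Using the sign description of $p_L$ on $(1,\infty)$ once more, $\lambda_L > t_L$ is equivalent to $p_L(t_L) < 0$, and after clearing denominators (with $t_L - 1 = (N_{L+1}-N_L)/(N_L+1)$) this is exactly
\[
t_L^{\,L-1}\,(N_{L+1} - N_L) \;<\; (N_L+1)^2 .
\]

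To verify this I would use three crude estimates: (a) $N_{L+1} - N_L \le (L+2)/2$, a ceiling computation from $(L+1)(L+2)/4 - L(L+1)/4 = (L+1)/2$; (b) $N_L + 1 > L(L+1)/4$; and (c), combining (a) and (b), $t_L \le 1 + \tfrac{2(L+2)}{L(L+1)}$, whence $t_L^{\,L-1} \le \exp\!\big(\tfrac{2(L-1)(L+2)}{L(L+1)}\big) < e^2 < 8$ because $(L-1)(L+2) < L(L+1)$. Putting these together, for $L \ge 4$,
\[
t_L^{\,L-1}(N_{L+1}-N_L) \;<\; 8\cdot\tfrac{L+2}{2} \;=\; 4(L+2) \;<\; \tfrac{1}{16}\big(L(L+1)\big)^2 \;\le\; (N_L+1)^2,
\]
where $64(L+2) < (L(L+1))^2$ holds for $L \ge 4$ since $(L(L+1))^2 \ge 20\,L(L+1) = 20L^2 + 20L > 64L + 128$. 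The remaining two cases are immediate: $\lambda_2 = \tfrac{1+\sqrt{13}}{2} > 2 > \tfrac43 = t_2$, and $p_3(2) = 8 - 4 - 4 = 0$ forces $\lambda_3 = 2 > \tfrac32 = t_3$.

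I expect the only real friction to be bookkeeping with the ceilings in the definition of $N_L$ (the estimate in (a)) together with the fact that the wasteful bound $e^2 < 8$ makes it necessary to dispose of the small cases $L = 2,3$ by hand; everything else is a routine intermediate-value and monotonicity argument on the characteristic polynomials, of the same flavor as the proofs of Lemmas~\ref{DecreaseLastDecreasesRoot} and~\ref{addingcoeffdecreasesroot}.
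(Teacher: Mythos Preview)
Your argument is correct, and the overall strategy matches the paper's: both reduce $\lambda_{L+1}<\lambda_L$ to $p_{L+1}(\lambda_L)>0$, and both then simplify (using $\lambda_L^{L-1}(\lambda_L-1)=N_L+1$) to the single inequality $(N_L+1)(\lambda_L-1)>N_{L+1}-N_L$, which is exactly your $\lambda_L>t_L$. The only point of divergence is how this lower bound on $\lambda_L$ is verified. The paper does not evaluate $p_L$ at $t_L$; instead it bounds $N_{L+1}-N_L\le (L+2)/2$ and then appeals to a separate appendix lemma establishing the uniform bound $\lambda_L-1\ge (L+2)/(L^2+L+4)$, proved by checking that $p_L$ is nonpositive at the clean test point $(L-1)/(L-2)$ and invoking monotonicity of the two sides in $L$. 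Your route is to plug $t_L$ itself into $p_L$ and control $t_L^{\,L-1}$ by the crude exponential bound $(1+2(L+2)/L(L+1))^{L-1}<e^2<8$, which forces you to handle $L=2,3$ separately but avoids the auxiliary lemma. Both verifications are short; yours is self-contained, while the paper's isolates a reusable explicit lower bound on $\lambda_L$.
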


\begin{proof}
	We let $f(x)$ and $g(x)$ denote the characteristic polynomials of $[1,0,\ldots, 0,N_{L}+1]$ and $[1,0,\ldots, 0,N_{L+1}+1]$, of length $L$ and $L+1$, respectively. This way we obtain
	\begin{equation}
	f(x)=x^{L}-x^{L-1}-N_{L}-1,\; \; \; g(x)=x^{L+1}-x^{L}-N_{L+1}-1.
	\end{equation} 
	As in previous proofs, we see that $\lambda _{L}>\lambda _{L+1}\iff g\left( \lambda _{L} \right) >f\left( \lambda _{L} \right) =0$.
\begin{equation}
\begin{array}{l@{{}\iff {}}l}
	g\left( \lambda  \right) >f\left( \lambda  \right) 
& \lambda ^{L+1}-\lambda ^{L}-N_{L+1}-1> \lambda ^{L}-\lambda ^{L-1}-N_{L}-1 \\
& \lambda ^{L+1}-2\lambda ^{L}+\lambda ^{L-1}>N_{L+1}-N_{L} \\
&\lambda ^{L-1}\left( \lambda -1 \right) ^2 > N_{L+1}-N_{L}. \\
\end{array}
\end{equation}
Note that when $f\left( \lambda  \right) =0$, we have $\; \lambda ^{L-1}\left( \lambda -1 \right) =N_{L}+1$. Moreover, $N_{L+1}-N_{L}\leq (L+2)/{2}$, which can be shown by using the definition of $N_L$ and checking all cases modulo 4. Thus, it suffices to show that \begin{equation}
	\left( N_{L}+1 \right) \left( \lambda _{L}-1 \right) \geq \frac{L+2}{2}.
\end{equation} 
Now, using the value of $N_{L}$, all we need to show is
\begin{equation}\label{appendixit}
	\left( \lambda _{L}-1 \right) \geq \frac{L+2}{L^2+L+4}. 
\end{equation}
The proof of \eqref{appendixit} is just algebra, and is left to Appendix~\ref{apx:sec4lemmas}, as Lemma~\ref{appendixitAppendix}.
\end{proof}

\begin{lemma}\label{rootsgotozero}
For any $L \in \N$, let $\lambda _{L}$ be the sole positive root of the polynomial
\begin{equation}
	p_{L}(x)=x^{L}-x^{L-1}-\left\lceil \frac{L(L+1)}{4} \right\rceil -1.
\end{equation} 
Then, $\lim_{L \rightarrow \infty }\lambda _{L}=1$.
\end{lemma}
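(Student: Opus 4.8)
The plan is to sandwich $\lambda_L$ between $1$ and $1+\epsilon$ for every $\epsilon>0$ once $L$ is large, exploiting the gap between exponential and polynomial growth. First I would record the basic sign information about $p_L$: since $p_L(1) = -\lceil L(L+1)/4\rceil - 2 < 0$ and $p_L(x)\to+\infty$, and since $\lambda_L$ is by hypothesis the unique positive root of $p_L$, the polynomial $p_L$ is negative on $(0,\lambda_L)$ and positive on $(\lambda_L,\infty)$. In particular $\lambda_L>1$ for every $L$, so the sequence is bounded below by $1$; it remains to push the upper bound down to $1$.

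Next, fix $\epsilon>0$. Factoring out $(1+\epsilon)^{L-1}$ gives $p_L(1+\epsilon) = (1+\epsilon)^{L-1}\bigl((1+\epsilon)-1\bigr) - \lceil L(L+1)/4\rceil - 1 = \epsilon(1+\epsilon)^{L-1} - \lceil L(L+1)/4\rceil - 1$. The first term grows exponentially in $L$ while $\lceil L(L+1)/4\rceil + 1 = O(L^2)$, so there is an $L_\epsilon$ with $p_L(1+\epsilon)>0$ for all $L\geq L_\epsilon$. By the sign analysis above, $p_L(1+\epsilon)>0$ forces $1+\epsilon>\lambda_L$. Hence $1<\lambda_L<1+\epsilon$ for all $L\geq L_\epsilon$, and since $\epsilon>0$ was arbitrary, $\lambda_L\to 1$.

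Alternatively, one may invoke Lemma~\ref{rootbounddecreases} to know the limit $\ell\coloneqq\lim_{L\to\infty}\lambda_L$ exists and satisfies $\ell\geq 1$; then the identity $\lambda_L^{L-1}(\lambda_L-1) = \lceil L(L+1)/4\rceil + 1$ obtained from $p_L(\lambda_L)=0$ shows that if $\ell>1$ the left-hand side is at least $\ell^{L-1}(\ell-1)\to\infty$ exponentially while the right-hand side is $O(L^2)$, a contradiction, so $\ell=1$. There is no substantial obstacle here; the only point requiring care is using the \emph{uniqueness} of the positive root of $p_L$ to upgrade the inequality $p_L(1+\epsilon)>0$ into the bound $\lambda_L<1+\epsilon$, rather than merely asserting that $p_L$ is positive somewhere.
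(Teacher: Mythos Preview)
Your proof is correct and follows essentially the same approach as the paper: show that for any $\varepsilon>0$ one eventually has $p_L(1+\varepsilon)>0$, and use uniqueness of the positive root to conclude $\lambda_L<1+\varepsilon$. Your factorization $p_L(1+\varepsilon)=\varepsilon(1+\varepsilon)^{L-1}-\lceil L(L+1)/4\rceil-1$ is in fact cleaner than the paper's binomial expansion; the only slip is that $p_L(1)=-\lceil L(L+1)/4\rceil-1$, not $-2$, but this does not affect the argument.
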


\begin{proof}
	We show that for any $\varepsilon >0$, there exists an $M$ large enough so that for all $L > M$, $p_{L}(1+\varepsilon )>0$. As $p_{L}(x)$ has only one positive root $\lambda _{L}$ and $p(x)$ is positive as $x\rightarrow \infty $, we see $p_{L}(1+\varepsilon )>0$ implies $\lambda _{L}<1+\varepsilon $. If this is possible for arbitrary $\varepsilon$, then $\lambda _{L}\rightarrow 1$ as desired.

Let us fix an $\varepsilon >0$. For any $L$, we may write 
\begin{align}
	p_{L}(1+\varepsilon )&=\left( 1+\varepsilon  \right) ^{L}-(1+\varepsilon )^{L-1}-\left\lceil \frac{L(L+1)}{4} \right\rceil -1 \nnend
	&= \sum_{n=0}^{L}\varepsilon ^{n}\left( \binom{L}{n}-\binom{L-1}{n} \right) - \left\lceil \frac{ L(L+1) }{4} \right\rceil -1,\label{uglyformpl}
    \end{align}
where $\binom{L-1}{L}$ is $0$. Using Pascal's rule ($ \binom{n-1}{k} + \binom{n-1}{k-1} = \binom{n}{k}$), we can reduce \eqref{uglyformpl} to 
\begin{equation}\label{eq:Pascalrule}
	p_{L}\left( 1+\varepsilon  \right) = \sum_{n=1}^{L}\varepsilon ^{n}\binom{L-1}{n-1} - \left\lceil L(L+1)/4 \right\rceil -1.
\end{equation}
The quantity from (\ref{eq:Pascalrule}) can easily be shown to be positive (and in fact tends towards infinity) for large enough $L$. For example, we can take the trivial bound 
\begin{equation}
\sum_{n=1}^{L}\varepsilon ^{n}\binom{L-1}{n-1} > \varepsilon ^4\binom{L-1}{3},
\end{equation} 
as the full sum must be larger than only its fourth summand.

Since $\varepsilon ^{4}$ is simply a positive constant and $L(L+1) \ll \binom{L-1}{3}$, then for large enough $L$,
\begin{equation}
	p_{L}(1+\varepsilon )>\varepsilon ^4\binom{L-1}{3}-\left\lceil L(L+1)/4 \right\rceil -1 >0.
\end{equation} 
\end{proof}

\begin{remark}
	Even in the event that Conjecture~\ref{final} is false, this gives us conclusive proof that we may find incomplete sequences whose roots are arbitrarily close to 1. Since 1 is the minimum possible size for the root of a PLRS, this may be interpreted as proof that we may find arbitrarily slow-growing incomplete sequences, with coefficients of any length $L$.
\end{remark}

\begin{lemma}\label{addingm}
	Consider the sequence generated by $[c_1,\ldots,c_{L}]$. For any value $m \in \Z_{>0}$, the principal root of $[c_1,\ldots, c_{L}+m]$ is greater than that of $[c_1,\ldots, c_{L},m]$. 
\end{lemma}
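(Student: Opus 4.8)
The plan is to follow the template used throughout Section~\ref{roots} --- as in Lemmas~\ref{DecreaseLastDecreasesRoot}, \ref{addingcoeffdecreasesroot}, and \ref{rootbounddecreases} --- namely, to compare the two characteristic polynomials by evaluating one at the principal root of the other. Write $f(x) = x^{L} - c_1 x^{L-1} - \cdots - c_{L-1} x - (c_L + m)$ for the characteristic polynomial of $[c_1,\ldots,c_L+m]$, with principal root $r$, and $g(x) = x^{L+1} - c_1 x^{L} - \cdots - c_{L} x - m$ for that of $[c_1,\ldots,c_L,m]$, with principal root $q$. By Lemma~\ref{lma:principalRoot}, $g$ has a unique positive root $q$, of multiplicity one, and since $g$ is monic it is eventually positive; together with $g(0) = -m < 0$ this forces $g < 0$ on $(0,q)$ and $g > 0$ on $(q,\infty)$. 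So it suffices to show $g(r) > 0$, as this will give $r > q$.

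The second step is the short computation of $g(r)$ using $f(r)=0$. The point is that $r\,f(r)$ and $g(r)$ have identical high-order terms, differing only in the tail: $r\,f(r)$ ends in $-(c_L+m)r$ while $g(r)$ ends in $-c_L r - m$. Subtracting,
\[
g(r) \;=\; r\,f(r) + m(r-1) \;=\; m(r-1),
\]
since $f(r)=0$.

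Thus everything reduces to checking $r > 1$. This follows because $f(1) = 1 - \sum_{i=1}^{L-1} c_i - (c_L+m) < 0$: indeed $c_1 \geq 1$ and $c_L \geq 1$ since the sequence is a PLRS, and $m \geq 1$, so the right-hand side is at most $1 - 1 - 2 < 0$ when $L \geq 2$, while for $L = 1$ one has $f(x) = x - (c_1+m)$ with root $c_1 + m \geq 2$. Since $f$ is monic with a unique positive root and $f(1) < 0$, that root satisfies $r > 1$. Hence $g(r) = m(r-1) > 0$, so $q < r$, as claimed.

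I do not anticipate a genuine obstacle: this is a direct polynomial comparison of the same flavor as the neighboring lemmas, and the only step requiring any attention is the elementary fact that the principal root of a PLRS whose constant term has absolute value at least $2$ must exceed $1$.
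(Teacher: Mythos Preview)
Your proof is correct and follows essentially the same approach as the paper: both compare $g(r)$ with $r f(r)$, observe that their difference is $m(r-1)$, and conclude from $r>1$. The paper leaves the fact $r>1$ implicit, whereas you justify it explicitly via $f(1)<0$, but the argument is otherwise identical.
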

	
\begin{proof}
	Let $f(x),g(x)$ be the characteristic polynomials and $r,q$ be the principal roots of $[c_1,\ldots, c_{L}+m]$ and $[c_1,\ldots, c_{L},m]$, respectively. Since each of $f$ and $g$ has a unique positive root, we see that $r > q\iff g(r)>f(r)=0$. Note that 
\begin{equation}
\begin{array}{l@{{}\iff {}}l}
	g(r)>0
& 0=rf(r)<g(r) \\
& r\left(  r^{L}-\sum_{i=1}^{L}c_{i}r^{L-i}-m\right) < r^{L+1}-\sum_{i=1}^{L}c_{i}r^{L+1-i}-m \\
 & m<rm \\
 & r>1.
\end{array}
\end{equation}
Thus, the inequality always holds, and so $r > q$, as desired.
\end{proof}

\begin{conjecture}\label{finalsegundamitad}
Let $\lambda_{L} $ be the principal root of $x^{L}-x^{L-1}-N_{L}-1$. If the sequence generated by $[c_1,\ldots, c_{L}]$ is incomplete with $\sum_{i=1}^{L}c_{i}\leq \left\lceil L( L+1)/ {4}\right\rceil+2$, then its principal root is at least $\lambda_{L} $. 
\end{conjecture}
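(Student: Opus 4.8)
The plan is to show that, among incomplete PLRS's of length $L$ with coefficient sum at most $N_L+2$, the one with smallest principal root is $[1,\underbrace{0,\ldots,0}_{L-2},N_L+1]$, whose root is $\lambda_L$. The first step is to reduce to $c_1=1$. If $c_1\geq 2$, then the characteristic polynomial $p(x)=x^L-\sum_{i=1}^L c_ix^{L-i}$ satisfies $p(2)=2^L-c_12^{L-1}-\sum_{i\geq 2}c_i2^{L-i}\leq 2^L-2\cdot 2^{L-1}=0$, so its unique positive root is at least $2$; and $p_L(2)=2^{L-1}-N_L-1\geq 0$ for every $L\geq 3$ by a short induction (exponential growth beating the quadratic $N_L$), so $\lambda_L\leq 2$ for $L\geq 3$, and the root is at least $2\geq\lambda_L$. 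For $L\in\{2,3\}$ there are only finitely many coefficient vectors with $\sum c_i\leq N_L+2$, so these can be checked directly; hence assume $L\geq 4$ and $c_1=1$. Since $\lambda_L<2$ for $L\geq 4$, we may also assume the principal root $r$ of $(H_n)$ satisfies $r<2$, as otherwise $r\geq 2>\lambda_L$.

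By Lemma~\ref{PLSRoots}, $r$ is at least the principal root of $[1,0,\ldots,0,S]$, where $S=\sum_{i=1}^L c_i-1$. If $S\geq N_L+1$ (equivalently $\sum c_i\geq N_L+2$) we are done, either by Lemma~\ref{DecreaseLastDecreasesRoot}, which gives that the root of $[1,0,\ldots,0,S]$ is at least the root of $[1,0,\ldots,0,N_L+1]=\lambda_L$, or directly by Lemma~\ref{minprincipalroot}. The remaining, genuinely new case is $S\leq N_L$, i.e.\ $\sum c_i\leq N_L+1$. Here the rearrangement bound of Lemma~\ref{PLSRoots} is by itself too weak, since $[1,0,\ldots,0,S]$ is then \emph{complete} by Theorem~\ref{thm:1onekzero} and its root lies strictly below $\lambda_L$; and the monotonicity lemmas that preserve incompleteness (appending a coefficient, or merging the last two coefficients) move the root in the wrong direction. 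So incompleteness of $(H_n)$ must be used in an essential, quantitative way.

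To do so I would reformulate the goal. Since $\lambda_L$ is the positive root of $x^L-x^{L-1}-N_L-1$, we have $1=\lambda_L^{-1}+(N_L+1)\lambda_L^{-L}$, and because $p$ has a unique positive root the inequality $r\geq\lambda_L$ is equivalent to $p(\lambda_L)\leq 0$; dividing by $\lambda_L^L$ and using $c_1=1$, this becomes
\begin{equation*}
\sum_{i=2}^{L} c_i\,\lambda_L^{\,L-i}\ \geq\ N_L+1 .
\end{equation*}
This weighted sum favors the early coefficients (as $\lambda_L>1$), and equals $N_L+1$ exactly for $[1,0,\ldots,0,N_L+1]$. To prove the inequality for incomplete $(H_n)$, let $m$ be the least index with $H_m>1+\sum_{i=1}^{m-1}H_i$. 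Brown's criterion holding through $m-1$ forces $H_i\leq 2^{i-1}$ for $i<m$; combining $H_m\geq 2+\sum_{i<m}H_i$ with the recurrence for $H_m$ and reindexing yields $\sum_j(c_j-1)H_{m-j}\geq 1$ over the relevant range of $j$, and since $c_1=1$ this forces some $c_j\geq 2$ with $2\leq j\leq L$. One then needs to push this, together with $\sum c_i\leq N_L+1$, through to the weighted inequality above, ideally by analyzing the chain of rightward-transfer rearrangements in the proof of Lemma~\ref{PLSRoots}: the last sequence in that chain which is still incomplete is one unit of transfer away from a complete sequence, and I expect exactly this extremal configuration to realize the weighted bound, with equality only at $[1,0,\ldots,0,N_L+1]$.

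The main obstacle is precisely this last implication: turning ``incomplete, and one rightward transfer from complete'' into the sharp numerical bound $\sum_{i\geq 2}c_i\lambda_L^{\,L-i}\geq N_L+1$. This appears to require good control of where Brown's criterion first fails for such a sequence, which in general one expects only from (a suitable case of) the ``$2L-1$ conjecture'' (Conjecture~\ref{2Lcrit}), or a conditional statement in the spirit of Lemma~\ref{firstL+2}, after which the estimate should come down to an explicit inequality for $\lambda_L$ of the same flavor as the one proved in Lemma~\ref{rootbounddecreases}. For $L$ up to a moderate explicit bound the claim can instead be settled by a finite computation, since $c_1=1$, $r<2$ and $\sum c_i\leq N_L+1$ leave only finitely many coefficient vectors; and for large $L$, $\lambda_L$ is close to $1$ by Lemma~\ref{rootsgotozero}, which should give room to spare. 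Making the ``large $L$'' regime effective and joining it cleanly to the finite range is where the remaining work lies.
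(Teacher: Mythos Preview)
Your approach is genuinely different from the paper's, and since the statement is a conjecture, it is worth comparing what each partial argument buys.

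The paper proceeds by induction on the length: given an incomplete $[c_1,\ldots,c_{L+1}]$ with small coefficient sum, it drops the last coefficient and splits according to whether $[c_1,\ldots,c_L]$ is incomplete (then the inductive hypothesis plus Lemma~\ref{addingcoeffdecreasesroot} and Lemma~\ref{rootbounddecreases} finish) or has large partial sum (then Lemmas~\ref{PLSRoots} and \ref{DecreaseLastDecreasesRoot} finish). The unresolved gap is the sub-case where $[c_1,\ldots,c_L]$ is \emph{complete} and has small sum; here the inductive hypothesis gives nothing, and none of the available monotonicity lemmas apply in the right direction.

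You instead work directly at length $L$: you dispose of $c_1\geq 2$ by the trivial root bound (a reduction the paper never makes explicit), handle $\sum c_i\geq N_L+2$ exactly as the paper does, and then recast the remaining case $\sum c_i\leq N_L+1$ as the clean weighted inequality $\sum_{i\geq 2}c_i\,\lambda_L^{\,L-i}\geq N_L+1$. This reformulation is a genuine contribution: it makes precise what must be extracted from incompleteness, and it shows why the pure rearrangement Lemma~\ref{PLSRoots} is insufficient in this regime. Your gap and the paper's gap are really the same obstruction viewed from two sides: both need to convert ``incomplete but close to a complete sequence'' into a sharp root bound, and both point toward needing control on where Brown's criterion first fails (something like Conjecture~\ref{2Lcrit}). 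Your suggestion of a finite check for small $L$ joined to an asymptotic argument via Lemma~\ref{rootsgotozero} is plausible but, as you note, not yet effective; the paper does not attempt this route.
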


We present a partial proof, which addresses all cases except what is denoted as Subcase 2.

\begin{proof}[Partial proof.]
We use induction.

For $L=2$, $N_{L}=\left\lceil 2\cdot 3/{4}\right\rceil=2$, and so the coefficients $[c_1,c_2]$ fulfilling the requirement are of the form $c_1+c_2\leq  4$. The incomplete sequences of this form have coefficients $[2,1]$, $[2,2]$, $[1,3]$, and $[3,1]$. Checking each case directly, we see that their principal roots are approximately $2.414$, $2.731$, $2.303$, and $3.303$, respectively. Among these roots, the root of $[1,3]=[1,N_2+1]$ is the minimum; thus, the lemma holds for the base case.

Now, suppose the lemma holds for some value of $L\geq 2$. We show that the Lemma holds for $L+1$ as well.

Let $[c_1,\ldots, c_{L},c_{L+1}]$ be an incomplete sequence with $\sum_{i=1}^{L+1}c_{i}\leq \left\lceil (L+1)(L+2) /{4}\right\rceil+2$.

\begin{enumerate}[label={\textbf{Case \arabic*:}}, leftmargin=*]
\item  $\sum_{i=1}^{L}c_{i}< N_{L}+2$.

Under the condition above, the following two sub-cases arise.

\begin{enumerate}[label={\textbf{Sub-Case \arabic*:}}, leftmargin=*]

\item $[c_1,\ldots, c_{L}]$ is incomplete.

If the sequence is incomplete, then, by our inductive hypothesis, since $\sum_{i=1}^{L}c_{i}\leq N_{L}+2$, we must have that the principal root $r$ of $[c_1,\ldots, c_{L}]$ is greater than or equal to $\lambda _{L}$. Hence, by Lemma~\ref{addingcoeffdecreasesroot}, since the principal root $q$ of $[c_1,\ldots, c_{L+1}]$ satisfies $q>r$, we have that $[c_1,\ldots, c_{L+1}]$ has principal root $q>\lambda_{L} $. Finally, by Lemma~\ref{rootbounddecreases}, we know that $\lambda _{L}>\lambda _{L+1}$. Therefore, we have $q>r \geq \lambda _{L}>\lambda _{L+1}$, and the statement holds in this case.

\item $[c_1,\ldots, c_{L}]$ is complete:

The proof of this sub-case has not been found yet, hence why the statement remains a conjecture.
\end{enumerate}

\item $\sum_{i=1}^{L}c_{i}\geq  N_{L}+2$.

If this inequality holds, the transformations developed in Lemmas~\ref{PLSRoots} and \ref{DecreaseLastDecreasesRoot}, imply that $[c_1,\ldots, c_{L}]$ has principal root of at least $\lambda$. Applying Lemma~\ref{addingcoeffdecreasesroot}, we see that the principal root of $[c_1,\dots, c_{L+1}]$ is strictly greater, and thus the statement holds in this case.\qedhere
\end{enumerate}
\end{proof}

The results in this section provide us with an efficient way to verify completeness for PLRS's. Namely, for a sequence $[c_1, \ldots, c_L]$, we may evaluate its characteristic polynomials at the points $B_L$ and $2$, which provides the following information.
\begin{itemize}
\item If $p(2)<0$, the sequence is incomplete.
\item If $p(B_L)>0$, the sequence is complete.
\item If $p(2)\geq 0$ and $p(B_L)\leq 0$, then the principal root of the sequence lies in the interval $[B_L, 2]$, and so further inquiry is necessary to determine whether the sequence is complete.
\end{itemize}

Computationally, evaluating a polynomial of degree $L$ is an $\mathcal{O}(L^2)$ problem. Generating a minimum of $2L$ terms of the sequences and checking Brown's criterion for each, on the other hand, is a $\mathcal{O}(2^L)$ problem. Thus, this method---even if inconclusive---provides fast and efficient method to categorize sequences, and narrows our search to the interesting interval $[B_L, 2]$, in which both complete and incomplete sequences arise.

\subsection{Denseness of incomplete roots}

Having narrowed our search for principal roots of complete and incomplete sequences to the interval $[B_L, 2]$, it is only natural to ask how the roots of these sequences are distributed throughout the interval. 

\begin{lemma}\label{rootsorder}
	For fixed $L>2$ and $k>0$, define the three polynomials $f(x)=x^{L}-x^{L-1}-k$, $g(x)=x^{L}-x^{L-1}-\left( k+1 \right) $, and $h(x)=x^{L}-x^{L-1}-\left( k+2 \right)$. Let $q,r$, and $s$ be the sole positive roots of $f,g$, and $h$ respectively, so that $1<q<r<s$. Then, 
\begin{equation}
r-q>s-r.
\end{equation}
\end{lemma}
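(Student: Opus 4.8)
The plan is to treat $q, r, s$ as implicitly defined functions of the shift parameter $k$ and compare consecutive differences via a concavity-type argument. Write $\phi(t)$ for the sole positive root of $x^L - x^{L-1} - t = 0$; then $q = \phi(k)$, $r = \phi(k+1)$, $s = \phi(k+2)$, and the claim $r - q > s - r$ is exactly the statement that $\phi$ is strictly concave on the relevant range, i.e. $\phi(k+1) - \phi(k) > \phi(k+2) - \phi(k+1)$. Since $\phi$ is smooth (the positive root is simple by Lemma~\ref{lma:principalRoot}, so the implicit function theorem applies), it suffices to show $\phi'' < 0$ throughout, or more elementarily to show $\phi'$ is strictly decreasing.

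The concrete way I would carry this out: from $\phi(t)^L - \phi(t)^{L-1} = t$, differentiate to get $\phi'(t)\bigl(L\phi^{L-1} - (L-1)\phi^{L-2}\bigr) = 1$, so $\phi'(t) = \dfrac{1}{\phi^{L-2}\bigl(L\phi - (L-1)\bigr)}$. As $t$ increases, $\phi(t)$ strictly increases (clear from monotonicity of $x^L - x^{L-1}$ for $x > 1$, or from the previous lemmas of this type in the section), and the denominator $\psi(x) \coloneqq x^{L-2}(Lx - (L-1))$ is strictly increasing in $x$ for $x > 1$ — both factors $x^{L-2}$ and $Lx-(L-1)$ are positive and increasing there. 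Hence $\phi'(t)$ is strictly decreasing in $t$, which gives $\phi(k+1)-\phi(k) = \int_k^{k+1}\phi'(t)\,dt > \int_{k+1}^{k+2}\phi'(t)\,dt = \phi(k+2)-\phi(k+1)$, i.e. $r - q > s - r$. One should note that all of $q,r,s$ exceed $1$ (indeed $\phi(t) > 1$ for $t > 0$ since at $x=1$ the polynomial $x^L - x^{L-1} - t = -t < 0$), so the whole argument stays in the region $x > 1$ where $\psi$ is monotone; the hypotheses $L > 2$ and $k > 0$ guarantee $L - 2 \geq 1$ and $t \geq k > 0$ so there is no degeneracy.

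Alternatively, if the paper prefers to avoid calculus, the same inequality can be extracted purely algebraically: set $a = r - q > 0$ and $b = s - r > 0$ and use the three defining equations $q^{L-1}(q-1) = k$, $r^{L-1}(r-1) = k+1$, $s^{L-1}(s-1) = k+2$ to get $r^{L-1}(r-1) - q^{L-1}(q-1) = 1 = s^{L-1}(s-1) - r^{L-1}(r-1)$; then one argues that the function $u \mapsto u^{L-1}(u-1)$ is strictly convex and increasing on $u > 1$, so equal increments in output force strictly decreasing increments in input, yielding $a > b$. This is the discrete mirror of the derivative computation and relies on the same monotonicity of $\psi$ (which is, up to a constant, the derivative of $u^{L-1}(u-1)$).

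The main obstacle is not any single hard step but making the convexity/monotonicity claim fully rigorous with the least machinery: one must confirm that $\psi(x) = x^{L-2}(Lx-(L-1))$ is genuinely increasing on $(1,\infty)$ for every $L > 2$ (a one-line check that $\psi'(x) = x^{L-3}\bigl(L^2 x - (L-1)^2\bigr) > 0$ when $x>1$, since $L^2 > (L-1)^2$), and that $\phi$ is well-defined and the implicit differentiation is valid (immediate from simplicity of the root). Everything after that is routine. I would present the calculus version as the main proof and perhaps remark on the algebraic reformulation, since the section already freely uses the "evaluate the characteristic polynomial and apply the IVT" style of reasoning.
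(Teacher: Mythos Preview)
Your proposal is correct. Interestingly, the paper's actual proof is precisely your \emph{alternative} route: it defines $p(x)=x^{L}-x^{L-1}$, checks directly that $p'(x)>0$ and $p''(x)>0$ for $x\geq 1$, and then uses the equal increments $p(r)-p(q)=p(s)-p(r)=1$ together with strict convexity of $p$ on $(1,\infty)$ to conclude $r-q>s-r$. Your main argument---working instead with the inverse $\phi=p^{-1}$ and showing $\phi'$ is strictly decreasing via the implicit function theorem---is mathematically equivalent (strict convexity of $p$ and strict concavity of $p^{-1}$ are the same fact), but carries a bit more overhead: you invoke simplicity of the root, implicit differentiation, and an integral comparison where the paper gets away with a two-line second-derivative check. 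The convexity-of-$p$ version is the cleaner presentation here; you already had it in hand, so you could simply promote it to the main proof.
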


\begin{proof}
From the definition of $f,g$, and $h$, we see that 
\begin{align}
q^{L}-q^{L-1}&= k,\nnend
r^{L}-r^{L-1}&= k+1,\nnend
s^{L}-s^{L-1} &=k+2.\label{diferenciasderoots}
\end{align}

Now, define the polynomial $p(x)=x^{L}-x ^{L-1} $. Taking the first and the second derivative of $p$, we see $p'(x)=Lx^{L-1}-\left( L-1 \right) x^{L-2}$, and $p''(x)=L\left( L-1 \right) x^{L-2}-\left( L-1 \right) \left( L-2 \right) x^{L-3}$. In particular, for all $x\geq 1$, $p(x)\geq 0,p'(x)>0$, and $p''(x)>0$.

Thus, $p(x)$ is increasing and convex on $\left( 1,\infty  \right) $. By \eqref{diferenciasderoots}, we have $p(r)-p(q)=p(s)-p(r)$. Thus, since $s>r>q>1$, we conclude $r-q>s-r$, as desired.
\end{proof}

\begin{theorem}\label{denseness}
	For any $L\geq 2$, let $R_{L}$ be the set of roots of all incomplete PLRS's generated by $L$ coefficients. Then, for any $\varepsilon >0$, there exists an $M$ such that for all  $L>M$ and for any $\varepsilon $-ball $B_{\varepsilon }\subset \left( 1,2 \right) $, $B_{\varepsilon }\cap R_{L}\neq \varnothing$.
\end{theorem}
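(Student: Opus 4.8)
The plan is to exhibit, for all sufficiently large $L$, an explicit $\varepsilon$-net of incomplete principal roots inside $(1,2)$, built entirely from the one-parameter family $P_N \coloneqq [1,\underbrace{0,\ldots,0}_{L-2},N]$. By Theorem~\ref{thm:1onekzero} (with $k=L-2$), $P_N$ is incomplete precisely when $N \geq N_L+1$, where $N_L=\lceil L(L+1)/4\rceil$, and its principal root $\rho_L(N)$ is the unique positive root of $x^{L}-x^{L-1}-N$; by Lemma~\ref{DecreaseLastDecreasesRoot} the map $N\mapsto\rho_L(N)$ is strictly increasing. Writing $x_j \coloneqq \rho_L(N_L+1+j)$ for $j\geq 0$, we get an increasing sequence $x_0=\lambda_L < x_1 < x_2 < \cdots$, all of whose terms lie in $R_L$.

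First I would record two quantitative facts about $(x_j)$. (i) It escapes to infinity: since $\rho_L(N)$ satisfies $\rho_L(N)^{L-1}\bigl(\rho_L(N)-1\bigr)=N$ and $\rho_L(N)>1$, the inequality $\rho_L(N)<2$ forces $N<2^{L-1}$, so only finitely many $x_j$ lie below $2$ (indeed below any fixed bound). (ii) Its consecutive gaps are dominated by a single quantity that tends to $0$: applying Lemma~\ref{rootsorder} with $k=N_L+1+j$ shows that $x_{j+1}-x_j$ is strictly decreasing in $j$, and applying it once more with $k=N_L$ (legitimate since $N_L\geq 2$, and there $q=\rho_L(N_L)>1$) gives
\[
x_{j+1}-x_j \;\leq\; x_1-x_0 \;<\; \lambda_L-\rho_L(N_L) \;<\; \lambda_L-1 .
\]
By Lemma~\ref{rootsgotozero}, $\lambda_L\to 1$, so I may choose $M=M(\varepsilon)\geq 2$ so that $\lambda_L-1<2\varepsilon$ for every $L>M$; then simultaneously $x_0=\lambda_L<1+2\varepsilon$ and $x_{j+1}-x_j<2\varepsilon$ for all $j$, with $M$ depending on $\varepsilon$ alone.

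The remaining step is a short covering argument. Fix $L>M$ and an $\varepsilon$-ball $B_\varepsilon=(c-\varepsilon,c+\varepsilon)\subseteq(1,2)$, so that $c\geq 1+\varepsilon$. Let $j^{*}$ be the largest index with $x_{j^{*}}<c+\varepsilon$; this exists because $x_0=\lambda_L<1+2\varepsilon\leq c+\varepsilon$ and because, by (i), only finitely many $x_j$ are smaller than $c+\varepsilon\leq 2$. If we had $x_{j^{*}}\leq c-\varepsilon$, then $x_{j^{*}+1}<x_{j^{*}}+2\varepsilon\leq c+\varepsilon$, contradicting the maximality of $j^{*}$; hence $c-\varepsilon<x_{j^{*}}<c+\varepsilon$, that is, $x_{j^{*}}\in B_\varepsilon\cap R_L\neq\varnothing$. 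This is the desired conclusion.

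I expect the only genuine subtlety to be step (ii): one must resist trying to bound each gap $x_{j+1}-x_j$ individually (there is no obvious direct estimate) and instead use the monotonicity of gaps supplied by Lemma~\ref{rootsorder} to reduce everything to the single estimate $x_1-x_0<\lambda_L-1$, which Lemma~\ref{rootsgotozero} then forces to $0$ uniformly over all balls. Everything else is routine bookkeeping: verifying that every $P_N$ with $N\geq N_L+1$ is a bona fide PLRS, that $N_L\geq 2$ for $L\geq 2$ so that Lemma~\ref{rootsorder} applies at $k=N_L$, and that the $\varepsilon$-ball condition $B_\varepsilon\subseteq(1,2)$ indeed yields $c\geq 1+\varepsilon$.
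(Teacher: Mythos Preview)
Your proposal is correct and follows essentially the same strategy as the paper: both use the one-parameter family $[1,0,\ldots,0,N]$ with $N\geq N_L+1$, invoke Lemma~\ref{rootsorder} to show the consecutive root gaps are decreasing and hence bounded by the first gap, and then appeal to Lemma~\ref{rootsgotozero} to force that first gap to zero. The only cosmetic difference is that the paper bounds the gaps by $\lambda_L-\rho_L(N_L)<\varepsilon$ (having chosen $M$ so that $\lambda_L<1+\varepsilon$), whereas you bound them by $\lambda_L-1<2\varepsilon$; your covering argument is also spelled out more carefully than the paper's one-line ``intercepts any $\varepsilon$-ball'' conclusion.
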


\begin{proof}
	Let $\varepsilon >0$ be arbitrary. By Lemma~\ref{rootsgotozero}, we may fix an $M$ such that for all $L>M$, $1<\lambda _{L}<1+\varepsilon $.

	From our previous work, we know that the sequence of length $L$ that has coefficients $[1,0,\ldots , 0,\left\lceil L\left( L+1 \right) /4 \right\rceil +1]$ is incomplete, as is any sequence of the form $[1,0,\ldots , 0,k]$, with $k \geq \left\lceil L\left( L+1 \right) /4 \right\rceil +1$.

	Note that $\lambda _{L}$ is the root of $[1,0,\ldots , 0,\left\lceil L\left( L+1 \right) /4 \right\rceil +1]$. Since $\lambda _{L}<1+\varepsilon $, it is clear that the root $\alpha $ of $[1,0,\ldots , 0,\left\lceil L\left( L+1 \right) /4 \right\rceil]$ fulfills  $1<\alpha <\lambda _{L}$, and so $\lambda -\alpha <\varepsilon $.

	Now, we know the sequence $[1,0,\ldots , 0,2^{L-1}]$ has a root of size exactly 2. Applying Lemma~\ref{rootsorder} iteratively, any two sequences $[1,0,\ldots , 0,k]$, $[1,0,\ldots , 0,k+1]$ with  $k\geq \left\lceil L\left( L+1 \right) /4 \right\rceil $ and roots $q,r$ must fulfill $r-q<\lambda _{L}-\alpha <\varepsilon $. Thus, any two consecutive sequences $[1,0,\ldots , 0,k]$, $[1,0,\ldots , 0,k+1]$ with $k\geq \left\lceil L\left( L+1 \right) /4 \right\rceil +1$ have roots with separation less than $\varepsilon $, and so the set of roots of sequences of the form $[1,0,\ldots , 0,k]$ with $\left\lceil L\left( L+1 \right) 4 \right\rceil +1\leq k\leq 2^{L-1}$ intercepts any $\varepsilon $-ball of $\left( 1,2 \right) $. As this is a subset of $R_{L}$, we are done. 
\end{proof}

\begin{corollary}\label{densenesscoro}
	The set of principal roots of incomplete sequences $R=\bigcup_{L=2}^{\infty }R_{L}$ is dense in $\left( 1,2 \right) $.
\end{corollary}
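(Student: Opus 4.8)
The plan is to derive this as a direct consequence of Theorem~\ref{denseness} by unwinding the definition of density. Recall that a set $R \subseteq (1,2)$ is dense in $(1,2)$ precisely when it meets every nonempty open subset of $(1,2)$; equivalently, when for every point $x \in (1,2)$ and every $\varepsilon > 0$ the intersection $R \cap (x - \varepsilon, x + \varepsilon)$ is nonempty. So it suffices to fix an arbitrary $x \in (1,2)$ and an arbitrary $\varepsilon > 0$ and produce an element of $R$ within distance $\varepsilon$ of $x$.

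First I would use that $(1,2)$ is open: there is some $\varepsilon_0 > 0$ with $(x - \varepsilon_0, x + \varepsilon_0) \subseteq (1,2)$, so after replacing $\varepsilon$ by $\min(\varepsilon, \varepsilon_0)$ we may assume the $\varepsilon$-ball $B_\varepsilon \coloneqq (x - \varepsilon, x + \varepsilon)$ satisfies $B_\varepsilon \subseteq (1,2)$. Then I would invoke Theorem~\ref{denseness} with this $\varepsilon$: it yields an $M$ (depending only on $\varepsilon$, not on where the ball is located) such that for every $L > M$ and every $\varepsilon$-ball contained in $(1,2)$, that ball meets $R_L$. Applying this with our particular ball $B_\varepsilon$ and any single choice of $L > M$ gives $R_L \cap B_\varepsilon \neq \varnothing$. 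Since $R_L \subseteq R = \bigcup_{L=2}^\infty R_L$ by definition, we conclude $R \cap B_\varepsilon \neq \varnothing$, i.e., $R$ contains a principal root of an incomplete PLRS within $\varepsilon$ of $x$. As $x$ and $\varepsilon$ were arbitrary, $R$ is dense in $(1,2)$.

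There is essentially no obstacle here; the only points requiring a word of care are (i) the reduction to $\varepsilon$-balls contained in $(1,2)$, which uses openness of the target interval, and (ii) the observation that the uniformity in Theorem~\ref{denseness} — the same $M$ works for \emph{every} $\varepsilon$-ball in $(1,2)$ — is exactly the quantifier strength one needs to pass from "for large $L$ the roots $R_L$ are $\varepsilon$-dense" to "the union $R$ is dense." All of the genuine mathematical content (the location of the incomplete sequences $[1,0,\dots,0,k]$ and the spacing estimate of Lemma~\ref{rootsorder}, together with $\lambda_L \to 1$) has already been absorbed into the proof of Theorem~\ref{denseness}.
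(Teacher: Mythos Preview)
Your proposal is correct and matches the paper's approach: the corollary is stated without proof because it is an immediate consequence of Theorem~\ref{denseness}, and your argument is precisely the routine unwinding of density needed to make that implication explicit. The only minor care point you identify---shrinking $\varepsilon$ so that the ball lies inside $(1,2)$ to match the hypothesis of Theorem~\ref{denseness}---is handled correctly.
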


We conjecture that a similar result can be shown about complete roots; however, this proof has proven to be more difficult, as examples of families of complete sequences are more fragile.

\section{Open questions}
Here are conjectures and several other questions that future research could investigate.
\begin{itemize}
    \item Our results often focus on the final coefficient, such as in Theorems~\ref{decreaseLastCoe} and \ref{Adding M Theorem}. Do these results have any analogues for coefficients that are not the last?
    \item Can Theorem~\ref{thm:gbon} be extended to address what happens when $g < k$?
    \item Are there any other interesting families of PLRS's that can be fully characterized that have entries other than $0$ and $1$ as coefficients that are not the final coefficient?
    \item Are Conjectures~\ref{addFrontOnes} and \ref{2Lcrit} true?
    \item Is the missing component of the proof of Conjecture~\ref{final}, i.e., Conjecture~\ref{finalsegundamitad} true? 
\end{itemize}

\section{Acknowledgments} This research was conducted as part of the SMALL 2020 REU at Williams College. The authors were supported by NSF Grants DMS1947438 and DMS1561945, Williams College, Yale University, and the University of Rochester. The authors would like to thank the organizers of the 19th International Fibonacci Conference, the 2020 Young Mathematicians Conference, and CANT 2021 for the opportunity to present this work and receive feedback in earlier stages. 
\appendix

\section{Brown's criterion and a corollary}\label{apx:brownAndCor}
Here are several proofs of important results for our paper. All results will be restated for the reader's convenience.

\begin{theorem}{(Brown \cite{Br})}\label{thm:brownsCritApx}
	If $a_n$ is a non-decreasing sequence, then $a_n$ is complete if and only if $a_1 = 1$ and for all $n > 1$,
	\begin{equation}\label{eqn:brownsCritApx}
	a_{n+1} \leq 1+ \sum_{i = 1}^{n} a_i.
	\end{equation}
\end{theorem}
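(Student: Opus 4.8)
The plan is to prove the two implications separately, with the reverse direction carried by an induction establishing a slightly stronger statement: every integer in the initial range $[1, \sum_{i=1}^n a_i]$ is a sum of distinct terms drawn from $\{a_1, \dots, a_n\}$.

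For the forward direction, suppose $(a_n)$ is complete. Since $1$ is a sum of distinct terms and every $a_i \geq 1$, exactly one $\varepsilon_i$ can be nonzero in such a representation and the corresponding term equals $1$; as the sequence is non-decreasing, this forces $a_1 = 1$. Now suppose toward a contradiction that $a_{n+1} > 1 + \sum_{i=1}^n a_i$ for some $n \geq 1$, and set $N = 1 + \sum_{i=1}^n a_i$. Any subset of $\{a_1, \dots, a_n\}$ sums to at most $N - 1$, while any subset containing some $a_j$ with $j \geq n+1$ sums to at least $a_{n+1} > N$ by monotonicity; hence $N$ admits no representation, contradicting completeness. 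This proves \eqref{eqn:brownsCritApx}.

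For the reverse direction, assume $a_1 = 1$ and $a_{n+1} \leq 1 + \sum_{i=1}^n a_i$ for all $n \geq 1$, and prove the strengthened claim by induction on $n$. The base case $n = 1$ is immediate, since $\sum_{i=1}^1 a_i = a_1 = 1$ and the only $N$ in range is $a_1$. For the inductive step, take $N$ with $1 \leq N \leq \sum_{i=1}^{n+1} a_i$. If $N \leq \sum_{i=1}^n a_i$, the inductive hypothesis applies verbatim. Otherwise consider $N - a_{n+1}$: it is at most $\sum_{i=1}^{n+1} a_i - a_{n+1} = \sum_{i=1}^n a_i$, and it is nonnegative because $a_{n+1} \leq 1 + \sum_{i=1}^n a_i \leq N$. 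If $N - a_{n+1} = 0$ then $N = a_{n+1}$; otherwise $1 \leq N - a_{n+1} \leq \sum_{i=1}^n a_i$, so by the inductive hypothesis $N - a_{n+1}$ is a sum of distinct terms with indices at most $n$, and adjoining $a_{n+1}$ (necessarily distinct from all of them) represents $N$. Finally, since each $a_i \geq 1$ we have $\sum_{i=1}^n a_i \geq n \to \infty$, so every positive integer lies in one of these ranges and is therefore representable, i.e., $(a_n)$ is complete.

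The argument is elementary and I do not anticipate a real obstacle. The one delicate point is the lower bound $N - a_{n+1} \geq 0$ in the inductive step: this is exactly where hypothesis \eqref{eqn:brownsCritApx} is invoked, and it is the structural reason the ``$+1$'' appears in Brown's criterion (were it absent, $N - a_{n+1}$ could drop below $0$ and the greedy step would fail). A secondary bookkeeping point is confirming that the term $a_{n+1}$ adjoined in the last case is distinct from those used for $N - a_{n+1}$, which holds because the latter all have index at most $n$.
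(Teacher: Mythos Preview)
Your proof is correct and follows essentially the same route as the paper's: the sufficiency direction is an induction on $n$ showing that every integer up to $\sum_{i=1}^n a_i$ is a subset sum of $\{a_1,\dots,a_n\}$, with the key step being that the hypothesis \eqref{eqn:brownsCritApx} guarantees $N - a_{n+1} \geq 0$; the necessity direction exhibits the specific unrepresentable integer $1 + \sum_{i=1}^{n_0} a_i$ (the paper phrases it as $a_{n_0+1}-1$, but the idea is identical). Your write-up is slightly more careful in two places the paper glosses over---you explicitly argue $a_1=1$ from completeness, and you note $\sum_{i=1}^n a_i \to \infty$ so that the inductive ranges exhaust $\mathbb{N}$---but the core argument is the same.
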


\begin{proof}
	Let $\left(a_n\right)_{n = 1}^{\infty}$ be a sequence of positive integers, not necessarily distinct, such that $a_1 = 1$ and 
	\begin{equation}
	    a_{n + 1} \leq 1 + \sum_{i = 1}^{n}{a_i} 
	\end{equation}
	for $n \geq 1$. Then, for $0<n<1+ \sum_{i = 1}^{k}{a_i}$, there exists $\left(b_i\right)_{i = 1}^{k}$, $b_i \in \{0, 1\}$ such that $n = \sum_{i = 1}^{k}{b_i a_i}$. We proceed by induction on $k$. The claim trivially holds for $k = 1$, so one may assume that the claim holds for $k = N$ as well. Hence, we must show that $0 < n < 1 +   \sum_{i = 1}^{N + 1}{a_i}$ implies the existence of $\left(\varepsilon_i\right)^{N + 1}_{i =1}$, $\varepsilon_i \in \{0, 1\}$ such that $n = \sum_{i = 1}^{N + 1}{\varepsilon_i a_i}$. Due to the inductive hypothesis, we only consider values satisfying
	\begin{equation}
	    1 + \sum_{i = 1}^{N}{a_i} \leq n < 1 + \sum_{i = 1}^{N + 1}{a_i}.
	\end{equation}
	Note that by assumption,
	\begin{equation}
	    n-a_{N + 1} \geq 1 + \sum_{i = 1}^{N}{a_i - a_{N + 1}} \geq 0. 
	\end{equation}
    Now, if $n - a_{N + 1} = 0$, the conclusion follows. Otherwise,
    \begin{equation}
        0 < n - a_{N + 1} < 1 + \sum_{i = 1}^{N}{a_i}
    \end{equation}
    implies the existence of $\left(b_i\right)_{i = 1}^{N}$ such that $n - a_{N + 1} = \sum_{i = 1}^{N}{b_i a_i}$. Then the result is immediate on transposing $a_{N + 1}$ and identifying $\varepsilon_i = b_i$ for $i \in \{1, \ldots, N\}$ and $\varepsilon_{N + 1} = 1$. This completes the sufficiency part of the proof.
    
    For the necessity, assume that there exists $n_0 \geq 1$ such that $a_{n_0 + 1} \geq 1 + \sum_{i = 1}^{n_0} a_i$. Then, however, \begin{equation}
        a_{n_0 + 1} > a_{n_0 + 1} - 1 > \sum_{i = 1}^{n_0} a_i,
    \end{equation}
    which implies that the positive integer $a_{n_0 + 1} - 1$ cannot be represented in the form $\sum_{i = 1}^{k}{b_i a_i}$. This leads to a contradiction and completes the proof.
\end{proof}

\begin{corollary}\label{cor:doublingCritApx}
	If $a_n$ is a nondecreasing sequence such that $a_1 = 1$ and $a_n \leq 2 a_{n - 1}$ for all $n \geq 2$, then $a_n$ is complete.
\end{corollary}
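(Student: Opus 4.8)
The plan is to verify Brown's criterion (Theorem~\ref{thm:brownsCritApx}) directly, using the doubling hypothesis to control the growth of consecutive terms. Since $a_1 = 1$ is part of the hypothesis, it suffices to show $a_{n+1} \leq 1 + \sum_{i=1}^{n} a_i$ for every $n \geq 1$, and I would establish this by induction on $n$.

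For the base case $n = 1$, the doubling hypothesis gives $a_2 \leq 2a_1 = 2 = 1 + a_1$, which is precisely the inequality required by Brown's criterion. For the inductive step, suppose $a_n \leq 1 + \sum_{i=1}^{n-1} a_i$. Writing $a_{n+1} \leq 2a_n = a_n + a_n$ and substituting the inductive hypothesis into one of the two copies of $a_n$ yields
\begin{equation}
a_{n+1} \leq a_n + \left(1 + \sum_{i=1}^{n-1} a_i\right) = 1 + \sum_{i=1}^{n} a_i,
\end{equation}
which closes the induction.

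Once the Brown inequality holds for all $n \geq 1$, completeness follows at once from the sufficiency direction of Theorem~\ref{thm:brownsCritApx}, since $(a_n)$ is assumed nondecreasing. There is no substantive obstacle in this argument; the only points requiring care are that the hypothesis $a_1 = 1$ is genuinely used (the sequence would otherwise fail to represent $1$), and that monotonicity is what licenses the application of Brown's criterion in this form.
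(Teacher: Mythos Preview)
Your proof is correct and matches the paper's argument essentially line for line: both verify Brown's criterion by induction, using $a_2 \leq 2a_1 = a_1 + 1$ as the base case and $a_{n+1} \leq 2a_n = a_n + a_n \leq a_n + \bigl(1 + \sum_{i=1}^{n-1} a_i\bigr)$ for the inductive step.
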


\begin{proof}
	We argue by induction on $n$ that $a_n$ satisfies Brown's criterion when $n \geq 2$. As $a_1 = 1$, for the base case we have
	\begin{equation}
		a_2 \leq 2 a_1 = 2 = a_1 + 1.
	\end{equation}
	Now assume for inductive hypothesis that for some $n \geq 2$,
	\begin{equation}
		a_n \leq a_{n - 1} + \cdots + a_1 + 1.
	\end{equation}
	Then
	\begin{equation}
		a_{n + 1} \leq 2 a_n = a_n + a_n \leq a_n + a_{n - 1} + \cdots + a_1 + 1,
	\end{equation}
	completing the induction.
\end{proof}

\begin{example}
	The converse does not hold. A sequence may be complete and have some terms that are larger than the double of the previous term. One such example is the sequence generated by $[1, 0, 1, 4]$, whose terms are $\left(1, 2, 3, 5, 11,\dots\right)$. Here, $11$ is more than twice $5$, yet the sequence is still complete.
\end{example}

\section{Lemmas for Section 2}\label{ProofsOfLemmas2}

\begin{lemma}\label{SequencesDifferencesAppendix}
Let $\left( G_{n} \right)$, $\left( H_{n} \right)$ be the sequences defined by $[c_1,\dots, c_{L}], \; [c_1,,\dots, c_{L},c_{L+1}]$, respectively, where $c_{L+1}$ is any positive integer. For all $k \geq 2$,
\begin{equation}
	H_{L+k}-G_{L+k}\geq 2\left( H_{L+k-1}-G_{L+k-1} \right). 
\end{equation} 
\end{lemma}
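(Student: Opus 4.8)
The plan is to pass to the difference sequence $d_n := H_n - G_n$ and reduce the statement to a single inequality amenable to induction. First I would record the bookkeeping: since $G$ and $H$ share the coefficients $c_1,\dots,c_{L-1}$ and use the same initial-condition recipe, $G_i = H_i$ for $1 \le i \le L$, and comparing $G$'s recurrence with $H$'s initial condition at index $L+1$ gives $d_{L+1} = 1$. Subtracting $G_{n+1} = \sum_{j=1}^{L} c_j G_{n+1-j}$ (valid for $n \ge L$) from $H_{n+1} = \sum_{j=1}^{L} c_j H_{n+1-j} + c_{L+1} H_{n-L}$ (valid for $n \ge L+1$) yields
\begin{equation*}
d_{n+1} = \sum_{j=1}^{L} c_j\, d_{n+1-j} + c_{L+1} H_{n-L}, \qquad n \ge L+1,
\end{equation*}
with $d_m = 0$ for $m \le L$; since every term on the right is nonnegative and $H_{n-L} > 0$, a one-line induction gives $d_n \ge 0$ for all $n$ (indeed $d_n > 0$ for $n \ge L+1$).

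Next I would reformulate the target. The desired inequality is equivalent to $H_{L+k} - 2H_{L+k-1} \ge G_{L+k} - 2G_{L+k-1}$; substituting the recurrences for $G$ and $H$ at index $L+k$ (legitimate for $k \ge 2$) and pulling out the $c_1$ term collapses it to
\begin{equation*}
(c_1 - 2)\,d_{L+k-1} + \sum_{j=2}^{L} c_j\,d_{L+k-j} + c_{L+1} H_{k-1} \ge 0 .
\end{equation*}
If $c_1 \ge 2$ every summand is nonnegative (using $d_n \ge 0$ and $H_{k-1} > 0$), so that case is immediate. The real work is the case $c_1 = 1$, where one must show $\sum_{j=2}^{L} c_j d_{L+k-j} + c_{L+1} H_{k-1} \ge d_{L+k-1}$ for all $k \ge 2$. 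I would prove this by strong induction on $k$: the cases $k = 2, 3$ are checked directly from the initial data, and for $k \ge 4$ one unrolls the $d$-recurrence down to $d_{L+1}=1$, obtaining
\begin{equation*}
d_{L+k-1} = 1 + \sum_{j=2}^{L} c_j \sum_{i=2}^{k-1} d_{L+i-j} + c_{L+1} \sum_{i=1}^{k-2} H_i .
\end{equation*}
For each fixed $j$ the inner block $\sum_{i=2}^{k-1} d_{L+i-j}$ is a sum of consecutive \emph{earlier} $d$-terms, so by the inductive (doubling) hypothesis together with the geometric-series bound ``$\sum_{m<M} x_m < x_M$ whenever $x_m \ge 2x_{m-1} \ge 0$'' it is dominated by $d_{L+k-j}$; hence $\sum_{j=2}^{L} c_j d_{L+k-j}$ already beats $\sum_{j=2}^{L} c_j\sum_{i=2}^{k-1} d_{L+i-j}$ and it remains to absorb $1 + c_{L+1}\sum_{i=1}^{k-2} H_i - c_{L+1}H_{k-1} = 1 + c_{L+1}\bigl(B_{H,k-1} - 1\bigr)$ — a Brown's-gap expression for the PLRS $H$ — into the slack $\sum_{j=2}^{L} c_j\bigl(d_{L+k-j} - \sum_{i=2}^{k-1} d_{L+i-j}\bigr)$.

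The main obstacle is precisely this last absorption step when $c_1 = 1$. When several of $c_2,\dots,c_{L-1}$ vanish, the corresponding blocks $c_j\sum_i d_{L+i-j}$ drop out, so the available positive slack is thin, while $B_{H,k-1}$ can simultaneously be large (the early segment of $H$ is just $1,2,3,\dots$ through a long stretch). Pushing the argument through requires a sub-case analysis keyed to the positions of the nonzero coefficients among $c_2,\dots,c_{L}$, together with replacing the crude estimate above by the sharper $\sum_{m<M}x_m \le \bigl(1 - 2^{-(M-m_0)}\bigr)x_M$ and carefully tracking the leftover $2^{-(M-m_0)}x_M$. Matching the surviving $d$-blocks and the Brown's-gap term of $H$ against $d_{L+k-1}$ — rather than discarding information at each step — is the crux of the proof.
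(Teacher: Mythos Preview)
Your bookkeeping and reformulation are correct, and you have correctly isolated the heart of the matter: when $c_1=1$, the inequality to establish is $\sum_{j=2}^{L} c_j\,d_{L+k-j} + c_{L+1} H_{k-1} \ge d_{L+k-1}$, and the ``absorption step'' you flag is indeed where everything hinges. Unfortunately that step cannot be completed in general, because the lemma as stated is false. Take $L=5$, $[c_1,\dots,c_5]=[1,0,0,0,1]$ and $c_6=1$. One computes $G_6,\dots,G_9 = 6,8,11,15$ and $H_6,\dots,H_9 = 7,10,15,22$, so that $d_6,d_7,d_8,d_9 = 1,2,4,7$; at $k=4$ we get $d_{L+k}=d_9=7 < 8 = 2d_8 = 2d_{L+k-1}$. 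The obstruction you sensed --- too many zero middle coefficients leaving no slack to absorb the term $c_{L+1}(H_{k-1}-2H_{k-2})$, which is negative whenever $H_{k-1}<2H_{k-2}$ --- is precisely what makes this example fail.

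For comparison, the paper's own argument is a direct strong induction: expand $H_{L+m}$ and $G_{L+m}$ via their recurrences and apply the hypothesis term by term. That proof, however, silently uses the $L$-term recurrence for $H$ instead of the $(L{+}1)$-term one, dropping the contributions $c_{L+1}H_{m-1}$ and $2c_{L+1}H_{m-2}$. Once those are reinstated, the term-by-term comparison needs $H_{m-1}\ge 2H_{m-2}$, which fails for PLRS's like $[1,0,\dots,0,c_L,c_{L+1}]$ whose early terms grow like $1,2,3,\dots$. So your attempt and the paper's proof founder at the same missing inequality, and no repair is possible without strengthening the hypotheses or weakening the conclusion.
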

\begin{proof}
We use strong induction. 

We begin with the base case. First, recall that for all $n$ such that $1\leq n \leq L$, we know $H_{n}=G_{n}$. Further, note that
\begin{equation}
H_{L+1}=c_1H_{L}+\dots+c_{L}H_1+1=c_1G_{L}+\dots+c_{L}G_1+1=G_{L+1}+1.
\end{equation} 

Using this fact, we compute 
\begin{align}
	H_{L+2} &=c_1H_{L+1}+c_2H_{L}+\dots+c_{L}H_2+c_{L+1} H_1 \nnend
	&=c_1\left( G_{L+1}+1 \right)+c_2G_{L}+\dots+c_{L}G_2+c_{L+1} \nnend
	&=G_{L+1}+c_1+c_{L+1}.
\end{align}
Thus, we have that
\begin{equation}
	H_{L+2}-G_{L+2}=c_1+c_{L+1}\geq 2=2(1)= 2\left( H_{L+1}-G_{L+1} \right).
\end{equation} 
For the inductive step, suppose for some $m$, the lemma holds for all $2\leq k\leq m-1$. We wish to show the lemma holds for $m$, i.e., \begin{equation}
	H_{L+m}-G_{L+m}\geq 2\left( H_{L+m-1}-G_{L+m-1} \right).
\end{equation}
Expanding the terms using the recurrence definition, we see
\begin{equation}
 H_{L+m}-G_{L+m}\geq 2\left( H_{L+m-1}-G_{L+m-1} \right),
\end{equation}
which holds if and only if
\begin{equation}
\sum_{i=1}^{L}c_{i}H_{L+m-i}-\sum_{i=1}^{L}c_{i}G_{L+m-i}\geq 2\left( \sum_{i=1}^{L}c_{i}H_{L+m-1-i}-\sum_{i=1}^{L}c_{i}G_{L+m-1-i} \right).
\end{equation}
Note that for all $i \geq m$, $H_{L+m-i}-G_{L+m-i}=0$. We cancel out any such terms on both sides of the inequality above, simplifying to
\begin{equation}\label{InductHyp}
  \sum_{i=1}^{\min(m-1, L)}c_{i}\left( H_{L+m-i}-G_{L+m-i} \right) \geq  \sum_{i=1}^{\min(m-1, L)}  2c_{i}\left(H_{L+m-1-i}-G_{L+m-1-i}  \right).
\end{equation}
Note that for $m-1 \leq L$, we preserve the term $2c_{m-1}\left( H_{L}-G_{L} \right)=0$ in the right hand side sum, so that both sides of the inequality have the same number of summands.

By our inductive hypothesis, we see that for all $i$, 
\begin{equation}
c_{i}(H_{L+m-i}-G_{L+m-i})\geq 2c_{i}\left( H_{L+m-1-i}-G_{L+m-1-i} \right).
\end{equation}
Thus, inequality \eqref{InductHyp} holds, which completes the proof.
\end{proof}

\begin{lemma}\label{Lemma3.37Appendix}
Consider sequences $\left(G_n\right)=[c_1,c_2,\ldots, c_L]$ and   $\left(H_n\right)=[c_1,c_2,\ldots, k_L]$, where $1\leq k_L \leq c_L$. For all $k\in\N$,
\begin{equation}
H_{L+k+1}-2H_{L+k}\leq G_{L+k+1}-2G_{L+k}.
\end{equation}
\end{lemma}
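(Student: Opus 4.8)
The plan is to recast the statement via the difference sequence $d_n := G_n - H_n$. Because $G$ and $H$ share the coefficients $c_1,\dots,c_{L-1}$ and hence the same initial conditions, we have $d_n = 0$ for $1 \le n \le L$; because $k_L \le c_L$, an easy induction (using $d_{L+1} = c_L - k_L$ and the fact that the recurrence for $d$ has nonnegative coefficients and a nonnegative forcing term $(c_L-k_L)G_{n-L}$) gives $d_n \ge 0$ for all $n$. A one-line rearrangement shows that the desired inequality $H_{L+k+1} - 2H_{L+k} \le G_{L+k+1} - 2G_{L+k}$ is exactly $d_{L+k+1} \ge 2d_{L+k}$. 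I would prove this by strong induction on $k$, following the template of Lemma~\ref{SequencesDifferencesAppendix}.

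The base case $k=0$ is immediate, since $d_{L+1} = c_L - k_L \ge 0 = 2d_L$; I expect also to need the first few values of $k$ as explicit base cases, read off from the initial conditions. For the inductive step take $n = L+k+1 \ge L+2$, so that both $G$ and $H$ satisfy their recurrences at indices $n$ and $n-1$. Expanding and subtracting, and using that the two recurrences differ only in the last coefficient ($c_L$ versus $k_L$), gives
\begin{multline*}
d_n - 2d_{n-1} = \sum_{i=1}^{L-1} c_i\bigl(d_{n-i} - 2d_{n-1-i}\bigr) \\ {}+ \bigl[\,c_L(G_{n-L}-2G_{n-L-1}) - k_L(H_{n-L}-2H_{n-L-1})\,\bigr].
\end{multline*}
By the inductive hypothesis, together with the convention $d_j = 0$ for $j \le L$, every term $c_i\bigl(d_{n-i}-2d_{n-1-i}\bigr)$ is nonnegative, so the whole problem reduces to showing that the bracketed ``defect term'', which encodes the mismatch in the last coefficient, does not pull the sum below $0$.

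Controlling this defect term is the crux of the argument, and the step I expect to be the hardest. Writing $G_j = H_j + d_j$, the bracket equals $(c_L-k_L)\bigl(H_{n-L}-2H_{n-L-1}\bigr) + c_L\bigl(d_{n-L}-2d_{n-L-1}\bigr)$; the second piece folds back into the induction, but the first involves a consecutive difference of $H$ that is \emph{not} sign-definite --- it can be negative for a slowly growing PLRS --- so it cannot simply be discarded. The approach I would take is to run enough explicit base cases (roughly $L+1 \le n \le 2L$) that in the inductive step the index $n-L-1$ is itself at least $L$, then expand this defect contribution via the recurrences as well and argue that the positive slack accumulated by the terms $c_i\bigl(d_{n-i}-2d_{n-1-i}\bigr)$ (in particular the $c_1$ term, which inherits the full inductive gap since $c_1 \ge 1$) always dominates it. Equivalently, one tracks the auxiliary sequence $F_m := d_{m+1}-2d_m$, which satisfies the $G$-recurrence with forcing term $(c_L-k_L)\bigl(H_{m-L+1}-2H_{m-L}\bigr)$, and shows $F_m \ge 0$ for all $m \ge L$; making this accounting precise is the delicate part of the proof.
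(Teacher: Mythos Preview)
Your framework---strong induction via the recurrence expansion, recast through $d_n = G_n - H_n$---is exactly the paper's approach up to notation. The paper does not introduce $d_n$; it writes
\[
H_{L+k+2}-2H_{L+k+1} \;=\; \sum_{i=1}^{L-1} c_i\bigl(H_{L+k+2-i}-2H_{L+k+1-i}\bigr) + k_L\bigl(H_{k+2}-2H_{k+1}\bigr),
\]
then replaces the trailing $k_L$ by $c_L$ and applies the inductive hypothesis termwise to swap each $H$ for $G$. That single replacement is precisely your bracketed ``defect term,'' disposed of in one stroke.

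You are right to be suspicious of that stroke: it requires $H_{k+2}-2H_{k+1}\ge 0$, and this can fail. For instance, $[1,0,\ldots,0,N]$ gives $H_j=j$ for $j\le L$, so $H_{k+2}-2H_{k+1}=-k<0$ for every $k\ge 1$ in the relevant range. So the paper's written argument glides over exactly the difficulty you flagged; your analysis of the defect as $(c_L-k_L)(H_{n-L}-2H_{n-L-1}) + c_L(d_{n-L}-2d_{n-L-1})$ is the correct decomposition, and the first piece is genuinely sign-indefinite.

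That said, your proposal does not close the gap either. Running explicit base cases up to $n\approx 2L$ and hoping the $c_1$-slack absorbs the deficit is a reasonable plan, but you leave the accounting undone, and it is not routine: in the slow-growth regime $c_1=1$ the positive contribution $c_1(d_{n-1}-2d_{n-2})$ and the negative defect $(c_L-k_L)(H_{n-L}-2H_{n-L-1})$ are of comparable size over a range of roughly $L$ indices, so a term-by-term comparison will not work---one needs a cumulative estimate. In short: same route as the paper, and you have correctly located a real subtlety that neither proof, as written, actually resolves.
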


\begin{proof}
We proceed by strong induction on $k$. For $k=1$, we have 
\begin{align}
     H_{L+2}-2H_{L+1}&=\left(c_1 H_{L+1}+c_2 H_{L}+\dots+k_L H_2 \right) - 2\left(c_1 H_{L}+c_2 H_{L-1}+\dots+k_L H_1\right)\nnend
     &=\left(c_1 H_{L+1}+c_2 G_{L}+\dots+k_L G_2 \right) - 2\left(c_1 G_{L}+c_2 G_{L-1}+\dots+k_L G_1\right) \nnend
     &=G_{L+2}-\left(G_{L+1}-H_{L+1}\right)-\left(2c_L-2k_L\right)-2G_{L+1}-2\left(c_L-k_L\right) \nnend
     &\leq  G_{L+2}-2G_{L+1}.
\end{align}
Assume the statement holds true for a natural number $k$. Now, note
\begin{align}
     H_{L+k+2}&-2H_{L+k+1}\nnend
     &=\left(c_1 H_{L+k+1}+c_2 H_{L+k}+\dots+k_L H_{k+2} \right) - 2\left(c_1 H_{L+k}+c_2 H_{L+k-1}+\dots+k_L H_{k+1}\right)\nnend
     &= c_1\left(H_{L+k+1}-2H_{L+k}\right)+c_2\left(H_{L+k}-2H_{L+k-1}\right)+\dots +k_L\left(H_{k+2}-2h_{k+1}\right)\nnend
     &\leq c_1\left(H_{L+k+1}-2H_{L+k}\right)+c_2\left(H_{L+k}-2H_{L+k-1}\right)+\dots +c_L\left(H_{k+2}-2H_{k+1}\right).
     \intertext{By the inductive hypothesis,} 
     &\leq c_1\left(G_{L+k+1}-2G_{L+k}\right)+c_2\left(G_{L+k}-2G_{L+k-1}\right)+\dots +c_L\left(G_{k+2}-2G_{k+1}\right) \nnend
     &=G_{L+k+2}-2G_{L+k+1}.
\end{align}
Therefore, the statement holds by induction.
\end{proof}

\begin{lemma}\label{Add 1 Lemma Appendix}
	Let $\left( G_{n} \right)$ be the sequence defined by $[c_1,\ldots, c_{L}]$, and let $\left( H_{n} \right)$ be the sequence defined by $[c_1,\ldots, c_{L-1}+1,\; c_{L}-1]$. Then, for all $k \geq 0$,
	\begin{equation}
		H_{L+k+1}-G_{L+k+1}\geq 2\left( H_{L+k}-G_{L+k} \right) .
	\end{equation} 	
\end{lemma}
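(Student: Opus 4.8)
The plan is to follow the template of Lemmas~\ref{SequencesDifferencesAppendix} and~\ref{Lemma3.37Appendix}. Set $d_n \coloneqq H_n - G_n$. Because $H$ and $G$ share the coefficients $c_1,\dots,c_{L-2}$, the PLRS initial conditions force $H_n = G_n$ for $1 \leq n \leq L-1$, while
\begin{equation}
H_L = c_1 H_{L-1} + \cdots + c_{L-2}H_3 + (c_{L-1}+1)H_2 + (c_L-1)H_1 + 1 = G_L + H_1 = G_L + 1,
\end{equation}
so $d_n = 0$ for $n \leq L-1$ and $d_L = 1$. Writing out $H_{n+1} = \sum_{i=1}^{L-2}c_iH_{n+1-i} + (c_{L-1}+1)H_{n-L+2} + (c_L-1)H_{n-L+1}$ and subtracting $G_{n+1} = \sum_{i=1}^L c_iG_{n+1-i}$ gives, for all $n \geq L$,
\begin{equation}\label{Add1dnrec}
d_{n+1} = \sum_{i=1}^{L} c_i\, d_{n+1-i} + \bigl(H_{n-L+2} - H_{n-L+1}\bigr).
\end{equation}
The assertion $H_{L+k+1} - G_{L+k+1} \geq 2(H_{L+k} - G_{L+k})$ is exactly $d_{n+1} \geq 2d_n$ with $n = L+k$, and I would prove it by strong induction on $n \geq L$.

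For the base case $n = L$, equation~\eqref{Add1dnrec} gives $d_{L+1} = c_1 d_L + (H_2 - H_1) = c_1 + c_1 = 2c_1 \geq 2 = 2d_L$ since $c_1 \geq 1$. For the inductive step with $n \geq L+1$, subtract $2d_n = 2\sum_{i=1}^L c_i d_{n-i} + 2(H_{n-L+1}-H_{n-L})$ (that is, \eqref{Add1dnrec} at the previous index, doubled) from~\eqref{Add1dnrec}:
\begin{equation}\label{Add1dndiff}
d_{n+1} - 2d_n = \sum_{i=1}^{L} c_i\,(d_{n+1-i} - 2d_{n-i}) + \Bigl[(H_{n-L+2} - H_{n-L+1}) - 2(H_{n-L+1} - H_{n-L})\Bigr].
\end{equation}
Each summand $c_i(d_{n+1-i} - 2d_{n-i})$ is nonnegative: for $n-i \geq L$ this is the inductive hypothesis; for $n-i = L-1$ it equals $c_i(d_L - 2d_{L-1}) = c_i$; and for $n-i < L-1$ it vanishes. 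Hence the difference~\eqref{Add1dndiff} is at least the bracketed correction term
\begin{equation}
C_n \coloneqq (H_{n-L+2} - H_{n-L+1}) - 2(H_{n-L+1} - H_{n-L}),
\end{equation}
and in fact at least $c_{n-L+1} + C_n$ when $L \leq n \leq 2L-1$, the range in which the reserve $d_L - 2d_{L-1} = 1$ sits inside the sum.

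The crux, and the main obstacle, is controlling $C_n$, which need \emph{not} be nonnegative: $C_n < 0$ precisely when the forward differences of $H$ fail to satisfy a doubling inequality, which occurs for many PLRS's. The argument must absorb the negative part of $C_n$ into the nonnegative sum in~\eqref{Add1dndiff}. I would split on $n$: for $L+1 \leq n \leq 2L-1$ express $C_n$ through the explicitly computable initial terms of $H$ and verify $c_{n-L+1} + C_n \geq 0$ directly; for a short transitional band around $n = 2L$ check by hand; and for $n$ large use that $C_n$ is a fixed linear combination of shifts of $H$, hence eventually obeys $H$'s own recurrence, which together with the strict surplus that has by then accumulated in the quantities $d_{j+1}-2d_j$ keeps the difference nonnegative. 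I should flag that the inequality as literally stated can fail for slowly-growing complete $G$ (for instance $[1,0,2]$ yields $H_{10}-G_{10}=151 < 152 = 2(H_9-G_9)$), so to push the argument through one must invoke the hypothesis under which the lemma is actually applied in Lemma~\ref{IncompExtension}, namely that $(G_n)$ is incomplete; incompleteness forces $B_{G,n}<0$ for some $n$, i.e. $G$ grows fast enough that the accumulated surplus stays ahead of $-C_n$. Making that last link quantitative is where the real work lies.
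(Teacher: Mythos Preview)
Your setup and the recurrence for $d_n$ match the paper's approach exactly (modulo a harmless indexing slip in your display for $H_L$: the $(L-1)$st coefficient multiplies $H_1$, and the $L$th coefficient does not enter until $H_{L+1}$; the conclusion $d_L=1$ is nonetheless correct). The paper's inductive step is precisely your decomposition: it handles $\sum_i c_i(d_{n+1-i}-2d_{n-i})$ via the induction hypothesis and then, for the correction term, asserts ``$H_{m+2}-H_{m+1}\geq H_{m+1}-H_m$, simply because we know that gaps in a PLRS grow.'' But what the displayed inequality in the paper actually requires is $H_{m+2}-H_{m+1}\geq 2(H_{m+1}-H_m)$, i.e.\ exactly your $C_n\geq 0$; the paper's stated bound is too weak by a factor of two, so its proof has the very gap you isolated.

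Your counterexample is decisive: with $G=[1,0,2]$ and $H=[1,1,1]$ one gets $d_9=76$ and $d_{10}=151<152=2d_9$, so the lemma \emph{as stated is false}, and neither your argument nor the paper's can be completed without an extra hypothesis. Your proposed remedy---restrict to incomplete $G$---is natural since Lemma~\ref{IncompExtension} only applies the result in that setting; however, observe that the proof of Lemma~\ref{IncompExtension} actually aims at the unconditional chain $B_{H,m}\leq B_{G,m}$, and in your counterexample that chain still holds (e.g.\ $B_{G,10}-B_{H,10}=54-52=2>0$) even though the doubling fails. Since $B_{G,m}-B_{H,m}=d_m-\sum_{i<m}d_i$, a cleaner repair may be to prove $d_m\geq\sum_{i<m}d_i$ directly rather than the stronger, and false, pointwise inequality $d_{m+1}\geq 2d_m$.
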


\begin{proof}
We use strong induction. We begin with the base case. First, since the first $L-2$ coefficients of $\left( G_{n} \right),\left( H_{n} \right)$ are equivalent, we have that for all $1\leq n\leq L-1$, $G_{n}=H_{n}$. We also see that
\begin{equation}
	H_{L}=c_1H_{L-1}+\cdots+\left( c_{L-1}+1 \right) H_{1}+1=c_1G_{L-1}+\cdots+\left( c_{L-1}+1 \right) G_1+1=G_{L}+G_1=G_{L}+1.
\end{equation} 
Moreover,
\begin{equation}
\begin{array}{l@{{} {}}l}
H_{L+1}
&= c_1H_{L}+\cdots+\left( c_{L-1}+1 \right) H_2+\left( c_{L}-1 \right) H_1 \\
&= c_1\left( G_{L}+1 \right) +\cdots+\left( c_{L-1}+1 \right) G_2+\left( c_{L}-1 \right) G_1 \\
 &= c_1+G_2-G_1+\sum_{i=1}^{L}c_{i}G_{L+1-i} \\
 &= c_1+c_1 +G_{L+1}  \\
  &= 2c_1+G_{L+1}. \\
\end{array}
\end{equation}
Thus, we see that
\begin{equation}
	H_{L+1}-G_{L+1}=2c_1\geq 2=2(1)=2\left( H_{L}-G_{L} \right),
\end{equation} 
and so the base case holds.

For the induction step, suppose our lemma holds for all $0\leq k \leq m$. We wish to show this holds for $m+1$, so that $H_{L+m+1}-G_{L+m+1}\geq 2\left( H_{L+m}-G_{L+m} \right) $.

Since $\left(G_n\right)$ and $\left(H_n\right)$ are PLRS, we expand the terms in question using their respective recurrence relations to see that $H_{L+m+1}-G_{L+m+1}\geq 2\left( H_{L+m}-G_{L+m} \right)$ if and only if
\begin{multline}\label{InductIneq}
 \sum_{i=1}^{L}c_{i}H_{L+m+1-i}+H_{m+2}-H_{m+1}-\sum_{i=1}^{L}c_{i}G_{L+m+1-i} \\
 \geq 2\left( \sum_{i=1}^{L}c_{i}H_{L+m-1}+H_{m+1}-H_{m}-\sum_{i=1}^{L}c_{i}G_{L+m-i} \right).
\end{multline}

We note that by the induction hypothesis, we have that for all $i$,
\begin{equation}
c_{i}\left( H_{L+m+1-i}-G_{L+m+1-i} \right) \geq 2c_{i}\left( H_{L+m-i}-G_{L+m-i} \right).
\end{equation}

Moreover, $H_{m+2}-H_{m+1}\geq H_{m+1}-H_{m}$, simply because we know that gaps in a PLRS grow. 
Combining these two statements, we have that inequality $\ref{InductIneq}$ holds, and so our inductive step is complete.
\end{proof}

\begin{lemma}\label{Last Case Adding M Appendix}
Let $\left( G_{n} \right)$ be the sequence defined by $[c_1,\ldots, c_{L-1},1]$, and let $\left( H_{n} \right)$ be the sequence defined by $[c_1,\ldots,  c_{L-1}+1]$. Then, for all $k \geq  1$,
\begin{equation}
	H_{L+k+1}-G_{L+k+1}\geq 2\left( H_{L+k}-G_{L+k} \right). 
\end{equation}
\end{lemma}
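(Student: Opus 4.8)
The plan is to mirror the proof of Lemma~\ref{Add 1 Lemma Appendix}: run a strong induction on $k$ for the difference sequence $D_n := H_n - G_n$, the only new wrinkle being that $(H_n)$ has $L-1$ coefficients while $(G_n)$ has $L$, so the two defining recurrences are offset by one index. First I would pin down the small terms. Because the coefficient lists $[c_1,\dots,c_{L-2},c_{L-1}+1]$ and $[c_1,\dots,c_{L-1},1]$ agree in positions $1,\dots,L-2$, the initial-condition formulas for $(H_n)$ and $(G_n)$ agree through index $L-1$, and a one-line computation---the same one underlying the identities $B_{H,k}=B_{G,k}$ for $k\le L$ and $B_{G,L+1}-B_{H,L+1}=c_1$ in the proof of Lemma~\ref{Last Case Adding M Theorem}---gives $H_n=G_n$ for $1\le n\le L$ and $H_{L+1}-G_{L+1}=c_1$. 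Hence $D_n=0$ for $n\le L$ and $D_{L+1}=c_1>0$. Subtracting the two recurrences for $n\ge L+1$ (the $+1$ in the last coefficient of $(H_n)$ contributing an extra copy of $H_{n-L+1}$) yields
\begin{equation}
D_n=\sum_{i=1}^{L-1}c_iD_{n-i}+\bigl(H_{n-L+1}-G_{n-L}\bigr),\qquad n\ge L+1,
\end{equation}
and I would rewrite the bracketed term as $D_{n-L+1}+\bigl(G_{n-L+1}-G_{n-L}\bigr)$, so that $(D_n)$ satisfies the \emph{same} homogeneous recurrence as $(H_n)$, driven by the strictly positive gap $G_{n-L+1}-G_{n-L}$ of $(G_n)$.

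The induction is then short in outline. For $k=1$ I would evaluate $D_{L+2}$ from the difference recurrence---only the term $c_1D_{L+1}=c_1^2$ of the homogeneous part survives and the forcing term is $G_3-G_2\ge1$, so $D_{L+2}\ge c_1^2+1\ge 2c_1=2D_{L+1}$ since $(c_1-1)^2\ge0$ (the edge case $L=2$ handled directly). For the inductive step, assuming $D_m\ge2D_{m-1}$ for $L+2\le m\le n$, I would subtract $2D_n$ from $D_{n+1}$: the homogeneous part contributes $\sum_{i=1}^{L-1}c_i(D_{n+1-i}-2D_{n-i})$, and each summand is nonnegative---by the induction hypothesis once the index exceeds $L+1$, because $D_{L+1}-2D_L=c_1\ge0$ at index $L+1$, and trivially below---while the remaining term is a combination of two consecutive gaps of $(G_n)$, essentially $(G_{n-L+2}-G_{n-L+1})-2(G_{n-L+1}-G_{n-L})$.

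That last term is the crux, and the step I expect to be hardest: it is not automatically nonnegative, so a purely term-by-term comparison does not close the induction---this is exactly the obstruction already present in Lemma~\ref{Add 1 Lemma Appendix}. I would resolve it in the same spirit: carry along the slack that builds up in the homogeneous part (all of it ultimately descending from the strict inequality $D_{L+1}-2D_L=c_1>0$) and show it dominates the comparatively small gap deficit, using that gaps of a PLRS are monotone increasing and that $H_m\ge G_m$ (which follows from $D_m\ge0$, a by-product of the induction). If a cleanly packaged quantitative step is wanted, I would factor out an auxiliary inequality comparing $H_{p+1}-2H_p$ with $G_p-2G_{p-1}$ and prove it by a separate short induction, exactly in the style of Lemmas~\ref{SequencesDifferencesAppendix} and~\ref{Add 1 Lemma Appendix}; this is the one place where the estimate is genuinely delicate and where one should be prepared to use that $(G_n)$ is incomplete in the situations where the lemma is applied.
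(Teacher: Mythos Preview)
Your approach mirrors the paper's exactly: strong induction on $k$, expand both sequences via their recurrences, split into a ``homogeneous'' part (handled by the inductive hypothesis) and a residual term. You are right that the residual is the crux, and you are more honest than the paper about it---the paper simply asserts $H_{m+2}-2H_{m+1}\ge G_{m+1}-2G_m$ and waves at ``$(H_n)$ grows faster'' without proof. Your proposed auxiliary inequality $H_{p+1}-2H_p\ge G_p-2G_{p-1}$ is the same claim.

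The gap is real, and it cannot be closed: the lemma as stated is false. Take $L=3$, $c_1=1$, $c_2=0$, so $(G_n)=[1,0,1]$ and $(H_n)=[1,1]$. Then $G_n=1,2,3,4,6,9,13,19,\dots$ and $H_n=1,2,3,5,8,13,21,34,\dots$, giving $D_n=H_n-G_n=0,0,0,1,2,4,8,15,\dots$. At $k=4$ one has $D_{L+k+1}=D_8=15<16=2D_7=2D_{L+k}$, so the inequality fails. (Asymptotically $D_{n+1}/D_n\to\varphi\approx1.618<2$, so it fails for all large $k$.) In this example the residual term $H_{m+2}-2H_{m+1}-(G_{m+1}-2G_m)$ is already negative at $m=1$: $H_3-2H_2=-1$ while $G_2-2G_1=0$. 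So neither a term-by-term argument nor any ``carry the slack'' refinement can rescue the induction; there is no slack to carry.

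Your instinct that one might need ``$(G_n)$ incomplete'' is exactly right: in my counterexample $(G_n)=[1,0,1]$ is complete, and in the only place the paper uses this lemma (Lemma~\ref{Last Case Adding M Theorem}) incompleteness of $(G_n)$ is assumed. But that hypothesis is not part of the lemma's statement, and even with it the heuristic is delicate---what one really needs for the asymptotics is that the principal root of $(H_n)$ be at least $2$, which incompleteness of $(G_n)$ does not directly give. So the correct move is not to try to prove the lemma as written, but to flag that it is misstated and that the downstream application needs a different (or additionally hypothesised) inequality.
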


\begin{proof}
The proof is similar to that of Lemma~\ref{Add 1 Lemma Appendix} and so we repeat our use of strong induction.

We begin with the base case. First, since first $L-2$ coefficients of $\left( G_{n} \right),\left( H_{n} \right)$ are equivalent, we have that for all $1\leq n\leq L-1$, $G_{n}=H_{n}$. In fact, even more can be said: $G_{L}=H_{L}$, as
\begin{equation}
	H_{L}=c_1H_{L-1}+\cdots +\left( c_{L-1}+1 \right) H_1=c_1G_{L-1}+\cdots +\left( c_{L-1}+1 \right) G_1=(G_{L}-1)+G_1=G_{L}.
\end{equation}
Hence, 
\begin{multline}
	H_{L+1}=c_1H_{L}+\cdots +(c_{L-1}+1)H_2=c_1G_{L}+\cdots +\left( c_{L-1}+1 \right) G_2\\ =G_{L+1}-G_1+G_2= G_{L+1}-\left( 1 \right) +\left( c_1+1 \right) =  G_{L+1}+c_1,
\end{multline}
and so we see that 
\begin{equation}
	H_{L+1}-G_{L+1}=c_1>0=2\left( H_{L}-G_{L} \right) .
\end{equation}
For the induction step, suppose for some $m$ that our lemma holds for all $0\leq k\leq m$. We wish to show this holds  for $m+1$, so that $H_{L+m+1}-G_{L+m+1}\geq 2\left( H_{L+m}-G_{L+m} \right) $.

Since $\left(G_n\right)$ and $\left(H_n\right)$ are PLRS, we expand the terms in question using their respective recurrence relations. On this basis, we can claim that $H_{L+m+1}-G_{L+m+1}\geq 2\left( H_{L+m}-G_{L+m} \right) $ if and only if
\begin{multline}\label{InductIneq 2}
 \sum_{i=1}^{L-1}c_{i}H_{L+m+1-i}+H_{m+2}-\sum_{i=1}^{L-1}c_{i}G_{L+m+1-i}-G_{m+1} \\
 \geq 2\left( \sum_{i=1}^{L-1}c_{i}H_{L+m-i}+H_{m}-\sum_{i=1}^{L}c_{i}G_{L+m-i} - G_{m-1} \right).
\end{multline}
By the induction hypothesis, we have that for all $i$, 
\begin{equation}
	c_{i}H_{L+m+1-i}-c_{i}G_{L+m+1-i}\geq 2\left( c_{i}H_{L+m-i}-G_{L+m-i} \right).
\end{equation} 
However, we can also show that $H_{m+2}-G_{m+1}\geq 2\left( H_{m+1}-G_{m} \right) $. By rewriting this as $H_{m+2}-2H_{m+1}\geq G_{m+1}-2G_{m}$, we see that for $m\leq L-1$, both sides are equal. For $m\geq L+1$, it suffices to note that $\left( H_{n} \right)$ grows faster, and thus so must the gaps between consecutive terms.
By combining these two observations, the inequality \eqref{InductIneq 2} holds, which completes the proof.
\end{proof}

\section{Lemmas for Section 3}\label{apx:sec3lemmas}

\begin{lemma}\label{lem:sharp1onekzero}
For the PLRS $H_{n+1} =H_n + NH_{n-k-1}$, with $N = \ceil*{(k + 2)(k + 3)/{4}}$, then 
\begin{equation}
(N-2)H_{n-k-1}\leq H_{n-1} +\dots+H_{n-k}.
\end{equation}
\end{lemma}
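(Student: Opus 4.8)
The plan is to prove the inequality for all $n \geq k+2$ (the range in which the recurrence $H_{n+1}=H_n+NH_{n-k-1}$ governs the sequence) by strong induction on $n$, phrased in terms of the quantity $D_n := \sum_{i=1}^{k}H_{n-i} - (N-2)H_{n-k-1}$, for which we must show $D_n \geq 0$. First I would record the elementary facts used throughout: $H_i=i$ for $1\le i\le k+2$; the bound $N \le \lceil (k+2)(k+3)/4\rceil \le (k+2)(k+3)/4 + 1/2$; and that consecutive differences satisfy $H_{m+1}-H_m \geq 1$ for every $m$ (they equal $1$ for $m\le k+1$ and $NH_{m-k-1}\ge N$ for $m\ge k+2$).

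For the two base cases $n=k+2$ and $n=k+3$ I would substitute $H_i=i$: they reduce to $N-2 \le (k+1)(k+2)/2-1$ and $2(N-2)\le (k+2)(k+3)/2-3$ respectively, and both follow from the bound on $N$ --- the second one essentially with equality, which is exactly where the constant $\lceil (k+2)(k+3)/4\rceil$ from Theorem~\ref{thm:1onekzero} enters. For the inductive step ($n\ge k+4$) the governing computation is $D_n - D_{n-1} = H_{n-1} - (N-1)H_{n-k-1}+(N-2)H_{n-k-2} = (H_{n-1}-H_{n-k-2}) - (N-1)(H_{n-k-1}-H_{n-k-2})$, so it suffices to establish the auxiliary claim that $H_\nu - H_{\nu-k-1} \ge (N-1)(H_{\nu-k}-H_{\nu-k-1})$ for every $\nu\ge k+3$; granting it at $\nu=n-1$ gives $D_n \ge D_{n-1}\ge 0$ by the induction hypothesis at $n-1$.

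To prove the auxiliary claim I would split on the size of $\nu$. When $k+3\le \nu \le 2k+2$, the indices $\nu-k-1,\nu-k$ lie in the initial block, so the right-hand side is just $N-1$, while $H_\nu-H_{\nu-k-1} \ge N+k$ using only $H_{m+1}-H_m\ge 1$ together with $H_{k+3}=k+2+N$. When $\nu\ge 2k+3$, every gap involved obeys $H_{j+1}-H_j=NH_{j-k-1}$, which telescopes to $H_\nu-H_{\nu-k-1}=N\sum_{i=\nu-2k-2}^{\nu-k-2}H_i$ and $H_{\nu-k}-H_{\nu-k-1}=NH_{\nu-2k-2}$; peeling off the first summand and applying the induction hypothesis $D_{\nu-k-1}\ge 0$ (legitimate since $k+2 \le \nu-k-1 < n$) yields $\sum_{i=\nu-2k-2}^{\nu-k-2}H_i \ge (N-1)H_{\nu-2k-2}$, which is precisely what is required. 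This closes the induction.

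I expect the main difficulty to be bookkeeping rather than any deep idea: one must check that every appeal to the induction hypothesis is at a strictly smaller index that still lies in the admissible range $\{k+2,k+3,\dots\}$, and that the two base cases $n=k+2,k+3$ genuinely suffice --- the auxiliary claim fails at $\nu=k+2$ (there it would demand $N\le k+2$), which is why $n=k+3$ must be handled directly rather than produced by the step. All the comparisons against $N=\lceil(k+2)(k+3)/4\rceil$ are routine arithmetic once this structure is laid out.
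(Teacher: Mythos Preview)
Your argument is correct. Both your proof and the paper's proceed by strong induction on $n$ with a case split near $n\approx 2k+3$, but the organization differs. The paper expands $(N-2)H_{n-k}$ directly via the recurrence $H_{n-k}=H_{n-k-1}+NH_{n-2k-2}$ (or $H_{n-k}=H_{n-k-1}+1$ in the initial range) and applies the induction hypothesis to each piece, then reassembles the sum $\sum_{i=1}^{k}H_{n-i+1}$ using the same recurrence. You instead prove the monotonicity $D_n\ge D_{n-1}$ for $n\ge k+4$ through the auxiliary gap inequality $H_\nu-H_{\nu-k-1}\ge (N-1)(H_{\nu-k}-H_{\nu-k-1})$, which you then reduce to $D_{\nu-k-1}\ge 0$ in the full-recurrence range. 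Your route makes the role of the second base case $n=k+3$ transparent---it is exactly where the choice $N=\lceil (k+2)(k+3)/4\rceil$ becomes sharp---whereas the paper's single base case $n=k+2$ together with its case~1 step implicitly absorbs this. Both approaches ultimately use the induction hypothesis at the shifted index $n-k-1$ in the range $n\ge 2k+3$; yours simply factors that appeal through a cleaner intermediate statement.
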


\begin{proof}
The claim follows by strong induction on $n$. Consider the base case, for $n=k+2$: $H_{n-k-1} = H_1 = 1, H_{n-k} = H_2 = 2, \dots, H_{n-1} = H_{k+1} = k+1$. 
\begin{align}
\left( N-2 \right) H_{n-k-1}  \leq  H_{n-1}+\dots+H_{n-k} &\iff  (N-2) \leq 2+3+\dots+k+\left( k+1 \right)  \nnend
	&\iff  \left\lfloor \frac{\left( k+2 \right) \left( k+3 \right) }{4}+\frac{1}{2}\right\rfloor  \leq \frac{\left( k+1 \right) \left( k+2 \right) }{2}+1 \nnend
	&\Longleftarrow \quad \frac{\left( k+2 \right) \left( k+3 \right) +2}{4}  \leq \frac{k^2+3k+2}{2}+1 \nnend
    &\iff  k^2+5k+8  \leq 2k^2+6k+8 \nnend
    &\iff  0  \leq k^2-k. 
\end{align}
Hence, the base case holds for $k\geq 0$.

For the induction hypothesis, assume the following holds for arbitrary, fixed $n$:
\begin{equation}
    (N-2)H_{n-k-1}\leq H_{n-1} +\dots+H_{n-k}.
\end{equation}
For the induction step, we wish to show the following:
\begin{equation}
    (N-2)H_{n-k}\leq H_{n} +\dots+H_{n-k+1}.
\end{equation}

Depending on the value of $n$, either the full recurrence relation is used, or an abridged version given by the definition of a PLRS is used. Thus, we consider two different cases for $n$.
First, for $k+2 < n < 2k+3$, the  recurrence relation implies that
\begin{equation}
    (N-2)H_{n-k}= (N-2)H_{n-k-1} + (N-2).
\end{equation}
By applying the induction hypothesis to the right hand side of the previous expression, we write
\begin{align}
    (N-2)H_{n-k} &\leq  H_{n-1} +\dots+H_{n-k} + N-2\nnend
    &= H_n + \cdots + H_{n-k+1} + (H_{n-k} - H_n + N-2).
\end{align}
As for all $k \geq 1$,
\begin{equation}
    H_{n}=H_{n-1} + NH_{n-k-2} \geq H_{n-k} + N - 2,
\end{equation}
then $(H_{n-k} - H_n + N-2) \leq 0$, and we conclude that
\begin{equation}
    (N-2)H_{n-k} \leq  H_n + \cdots + H_{n-k+1}.
\end{equation}

In the second case, where $n \geq 2k+3$, using the full recurrence relation, we write that
\begin{equation}
    (N-2)H_{n-k}= (N-2)H_{n-k-1} + N(N-2)H_{n-2k-2}.
\end{equation}
By applying the induction hypothesis to both terms on the right hand side of the previous expression, we write
\begin{align}
    (N-2)H_{n-k} &\leq  H_{n-1} +\dots+H_{n-k} + N(H_{n-k-2} +\dots+H_{n-2k-1})\nnend
    &= \sum_{i=1}^{k}\left(H_{n-i}+NH_{n-k-1-i}\right)\nnend
    &= \sum_{i=1}^k H_{n-i+1}.
\end{align}
Hence, the claim is true for all $n \geq k+1, k\geq 0$.
\end{proof}

\begin{lemma}\label{TrivId}
    $\sum_{i=1}^n 2^{i-1}i = 2^{n}(n-1) + 1$.
\end{lemma}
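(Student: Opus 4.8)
The plan is to prove this identity by straightforward induction on $n$; I will also indicate a non-inductive derivation via differentiating a geometric series, since that explains where the closed form comes from.

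For the base case $n=1$, the left-hand side is $2^{0}\cdot 1 = 1$ and the right-hand side is $2^{1}(1-1)+1 = 1$, so the identity holds. For the inductive step, I would assume $\sum_{i=1}^{n} 2^{i-1}i = 2^{n}(n-1)+1$ and then compute
\begin{equation}
\sum_{i=1}^{n+1} 2^{i-1}i = \bigl(2^{n}(n-1)+1\bigr) + 2^{n}(n+1) = 2^{n}\bigl((n-1)+(n+1)\bigr)+1 = 2^{n+1}n+1,
\end{equation}
which is exactly the claimed formula with $n$ replaced by $n+1$, since $2^{n+1}\bigl((n+1)-1\bigr)+1 = 2^{n+1}n+1$. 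This closes the induction.

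As an alternative one can note that $\sum_{i=1}^{n} i x^{i-1}$ is the derivative of $\sum_{i=0}^{n} x^{i} = (x^{n+1}-1)/(x-1)$, and evaluating the resulting expression at $x=2$ yields $(n+1)2^{n} - (2^{n+1}-1) = 2^{n}(n-1)+1$. Either way, the statement is a routine finite-sum manipulation, so there is no substantive obstacle; the only point requiring a little care is the index bookkeeping in the inductive step, where one must recognize $2^{n}\cdot 2n = 2^{n+1}n$ matches the target formula at $n+1$.
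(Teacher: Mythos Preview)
Your proof is correct and takes the same approach as the paper, which simply states ``By induction on $n$.'' You have merely filled in the details the paper omits, and your alternative derivation via differentiating the geometric series is a nice bonus but not needed.
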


\begin{proof}
    By induction on $n$.
\end{proof}

\begin{lemma}\label{Line3Terms}
    Define $\left(f_n\right) = [\underbrace{1, \ldots, 1}_g]$ and $\left(H_n\right) = [\underbrace{1, \ldots, 1}_g, \underbrace{0, \ldots, 0}_k, 2^{k + 1} - 1]$. Then $H_{g + k + 1 + n} = f_{g + k + 1 + n} + (2^{k + 1} - 1)(2^n + 2^{n - 2}(n - 1))$ when $1 \leq n \leq g - k$.
\end{lemma}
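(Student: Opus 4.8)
The plan is to track the difference $b(m) \coloneqq H_m - f_m$. Both $(f_n)$ and $(H_n)$ satisfy the $g$-bonacci recurrence once past their initial segments (of lengths $g$ and $L = g+k+1$ respectively), and the only discrepancy between their defining recurrences is the single term $(2^{k+1}-1)H_{m-g-k-1}$ present for $(H_n)$. Subtracting, for every $m \geq g+k+2$ we get
\begin{equation}
b(m) \ = \ \sum_{i=1}^{g} b(m-i) \ + \ (2^{k+1}-1)\,H_{m-g-k-1}.
\end{equation}
First I would record the base values: $b(m) = 0$ for $1 \leq m \leq g$ (both sequences equal $2^{m-1}$ there), and a short induction on $j$, using the initial-condition formula for $(H_n)$, gives $b(g+j) = 2^{j-1}$ for $1 \leq j \leq k+1$. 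In particular $\sum_{m=g+1}^{g+k+1} b(m) = 2^{k+1}-1$.

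The heart of the proof is a strong induction on $n$, for $1 \leq n \leq g-k$. Put $B(n) \coloneqq b(g+k+1+n)/(2^{k+1}-1)$ and apply the displayed recurrence at $m = g+k+1+n$; the $g$ indices appearing in the sum form the block $k+1+n \leq m' \leq g+k+n$, which I would split at $g+k+1$. For the low piece $k+1+n \leq m' \leq g+k+1$: the hypothesis $n \leq g-k$ is precisely what forces $k+1+n \leq g+1$, so this block contains all of $\{g+1,\dots,g+k+1\}$ plus possibly some indices $\leq g$ at which $b$ vanishes, and hence contributes exactly $2^{k+1}-1$. The high piece $g+k+2 \leq m' \leq g+k+n$ equals $\{\,g+k+1+j : 1 \leq j \leq n-1\,\}$, so it contributes $(2^{k+1}-1)\sum_{j=1}^{n-1} B(j)$, with the inductive hypothesis covering each $j < n$. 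Finally $m - g - k - 1 = n \leq g$, so $H_{m-g-k-1} = 2^{n-1}$. Dividing through by $2^{k+1}-1$ gives
\begin{equation}
B(n) \ = \ 1 + \sum_{j=1}^{n-1} B(j) + 2^{n-1}, \qquad B(1) = 2.
\end{equation}

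To close, I would solve this recurrence: taking successive differences gives $B(n) = 2B(n-1) + 2^{n-2}$ for $n \geq 2$, and a one-line induction confirms $B(n) = (n+3)2^{n-2} = 2^n + 2^{n-2}(n-1)$. Multiplying back by $2^{k+1}-1$ and adding $f_{g+k+1+n}$ recovers $H_{g+k+1+n} = f_{g+k+1+n} + (2^{k+1}-1)\bigl(2^n + 2^{n-2}(n-1)\bigr)$, as desired. I expect the main obstacle to be the index bookkeeping in that split: confirming that the low block always yields the full geometric sum $2^{k+1}-1$ (the one place where $n \leq g-k$ is genuinely used), handling the degenerate cases — the smallest being $g = k+1$, where only $n=1$ is asserted, and $n=1$ itself, where the high block is empty — and checking throughout that all indices involved are large enough for the recurrence relation, rather than the initial-condition formula, to apply.
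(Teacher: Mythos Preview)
Your proposal is correct and follows essentially the same route as the paper's own proof: the paper also sets $a(n) = H_{g+k+1+n} - f_{g+k+1+n}$, derives $a(n) = \sum_{i=1}^{n-1} a(i) + (2^{k+1}-1)(2^{n-1}+1)$ by expanding $H_{g+k+1+n}$ via its recurrence and using $k+1+n \leq g+1$, differences to obtain $a(n) = 2a(n-1) + (2^{k+1}-1)2^{n-2}$, and then solves this from $a(1) = 2(2^{k+1}-1)$. Your normalization $B(n) = a(n)/(2^{k+1}-1)$ and the explicit split of the index block into low and high pieces are cosmetic refinements of the same computation.
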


\begin{proof}
    Define $a(n)$ so that $H_{g + k + 1 + n} = f_{g + k + 1 + n} + a(n)$ for $1 \leq n \leq g - k$. Now,
    \begin{align}
        H_{g + k + 1 + n} &= H_{g + k + n} + \cdots + H_{k + 1 + n} + (2^{k + 1} - 1)H_n\nnend
        &= \sum_{i = k + 1 + n}^{g + k + n} f_i + \sum_{i = 1}^{n - 1} a(i) + \sum_{i = 1}^{k + 1} 2^{i - 1} + (2^{k + 1} - 1)2^{n - 1}
        \intertext{(since $k + 1 + n \leq g + 1$, $(H_i - f_i)$ spans all the indices from $k + 1 + n \leq g + 1$ to $g + k + n$)}
        &= f_{g + k + n + 1} + \sum_{i = 1}^{n - 1} a(i) + (2^{k + 1} - 1)(2^{n - 1} + 1)
    \end{align}
    Therefore,
    \begin{equation}
        a(n) = \sum_{i = 1}^{n - 1} a(i) + (2^{k + 1} - 1)(2^{n - 1} + 1)
    \end{equation}
    and
    \begin{equation}
        a(n - 1) = \sum_{i = 1}^{n - 2} a(i) + (2^{k + 1} - 1)(2^{n - 2} + 1).
    \end{equation}
    Hence
    \begin{equation}
        a(n) = 2a(n - 1) + (2^{k + 1} - 1)(2^{n - 2}).
    \end{equation}
    Since $a(1) = 2(2^{k + 1} - 1)$, by induction we have
    \begin{equation}
        a(n) = (2^{k + 1} - 1)(2^n + 2^{n - 2}(n - 1)).
    \end{equation}
\end{proof}

\begin{lemma}\label{Line4Terms}
    Define $\left(f_n\right) = [\underbrace{1, \ldots, 1}_g]$ and $\left(H_n\right) = [\underbrace{1, \ldots, 1}_g, \underbrace{0, \ldots, 0}_k, 2^{k + 1} - 1$. Then $f_{g + n} = 2^{g + n - 1} - 2^{n - 2}(n + 1)$ when $1 \leq n \leq g$.
\end{lemma}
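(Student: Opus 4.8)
The plan is to prove the formula by induction on $n$, using a ``telescoped'' form of the order-$g$ recurrence that makes the computation transparent, rather than summing $g$ terms directly.

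First I would record the base case. Since $\left(f_n\right) = [\underbrace{1,\ldots,1}_g]$, the PLRS initial conditions give $f_n = 2^{n-1}$ for $1 \leq n \leq g$, and the first application of the pure recurrence gives $f_{g+1} = f_g + f_{g-1} + \cdots + f_1 = 2^{g-1} + \cdots + 1 = 2^g - 1$, which agrees with $2^{g+1-1} - 2^{1-2}(1+1) = 2^g - 1$. So the claim holds for $n = 1$.

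Next I would establish the identity $f_{j+1} = 2f_j - f_{j-g}$ for all $j \geq g+1$. This follows by subtracting the two instances $f_{j+1} = f_j + f_{j-1} + \cdots + f_{j+1-g}$ (valid for $j \geq g$) and $f_j = f_{j-1} + f_{j-2} + \cdots + f_{j-g}$ (valid for $j \geq g+1$) of the defining recurrence. Specializing to $j = g+n$ with $1 \leq n \leq g-1$, the index $j - g = n$ satisfies $1 \leq n \leq g$, so $f_{j-g} = 2^{n-1}$, and hence
\[
f_{g+n+1} \ = \ 2f_{g+n} - 2^{n-1}.
\]
For the inductive step, assuming $f_{g+n} = 2^{g+n-1} - 2^{n-2}(n+1)$ for some $1 \leq n \leq g-1$, I substitute into this relation:
\[
f_{g+n+1} \ = \ 2\bigl(2^{g+n-1} - 2^{n-2}(n+1)\bigr) - 2^{n-1} \ = \ 2^{g+n} - 2^{n-1}(n+1) - 2^{n-1} \ = \ 2^{g+n} - 2^{n-1}(n+2),
\]
which is exactly the asserted formula with $n$ replaced by $n+1$. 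Together with the base case this covers all $n$ with $1 \leq n \leq g$, completing the induction.

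There is essentially no serious obstacle here; the only points to watch are the index ranges on which the telescoped recurrence and the closed form $f_{n-g} = 2^{n-g-1}$ are simultaneously valid (they are, precisely for $g+1 \leq j \leq 2g-1$, which is exactly the range the statement requires), and the appearance of the half-integer power $2^{n-2}$ when $n = 1$, which is harmless since it always occurs multiplied by an even integer.
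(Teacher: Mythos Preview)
Your proof is correct and follows essentially the same approach as the paper: both arguments reduce to the two-term recurrence $f_{g+n+1} = 2f_{g+n} - 2^{n-1}$ and finish by a straightforward induction. The paper arrives at this recurrence by introducing the auxiliary quantity $a(n) = 2^{g+n-1} - f_{g+n}$ and subtracting consecutive values, whereas you obtain it more directly by telescoping the $g$-term recurrence; the computations are otherwise identical.
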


\begin{proof}
    Set $f_{g + n} = 2^{g + n - 1} - a(n)$ for $1 \leq n \leq g$. Then
    \begin{align}
        f_{g + n} &= f_{g + n - 1} + \cdots + f_n\nnend
        &= 2^{g + n - 2} + 2^{g + n - 3} + \cdots + 2^{n - 1} - (a(n - 1) + \cdots + a(1))\nnend
        &= 2^{g + n - 1} - \left( 2^{n - 1} + \sum_{i = 1}^{n - 1} a(i) \right).
    \end{align}
    Therefore,
    \begin{equation}
        a(n) = 2^{n - 1} + \sum_{i = 1}^{n - 1} a(i)
    \end{equation}
    and
    \begin{equation}
        a(n - 1) = 2^{n - 2} + \sum_{i = 1}^{n - 2} a(i).
    \end{equation}
    Hence
    \begin{equation}
        a(n) = 2^{n - 1} + 2a(n - 1) - 2^{n - 2} = 2a(n - 1) + 2^{n - 2}.
    \end{equation}
    Since $a(1) = 1$, by induction, we have $a(n) = 2^{n - 2}(n + 1)$.
\end{proof}

\begin{lemma}\label{lem:Gap1}
    For $k + \lceil \log_2k \rceil \leq g < 2k$, $\left(H_n\right)$ defined as $[\underbrace{1, \dots, 1}_{g}, \underbrace{0, \dots, 0}_{k}, 2^{k+1} - 1]$, we have 
    \begin{equation}
    2^{g+k+1} - \sum_{i=k+n+2}^{g+k+1}H_i \leq 2^g + 2^{k+n+2} - 2^{n+1}
    \end{equation}
    for all $g - k \leq n \leq k$.
\end{lemma}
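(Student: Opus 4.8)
The plan is to reduce everything to explicit closed forms. Combining the middle displayed formula in the proof of Lemma~\ref{thm:SharpgAndlog2k} with Lemma~\ref{Line4Terms} gives $H_{g+n} = 2^{g+n-1} - 2^{n-2}(n-1)$ for $1 \le n \le k+1$. In the sum $\sum_{i=k+n+2}^{g+k+1} H_i$ I would substitute $i = g+j$ to rewrite it as $\sum_{j=a}^{k+1} H_{g+j}$ with $a \coloneqq k+n+2-g$. The hypotheses $g-k \le n \le k$ force $2 \le a \le k+1$ once $k \ge 2$ (using $g \ge k+1$, which follows from $g \ge k+\lceil\log_2 k\rceil$); the degenerate case $k=1$ (which forces $g=1$ and $n\in\{0,1\}$) I would simply check by hand. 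Thus every summand falls in the range where the explicit formula above is valid.

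Next I would evaluate the two pieces. The geometric piece is $\sum_{j=a}^{k+1} 2^{g+j-1} = 2^{g-1}(2^{k+2} - 2^{a}) = 2^{g+k+1} - 2^{k+n+1}$, using $g+a-1 = k+n+1$. The weighted piece is $\sum_{j=a}^{k+1} 2^{j-2}(j-1) = 2^{k}(k-1) - 2^{k+n-g}(k+n-g-1)$, which follows from Lemma~\ref{TrivId} evaluated at the limits $k$ and $a-2$ (and which also covers the boundary cases $a=k+1$ and $a=2$, where a summation range is a single term or empty). Subtracting, one gets the closed form
\[ 2^{g+k+1} - \sum_{i=k+n+2}^{g+k+1} H_i \;=\; 2^{k+n+1} + 2^{k}(k-1) - 2^{k+n-g}(k+n-g-1), \]
so, after cancelling $2^{k+n+1}$ (using $2^{k+n+2} = 2\cdot 2^{k+n+1}$), the claimed inequality is equivalent to
\[ 2^{k}(k-1) + 2^{n+1} \;\le\; 2^{k+n+1} + 2^{g} + 2^{k+n-g}(k+n-g-1). \]

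To finish: from $g \ge k + \lceil\log_2 k\rceil$ and $n \ge g-k$ we get $n \ge \lceil\log_2 k\rceil$, hence $2^{n+1} \ge 2\cdot 2^{\lceil\log_2 k\rceil} \ge 2k$. Combined with $n \le k$, this gives $2^{k+n+1} = 2^{k}\cdot 2^{n+1} \ge 2^{k}(k+1) = 2^{k}(k-1) + 2^{k+1} \ge 2^{k}(k-1) + 2^{n+1}$, so the whole left-hand side is already at most $2^{k+n+1}$. It then suffices that $2^{g} + 2^{k+n-g}(k+n-g-1) \ge 0$: since $n \ge g-k$, the exponent $t \coloneqq k+n-g$ is a nonnegative integer, the integer-valued function $t \mapsto 2^{t}(t-1)$ is never below $-1$ (its minimum, at $t=0$), and $2^{g} \ge 2$, so the sum is in fact $\ge 1$.

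The main obstacle is not conceptual but bookkeeping: one must confirm that the index range $[a,k+1]$ lands inside the window where the explicit formula for $H_{g+j}$ is known (this is precisely where the constraints $k + \lceil\log_2 k\rceil \le g < 2k$ and $g-k \le n \le k$ are used), apply Lemma~\ref{TrivId} with the correctly shifted limits, and dispose of the small corner cases ($k=1$, and the empty/single-term summation ranges). Once the closed form is in hand, the reduced inequality has a comfortable amount of slack, so the final estimate is immediate.
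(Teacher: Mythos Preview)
Your argument is correct, and it takes a genuinely different route from the paper's. The paper proceeds by induction on $n$: the base case $n=g-k$ is verified by plugging in the explicit formulas from the proof of Lemma~\ref{thm:SharpgAndlog2k}, and the inductive step follows from the single estimate $H_{k+n+2}\le 2^{k+n+1}\le 2^{k+n+2}-2^{n+1}$. You instead compute the left-hand side in closed form by summing the exact expression $H_{g+j}=2^{g+j-1}-2^{j-2}(j-1)$ over the whole range (using Lemma~\ref{TrivId} for the weighted part), and then verify the resulting inequality directly via $2^{n+1}\ge 2k$, which comes from $n\ge g-k\ge\lceil\log_2 k\rceil$. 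Your approach makes the role of the hypothesis $g\ge k+\lceil\log_2 k\rceil$ fully explicit and produces an exact identity for the left-hand side as a by-product; the paper's induction is shorter and avoids most of the arithmetic, at the cost of hiding the dependence on the lower bound for $g$ inside the base case. Both are clean; yours gives more information, the paper's is more economical.
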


\begin{proof}
We proceed by induction on $n$. Suppose it holds for some $n \geq g-k$. Then 
\begin{equation}
    2^{g+k+1} - \sum_{i=k+n+3}^{g+k+1}H_i = 2^{g+k+1} - \sum_{i=k+n+2}^{g+k+1}H_i + H_{k+n+2}.
\end{equation}
By the induction hypothesis,
\begin{equation}
    2^{g+k+1} - \sum_{i=k+n+3}^{g+k+1}H_i \leq 2^g + 2^{k+n+2} - 2^{n+1} + H_{k+n+2}.
\end{equation}
As we can check explicitly that $H_{k+n+2} \leq 2^{k+n+1} \leq 2^{k+n+2} - 2^{n+1}$, we see
\begin{align}
    2^{g+k+1} - \sum_{i=k+n+3}^{g+k+1}H_i &\leq 2^g + 2^{k+n+2} - 2^{n+1} + (2^{k+n+2} - 2^{n+1}) \nnend
    &= 2^g + 2^{k+n+3} - 2^{n+2}.
\end{align}
It remains to show for the base case $n = g-k$. This can be shown directly from the given formulas in Theorem~\ref{thm:SharpgAndlog2k}.
\end{proof}

\begin{lemma}\label{lem:Gap2}
    For $\left(H_n\right)$ defined as in Lemma~\ref{lem:Gap1} and the same conditions on $g$ and $k$, we have
    \begin{equation}
    H_{(g+k+1) + n} \geq (2^{k+1} - 1)(2^{g+n-1} - 2^{n-2}(n+1))
    \end{equation}
    for all $1 \leq n \leq k$.
\end{lemma}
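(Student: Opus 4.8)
The plan is to prove the stated lower bound by induction on $n$, using the defining recurrence to pass from $H_{(g+k+1)+(n-1)}$ to $H_{(g+k+1)+n}$, and reducing the whole statement to one very easy inequality about $H_{k+n}$.

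As a preliminary step I would record the consequences of the explicit formulas already in hand. By Lemmas~\ref{Line3Terms} and~\ref{Line4Terms} together with the structure of $(H_n)$, we have $H_i = 2^{i-1}$ for $1 \le i \le g$, $H_{g+n'} = f_{g+n'} + 2^{n'-1}$ for $1 \le n' \le k+1$, and $f_{g+n'} = 2^{g+n'-1} - 2^{n'-2}(n'+1)$ for $1 \le n' \le g$. From these I would deduce $H_{k+n} \le 2^{k+n-1}$ for every $1 \le n \le k$: this is an equality when $k+n \le g$, and when $k+n > g$ (so $1 \le k+n-g \le k+1 \le g+1$) combining the last two formulas gives $H_{k+n} = 2^{k+n-1} - 2^{k+n-g-2}(k+n-g-1) \le 2^{k+n-1}$. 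Since $2^k \le 2^{k+1}-1$, this produces the key inequality
\begin{equation}\label{eq:Gap2key}
H_{k+n} \le 2^{k+n-1} \le (2^{k+1}-1)\,2^{n-1}, \qquad 1 \le n \le k.
\end{equation}

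Next I would set up the recursion. For $2 \le n \le k$, both $(g+k+1)+n$ and $g+k+n$ exceed $L = g+k+1$, so the defining recurrence applies to each; writing the two identities out and subtracting, the two blocks of $g$ consecutive summands telescope, and after substituting $H_n = 2^{n-1}$ and $H_{n-1} = 2^{n-2}$ (valid since $n \le k \le g$) I obtain
\[
H_{(g+k+1)+n} = 2\,H_{g+k+n} - H_{k+n} + (2^{k+1}-1)\,2^{n-2}.
\]
Writing $T(n) \coloneqq (2^{k+1}-1)\bigl(2^{g+n-1} - 2^{n-2}(n+1)\bigr)$ for the claimed bound, a one-line computation shows $2T(n-1) + (2^{k+1}-1)2^{n-2} - (2^{k+1}-1)2^{n-1} = T(n)$. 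Hence if $H_{g+k+n} \ge T(n-1)$, then the displayed recursion and \eqref{eq:Gap2key} give
\[
H_{(g+k+1)+n} \ge 2T(n-1) - (2^{k+1}-1)2^{n-1} + (2^{k+1}-1)2^{n-2} = T(n),
\]
which is exactly the inductive step. For the base case $n = 1$ the target is $H_{g+k+2} \ge T(1) = (2^{k+1}-1)(2^g - 1)$; here I would use $H_{g+k+2} = f_{g+k+2} + 2(2^{k+1}-1)$ (Lemma~\ref{Line3Terms} with $n=1$, legitimate since $g-k \ge 1$ when $k \ge 2$) and $f_{g+k+2} = 2^{g+k+1} - 2^k(k+3)$ (Lemma~\ref{Line4Terms}, valid when $k+2 \le g$), reducing the inequality to $2^g \ge 2^k(k-3)+3$, which follows from $g \ge k + \lceil \log_2 k\rceil$ (equivalently $2^{g-k} \ge k$). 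The finitely many small cases with $k+2 > g$ --- namely $k \le 2$, where $g = 3$ if $k = 2$ and $g = 1$ if $k = 1$ --- are dispatched by direct computation.

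I expect the difficulty here to be bookkeeping rather than conceptual: the real content is that, once the telescoping identity above is established, the entire lemma collapses onto the one-line bound \eqref{eq:Gap2key} on $H_{k+n}$. The care needed is in tracking which explicit formula (for $H_{g+n'}$, or for $f_{g+n'}$) is valid on which range of indices --- in particular the split between $k+n \le g$ and $k+n > g$, which is precisely where the hypothesis $g < 2k$ forces both regimes to appear --- and in isolating the genuinely small cases $k+2 > g$ that the generic estimate does not cover.
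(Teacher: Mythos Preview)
Your proof is correct and takes a genuinely different route from the paper's. The paper rewrites the inequality as an upper bound on $2^{g+k+n} - H_{(g+k+1)+n}$, proceeds by \emph{strong} induction with base cases $1 \le n \le g-k$ handled by the explicit formulas, and for the inductive step expands $H_{(g+k+1)+(m+1)}$ via the full $g$-term recurrence, invoking the auxiliary Lemma~\ref{lem:Gap1} to control the partial sum $2^{g+k+1} - \sum_{i=k+m+2}^{g+k+1} H_i$. Your argument instead subtracts two consecutive instances of the recurrence to obtain the clean two-term relation $H_{(g+k+1)+n} = 2H_{g+k+n} - H_{k+n} + (2^{k+1}-1)2^{n-2}$, which lets you run a \emph{simple} induction from the single base case $n=1$ and reduces the entire inductive step to the elementary bound $H_{k+n} \le 2^{k+n-1} \le (2^{k+1}-1)2^{n-1}$. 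The payoff is that you bypass Lemma~\ref{lem:Gap1} altogether and the structure of the argument is more transparent; the cost is that you must separately handle the base case $n=1$ (including the small exceptional pairs $(k,g) \in \{(1,1),(2,3)\}$ where the explicit formula for $f_{g+k+2}$ is unavailable), whereas the paper absorbs all of $1 \le n \le g-k$ into the base range at once.
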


\begin{proof}
    This is equivalent to showing that
    \begin{equation}
    2^{g+k+n} - H_{(g+k+1) + n} \leq 2^{g+n-1} + 2^{k+n-1}(n+1) - 2^{n-2}(n+1) \ \text{ for all } 1 \leq n \leq k,
    \end{equation}
    which we proceed to prove by strong induction on $n$. The case $1 \leq n \leq g-k$ has been established in Theorem~\ref{thm:SharpgAndlog2k}, so we suppose this holds for all $n \leq m$ for some $g-k \leq m < k$. Then
    \begin{align}
    &2^{g+k+(m+1)} - H_{(g+k+1) + (m+1)} \nnend
    &\quad= 2^{g+k+m+1} - \left(\sum_{i=1}^m H_{(g+k+1)+i} + \sum_{i=k+m+2}^{g+k+1}H_i + (2^{k+1}-1)H_{m+1} \right) \nnend
    &\quad= \sum_{i=1}^m \left(2^{g+k+i} - H_{(g+k+1)+i} \right) + \left(2^{g+k+1} - \sum_{i=k+m+2}^{g+k+1}H_i \right) - (2^{k+1}-1)2^m.
    \intertext{By the inductive hypothesis and Lemma~\ref{lem:Gap1},}
    &\quad\leq \sum_{i=1}^m \left(2^{g+i-1} + 2^{k+i-1}(i+1) - 2^{i-2}(i+1) \right) + \left( 2^g + 2^{k+m+2} - 2^{m+1} \right) - (2^{k+1}-1)2^m \nnend
    &\quad= 2^{g+(m+1) - 1} + 2^{k + (m+1) - 1}((m+1)+1) - 2^{(m+1) - 2} ((m+1)+1).
    \end{align}
    Our proof by induction is complete.
\end{proof}

\begin{lemma}\label{lem:lastCoeff3}
The sequence generated by $[1,\dots,1,0,3]$, with $k \geq 1$ ones, is always complete.
\end{lemma}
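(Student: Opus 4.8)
The plan is to verify Brown's criterion directly, invoking Theorem~\ref{weak2Lcrit} so that only the first $2L-1 = 2k+3$ Brown's gaps need to be controlled, where $L = k+2$ is the number of coefficients. First I would determine the initial terms: the PLRS initial conditions give $H_j = 2^{j-1}$ for $1 \leq j \leq k+1$, then $H_{k+2} = H_{k+1} + \cdots + H_2 + 1 = 2^{k+1}-1$ (the zero coefficient suppresses the $H_1$ term), and applying the full recurrence once more, $H_{k+3} = 2^{k+2}-2$. A direct computation then gives $B_{H,n} = 0$ for $1 \leq n \leq k+1$ and $B_{H,k+2} = 1$.

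Next I would control the Brown's gaps through their first differences. Since $B_{H,n+1} - B_{H,n} = 2H_n - H_{n+1}$, substituting the recurrence $H_{n+1} = H_n + H_{n-1} + \cdots + H_{n-k+1} + 3H_{n-k-1}$ and, for $n \geq k+3$, also $H_n = H_{n-1} + \cdots + H_{n-k} + 3H_{n-k-2}$ telescopes this down to $B_{H,n+1} - B_{H,n} = H_{n-k} - 3(H_{n-k-1} - H_{n-k-2})$; at the seam $n = k+2$ one instead checks directly that $B_{H,k+3} - B_{H,k+2} = 2H_{k+2} - H_{k+3} = 0$.

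Then, for $k+3 \leq n \leq 2k+2$, I set $m = n-k$, which runs over $3, \ldots, k+2$, so that $H_m, H_{m-1}, H_{m-2}$ all lie in the range $\{1, \ldots, k+2\}$ for which we have explicit formulas. The required inequality $H_m + 3H_{m-2} \geq 3H_{m-1}$ reduces, for $3 \leq m \leq k+1$, to $7 \cdot 2^{m-3} \geq 6 \cdot 2^{m-3}$, and for $m = k+2$ to $2^{k-1} - 1 \geq 0$; both hold for $k \geq 1$. Hence $B_{H,n}$ is nondecreasing from $n = k+2$ onward through $n = 2k+3$, so $B_{H,n} \geq B_{H,k+2} = 1 > 0$ on that range, while $B_{H,n} = 0 \geq 0$ for $n < L$. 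Theorem~\ref{weak2Lcrit} then yields completeness.

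The step I expect to be the main obstacle is the bookkeeping at the boundary between the initial-condition regime ($n < L$) and the recurrence regime ($n \geq L$): the value $H_{k+2} = 2^{k+1}-1$ breaks the clean powers-of-two pattern, so the telescoped difference formula is valid only for $n \geq k+3$ and the case $n = k+2$ must be treated separately, and one must confirm that the indices $n-k$, $n-k-1$, $n-k-2$ never exceed $k+2$ in the relevant range so that the explicit formulas apply. The remaining computations are routine.
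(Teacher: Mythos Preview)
Your proof is correct but takes a genuinely different route from the paper's. The paper gives a short direct argument by strong induction on Brown's criterion: starting from the recurrence $H_{n+1} = H_n + \cdots + H_{n-k+1} + 3H_{n-k-1}$, it uses monotonicity to write $3H_{n-k-1} \leq H_{n-k} + 2H_{n-k-1}$ and then the inductive hypothesis $H_{n-k-1} \leq 1 + H_1 + \cdots + H_{n-k-2}$ to absorb the remaining $2H_{n-k-1}$, giving Brown's inequality for $H_{n+1}$ in three lines with no explicit term formulas and no appeal to Theorem~\ref{weak2Lcrit}. Your approach instead reduces to a finite check via Theorem~\ref{weak2Lcrit}, computes the first $L$ terms explicitly, and then tracks the Brown's-gap differences $B_{H,n+1}-B_{H,n}$ through a telescoping identity; this is more computational and requires the boundary bookkeeping you flag, but it has the advantage of demonstrating Theorem~\ref{weak2Lcrit} in action and gives quantitative control of the gaps ($B_{H,n}\geq 1$ for $n\geq L$) rather than just nonnegativity. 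The paper's proof is the cleaner one for this particular lemma; yours is a valid alternative that fits the broader machinery developed in Section~\ref{sec:famiilies}.
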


\begin{proof}
By strong induction on $n$.

A simple calculation can be done to show that Brown's criterion holds for all $m \leq k+1$, i.e. $H_{m+1} \leq 1 + H_1 + \dots + H_{m}$. 

For the induction hypothesis, assume that for some $n \geq k+1$, Brown's criterion holds for all $m<n$, i.e., assume $H_{m+1} \leq 1 + H_1 + \dots +H_{m}$ for all $m<n$.

For the induction step, we start with the recurrence relation and apply the induction hypothesis:
\begin{align}
H_{n+1} &= H_n + \dots + H_{n-k+1} + 3H_{n-k-1}\nnend
&\leq H_n + \dots + H_{n-k+1} + H_{n-k} + 2H_{n-k-1}\nnend
&\leq H_n + \dots + H_{n-k+1} + H_{n-k} + H_{n-k-1} + H_{n-k-2} + \dots + H_1 +1.
\end{align}
Hence, by Brown's criterion, the sequence is complete. By strong induction, the lemma is proved.
\end{proof}

\begin{lemma}\label{firstL+2append}
	Let $\left( H_{n} \right)$ defined by $[1,0,\dots, 0, \underbrace{1,\dots, 1}_{m},N]$ be a PLRS with $L$ coefficients. Then, if the sequence is incomplete, it must fail Brown's criterion at the $L+1$\textsuperscript{th} or $L+2$\textsuperscript{th} term. In other words, if $H_{L+1}\leq 1+\sum_{i=1}^{L}H_{i}$ and $H_{L+2}\leq 1+\sum_{i=1}^{L+1}H_{i}$, then $\left( H_{n} \right)$ is complete.
\end{lemma}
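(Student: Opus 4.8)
The plan is to use that this lemma is already conditional on Conjecture~\ref{2Lcrit}, so that it suffices to control Brown's criterion on the finite segment $n\le 2L-1$, together with the extreme sparsity of the coefficient vector $[1,0,\dots,0,\underbrace{1,\dots,1}_{m},N]$.

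First I would record a free observation: $c_1=1$, every coefficient other than the last lies in $\{0,1\}$, and $c_L=N$ never enters the defining relations $H_{n+1}=c_1H_n+\cdots+c_nH_1+1$ for $n<L$. Hence $H_{n+1}\le H_n+\cdots+H_1+1$, i.e. $B_{H,n}\ge 0$, for every $1\le n\le L$. Together with the hypotheses $B_{H,L+1}\ge 0$ and $B_{H,L+2}\ge 0$, this means Brown's criterion holds for all of $n=1,\dots,L+2$, so by Conjecture~\ref{2Lcrit} it only remains to prove $B_{H,j}\ge 0$ for $L+3\le j\le 2L-1$.

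The engine for this is a second-order recursion for the Brown gap. For $j\ge L+2$ the recurrence reads $H_j=H_{j-1}+\sum_{i=j-L+1}^{j-L+m}H_i+NH_{j-L}$, and the same identity holds for $H_{j-1}$; substituting both into $B_{H,j}-B_{H,j-1}=2H_{j-1}-H_j$ and telescoping with the consecutive differences $d_t\coloneqq H_t-H_{t-1}$ collapses everything to
\begin{equation*}
B_{H,j}=B_{H,j-1}+\sum_{t=j-L+m+1}^{j-2}d_t-N\,d_{j-L}.
\end{equation*}
The displayed sum has $L-m-2\ge 1$ terms (at least one coefficient is $0$, so $L\ge m+3$), and since $(d_t)$ is nondecreasing --- consecutive gaps in a PLRS grow, the fact used throughout Appendix~\ref{ProofsOfLemmas2} --- each of those terms is at least $d_{j-L}$. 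Iterating from $j=L+2$,
\begin{equation*}
B_{H,j}=B_{H,L+2}+\sum_{r=L+3}^{j}\left(\sum_{t=r-L+m+1}^{r-2}d_t-N\,d_{r-L}\right),
\end{equation*}
and in the resulting double sum every ``bulk'' difference $d_t$ appears with nonnegative multiplicity $L-m-2$, so the right-hand side is $B_{H,L+2}\ge 0$ plus a correction supported only on the small indices $t\le L$ --- where the $d_t$ are the explicit, $N$-independent differences of $H_1,\dots,H_L$ computed as in the proof of Theorem~\ref{sum2L-2m} --- and on the indices $t$ just below $j\le 2L-1$. The subtracted terms $-N\,d_{r-L}$ likewise involve only $d_s$ with $s\le L-1$.

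The remaining task, and the main obstacle, is to show that these negative contributions are outweighed, i.e. that every partial sum $B_{H,L+2}+\sum_{r=L+3}^{j}(\cdots)$ is $\ge 0$. For this I would feed in the two explicit bounds that the hypotheses produce, namely $N\le\sum_{i=m+2}^{L-1}H_i+2$ from $B_{H,L+1}\ge 0$ and $2N\le\sum_{i=m+3}^{L}H_i+4$ from $B_{H,L+2}\ge 0$, and bound the correction term against $N\sum_{s}d_s$. I expect that a single one of these inequalities will not suffice --- which is exactly why the lemma must test two terms, not one --- so the argument has to exploit both bounds on $N$ simultaneously against the telescoped positive differences, and it will also inherit the modulo-$4$ case analysis already hidden in the closed forms for $H_1,\dots,H_L$. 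Everything else (the free initial segment, the gap recursion, and the reduction to $j\le 2L-1$) is routine.
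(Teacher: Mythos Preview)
Your displayed gap recursion is wrong. From $B_{H,j}-B_{H,j-1}=2H_{j-1}-H_j$ and $H_j=H_{j-1}+\sum_{i=j-L+1}^{j-L+m}H_i+NH_{j-L}$ one gets
\[
B_{H,j}-B_{H,j-1}=H_{j-1}-\sum_{i=j-L+1}^{j-L+m}H_i-NH_{j-L},
\]
which is \emph{not} $\sum_{t=j-L+m+1}^{j-2}d_t-N\,d_{j-L}$; a direct check (or substituting the recurrence for $H_{j-1}$ as well) shows your expression is off by $+H_{j-L}$. Every estimate you build on top of that identity therefore needs to be redone.

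More seriously, the part you label ``the main obstacle'' is the entire proof, and you do not carry it out: saying you would ``feed in the two explicit bounds on $N$'' and that you ``expect'' both will be needed, together with an unspecified mod-$4$ case split, is a plan, not an argument. There is no indication that the accumulated $-N\,d_{r-L}$ terms are actually dominated by the positive differences plus $B_{H,L+2}$, and with the corrected recursion the bookkeeping is worse than you suggest.

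The paper takes a different route that avoids bounding $N$ altogether in the inductive step. It proves directly, for each $2\le k\le L-1$, that $B_{H,L+k}\ge 0$ implies $B_{H,L+k+1}\ge 0$. After subtracting, this reduces to
\[
N\bigl(H_{k+1}-H_k\bigr)+H_{k+m+1}-H_{k+1}\le H_{L+k-1}.
\]
The key observation is to expand $H_{L+k-1}$ repeatedly via the recurrence until a term $N\sum_{i=1}^{k-1}H_i$ appears on the right. Since $k\le L-1$, the initial-condition structure forces $H_{k+1}-H_k-\sum_{i=1}^{k-1}H_i<0$, so the net $N$--coefficient is \emph{negative}; the $N$--dependence then helps rather than hurts, and what remains is an $N$--free inequality among the small terms $H_1,\dots,H_{k+m+1}$, handled by crude bounds. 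The hypotheses on $H_{L+1}$ and $H_{L+2}$ are used only as the base case. This is why the lemma holds without any upper bound on $N$ beyond what the two hypotheses encode, and why your strategy of controlling the $N$--contributions term by term is working against the grain of the problem.
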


\begin{proof}
Let $\left( H_{n} \right)$ be defined as above; it is clear that the first $L$ terms pass Brown's criterion. Now suppose the sequence passes Brown's criterion at the $L+1$-ist and $L+2$-nd term, so that
\begin{equation}
H_{L+1}\leq \sum_{i=1}^{L}H_{i}+1, \ \ \ H_{L+2}\leq \sum_{i=1}^{L+1}H_{i}+1.
\end{equation} 
We show that $\left( H_{n} \right)$ is complete.

We show by induction that if $\left( H_{n} \right)$ satisfies Brown's criterion at the $L+2$nd term, then it satisfies Brown's criterion at the $L+k$\textsuperscript{th} term, for any $2\leq k\leq L-1$. We assume our base case of $k=2$ by hypothesis, so only the induction step remains to be shown. 

Suppose for some $k$ that
\begin{equation}\label{a}
	H_{L+k}=H_{L+k-1}+H_{k+m}+\dots+H_{k+1}+NH_{k} \leq \sum_{i=1}^{L+k-1}H_{i}+1.
\end{equation}

We wish to show that 
\begin{equation}\label{b}
H_{L+k+1}=H_{L+k}+H_{k+m+1}+\dots+H_{k+2}+NH_{k+1}\leq \sum_{i=1}^{L+k}H_{i}+1.
\end{equation}
Looking at the difference between equations \eqref{a} and \eqref{b}, we see it suffices to show that 
\begin{equation}
	\left( H_{L+k}-H_{L+k-1} \right) +N\left( H_{k+1}-H_{k} \right) +H_{k+m+1}-H_{k+1}\leq H_{L+k}.
\end{equation} 
Or equivalently, 
\begin{equation}\label{c}
	N\left( H_{k+1}-H_{k} \right) +H_{k+m+1}-H_{k+1}\leq H_{L+k-1}.
\end{equation}
Expanding $H_{L+k-1}$, 
\begin{equation}\label{f}
H_{L+k-1} = H_{L+k-2}+H_{k+m-1}+\dots+H_{k}+NH_{k-1}.
\end{equation}
We can repeatedly expand the largest term of expression \eqref{f}, giving us longer partial sums. In particular, applying the process $k$ times, we see:
\begin{align}\label{d}
H_{L+k-1}
\ &=\ H_{L+k-2}+\sum_{i=k}^{k+m-1}H_{i}+NH_{k-1} \nnend
&=\ \left( H_{L+k-3}+\sum_{i=k-1}^{k+m-2}H_{i} +NH_{k-2} \right) +\sum_{i=k}^{k+m-1}H_{i} +NH_{k-1} \nnend
&=\ \left( H_{L+k-3}+\sum_{i=k-2}^{k+m-3}H_{i}+NH_{k-3} \right) +\sum_{i=k-1}^{k+m-2}H_{i}+\sum_{i=k}^{k+m-1}H_{i}  +N(H_{k-1}+H_{k-2}) \nnend
 &\ \;\vdots \nnend
 &=\ H_{L-1}+ \sum_{i=1}^{m}H_{i}+\cdots +\sum_{i=k}^{k+m-1}H_{i}+ N\left( H_{k-1}+H_{k-2}+\dots+H_1 \right)  \nnend
 &=\ H_{L-1}+\sum_{a=1}^{k}\sum_{i=a}^{a+m-1}H_{i}+N\left( \sum_{i=1}^{k-1}H_{i} \right).
\end{align}
Thus inequality \eqref{c} becomes
\begin{equation}\label{e}
	N\left( H_{k+1}-H_{k}-H_{k-1}-\dots-H_2-H_1 \right)+H_{k+m+1} \leq H_{L-1}+H_{k+1}+\sum_{a=1}^{k}\sum_{i=a}^{a+m-1}H_{i}.
\end{equation}
Assuming $k<L$, we can write $H_{k+1}=H_{k}+H_{k-L+m+1}+\cdots +H_{k-L+2}$, where for the sake of notation we define $H_0=1$ and $H_{j}=0$ for all $j< 0$. Thus,
\begin{align} 
N\left( H_{k+1}-H_{k}-H_{k-1}-\dots-H_2-H_1 \right) &= -N\left( H_{k-1}+H_{k-2}+\cdots+H_{k-L+m+2} \right) \nnend
&<-NH_{k-1},
\end{align} 
and so for inequality \eqref{e} it suffices to show
\begin{equation}
	H_{k+m+1}\leq H_{L-1}+H_{k+1}+\sum_{a=1}^{k}\sum_{i=a}^{a+m-1}H_{i}+ NH_{k-1}.
\end{equation}
Expanding the left hand side, we wish to show 
\begin{align}
H_{k+m+1}
&= H_{k+m}+H_{k-L+2m+1}+\cdots+H_{k-L+1}+NH_{k-L+m} \nnend
&\leq    H_{L-1}+H_{k+1}+\sum_{a=1}^{k}\sum_{i=a}^{a+m-1}H_{i}+ NH_{k-1}.
\end{align}
As $k-L+m \leq k-3<k-1$, we see $NH_{k-L+m}<NH_{k-1}$, and so we need only show
\begin{equation}\label{x}
	H_{k+m}+H_{k-L+2m+1}+\cdots+H_{k-L+1}\leq  H_{L-1}+H_{k+1}+\sum_{a=1}^{k}\sum_{i=a}^{a+m-1}H_{i}. 
\end{equation}
As $k\geq 2$, we note that in the double sum $\sum_{a=1}^{k}\sum_{i=a}^{a+m-1}H_{i}$, the summands $H_1,H_{k+m-1}$ are present exactly once, and for any $1<i<k+m-1$, the summand $H_{i}$ is present at least twice. Thus we can take the crude bound
\begin{equation}
	\sum_{a=1}^{k}\sum_{i=a}^{a+m-1}H_{i}\geq \sum_{i=1}^{k+m-1}H_{i}+\sum_{i=2}^{k+m-2}H_{i}.
\end{equation} 
Applying this bound on the right hand side of inequality \eqref{x}, and taking the trivial bounds $H_{L-1}>H_1+1,\; H_{k+1}>1$, we see 
\begin{equation}
	H_{L-1}+H_{k+1}+\sum_{a=1}^{k}\sum_{i=a}^{a+m-1}H_{i} \geq \left(  \sum_{i=1}^{k+m-1}H_{i}+1 \right)  + \left( \sum_{i=1}^{k+m-2}H_{i}+1 \right).
\end{equation} 
As we assumed $\left( H_{n} \right)$ fulfills Brown's criterion for terms below $L+k$, we know
\begin{equation}
H_{k+m}\leq \sum_{i=1}^{k+m-1}H_{i}+1.
\end{equation} 
Finally, it is clear that 
\begin{equation}
H_{k-L+2m+1}+\cdots+H_{k-L+1} \leq \sum_{i=1}^{k+m-2}H_{i}+1,
\end{equation}
as no indices on the left sum are repeated. Combining these two facts, we have
\begin{align}
	H_{k+m}+H_{k-L+2m+1}+\cdots+H_{k-L+1}
    &\leq \left(  \sum_{i=1}^{k+m-1}H_{i}+1 \right)  + \left( \sum_{i=1}^{k+m-2}H_{i}+1 \right) \nnend
    &\leq H_{L-1}+H_{k+1}+\sum_{a=1}^{k}\sum_{i=a}^{a+m-1}H_{i}.
\end{align}
Thus inequality \eqref{x} holds, and we are done.
\end{proof}

\begin{lemma}\label{diffGH}
    Let $\left(G_n\right)$ and $\left(H_n\right)$ be PLRS's, both with $L$ coefficients, which are defined by $[1, 0, \dots, 0, \underbrace{1, \dots, 1}_{m}, N]$ and $[1, 0, \dots, 0, \underbrace{1, \dots, 1}_{m+1}, N+1]$ respectively. For $(L-1) / 2 \leq m \leq L-4$,
    \begin{equation} \begin{cases}
        H_{i-1} = G_i -1, & \text{if $2 \leq i< 2(L-m)$;}\\
        H_{i-1} = G_{i}, & \text{if $i=2(L-m)$;}\\
        H_{i-1} > G_i, & \text{if $2(L-m) < i \leq L$.}\\
    \end{cases}
    \end{equation}
\end{lemma}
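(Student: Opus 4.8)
Write $d := L-m$, so the hypotheses $(L-1)/2 \le m \le L-4$ read $4 \le d \le m+1$. The plan is to compute the first $L$ terms of both sequences from the PLRS initial conditions and then compare $H_{i-1}$ with $G_i$ directly. Since both sequences have length $L$, their coefficient vectors are
\[ c^{G} = (1,\underbrace{0,\dots,0}_{d-2},\underbrace{1,\dots,1}_{m},N), \qquad c^{H} = (1,\underbrace{0,\dots,0}_{d-3},\underbrace{1,\dots,1}_{m+1},N+1), \]
so $c^{H}_j \ge c^{G}_j \ge 0$ for every $j$, with strict inequality only at $j=d-1$ and $j=L$. This immediately gives a \emph{monotonicity lemma} $H_j \ge G_j$ for all $j$: a one-line induction starting from $H_1=G_1=1$ and using $c^{H}_j \ge c^{G}_j \ge 0$ in the defining recurrences (initial-condition form and full form alike).

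Next I would record the explicit terms. Reading off the positions of the zeros and ones, one gets $G_i=i$ for $1 \le i \le d$, $H_i = i$ for $1 \le i \le d-1$, and the ``short'' recursions, valid for $0 \le k \le m$,
\[ G_{d+k} = G_{d+k-1} + \sum_{j=1}^{k} G_j + 1, \qquad H_{d+k} = H_{d+k-1} + \sum_{j=1}^{k+1} H_j + 1, \]
where the $k=0$ instance of the second one reads $H_d = H_{d-1}+H_1+1 = d+1$. Because the statement only involves indices up to $L=d+m$, no full recurrence is ever needed, which keeps the argument self-contained.

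Then I would prove the three cases by induction on $i$, in the order stated. For $2 \le i < 2d$ (claim $H_{i-1}=G_i-1$): for $i \le d$ both sides are explicit (with $i=d$ giving the base $d-1=d-1$); for $i=d+k$ with $1 \le k \le d-1$, taking the difference of the short recursions for $G_{d+k}$ and $H_{d+k-1}=H_{d+(k-1)}$ gives $G_{d+k} - H_{d+k-1} = (G_{d+k-1}-H_{d+k-2}) + \sum_{j=1}^{k}(G_j - H_j)$, where the bracket equals $1$ by the inductive hypothesis and the sum vanishes since $G_j = H_j = j$ for $j \le k \le d-1$. For $i=2d$ (nonempty only when $m \ge d$): the same difference gives $G_{2d}-H_{2d-1} = 1 + (G_d - H_d) = 1 + (d-(d+1)) = 0$. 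For $2d < i \le L$ (nonempty only when $m \ge d+1$): set $\delta_i := H_{i-1}-G_i$, so $\delta_{2d}=0$ by the previous case; differencing the short recursions at $i=d+k$ with $d+1 \le k \le m$ yields $\delta_{d+k} = \delta_{d+k-1} + \sum_{j=1}^{k}(H_j-G_j)$, and $\sum_{j=1}^{k}(H_j-G_j) = (H_d-G_d) + \sum_{j=d+1}^{k}(H_j-G_j) \ge 1$ by the monotonicity lemma, so by induction $\delta_i \ge i-2d > 0$, i.e.\ $H_{i-1} > G_i$.

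I expect the main obstacle to be bookkeeping rather than mathematical content: confirming the explicit formulas for the first $L$ terms, and—above all—checking that every invocation of a short recursion stays in its range of validity, together with the degenerate configurations that make the later cases vacuous ($d=m+1$, where $2d=L+1$ so both the $i=2d$ case and the $2d<i\le L$ case are empty; $m=d$, where only the third case is empty) and the $k=1$ boundary in the first case. Once the index ranges are pinned down, the three inductions and the monotonicity lemma are all routine.
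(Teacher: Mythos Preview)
Your proof is correct and follows essentially the same route as the paper: both arguments use the monotonicity $H_j \ge G_j$ (from $c^H_j \ge c^G_j$), compute the early terms explicitly, and then handle the third case by induction on the recursion difference $H_{k-1}-G_k = (H_{k-2}-G_{k-1}) + \sum_{j}(H_j-G_j)$. The only real difference is packaging: the paper quotes the cubic closed forms $G_{(L-m)+n} = (L-m)+n + \tfrac{n(n+1)(n+2)}{6}$ from Theorem~\ref{sum2L-2m} to dispatch the first two cases and then separately verifies the base $H_{2(L-m)} > G_{2(L-m)+1}$, whereas you bypass those formulas entirely by running the same recursion-difference induction uniformly across all three cases, so your argument is self-contained and slightly more economical.
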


\begin{proof}
From the proof of Lemma~\ref{sum2L-2m}, we get 
\begin{equation}
\begin{cases}
    H_n = n, H_{(L-m-1) + n} = L - m - 1 + n + \frac{n(n+1)(n+2)}{6}, & \text{if $1 \leq n \leq L-m-1$;}\\[1.5mm]
    G_n = n, G_{L-m + n} = L - m + n + \frac{n(n+1)(n+2)}{6}, & \text{if $1 \leq n \leq L-m$}.
\end{cases}
\end{equation}
From these explicit formulas, $H_{i-1} = G_i -1$ for all $2 \leq i \leq 2(L-m)-1$. Now, 
\begin{align}
H_{2(L-m)-1} 
\ &=\ H_{2(L-m-1)} + H_{L-m} + \sum_{i=1}^{L-m-1}H_i + 1 \nnend
&=\ 2(L-m-1) + \frac{(L-m-1)(L-m)(L-m+1)}{6} + 1 + \sum_{i=1}^{L-m-i}G_i + 1 \nnend
&=\ 2(L-m)-1 + \frac{(L-m-1)(L-m)(L-m+1)}{6} + \sum_{i=1}^{L-m-i}G_i + 1 \nnend
&=\ G_{2(L-m)-1} + \sum_{i=1}^{L-m-i}G_i + 1 \nnend
&=\ G_{2(L-m)}.
\end{align}
Similarly, by writing out explicit formulas, one can show that $H_{2(L-m)} > G_{2(L-m) + 1}$. Also, it is clear that $H_i \geq G_i$ for all $i$. Therefore, for any $2(L-m)+1 < k \leq L$,
\begin{align}
    H_{k-1} - G_k &= (H_{k-2} - G_{k-1}) + \sum_{i=1}^{k-(L-m)}(H_i - G_i)\nnend
    & \geq H_{k-2} - G_{k-1},
\end{align}
and the last inequality follows by induction on $k$.
\end{proof}

\begin{lemma}\label{lem:trivialineq}
    If $m + 3 < 2(L-m)$ and $m \geq (L-1) / 2$, then
    \begin{equation}
    2m-L + \sum_{i = L-m+2}^{m+1}i \geq 2L - 3(m+1) + 2.
    \end{equation}
\end{lemma}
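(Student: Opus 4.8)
The plan is to prove the inequality by a short direct computation after reparametrizing. Set $d \coloneqq L - m$; then the hypothesis $m + 3 < 2(L-m)$ becomes $2d \geq m+4$ (in integers), and $m \geq (L-1)/2$ becomes $d \leq m+1$, while $2m - L = m - d$. The summation $\sum_{i=L-m+2}^{m+1} i = \sum_{i=d+2}^{m+1} i$ is nonempty precisely when $d \leq m-1$, and in that regime the usual formula for a sum of consecutive integers gives $\sum_{i=d+2}^{m+1} i = (m+d+3)(m-d)/2$. Substituting, the left-hand side of the claim becomes
\begin{equation*}
2m - L + \sum_{i=d+2}^{m+1} i \;=\; (m-d) + \frac{(m+d+3)(m-d)}{2} \;=\; (m-d)\cdot\frac{m+d+5}{2},
\end{equation*}
and the right-hand side $2L - 3(m+1) + 2$ simplifies to $2d - m - 1$. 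Thus the whole statement reduces to the single inequality $(m-d)(m+d+5)/2 \geq 2d - m - 1$.

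For the generic case $d \leq m-1$ I would finish with crude bounds. Since $m - d \geq 1$ and, using $m \geq d+1$, also $m+d+5 \geq 2d+6$, the left-hand side is at least $d+3$; meanwhile $2d - m - 1 \leq 2d - (d+1) - 1 = d-2$. As $d+3 > d-2$, the inequality holds with slack at least $5$, so in fact only $d \leq m+1$ is needed here and the estimate is nowhere near tight. It then remains to treat the degenerate regime in which the index range $[d+2,\,m+1]$ has at most one element, i.e.\ $d \in \{m,\, m+1\}$, equivalently $L \in \{2m,\,2m+1\}$; there the summation contributes nothing and the left-hand side collapses to $2m-L \leq 0$, so these values must be inspected against $2d - m - 1$ using the remaining hypotheses, together with the ambient condition $L - m \geq 4$ under which this lemma is invoked inside Theorem~\ref{sum2L-2m 2nd}.

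The main (indeed only) obstacle is precisely this boundary analysis: the one-line reduction above is routine, but one has to check carefully whether $2(L-m) > m+3$ and $L - m \geq 4$ (in the context where this lemma is applied) force the summation range to be nonempty, i.e.\ $L \leq 2m-1$, or else isolate and dispose of the finitely many small cases with $L \in \{2m, 2m+1\}$ directly. Everywhere else the inequality has comfortable slack, so the only place the hypotheses genuinely bite is at this boundary; I would therefore invoke $2d \geq m+4$ and $d \leq m+1$ explicitly there rather than relying on loose bounds, and make sure the empty-sum convention is recorded correctly.
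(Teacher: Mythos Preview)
Your reparametrization via $d=L-m$ and the generic-case bound for $d\le m-1$ are correct, and this is essentially the same direct-computation route the paper takes: the paper expands the sum as $\tfrac{(m+1)(m+2)}{2}-\tfrac{(L-m+1)(L-m+2)}{2}$ and reduces the claim to $L(2m-L)+16m+2\ge 8L$. (That reduction in the paper contains an arithmetic slip; carrying the algebra through gives $9L$ on the right, not $8L$.)

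Your caution about the boundary regime $d\in\{m,m+1\}$ is not just bookkeeping, however---it is where the statement actually breaks. Take $m=4$, $L=8$ (so $d=m=4$): both hypotheses $m+3<2(L-m)$ and $m\ge(L-1)/2$ hold, the sum $\sum_{i=6}^{5} i$ is empty, and the asserted inequality becomes $0\ge 3$. The ambient condition $L-m\ge 4$ from Theorem~\ref{sum2L-2m 2nd} does not exclude this example either. So the lemma as written is false at the boundary; the paper's argument does not detect this because with the erroneous $8L$ the case $L=2m$ accidentally passes (and the closing assertion ``$L(2m-L)>6$'' is itself not justified by the stated hypotheses---it is $0$ when $L=2m$). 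Consequently your plan to ``inspect'' $d\in\{m,m+1\}$ using only $2d\ge m+4$ and $d\le m+1$ cannot be completed: those constraints are met in the failing example. The repair has to happen upstream---either strengthen the lemma's hypotheses (e.g.\ require $L\le 2m-1$, which is exactly $d\le m-1$, the range where your generic estimate already gives slack $\ge 5$), or handle $L\in\{2m,2m+1\}$ separately inside the proof of Theorem~\ref{sum2L-2m 2nd}.
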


\begin{proof}
This is equivalent to 
\begin{equation}
\frac{(m+1)(m+2)}{2} - \frac{(L-m+1)(L-m+2)}{2} \geq 3L - 5m - 1,
\end{equation}
which simplifies to 
\begin{equation}
L(2m-L) + 16m + 2 \geq 8L,
\end{equation}
which is true since $L(2m-L) > 6$ and $2m + 1 \geq L$.
\end{proof}

\section{Lemmas for Section 4}\label{apx:sec4lemmas}

\begin{lemma}\label{appendixitAppendix}
For any $L \in \Z_{>0}$, let $\lambda _{L}$ be the principal root of $x^{L} - x^{L - 1} - N_L - 1$. Then
\begin{equation}\label{eq:LowerBoundPrincipalRoot}
	 \lambda _{L}-1  \geq \frac{L+2}{L^2+L+4}.
\end{equation}
\end{lemma}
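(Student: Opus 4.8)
The plan is to work directly with the defining equation of $\lambda_L$. Since $p_L(x) = x^L - x^{L-1} - N_L - 1$ satisfies $p_L(1) = -(N_L+1) < 0$ while $p_L(x) \to +\infty$, the intermediate value theorem gives $\lambda_L > 1$. Factoring the characteristic equation as $\lambda_L^{L-1}(\lambda_L - 1) = N_L + 1$ and writing $t := \lambda_L - 1 > 0$, we obtain the identity $(1+t)^{L-1}\,t = N_L + 1$. Set $\beta_L := (L+2)/(L^2+L+4)$, the claimed lower bound. I would then note that the function $\phi(s) := (1+s)^{L-1} s$ is strictly increasing on $[0,\infty)$, since $\phi'(s) = (1+s)^{L-2}(Ls+1) > 0$ there (and $\phi(s) = s$ when $L=1$). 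Consequently $\phi(t) = N_L + 1$ forces $t \ge \beta_L$ the moment we verify $\phi(\beta_L) \le N_L + 1$; this reduces the whole lemma to a single scalar inequality.

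To bound $\phi(\beta_L) = (1+\beta_L)^{L-1}\beta_L$, I would apply $1 + x \le e^x$ to get $(1+\beta_L)^{L-1} \le e^{(L-1)\beta_L}$, and observe that $(L-1)\beta_L = (L^2+L-2)/(L^2+L+4) < 1$, so $(1+\beta_L)^{L-1} < e < 3$ uniformly in $L$. Hence $\phi(\beta_L) < 3\beta_L = 3(L+2)/(L^2+L+4)$. On the other hand $N_L + 1 = \lceil L(L+1)/4 \rceil + 1 \ge L(L+1)/4 + 1 = (L^2+L+4)/4$. Therefore it suffices to prove $3(L+2)/(L^2+L+4) \le (L^2+L+4)/4$, i.e.\ $(L^2+L+4)^2 \ge 12(L+2)$. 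This is elementary: for $L = 1$ both sides equal $36$, and for $L \ge 2$ one has $L^2+L+4 \ge 10$, so $(L^2+L+4)^2 \ge 10(L^2+L+4) = 12(L+2) + (10L^2 - 2L + 16) > 12(L+2)$. Chaining $\phi(\beta_L) < 3\beta_L \le (L^2+L+4)/4 \le N_L+1 = \phi(t)$ and invoking monotonicity of $\phi$ yields $t \ge \beta_L$, which is precisely the asserted inequality \eqref{eq:LowerBoundPrincipalRoot}.

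There is no serious obstacle here: the argument is a reduction of the root inequality to a polynomial inequality via monotonicity of $\phi$, followed by a deliberately crude exponential estimate. The only point needing a little care is that the bound $(1+\beta_L)^{L-1} < 3$ be genuinely uniform in $L$, which works precisely because $(L-1)\beta_L$ stays below $1$ for every $L \ge 1$; once that is in hand, the remaining inequality $(L^2+L+4)^2 \ge 12(L+2)$ is comfortably true for $L \ge 2$ and tight at $L = 1$.
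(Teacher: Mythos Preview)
Your proof is correct. Both your argument and the paper's start from the same reduction: since $\phi(s)=(1+s)^{L-1}s$ is increasing and $\phi(\lambda_L-1)=N_L+1$, the claim $\lambda_L-1\ge\beta_L$ amounts to $\phi(\beta_L)\le N_L+1$, equivalently $p_L(1+\beta_L)\le 0$. From there the routes diverge. The paper handles $L=2$ numerically, and for $L\ge 3$ replaces $\beta_L$ by the coarser $1/(L-2)>\beta_L$, reducing to $\bigl(\tfrac{L-1}{L-2}\bigr)^{L-1}\cdot\tfrac{1}{L-2}\le N_L+1$, which it dispatches by noting the left side decreases in $L$ while the right side increases (with a somewhat awkward aside for $L=1$). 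You instead keep $\beta_L$ and use the exponential inequality $(1+\beta_L)^{L-1}\le e^{(L-1)\beta_L}<e<3$, uniform in $L$ because $(L-1)\beta_L=\tfrac{L^2+L-2}{L^2+L+4}<1$; this turns the whole thing into the single polynomial inequality $(L^2+L+4)^2\ge 12(L+2)$, tight at $L=1$ and trivial for $L\ge 2$. Your route is cleaner and avoids the case splits; the paper's route trades uniformity for sharper constants at the cost of separate verifications for small $L$.
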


\begin{proof}
Set $f(x) = x^L - x^{L - 1} - N_L - 1$. It suffices to show that $f((L + 2)/(L^2 + L + 4) + 1) \leq 0$. For $L = 2$, $N_L = \lceil 2(2 + 1)/4 \rceil = 2$, so $f(x) = x^2 - x - 3$. Thus,
\begin{equation}
    f\left(\frac{L + 2}{L^2 + L + 4} + 1\right) = -\frac{61}{25} < 0,
\end{equation}
and we now consider $L \neq 2$. Since $1/(L^2 + L + 4) < 1/(L^2 - 4)$, it suffices to show
\begin{equation}
    \lambda_L - 1 \geq \frac{L + 2}{L^2 - 4} = \frac{1}{L - 2} \quad\iff\quad \lambda_L \geq \frac{L - 1}{L - 2}.
\end{equation}
As noted above, it suffices to show $f((L - 1)/(L - 2)) \leq 0$. That is,
\begin{align}
    \left(\frac{L - 1}{L - 2}\right)^L - \left(\frac{L - 1}{L - 2}\right)^{L - 1} - \left\lceil\frac{L(L + 1)}{4}\right\rceil - 1 &\leq 0\\
    \left(\frac{L - 1}{L - 2}\right)^L - \left(\frac{L - 1}{L - 2}\right)^{L - 1}  &\leq \left\lceil\frac{L(L + 1)}{4}\right\rceil + 1.\label{eq:sufficientForPrincipalBound}
\end{align}
Equality holds for $L = 3$ and the left hand side is decreasing while the right hand side is increasing, so \eqref{eq:sufficientForPrincipalBound} holds for $L \geq 3$. When $L = 1$, the left hand side is negative while the right hand side is positive, so we have covered all $L \in \Z_{>0}$, completing the proof.
\end{proof}

\bigskip
\hrule
\bigskip

\noindent {\it 2010 Mathematics Subject Classification}: Primary 11B37, Secondary 11B39.

\noindent Keywords: Zeckendorf's theorem, Positive linear recurrence sequence, Complete sequence, Brown's criterion, Characteristic polynomial.

\bigskip
\hrule
\bigskip

\noindent (Concerned with sequences \seqnum{A000045}, \seqnum{A000079}, \seqnum{A006138}, \seqnum{A054925}.)

\bigskip
\hrule
\bigskip

\vspace*{+.1in}
\noindent

\bigskip
\hrule
\bigskip

\noindent
Return to
\htmladdnormallink{Journal of Integer Sequences home page}{https://cs.uwaterloo.ca/journals/JIS/}.
\vskip .1in

\end{document}